\newcommand{\e}{{\epsilon}}
\newcommand{\X}{\mathcal{X}}
\newcommand{\Mcg}{\mathcal{M}}
\newcommand{\mV}{\mathcal{V}}
\newcommand{\T}{\mathcal{T}}
\newcommand{\tr}{{\rm tr}}
\newcommand{\la}{\lambda}
\newcommand{\g}{\mathfrak{g}}
\newcommand{\Q}{\mathbb Q}
\newcommand{\R}{\mathbb{R}}
\DeclareMathOperator{\Hom}{Hom}
\newcommand{\C}{\mathbb C}
\newcommand{\scc}{\mathcal{S}}
\newcommand{\Z}{\mathbb Z}
\newcommand{\ts}{\tilde{s}}
\newcommand{\Cc}{{\mathcal C}}
\newcommand{\F}{{\mathcal F}}
\newcommand{\Rr}{{\mathcal R}}
\newcommand{\Aa}{{\mathcal A}}
\newcommand{\V}{{\mathcal V}}
\newcommand{\rd}{\mathcal{R}_d}
\def\grpgen#1{<\!\!#1\!\!>}
\def\T{\textrm}
\def\p{\partial}
\def\e{\epsilon}
\def\f{\frac}
\def\g{\gamma}
\def\la{\longrightarrow}
\def\ol{\overline}
\def\p{\partial}
\def\q{\quad}
\def\S{\Sigma}
\def\tr{\triangle}
\def\tn{\textnormal}
\def\M{\textrm{Mor}}
\def\TL{\textrm{TL}}
\def\TLJ{\textrm{TLJ}}
\def\dim{\textrm{dim}}
\def\Pic{\textrm{Pic}}
\numberwithin{equation}{section}
\newtheorem{theorem}[equation]{Theorem}
\newtheorem{corollary}[equation]{Corollary}
\newtheorem{lemma}[equation]{Lemma}
\newtheorem{prop}[equation]{Proposition}
\theoremstyle{definition}
\newtheorem{definition}[equation]{Definition}
\begin{document}

\title[Picture TQFTs]
{On Picture (2+1)-TQFTs}

\author{Michael Freedman}
\email{michaelf@microsoft.com}

\author{Chetan Nayak}
\email{nayak@kitp.ucsb.edu}

\author{Kevin Walker}
\email{kevin@canyon23.net}

\author{Zhenghan Wang}
\email{zhenghwa@microsoft.com}
\address{Microsoft Station Q\\CNSI Bldg Rm 2243\\
    University of California\\
    Santa Barbara, CA 93106-6105\\
    U.S.A.}

\dedicatory{Dedicated to the memory of Xiao-Song Lin}

\thanks{The fourth author is partially supported by NSF FRG grant DMS-034772.}

\begin{abstract}
The goal of the paper is an exposition of the simplest $(2+1)$-TQFTs in a sense following a
pictorial approach.  In the end, we fell short on details in the later sections where new results are
stated and proofs are outlined.  Comments are welcome and should be sent to the 4th author.
\end{abstract}

\maketitle

\section{Introduction}
Topological quantum field theories (TQFTs) emerged into physics
and mathematics in the 1980s from the study of three distinct
enigmas: the infrared limit of $1+1$ dimensional conformal field
theories, the fractional quantum Hall effect (FQHE), and the
relation of the Jones polynomial to $3-$manifold topology.  Now 25
years on, about half the literature in $3-$dimensional topology
employs some $\lq\lq$quantum" view point, yet it is still difficult
for people to learn what a TQFT is and to manipulate the simplest
examples. Roughly (axioms will follow later), a ($2+1)-$dimensional
TQFT is a functor which associates a vector space $V(Y)$ called
$\lq\lq$modular functor" to a closed oriented surface $Y$ (perhaps
with some extra structures); sends disjoint union to tensor
product, orientation reversal to dual, and is natural with respect
to transformations (diffeomorphisms up to isotopy or perhaps a
central extension of these) of $Y$.  The empty set $\emptyset$ is
considered to be a manifold of each dimension: $\{0,1,\cdots\}$.
 As a closed surface, the
 associated vector space is $\C$, i.e., $V(\emptyset)=\C$.
 Also if $Y=\p X$, $X$ an oriented $3-$manifold (also perhaps with
some extra structure), then a vector $Z(X) \in V(Y)$ is
determined (surfaces $Y$ with boundary also play a role but we
pass over this for now.)  A closed $3-$manifold $X$ determines a
vector $Z(X) \in V(\emptyset)= \C$, that is a number. In the case
$X$ is the $3-$sphere with $\lq\lq$extra structure" a link $L$, then
Witten's $\lq\lq SU(2)-$family" of TQFTs yields a Jones polynomial
evaluation $Z(S^3 , L)= J_L(e^{2 \pi i/r})$, $r =3,4,5, \ldots,$
as the $\lq\lq$closed $3-$manifold" invariants, which mathematically
are the Reshetikhin-Turaev invariants based on quantum groups
\cite{jones84}\cite{witten89}\cite{reshetikhinturaev}. This is the
best known example. Note that physicists tend to index the same
family by the levels $k=r-2$. The shift $2$ is the dual Coxeter
number of $SU(2)$.  We will use both indices. Most of the
$\lq\lq$quantum" literature in topology focuses on such closed
$3-$manifold invariants but there has been a growing awareness
that a deeper understanding is locked up in the representation
spaces $V(Y)$ and the $\lq\lq$higher algebras" associated to boundary
$(Y)$ (circles) and points \cite{freedquinn}\cite{freed}. Let us
explain this last statement. While invariants of $3-$manifolds may
be fascinating in their interrelations there is something of a
shortage of work for them within topology. Reidemeister was
probably the last topologist to be seriously puzzled as to whether
a certain pair of $3-$manifolds were the same or different and,
famously, solved his problem by the invention of $\lq\lq$torsion". (In
four dimensions the situation is quite the opposite, and the
closed manifold information from $(3+1)$ dimensional TQFTs would
be most welcome. But in this dimension, we do not yet know
interesting examples of TQFTs.) So while the subject in dimension
$3$ seems to be maturing away from the closed case it is running
into a pedological difficulty.  It is hard to develop a solid
understanding of the vector spaces $V(Y)$ even for simple
examples.  Our goal in these notes is, in a few simple examples to
provide an intuition and understanding on the same level of
$\lq\lq$admissible pictures" modulo relations, just as we understand
homology as cycles modulo boundaries. This is the meaning of
$\lq\lq$picture" in the title.  A picture TQFT is one where $V(Y)$ is
the space of formal $\C-$linear combinations of $\lq\lq$admissible"
pictures drawn on $Y$ modulo some \underline{local} (i.e. on a
disk) linear relations.
 We will use the terms formal links, or formal
 tangles, or formal pictures , etc. to mean $\C$-linear combinations of
 links, tangles, pictures, etc.  Formal tangles in $3$-manifolds are also commonly
 referred to as $\lq\lq$skein"s.  Equivalently, we can adopt a dual
 point of view: take the space of linear functionals on multicurves and
 impose linear constraints for functionals.  This point of view is
 closer to the physical idea of $\lq\lq$amplitude of an
 eigenstate": think of a functional $f$ as a wavefunction and its
 value $f(\g)$ on a multicurve $\g$ as as the amplitude of the
 eigenstate $\g$.  Then quotient spaces of pictures become
 subspaces of wavefunctions.

Experts may note that central charge $c \neq 0$ is an obstruction
to this $\lq\lq$picture formulation":  the mapping class group
$\Mcg(Y)$ acts directly on pictures and so induces an action on
any $V(Y)$ defined by pictures. As $c$ determines a central
extension $\widetilde{\Mcg}^c(Y)$ which acts in place of
$\Mcg(Y)$, the feeling that all interesting theories must have $c
\neq 0$ may have discouraged a pictorial approach. However this is
not true: for any $V(Y)$ its endomorphism algebra End $V\cong
V^\ast \bigotimes V$ has central charge $c=0 \, \left(c (V^\ast )
=-c (V)\right)$ and remembers the original projective
representation faithfully. In fact, all our examples are either of
this form or slightly more subtle quantum doubles or Drinfeld
centers in which the original theory $V$ violates some axiom (the
nonsingularity of the $S-$matrix) but this deficiency is
$\lq\lq$cured" by doubling \cite{kassel}\cite{mugerII}. Although those
notes focus on picture TQFTs based on variations of the
Jones-Wenzl projectors, the approach can be generalized to an
arbitrary spherical tensor category.  The Temperley-Lieb
categories are generated by a single $\lq\lq$fundamental"
representation, and all fusion spaces are of dimension $0$ or $1$,
so pictures are just $1$-manifolds. In general, $1$-manifolds need
to be replaced by tri-valent graphs whose edges carry labels. But
$c=0$ is not sufficient for a TQFT to have a picture description.
Given any two TQFTs with opposite central charges, their product
has $c=0$, e.g. TQFTs with $\Z_n$ fusion rules have $c=1$, so the
product of any theory with the mirror of a different one has
$c=0$, but such a product theory does not have a picture
description in our sense.

While these notes describe the mathematical side of the story, we
have avoided jargon which might throw off readers from physics.
When different terminologies prevail within mathematics and
physics we will try to note both. Within physics, TQFTs are
referred to as $\lq\lq$anyonic systems" \cite{wilczek}\cite{DFN}.
These are $2$-dimensional quantum mechanical systems with point
like excitations (variously called $\lq\lq$quas-particle" or just
$\lq\lq$particle", anyon, or perhaps $\lq\lq$nonabelion") which under
exchange exhibit exotic statistics: a nontrival representation of
the braid groups acting on a finite dimensional Hilbert space $V$
consisting of $\lq\lq$internal degrees of freedom". Since these
$\lq\lq$internal degrees of freedom" sound mysterious, we note that
this information is accessed by fusion: fuse pairs of anyons along
a well defined trajectory and observe the outcome. Anyons are a
feature of the fractional quantum Hall effect; Laughlin's 1998
Nobel prize was for the prediction of an anyon carrying  change
$e/3$ and with braiding statistics $e^{2\pi i/3}$.  In the FQHE
central charge $c \neq 0$ is enforced by a symmetry breaking
magnetic field B.  It is argued in \cite{freedmanmag} that solid
state realizations of doubled or $\lq\lq$picture" TQFTs may - if found
- be more stable (larger spectral gap above the degenerate ground
state manifold) because no symmetry breaking is required. The
important electron - electron interactions would be at a lattice
spacing scale $\sim 4\AA$ rather than at a $\lq\lq$magnetic length"
typically around $150 \AA$. So it is hoped that the examples which
are the subject of these notes will be the low energy limits of
certain microscopic solid state models.  Picture TQFTs have a
Hamiltonian formulation, and describe string-net condensation in
physics, which serve as a classification of non-chiral topological
phases of matter. An interesting mathematical application is the
proof of the asymptotic faithfulness of the representations of the
mapping class groups.

As mentioned above, these notes are primarily about examples
either of the form $V^\ast \bigotimes V$ or with a related but
more general doubled structure $\mathcal{D}(V)$.  In choosing a
path through this material there seemed a basic choice:  (1)
present the picture (doubled) theories in a self contained way in
two dimensions with no reference to their twisted $(c \neq 0)$ and
less tractable parent theories $V$ or (2) weave the stories of
$\mathcal{D}(V)$ and $V$ together from the start and exploit the
action of $\mathcal{D}(V)$ on $V$ in analyzing the structure of
$\mathcal{D}(V)$.  In the end, the choice was made for us:  we did
not succeed in finding purely combinatorial $\lq\lq$picture-proofs"
for all the necessary lemmas --- the action on $V$ is indeed very
useful so we follow course (2). We do recommend to some interested
brave reader that she produce her own article hewing to course
(1).

In the literature \cite{BHMV} comes closest to the goals of the
notes, and \cite{walker06} exploits deeply the picture theories in
many directions. Actually, a large part of the notes will follow
from a finished \cite{walker06}.  If one applies the set up of
[BHMV] to skeins in surface cross interval, $Y \times I$, and then
resolves crossings to get a formal linear combination of
$1-$submanifolds of $Y=Y \times \f{1}{2} \subset Y \times I$ one
arrives at (an example of) the ``pictures" we study. In this
doubled context there is no need for the $p_1-$structure (or
``two-framing") intrinsic to the other approaches.  To readers
familiar with \cite{BHMV} one should think of skeins in a handle
body $H$, $\p H=Y$, when an undoubled theory $V(Y)$ is being
discussed, and skeins in $Y \times I$ when $\mathcal{D}V(Y)$ is
under consideration.

By varying pictures and relations we produce many examples, and in
the Temperley-Lieb-Jones context give a complete analysis of the
possible local relations.  Experts have long been troubled by
certain sign discrepancies between the $S-$matrix arising from
representations (or loop groups or quantum
groups)\cite{mooreseiberg}\cite{witten89}\cite{kirbymelvin} on the
one hand and from the Kauffman bracket on the other
\cite{lickorish}\cite{turaev}\cite{kauffmanlins}. The source of
the discrepancy is that the fundamental representation of $SU(2)$
is anti-symmetrically self dual whereas there is no room in
Kauffman's spin-network notation to record the antisymmetry.  We
rectify this by amplifying the pictures slightly, which yields
\underline{exactly} the modular functor $V$ coming from
representation theory of $SU(2)_q$.

\iffalse We treat in detail (verify all axioms) the modular
functors associated to $U(1)$, Kauffman bracket, $SU(2)$ and
$SO(3)$ (in that order) each in an undoubled and then a doubled
version.  A theory $V$ or $\mathcal{D}(V)$ is \underline{unitary}
if the vector spaces $V$ have natural positive definite Hermitian
structures.  Only unitary theories will have physical relevance so
we decide, for each theory.  If it is unitary (or perhaps
unitarizable). \vspace{.3in}

Table of Contents:

\vspace{.1in}

Section 2:  Jones representations

Section 3:  Diagram TQFTs for closed manifolds

Section 4:  Morita equivalence and cut-paste topology

Section 5:  Representation of Tempeley-Lieb-Jones categories

Section 6:  Diagram TQFTs

Section 7:  Jones-Kauffman TQFTs

Section 8:  Black-White TQFTs

Section 9:  Witten-Reshetikhin-Turaev and Turaev-Viro TQFTs

Section 10: Classification and unitarity

\vspace{.2in}

Appendices:

Appendix A:  Topological phases of matter

Appendix B:  Categories and functors

Appendix C:  Representation of linear categories

\vspace{.3in}\fi

The content of each section is as follows.  In Sections
\ref{jonesrepandjoneswenzl}, \ref{joneswenzlclosed}, we treat
diagram TQFTs for closed manifolds.  In Sections
\ref{moritacutpaste}, \ref{repoftlj}, \ref{joneswenzlfull}, we
handle boundaries.  In Sections \ref{joneskauffman},
\ref{blackwhite}, \ref{wrttqfts}, we cover the related
Jones-Kauffman TQFTs, and the Witten-Reshetikhin-Turaev
$SU(2)$-TQFTs which have anomaly, and non-trivial Frobenius-Schur
indicators, respectively. In Section \ref{classificationunitary},
we first prove the uniqueness of TQFTs based on Jones-Wenzl
projectors, and then classify them according to the Kauffman
variable $A$.  A theory $V$ or $\mathcal{D}(V)$ is
\underline{unitary} if the vector spaces $V$ have natural positive
definite Hermitian structures. Only unitary theories will have
physical relevance so we decide for each theory if it is unitary.

\section{Jones representations}\label{jonesrepandjoneswenzl}

\subsection{Braid statistics}

Statistics of elementary particles in $3$-dimensional space is
related to representations of the permutation groups $S_n$. Since
the discovery of the fractional quantum Hall effect, the existence
of anyons  in $2$-dimensional space becomes a real possibility.
Statistics of anyons is described by unitary representations of
the braid groups $B_n$.  Therefore, it is important to understand
unitary representations of the braid groups $B_n$.  Statistics of
$n$ anyons is given by unitary representation of the $n$-strand
braid group $B_n$.  Since statistics of anyons of different
numbers $n$ is governed by the same local physics, unitary
representations of $B_n$ have to be compatible for different $n$'s
in order to become possible statistics of anyons.  One such
condition is that all representations of $B_n$ come from the same
unitary braided tensor category.

There is an exact sequence of groups:
$ 1\longrightarrow PB_n \longrightarrow B_n\longrightarrow
S_n\longrightarrow 1,$
where $PB_n$ is the $n$-strand pure braid group.  It follows that
every representation of the permutation group $S_n$ gives rise to
a representation of the braid group $B_n$.  An obvious fact for
such representations of the braid groups is that the images are
always finite.  More interesting representations of $B_n$ are
those that do not factorize through $S_n$, in particular those
with infinite images.

To construct representations of the braid groups $B_n$, we recall
the construction of all finitely dimensional irreducible
representations (irreps) of the permutation groups $S_n$:  the
group algebra $\C [S_n]$, as a representation of $S_n$, decomposes
into irreps as $\C[S_n]\cong \bigoplus_{i}{\C}^{\dim V_i}\otimes
V_i$, where the sum is over all irreps $V_i$ of $S_n$.  This
construction cannot be generalized to $B_n$ because $B_n$ is an
infinite group for $n\geq 2$.  But by passing to various different
finitely dimensional quotients of $\C[B_n]$, we obtain many
interesting representations of the braid groups. This class of
representations of $B_n$ is Schur-Weyl dual to the the class of
braid group representations from the quantum group approach and
has the advantage of being manifestly unitary. This approach,
pioneered by V.~Jones \cite{jones84}, provides the best understood
examples of unitary braid group representations besides the Burau
representation, and leads to the discovery of the celebrated Jones
polynomial of knots \cite{jones87}. The theories in this paper are
related to the quantum $SU(2)_q$ theories.

\subsection{Generic Jones representation of the braid groups}\label{genericjones}

The $n$-strand braid group $B_n$ has a standard presentation with
generators $ \{\sigma_i, i=1,2,\cdots, n-1\}$ and relations:
\begin{equation}\label{farcommunitivity}
\sigma_i \sigma_j=\sigma_j\sigma_i,\;\;\; \textrm{if} \;\;\; |i-j|
\geq 2,
\end{equation}
\begin{equation}\label{braidrelation}
\sigma_i \sigma_{i+1}\sigma_i=\sigma_{i+1} \sigma_{i}\sigma_{i+1}.
\end{equation}

If we add the relations $\sigma_i^2=1$ for each $i$, we recover
the standard presentation for $S_n$.  In the group algebra
$k[B_n]$, where $k$ is a field (in this paper $k$ will be either
$\C$ or some rational functional field $\C(A)$ or $\C(q)$ over
variables $A$ or $q$), we may deform the relations $\sigma_i^2=1$
to linear combinations (superpositions in physical parlance)
$\sigma_i^2=a\sigma_i+b$ for some $a,b\in k$. By rescaling the
relations, it is easy to show that there is only $1$-parameter
family of such deformations.  The first interesting quotient
algebras are the Hecke algebras of type A, denoted by $H_n(q)$,
with generators $1,g_1,g_2,\cdots, g_{n-1}$ over $\Q(q)$ and
relations:
\begin{equation}\label{farcommunitivityH}
g_i g_j=g_jg_i,\;\;\; \textrm{if} \;\;\; |i-j| \geq 2,
\end{equation}
\begin{equation}\label{braidrelationH}
g_i g_{i+1}g_i=g_{i+1} g_{i}g_{i+1}.
\end{equation}
and
\begin{equation}\label{heckerelation}
g_i^2=(q-1)g_i+q.
\end{equation}

The Hecke relation \ref{heckerelation} is normalized to have roots
$\{-1,q\}$ when the corresponding quadratic equation is solved.
The Hecke algebras $H_n(q)$  at $q=1$ become $\C[S_n]$, hence they
are deformations of $\C[S_n]$. When $q$ is a variable, the irreps
of $H_n(q)$ are in one-to-one correspondence with the irreps of
$\C[S_n]$.

To obtain the Hecke algebras as quotients of $\C[B_n]$, we set
$q=A^4$, and $g_i=A^{3}\sigma_i$, where $A$ is a new variable,
called the Kauffman variable since it is the conventional variable
for the Kauffman bracket below. Note that $q=A^2$ in
\cite{kauffmanlins}.  The prefactor $A^3$ is introduced to match
the Hecke relation \ref{heckerelation} exactly to a relation in
the Temperley-Lieb algebras using the Kauffman bracket. In terms
of the new variable $A$, and new generators $\sigma_i$'s, the
Hecke relation \ref{heckerelation} becomes
\begin{equation}\label{heckerelationB}
\sigma_i^2=(A-A^{-3})\sigma_i+A^{-2}.
\end{equation}

The Kauffman bracket $<>$ is defined by the resolution of a
crossing in Figure \ref{kauffmanbracket}

\begin{figure}\label{kauffmanbrackets}
\centering
\includegraphics[scale=.3]{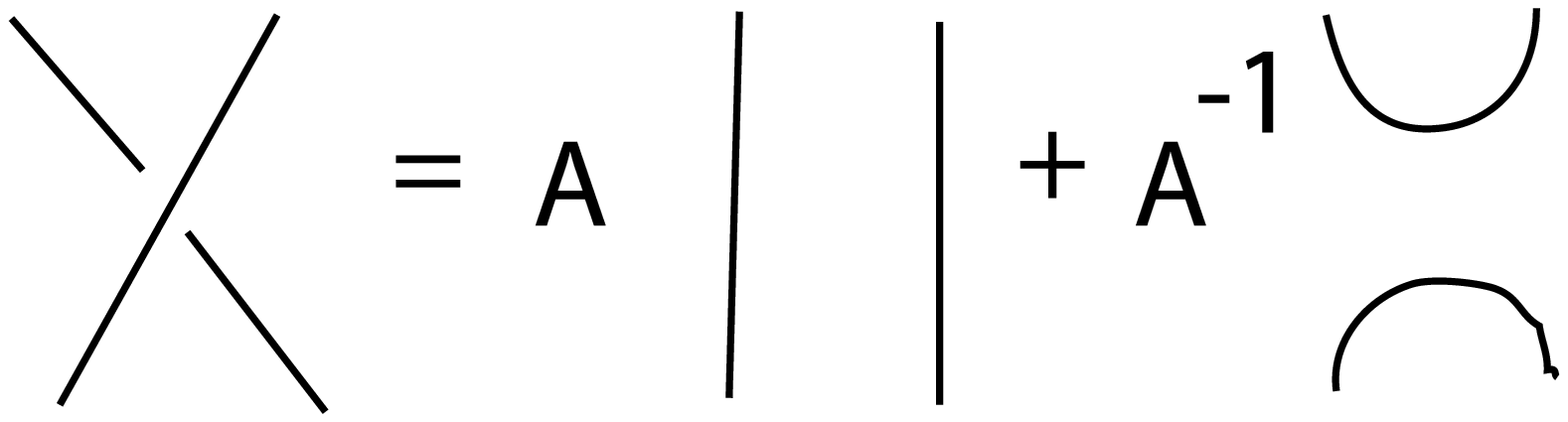}
\caption{Kauffman bracket}\label{kauffmanbracket}
\end{figure}

As a formula, $\sigma_i=A\cdot \textrm{id}+A^{-1}U_i$, where $U_i$
is a new generator.  The Hecke algebra $H_n(q)$ in variable $A$
and generators $1,U_1,U_2,\cdots, U_{n-1}$ is given by relations:
\begin{equation}\label{farcommunitivityTL}
U_i U_j=U_jU_i,\;\;\; \textrm{if} \;\;\; |i-j| \geq 2,
\end{equation}
\begin{equation}\label{braidrelationTL}
U_i U_{i+1}U_i-U_i=U_{i+1} U_{i}U_{i+1}-U_{i+1},
\end{equation}
and
\begin{equation}\label{heckerelationTL}
U_i^2=dU_i,
\end{equation}
where $d=-A^2-A^{-2}$.

The relation \ref{heckerelationTL} is the same as relation
\ref{heckerelationB}, which is the Hecke relation
\ref{heckerelation}.  The relation \ref{braidrelationTL} is the
braid relation \ref{braidrelationH}.

The Temperley-Lieb (TL) algebras, denoted as $\TL_n(A)$, are
further quotients of the Heck algebras.  In the TL algebras, we
replace the relations \ref{braidrelationTL} by
\begin{equation}
U_iU_{i\pm 1}U_i=U_i,
\end{equation}
i.e., both sides of relation \ref{braidrelationTL} are set to $0$.

\begin{prop}\label{KBalgebra}

The Kauffman bracket $<>: k[B_n]\la \TL_n(A)$ is a surjective
algebra homomorphism, where $k=\C(A)$.

\end{prop}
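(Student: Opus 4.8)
The statement asserts two things: that the Kauffman bracket $\langle\rangle\colon k[B_n]\to\TL_n(A)$ is a well-defined algebra homomorphism, and that it is surjective. I would prove these in that order. For well-definedness, the map is prescribed on generators by $\sigma_i\mapsto A\cdot\textrm{id}+A^{-1}U_i$, and since $k[B_n]$ is the free associative $k$-algebra on the $\sigma_i^{\pm 1}$ modulo the braid-group relations, it suffices to check (i) that each $\sigma_i$ is sent to an invertible element of $\TL_n(A)$ (so that $\sigma_i^{-1}$ has an image), and (ii) that the images satisfy the far-commutativity relation \eqref{farcommunitivity} and the braid relation \eqref{braidrelation}.

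**Key steps.** Step (i): compute $(A\cdot\textrm{id}+A^{-1}U_i)(A^{-1}\cdot\textrm{id}+A\,\textrm{id}-A^{-1}(\ldots))$ — more precisely, use the Hecke/TL relation $U_i^2=dU_i$ with $d=-A^2-A^{-2}$ to verify directly that $A\cdot\textrm{id}+A^{-1}U_i$ is invertible with inverse $A^{-1}\cdot\textrm{id}+A\,\textrm{id}+\ldots$; in fact the clean way is to observe that the image satisfies the quadratic \eqref{heckerelationB}, $\sigma_i^2=(A-A^{-3})\sigma_i+A^{-2}$, whose constant term $A^{-2}$ is a unit, so $\sigma_i$ is invertible in $\TL_n(A)$. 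Step (ii): far-commutativity for $|i-j|\ge 2$ is immediate from \eqref{farcommunitivityTL}, since the images are polynomials in $U_i$ and $U_j$ which commute. The braid relation is the substantive computation: expand $\sigma_i\sigma_{i+1}\sigma_i$ and $\sigma_{i+1}\sigma_i\sigma_{i+1}$ as $(A\cdot\textrm{id}+A^{-1}U_i)(A\cdot\textrm{id}+A^{-1}U_{i+1})(A\cdot\textrm{id}+A^{-1}U_i)$ etc., collect terms, and use $U_i^2=dU_i$ together with the TL relation $U_iU_{i\pm1}U_i=U_i$ to see that both sides reduce to the same element. The paper has essentially set this up already: the remarks following \eqref{heckerelationTL} note that \eqref{braidrelationTL} is the braid relation \eqref{braidrelationH} rewritten, so one only needs to track how the $U$-relations conspire; I would verify the difference of the two sides vanishes term by term in the monomial basis $\{\textrm{id}, U_i, U_{i+1}, U_iU_{i+1}, U_{i+1}U_i, U_iU_{i+1}U_i\}$.

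**Surjectivity.** Once the homomorphism is defined, surjectivity is easy: each $U_i$ lies in the image because $U_i = A(\sigma_i - A\cdot\textrm{id}) = A\,\langle\sigma_i\rangle - A^2\cdot\textrm{id}$, and $\textrm{id}$ is the image of the identity braid; since $\TL_n(A)$ is generated as a $k$-algebra by $1, U_1,\dots,U_{n-1}$, the image is all of $\TL_n(A)$.

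**Main obstacle.** The only place where anything could go wrong is the braid-relation check in Step (ii) — it is a genuine (if short) computation, and the coefficient bookkeeping with the Kauffman normalization (the $A$ versus $A^{-1}$ asymmetry, as opposed to a symmetric $A^{\pm 1/2}$ convention) is where sign or power-of-$A$ errors would creep in. Everything else is formal. I expect no conceptual difficulty, and indeed this proposition is really just repackaging the algebra identities \eqref{farcommunitivityTL}–\eqref{heckerelationTL} that the text has already assembled; the content is that the Kauffman resolution is the right change of generators making the braid group surject onto $\TL_n$.
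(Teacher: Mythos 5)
Your proposal is correct and is precisely the "straightforward computation" the paper invokes without writing out: define the map on the braid generators, verify invertibility of the images (your quadratic-with-unit-constant-term observation is fine, as is the explicit inverse $A^{-1}\cdot\mathrm{id}+A\,U_i$), check far-commutativity and the braid relation using $U_i^2=dU_i$ and $U_iU_{i\pm1}U_i=U_i$, and get surjectivity from $U_i=A\langle\sigma_i\rangle-A^2\cdot\mathrm{id}$. No gap; this matches the paper's (omitted) argument.
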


The proof is a straightforward computation.

When $A$ is generic,  the TL algebras $\TL_n(A)$ are semi-simple,
hence $\TL_n(A)\cong \oplus_i \tn{Mat}_{n_i}(\C(A))$, where
$\tn{Mat}_{n_i}$ are $n_i\times n_i$ matrices over $\C(A)$ for
some $n_i$'s.

The generic Jones representation of the braid groups $B_n$ is
defined as follows:

\begin{definition}
By the decomposition $\tn{TL}_n(A)\cong \oplus_i
\tn{Mat}_{n_i}(\C(A))$, each braid $\sigma \in B_n$ is mapped to a
direct sum of matrices under the Kauffman bracket.  It follows
from Prop. \ref{KBalgebra} that the image matrix of any braid is
invertible and the map is a group homomorphism when restricted to
$B_n$.
\end{definition}

It is an open question whether or not the generic Jones
representation is faithful, i.e., are there non-trivial braids
which are mapped to the identity matrix?

\subsection{Unitary Jones representations}

The TL algebras $\TL_n(A)$ have a beautiful picture description by
L.~Kauffman, inspired by R.~Penrose's spin-networks, as follows:
fix a rectangle $\Rr$ in the complex plane with $n$ points at both
the top and the bottom of $\Rr$ (see Fig.\ref{TLgenerators}),
$\TL_n(A)$ is spanned formally as a vector space over $\C(A)$ by
embedded curves in the interior of $\Rr$ consisting of $n$
disjoint arcs connecting the $2n$ boundary points of $\Rr$ and any
number of simple closed loops.  Such an embedding will be called a
diagram or a multi-curve in physical language, and a linear
combination of diagrams will be called a formal diagram. Two
diagrams that are isotopic relative to boundary points represent
the same vector in $\TL_n(A)$. To define the algebra structure, we
introduce a multiplication: vertical stacking from bottom to top
of diagrams and extending bilinearly to formal diagrams;
furthermore, deleting a closed loop must be compensated for by
multiplication by $d=-A^2-A^{-2}$. Isotopy and the deletion rule
of a closed trivial loop together will be called $\lq\lq$d-isotopy".

\begin{figure}[tbh]
\centering
\includegraphics[width=3.45in]{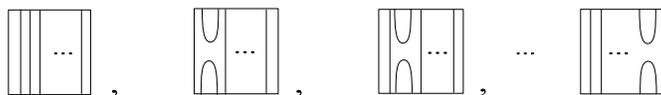}
\caption{Generators of TL}\label{TLgenerators}
\end{figure}

For our application, the variable $A$ will be evaluated at a
non-zero complex number.  We will see later that when
$d=-A^2-A^{-2}$ is not a root of a Chebyshev polynomial
$\Delta_i$, $\TL_n(A)$ is semi-simple over $\C$, therefore,
isomorphic to a matrix algebra. But when $d$ is a root of some
Chebyshev polynomial, $\TL_n(A)$ is in general not semi-simple.
Jones discovered a semi-simple quotient by introducing local
relations, called the Jones-Wenzl projectors
\cite{jones83rep}\cite{wenzlinduction}\cite{kauffmanlins}.
Jones-Wenzl projectors have certain rigidity. Represented by
formal diagrams in TL algebras, Jones-Wenzl projectors make it
possible to describe two families of TQFTs labelled by integers.
Conventionally the integer is either $r\geq 3$ or $k=r-2\geq 1$.
The integer $r$ is related to the order of $A$, and $k$ is the
level related to the $SU(2)$-Witten-Chern-Simons theory.  One
family is related to the $SU(2)_k$-Witten-Reshetikhin-Turaev (WRT)
TQFTs, and will be called the Jones-Kauffman TQFTs. Although
Jones-Kauffman TQFTs are commonly stated as the same as WRT TQFTs,
they are really not. The other family is related to the quantum
double of Jones-Kauffman TQFTs, which are of the Turaev-Viro type.
Those doubled TQFTs, labelled by a level $k\geq 1$, are among the
easiest in a sense, and will be called diagram TQFTs. The level
$k=1$ diagram TQFT for closed surfaces is the group algebras of
$\Z_2$-homology of surfaces. Therefore, higher level diagram TQFTs
can be thought as quantum generalizations of the $\Z_2$-homology,
and the Jones-Wenzl projectors as the generalizations of the
homologous relation of curves in Figure \ref{homology}.

\begin{figure}\label{homologys}
\centering
\includegraphics[scale=1]{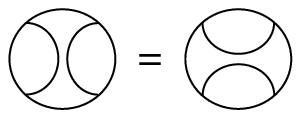}
\caption{$Z_2$ homology}\label{homology}
\end{figure}

The loop values $d=-A^2-A^{-2}$ play fundamental roles in the
study of Temperley-Lieb-Jones theories, in particular the picture
version of $\TL_n(A)$ can be defined over $\C(d)$, so we will also
use the notation $\TL_n(d)$. In the following, we focus the
discussion on $d$, though for full TQFTs or the discussion of
braids in $\TL_n(A)$, we need $A$'s.
 Essential to the proof and to the understanding of the exceptional
values of $d$ is the trace tr: TL$_{n}(d) \la \C$ defined by
Fig.\ref{markovtrace}. This Markov trace is defined on diagrams by
(and then extended linearly) connecting the endpoints at the top
to the endpoints at the bottom of the rectangle by $n$ non
crossing arcs in the complement of the rectangle, counting the
number $\#$ of closed loops (deleting the rectangle), and then
forming $d^{\#}$.

\begin{figure}[tbh] \label{markovtraces}
\centering
\includegraphics[width=3.45in]{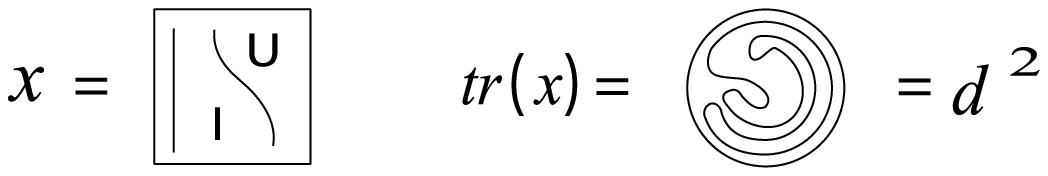}
\caption{Markov Trace} \label{markovtrace}
\end{figure}

The Markov trace $(x, y)\rightarrowtail \tn{tr}(\ol{x}y)$ extends
to a sesquilinear pairing on $\TL_n(d)$, where bar (diagram) is
reflection in a horizontal middle-line and bar(coefficient) is
complex conjugation.

Define the n$^{\tn{th}}$ Chebyshev polynomial $\tr_n (x)$
inductively by $\tr_0 =1, \tr_1 =x$, and $\tr_{n+1} (x) = x \tr_n
(x) - \tr_{n-1}(x)$.  Let $c_n$ be the Catalan number
$c_n=\frac{1}{n+1}{{2n}\choose{n}}$.  There are $c_n$ different
diagrams $\{D_i\}$ consisting of $n$ disjoint arcs up to isotopy
in the rectangle $\Rr$ to connect the $2n$ boundary points of
$\Rr$. These $c_n$ diagrams generate $\TL_n(d)$ as a vector space.
Let $M_{c_n\times c_n}=(m_{ij})$ be the matrix of the Markov trace
Hermitian pairing in a certain order of $\{D_i\}$, i.e.
$m_{ij}=\tn{tr}(\ol{D_i}D_j)$, then we have:

\begin{equation}\label{meanderformula}
\tn{Det}(M_{c_n\times c_n})=\prod_{i=1}^n{\Delta_i(d)}^{a_{n,i}},
\end{equation}

where
$a_{n,i}={{2n}\choose{n-i-2}}+{{2n}\choose{n-i}}-2{{2n}\choose{n-i-1}}$.

This is derived in \cite{meander}.

As a quick consequence of this formula, we have:

\begin{lemma}
 The dimension of $\TL_n(d)$ as a vector space over $\C(d)$
is $c_n$ if $d$ is not a root of the Chebyshev polynomials
$\Delta_i,1\leq i \leq n$, where
$c_n=\frac{1}{n+1}{{2n}\choose{n}}$.
\end{lemma}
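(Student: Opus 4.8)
The plan is to read off the dimension count from the meander determinant formula \eqref{meanderformula}. We are told that the $c_n$ distinct non-crossing diagrams $\{D_i\}$ connecting the $2n$ boundary points of $\Rr$ span $\TL_n(d)$ as a $\C(d)$-vector space, so it suffices to prove that these $c_n$ diagrams are linearly independent over $\C(d)$ whenever $d$ is not a root of any $\Delta_i$ with $1\le i\le n$. First I would observe that the $\{D_i\}$ are linearly independent precisely when the Gram matrix $M_{c_n\times c_n}=(m_{ij})$ of the Markov trace Hermitian pairing, $m_{ij}=\tn{tr}(\ol{D_i}D_j)$, is nonsingular, i.e. when $\tn{Det}(M_{c_n\times c_n})\ne 0$ in $\C(d)$.

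Next I would invoke \eqref{meanderformula}, which expresses $\tn{Det}(M_{c_n\times c_n})$ as the product $\prod_{i=1}^n \Delta_i(d)^{a_{n,i}}$. A product of powers of polynomials in $d$ is nonzero at a given value of $d$ if and only if none of the factors appearing with positive exponent vanishes there; so the determinant is nonzero exactly when $\Delta_i(d)\ne 0$ for every $i$ in the range $1\le i\le n$ for which $a_{n,i}>0$. Since the hypothesis says $d$ is not a root of any $\Delta_i$ for $1\le i\le n$, all of these factors are nonzero, hence $\tn{Det}(M_{c_n\times c_n})\ne 0$, the pairing is nondegenerate, and the $\{D_i\}$ are linearly independent. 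Combined with the spanning statement, this gives $\dim_{\C(d)}\TL_n(d)=c_n$.

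The main obstacle — really the only nontrivial input — is the meander determinant formula \eqref{meanderformula} itself, which is quoted from \cite{meander} and which I would take as given. One subtlety worth addressing explicitly is the direction of the argument: nonvanishing of the Gram determinant shows the spanning set is a basis, but one should note that a priori the $a_{n,i}$ could be negative for some $i$, in which case $\Delta_i(d)$ would appear in a denominator; I would remark that since $\tn{Det}(M_{c_n\times c_n})$ is manifestly a polynomial in $d$ (each $m_{ij}$ is $d^{\#}$ for a nonnegative integer $\#$), the right-hand side of \eqref{meanderformula} must in fact be a polynomial, so only the factors with $a_{n,i}\ge 0$ matter, and the conclusion stands. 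No genericity beyond avoiding the stated finite set of Chebyshev roots is needed.
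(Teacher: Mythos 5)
Your proof is correct and follows essentially the same route as the paper: the $c_n$ non-crossing diagrams span by construction, and formula \eqref{meanderformula} shows the Gram determinant of the Markov trace pairing is nonzero when $d$ avoids the roots of $\Delta_i$, $1\le i\le n$, so they are linearly independent. Your extra remark disposing of possibly negative exponents $a_{n,i}$ (since the determinant is manifestly a polynomial in $d$) is a harmless refinement of the same argument.
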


\begin{proof}
By the formula \ref{meanderformula}, if $d$ is not a root of
$\Delta_i, 1\leq i \leq n$, then $\{D_i\}$ are linearly
independent. As a remark, since each $D_i$ is a monomial of
$U_i$'s, it follows that $\{U_i\}$ generate $\TL_n(d)$ as an
algebra.

\end{proof}

Next we show the existence and uniqueness of the Jones-Wenzl
projectors.

\begin{theorem}\label{joneswenzlprojectors}
For $d \in \C$ that is not a root of $\tr_k$ for all $k<n$, then
$\tn{TL}_{n}(d)$ contains a unique element $p_n$ characterized by:
$p_n^2 = p_n\neq 0$ and $U_i p_n = p_n U_i=0$ for all $1\leq i\leq
n-1$. Furthermore $p_n$ can be written as $p_n = 1 +U$ where
$U=\sum c_j h_j$, $h_j$ a product of $U_i^{'}$s, $1 \leq i \leq
n-1$ and $c_j\in \C$.
\end{theorem}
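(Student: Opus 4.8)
The standard approach is an inductive construction of $p_n$ from $p_{n-1}$, known as the Wenzl recursion. The plan is to set $p_1 = 1 \in \TL_1(d)$ and, assuming $p_{n-1}$ has been constructed inside $\TL_{n-1}(d) \subset \TL_n(d)$ (the inclusion adding a single through-strand on the right), to define
\begin{equation}\label{wenzlrec}
p_n = p_{n-1} - \frac{\tr_{n-1}(d)}{\tr_n(d)}\, p_{n-1} U_{n-1} p_{n-1}.
\end{equation}
Here $p_{n-1}$ on the right-hand side is understood as its image in $\TL_n(d)$, and the ratio of Chebyshev polynomials is well-defined precisely because $d$ is not a root of $\tr_k$ for any $k<n$ (in particular $\tr_{n-1}(d) \neq 0$, so the recursion also makes sense at the next stage). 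First I would record the normalization facts I need about the Markov trace on the Jones--Wenzl idempotent, namely $\tr(p_{n-1}) = \tr_{n-1}(d)$ in the convention where the empty diagram has trace $1$ (or $d^{n-1}$ in the unnormalized convention — one must fix this and be consistent), together with the "partial trace" identity that closing up the last strand of $p_{n-1} U_{n-1} p_{n-1}$ yields $\frac{\tr_{n-1}(d)}{\tr_{n-2}(d)} p_{n-1}$, or equivalently $\varepsilon_{n-1}(p_{n-1} U_{n-1} p_{n-1}) = \frac{\tr_{n-1}(d)}{\tr_{n-2}(d)} p_{n-1}$ for the conditional expectation $\varepsilon_{n-1}\colon \TL_n \to \TL_{n-1}$.

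**Verification of the idempotent relations.** With \eqref{wenzlrec} in hand, the key steps are: (i) $U_i p_n = p_n U_i = 0$ for $i \le n-2$; (ii) $U_{n-1} p_n = p_n U_{n-1} = 0$; (iii) $p_n^2 = p_n$; (iv) $p_n \neq 0$; (v) the form $p_n = 1 + U$ with $U$ a $\C$-combination of products of the $U_i$. For (i), note $U_i$ with $i \le n-2$ lies in $\TL_{n-1}(d)$ and commutes with $U_{n-1}$ up to the far-commutativity relation only when $|i - (n-1)| \ge 2$; for $i = n-2$ one uses $U_{n-2}U_{n-1}U_{n-2} = U_{n-2}$ together with $U_i p_{n-1} = p_{n-1} U_i = 0$ to kill the correction term, and for $i \le n-3$ it is immediate since $U_i$ commutes with $U_{n-1}$ and annihilates $p_{n-1}$ on both sides. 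For (ii), one computes $p_n U_{n-1}$ using $p_{n-1} U_{n-1} p_{n-1} U_{n-1} = (\text{partial trace applied}) = \frac{\tr_{n-1}(d)}{\tr_{n-2}(d)} p_{n-1} U_{n-1}$... but wait, the precise coefficient that makes this cancel is $\tr_{n-1}(d)/\tr_n(d)$ against $\tr_n(d)/\tr_{n-1}(d)$; I would verify that the Chebyshev recursion $\tr_n(d) = d\,\tr_{n-1}(d) - \tr_{n-2}(d)$ is exactly what reconciles the two. Then (iii) follows from (i), (ii), and $p_{n-1}^2 = p_{n-1}$ by direct expansion of $p_n^2$, every cross term being killed. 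Claim (iv) follows because $p_n = 1 + (\text{terms each containing at least one } U_i)$, and in the basis of reduced diagrams the identity diagram appears with coefficient $1$ and cannot be produced by any nontrivial product of $U_i$'s; this simultaneously establishes (v) by induction, since the correction term in \eqref{wenzlrec} is a $\C$-linear combination (the coefficient is a rational function of $d$ evaluated at a complex number, hence a scalar) of products of $U_i$'s.

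**Uniqueness.** For uniqueness, suppose $q$ is another element with $q^2 = q \neq 0$ and $U_i q = q U_i = 0$ for all $i$. Writing $q = c\cdot 1 + (\text{combination of products of } U_i)$ — which is possible since, by the remark following the dimension lemma, the $U_i$ generate $\TL_n(d)$ as an algebra and the identity together with products of $U_i$'s spans it — one shows $c \neq 0$ (else $q$ lies in the ideal generated by the $U_i$, forcing $q = q\cdot\text{something} = 0$ via the annihilation relations, after a short argument), and then $q p_n = p_n$ and $p_n q = p_n$ by applying the defining relations of each to the other, so $q = q\,p_n^{\phantom{2}} \cdot$... more carefully, $p_n q = p_n$ and $q p_n = q$ give $q = q p_n = p_n$; symmetrically one gets the reverse, hence $q = p_n$. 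I expect the main obstacle to be \textbf{step (ii)}, the vanishing of $U_{n-1} p_n$: it is the one place where the exact value of the coefficient in \eqref{wenzlrec} is forced, and it requires the partial-trace computation $\varepsilon_{n-1}(p_{n-1} U_{n-1} p_{n-1}) = \frac{\tr_{n-1}(d)}{\tr_{n-2}(d)}\, p_{n-1}$, which itself is proved by a secondary induction using the Chebyshev recursion — so the bookkeeping of which trace normalization is in force must be handled with care, and this is precisely where $d$ not being a root of any $\tr_k$, $k < n$, is used (to ensure all the denominators $\tr_k(d)$ are nonzero).
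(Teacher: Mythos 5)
Your overall strategy is the same as the paper's: existence via Wenzl's inductive construction, uniqueness by normalizing $p_n=1+U$ and expanding the product of two candidates from both sides; your uniqueness sketch and the observation that the identity diagram appears with coefficient $1$ (so $p_n\neq 0$) are fine. The genuine problem is the inductive step as you wrote it. The correct recursion is $p_n=(p_{n-1}\otimes 1_1)-\mu_{n-1}(p_{n-1}\otimes 1_1)U_{n-1}(p_{n-1}\otimes 1_1)$ with $\mu_{n-1}=\Delta_{n-2}(d)/\Delta_{n-1}(d)$ (the paper's $\mu_k=\Delta_{k-1}/\Delta_k$), not $\Delta_{n-1}(d)/\Delta_n(d)$. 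With your coefficient the defining property already fails at $n=2$: you get $p_2=1-\frac{d}{d^2-1}U_1$, so $U_1p_2=-\frac{1}{d^2-1}U_1\neq 0$, whereas the correct $p_2=1-\frac{1}{d}U_1$ satisfies $U_1p_2=p_2U_1=0$. The auxiliary identity you plan to use is also misstated: closing the last strand of $(p_{n-1}\otimes 1_1)U_{n-1}(p_{n-1}\otimes 1_1)$ simply splices the two copies of $p_{n-1}$ together and yields $p_{n-1}$, with no scalar, so it cannot pin down any coefficient. The identity that actually drives your step (ii) is $U_{n-1}(p_{n-1}\otimes 1_1)U_{n-1}=\frac{\Delta_{n-1}}{\Delta_{n-2}}(p_{n-2}\otimes 1_2)U_{n-1}$, equivalently that the partial trace of $p_{n-1}$ over its last strand is $\frac{\Delta_{n-1}}{\Delta_{n-2}}p_{n-2}$; it is proved by a secondary induction using $\Delta_{m+1}=d\,\Delta_m-\Delta_{m-1}$, and it is against this factor that $\mu_{n-1}=\frac{\Delta_{n-2}}{\Delta_{n-1}}$ cancels. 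Your coefficient cannot be rescued for any $n$, since $\frac{\Delta_{n-1}}{\Delta_n}\cdot\frac{\Delta_{n-1}}{\Delta_{n-2}}\neq 1$ (indeed $\Delta_{n-1}^2-\Delta_n\Delta_{n-2}=1$).

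There is also a hypothesis mismatch that your version cannot survive: dividing by $\Delta_n(d)$ requires $d$ not to be a root of $\Delta_n$ itself, which the theorem does not assume (it only excludes roots of $\Delta_k$ for $k<n$), and which fails exactly in the paper's key application: for $A$ as in Lemma \ref{jwcases} one has $\Delta_{r-1}(d)=0$ (this is why $\tn{Tr}(p_{r-1})=0$) while $p_{r-1}$ exists and is the central object of the whole construction, so your recursion could not even produce it. The correct recursion divides only by $\Delta_k(d)$ for $k\leq n-1$, which is precisely what the hypothesis guarantees---this is the ``exact nature of the generic restriction on $d$'' the paper's proof alludes to. With the coefficient corrected and your partial-trace claim replaced by the identity above, the rest of your outline (far commutativity for $i\leq n-2$, idempotency by expansion, nonvanishing via the identity diagram, and the two-sided uniqueness argument) goes through and coincides with the paper's proof, which defers exactly these verifications to Kauffman--Lins and Turaev.
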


\begin{proof}
Suppose $p_n$ exists and can be expanded as $p_n =a 1
+U$, then $p_n^2 = p_n (a1 +U) = p_n (a1) = a p_n =a^2 1+a U$, so
$a=1$. Now check uniqueness by supposing $p_n = 1+U$ and $p_n^{'}
=1 +V$ both have the properties above and expand $p_n p_n^{'}$
from both sides:
$$
p_n^{'} = 1\cdot p_n^{'} = (1+ U) p_n^{'} = p_n p_n^{'} = p_n
(1+V) = p_n\cdot 1 =p_n .
$$

The proof is completed by H.~Wenzl's \cite{wenzlinduction}
inductive construction of $p_{n+1}$ from $p_n$ which also reveals
the exact nature of the $\lq\lq$generic" restriction on $d$.  The
induction is given in Figure \ref{jwinduction}, where
$\mu_n=\frac{\Delta_{n-1}(d)}{\Delta_n(d)}$.

\begin{figure}\label{jwinductions}
\centering
\includegraphics[scale=1]{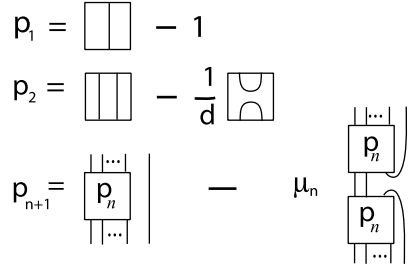}
\caption{Jones Wenzl projectors}\label{jwinduction}
\end{figure}

Tracing the inductive definition of $p_{n+1}$ yields
$\tn{tr}(p_{n+1}) =d \,\,\tn{tr}(p_n ) - \f{\tr_{n-1}}{\tr_n}
\tn{tr}(p_n )$ showing $\tn{tr}(p_n)$ satisfies the Chebyshev
recursion (and the initial data). Thus $\tn{tr}(p_n) = \tr_n$.

It is not difficult to check that $U_i p_n = p_n U_i =0$, $i<n$.
(The most interesting case is $U_{n-1}$.) Consult
\cite{kauffmanlins} or \cite{turaev} for details.
\end{proof}

The idempotent $p_n$ is called the {\it Jones-Wenzl idempotent},
or the Jones-Wenzl projector, and plays an indispensable role in
the pictorial approach to TQFTs.

\begin{theorem}
(1): If $d\in \C$ is not a root of Chebyshev polynomials $\Delta_i,
1\leq i \leq n$, then the TL algebra $\tn{TL}_n(d)$ is semisimple.

(2): Fixing an integer $r\geq 3$, a non-zero number $d$ is a root
of $\tr_i$, $i<r$ if and only if $d=-A^2-A^{-2}$ for some $A$ such
that $A^{4l} =1,l\leq r$.  If $d=-A^2 -A^{-2}$ for a primitive
$4r$-th root of unity $A$ for some $r\geq 3$ or a primitive $2r$th
or $r$th for $r$ odd, then the TL algebras $\{\tn{TL}_n(d)\}$
modulo
 the Jones-Wenzl idempotent $p_{r-1}$ are semi-simple.
\end{theorem}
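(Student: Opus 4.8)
The plan is to reduce both statements to the determinant formula \eqref{meanderformula} together with the structural information about the Jones–Wenzl idempotents from Theorem~\ref{joneswenzlprojectors}. For part (1), I would argue that when $d$ is not a root of $\Delta_i$ for $1\le i\le n$, the Markov-trace Hermitian pairing $M_{c_n\times c_n}$ is nondegenerate, since by \eqref{meanderformula} its determinant is a nonzero product of powers of the $\Delta_i(d)$. Nondegeneracy of the trace form on a finite-dimensional algebra over a field implies semisimplicity: the radical of the algebra lies in the radical of the trace form. More precisely, if $x$ is in the Jacobson radical then $\overline{x}y$ is nilpotent for all $y$, hence $\tr(\overline{x}y)=0$ (the Markov trace vanishes on nilpotents because it is a trace and nilpotents have zero trace in every representation), so $x=0$. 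Thus $\TL_n(d)$ has trivial radical and is semisimple. I should be mildly careful that the Markov trace is \emph{faithful} in the sense needed, i.e.\ that ``$\tr(\overline{x}y)=0$ for all $y$'' genuinely forces $x=0$ — but this is exactly the nondegeneracy of $M_{c_n\times c_n}$, which \eqref{meanderformula} supplies.

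For part (2), the first assertion is a direct computation with the Chebyshev polynomials: writing $d=-A^2-A^{-2}$ and using the standard substitution $A^2=e^{i\theta}$ one finds $\Delta_n(d)=\frac{\sin((n+1)\theta)}{\sin\theta}$ up to sign conventions, so $\Delta_i(d)=0$ for some $i<r$ exactly when $\theta$ is a rational multiple of $\pi$ with small denominator, which translates into $A^{4\ell}=1$ for some $\ell\le r$. I would just carry out this trigonometric identification and match the roots. The substance of part~(2) is the second assertion: for $A$ a primitive $4r$-th root of unity (or primitive $2r$-th or $r$-th when $r$ is odd), the quotient $\TL_n(d)/(p_{r-1})$ is semisimple. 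Here the idea is that $p_{r-1}$ exists by Theorem~\ref{joneswenzlprojectors} (the generic restriction there is precisely that $d$ is not a root of $\tr_k$ for $k<r-1$, which holds at these roots of unity), and $\tr(p_{r-1})=\Delta_{r-1}(d)=0$ at exactly these values of $A$; this vanishing is what makes $p_{r-1}$ generate a two-sided ideal that one can quotient by while killing the degeneracy of the trace form. I would then run the same trace-form argument as in part~(1), but now on the quotient $\TL_n(d)/(p_{r-1})$: its dimension is the ``truncated'' count of admissible diagrams (those whose through-strand number stays below $r-1$), the induced bilinear form is the restriction of the Markov form to the span of the surviving $D_i$, and by \eqref{meanderformula} together with the observation that the vanishing factors $\Delta_i(d)$ with $i\ge r-1$ correspond precisely to the cells killed by $p_{r-1}$, this restricted form is nondegenerate. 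Semisimplicity then follows as before.

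The main obstacle I expect is the bookkeeping in part~(2): making rigorous the claim that quotienting by the ideal generated by $p_{r-1}$ removes exactly the ``bad'' part of the algebra where the trace form degenerates. This requires knowing the cell/matrix-block structure of $\TL_n(d)$ — that its simple components (over a splitting field) are indexed by through-strand numbers $j$ with $j\equiv n\pmod 2$, $0\le j\le n$, that $p_{r-1}$ is a sum of minimal idempotents in the blocks with $j\ge r-1$, and that the exponents $a_{n,i}$ in \eqref{meanderformula} organize so that $\Delta_i(d)=0$ kills precisely those blocks. One clean way to package this is to use the path/Bratteli-diagram description of $\TL_n$ and Wenzl's restriction: an explicit basis of $\TL_n(d)$ adapted to the chain $\TL_1\subset\TL_2\subset\cdots$ diagonalizes the trace form into blocks whose determinants are monomials in the $\Delta_i(d)$, and the block for through-strand number $j$ contributes a factor with $\Delta_j(d)$; then $\Delta_{r-1}(d)=0$ is exactly the statement that the $j=r-1$ block (and, by the recursion and the root-of-unity symmetry $\Delta_{r-1+s}\equiv \pm\Delta_{r-1-s-2}$, all higher blocks) collapses, and these are the blocks $p_{r-1}$ annihilates. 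Once this correspondence is set up, both parts are two lines of the trace-radical argument; I would cite \cite{wenzlinduction}, \cite{kauffmanlins}, or \cite{turaev} for the detailed cellular/path-basis facts rather than reproving them here.
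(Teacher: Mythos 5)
Your overall strategy is the same as the paper's: use the determinant formula \ref{meanderformula} to get nondegeneracy of the Markov pairing away from the Chebyshev roots, deduce semisimplicity, and in part (2) combine the explicit formula $\Delta_n(d)=(-1)^n\frac{A^{2n+2}-A^{-2n-2}}{A^2-A^{-2}}$ with the identification of the kernel of the pairing with the ideal $\grpgen{p_{r-1}}$; the paper defers that identification to Sections \ref{tldproofsection} and \ref{repoftlj}, much as you defer the cell/Bratteli bookkeeping to \cite{wenzlinduction}, \cite{kauffmanlins}, \cite{turaev}, so on that score the two write-ups are at a comparable level of detail.

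There is, however, a genuine gap at the pivotal step of part (1), which then propagates to part (2) when you ``rerun the same trace-form argument'' on $\TL_n(d)/\grpgen{p_{r-1}}$. Nondegeneracy of a trace form on a finite-dimensional algebra does \emph{not} imply semisimplicity, and your justification --- that the Markov trace vanishes on nilpotents ``because it is a trace and nilpotents have zero trace in every representation'' --- is false as a general principle and circular as used here: a trace functional is computed by traces in representations only once you know it is a combination of characters, which at a non-semisimple specialization is exactly the issue. Concretely, $\C[x]/(x^2)$ with the trace functional $\epsilon(a+bx)=b$ (and $\bar{x}=x$) has a nondegenerate associative, even sesquilinear, trace form, is not semisimple, and $\epsilon$ does not kill the nilpotent $x$; so ``Jacobson radical $\subseteq$ radical of the form'' needs a genuine input. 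Two honest repairs: (i) positive definiteness of the Hermitian pairing, which is the actual hypothesis of the paper's Lemma \ref{starsemi} (the paper's own application is terse here, asserting only nondegeneracy), but this is unavailable for an arbitrary $d\in\C$ avoiding the Chebyshev roots; or (ii) a polynomiality/character argument specific to $\TL$: the Markov trace satisfies $\tr=\sum_j \Delta_j(d)\,\chi_j$ with $\chi_j$ the characters of the through-strand (cell) modules --- a polynomial identity in $d$ checked at generic $d$ --- so any element of the Jacobson radical, acting nilpotently on every cell module, is annihilated by $\tr(\bar{x}\,\cdot\,)$, and only then does nondegeneracy of $M_{c_n\times c_n}$ force the radical to vanish. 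You invoke exactly this cellular machinery in part (2) for the block bookkeeping, but as written part (1) rests on the false general implication; inserting (i) or (ii) closes the argument and brings it in line with what the paper's Lemma \ref{starsemi} is actually designed to do.
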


\begin{proof}

(1):  $\TL_n(d)$ is a $*$-algebra.  By formula
\ref{meanderformula}, the determinant of the Markov trace pairing
is $\prod_{i=1}^n{\Delta_i(d)}^{a_{n,i}}$, hence the $*$-structure
is non-degenerate.  By Lemma \ref{starsemi}, $\TL_n(d)$ is
semi-simple.

(2):  The first part follows from $\Delta_n(d)=(-1)^n
\frac{A^{2n+2}-A^{-2n-2}}{A^2-A^{-2}}.$  In Section
\ref{repoftlj}, we will show that the kernel of the Markov trace
Hermitian pairing is generated by $p_{r-1}$, and the second part
follows.

\end{proof}

The semi-simple quotients of $\TL_n(d)$ in the above theorem will
be called the Temperley-Lieb-Jones (TLJ) algebras or just Jones
algebras, denoted by $\TLJ_n(d)$. The TLJ algebras are semi-simple
algebras over $\C$, therefore it is isomorphic to a direct sum of
matrix algebras, i.e.,
\begin{equation}\label{jonesdecom}
\TLJ_n(d)\cong \oplus_i
Mat_{n_i}(\C).
\end{equation}
As in the generic Jones representation case, the Kauffman bracket
followed by the decomposition yields a representation of the braid
groups.

\begin{prop}

(1): When the Markov trace Hermitian paring is $\pm$-definite, then
Jones representations are unitary, but reducible.  When $A=\pm i
e^{\pm \frac{2\pi i}{4r}}$, the Markov trace Hermitian pairing is
$+$-definite for all $n$'s.

(2): Given a braid $\sigma\in B_n$, the Markov trace is a weighted
trace on the matrix decomposition \ref{jonesdecom}, and when
multiplied by $(-A)^{-3\sigma}$ results in the Jones polynomial of
the braid closure of $\sigma$ evaluated at $q=A^4$.

\end{prop}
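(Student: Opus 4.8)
The plan is to separate part~(1) into a formal unitarity statement, valid whenever $|A|=1$, together with a positivity computation at the listed values of $A$, and to deduce part~(2) from the structure of traces on multi-matrix algebras combined with the classical bracket--Jones-polynomial dictionary. For the formal half of~(1), note that $B_n$ maps into $\TLJ_n(d)$ by $\sigma_i=A\cdot\mathrm{id}+A^{-1}U_i$, and that for $|A|=1$ the loop value $d=-A^2-A^{-2}$ is real, so the Markov pairing $(x,y)\mapsto\tn{tr}(\overline x\, y)$ is Hermitian. The diagram $U_i$ is fixed by the horizontal reflection defining $\overline{(\cdot)}$ and satisfies $U_i^2=dU_i$ by \eqref{heckerelationTL}; since $\overline A=A^{-1}$ one gets $\sigma_i^{*}=\overline{\sigma_i}=A^{-1}\mathrm{id}+AU_i$, hence $\sigma_i\sigma_i^{*}=\mathrm{id}+(A^2+A^{-2}+d)U_i=\mathrm{id}$ and likewise $\sigma_i^{*}\sigma_i=\mathrm{id}$: every braid generator is $*$-unitary. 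Consequently, whenever the Markov pairing descends to a definite form on $\TLJ_n(d)$---replacing it by its negative if necessary, we may assume $+$-definite---that form makes $\TLJ_n(d)$ a finite-dimensional Hilbert space preserved by $B_n$, so the Jones representation is unitary; it is reducible, being the direct sum of the block representations of \eqref{jonesdecom}.

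To pin down definiteness, recall (Section~\ref{repoftlj}) that the Markov pairing descends to a nondegenerate Hermitian form on the semisimple algebra $\TLJ_n(d)$ (its kernel on $\TL_n(d)$ being the ideal generated by $p_{r-1}$). Wenzl's inductive analysis of the Jones--Wenzl projectors shows that, after a triangular change of basis, the Gram matrix of this form on each simple $\TLJ_n(d)$-module is diagonal, with each diagonal entry a product of terms $\Delta_j(d)^{\pm1}$ for $0\le j\le r-2$ (the prototype is $\tn{tr}(p_n)=\Delta_n(d)$, computed in the proof of Theorem~\ref{joneswenzlprojectors}, and ratios $\Delta_{j-1}(d)/\Delta_j(d)$ enter through the recursion defining the $p_j$). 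Hence the Markov pairing is $+$-definite for every $n$ precisely when $\Delta_j(d)>0$ for all $0\le j\le r-2$. At $A=\pm i e^{\pm 2\pi i/4r}$ one has $d=-A^2-A^{-2}=2\cos(\pi/r)$, so $\Delta_j(d)=\sin\!\big((j+1)\pi/r\big)/\sin(\pi/r)>0$ for all such $j$, which proves the second assertion of~(1). (Consistently, the meander determinant \eqref{meanderformula} for $\TL_n(d)$ involves only the positive factors $\Delta_j(d)$, $j\le r-2$, together with $\Delta_{r-1}(d)=0$, which records the divided-out kernel.)

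For part~(2), the statement that the Markov trace is a weighted trace is the elementary fact that the space of traces on $\bigoplus_i\mathrm{Mat}_{n_i}(\C)$ is spanned by the block traces $\mathrm{Tr}_i$: since $\tn{tr}(xy)=\tn{tr}(yx)$ (evident from closing diagrams up in the complement of the rectangle), $\tn{tr}=\sum_i w_i\,\mathrm{Tr}_i$ with $w_i$ the trace of a minimal idempotent in block $i$ (equal, as above, to $\Delta_{k_i}(d)$). For the polynomial identity, Proposition~\ref{KBalgebra} makes $\langle\sigma\rangle$ a well-defined element of $\TLJ_n(d)$, and by the definition of the Markov trace the scalar $\tn{tr}(\langle\sigma\rangle)$ is exactly the Kauffman bracket of the braid closure $\widehat{\sigma}$. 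Since the writhe of $\widehat{\sigma}$ equals the exponent sum $e(\sigma)$ of $\sigma$ in the generators, the normalization that turns the Kauffman bracket into the Jones polynomial, $V_L=(-A)^{-3\,w(L)}\langle L\rangle$ at $q=A^4$, shows that $(-A)^{-3e(\sigma)}\tn{tr}(\langle\sigma\rangle)$ is the Jones polynomial of $\widehat{\sigma}$, up to the customary overall power of $d$ coming from the normalization of the unknot.

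The main obstacle is the positivity step of part~(1): Wenzl's triangular factorization of the Markov form on each simple $\TLJ_n(d)$-module, reducing $+$-definiteness to the positivity of $\Delta_0(d),\dots,\Delta_{r-2}(d)$, together with the evaluation $\Delta_j(2\cos(\pi/r))=\sin((j+1)\pi/r)/\sin(\pi/r)>0$. This is in effect Jones' index-rigidity computation, and is cleanest run through the recursion for the $p_j$ (whose traces already appear in the proof of Theorem~\ref{joneswenzlprojectors}) rather than by a direct diagonalization of the $c_n\times c_n$ meander Gram matrix. Everything else---the $*$-unitarity of the $\sigma_i$, the weighted-trace description of $\tn{tr}$, and the bracket/Jones dictionary---is routine.
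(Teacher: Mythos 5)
Your proposal is essentially correct, but it takes a different route from the paper, which at this point does not argue in place at all: the paper defers unitarity to Section \ref{classificationunitary}, notes that reducibility is immediate from the decomposition \ref{jonesdecom}, and disposes of the Jones-polynomial identification by appealing to invariance under Reidemeister moves or Markov's theorem, citing \cite{kauffmanlins}. You instead give a direct algebraic argument: the observation that $\bar{\sigma_i}\sigma_i=\mathrm{id}+(A^2+A^{-2}+d)U_i=\mathrm{id}$ whenever $|A|=1$ makes each braid generator a $*$-unitary for the Markov sesquilinear form, so unitarity of the Jones representation reduces to definiteness of that form; you then get definiteness at $A=\pm ie^{\pm 2\pi i/4r}$ from $d=2\cos(\pi/r)$ and the positivity of $\Delta_j(d)=\sin((j+1)\pi/r)/\sin(\pi/r)$ for $0\le j\le r-2$ via Wenzl's orthogonalization, and you derive (2) from the fact that traces on $\bigoplus_i\mathrm{Mat}_{n_i}(\C)$ are weighted sums of block traces together with the standard bracket-to-Jones normalization $(-A)^{-3w}\langle\,\cdot\,\rangle$ at $q=A^4$. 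What your route buys is a self-contained proof that isolates exactly where positivity enters, rather than leaning on the later classification section and on \cite{kauffmanlins}; what the paper's route buys is economy, since the Gram-matrix/positivity analysis is genuinely the heavy step and is handled once, later, with the standard references. Two small caveats: your positivity step is only sketched (the claim that the Gram matrix diagonalizes with entries that are products of $\Delta_j(d)^{\pm 1}$ is the content of Wenzl's theorem and deserves either a proof or an explicit citation), and in (2) the unknot normalization (an overall power of $d$, i.e.\ whether $\langle\text{unknot}\rangle=1$ or $d$) should be fixed once and for all rather than left as ``customary''; with those points made precise your argument is complete.
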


Unitary will be established in Section
\ref{classificationunitary}, and reducibility follows from the
decomposition \ref{jonesdecom}. That the Markov trace, normalized
by the framing-dependence factor, is the Jones polynomial follows
from direct verification of invariance under Reidermeister moves
or Markov's theorem (see e.g. \cite{kauffmanlins}).

\subsection{Uniqueness of Jones-Wenzl projectors}\label{tldproofsection}
Fix an $r\geq 3$ and a primitive $4r$th root of unity or a
primitive $2r$th or $r$th root of unity for $r$ odd, and
$d=-A^2-A^{-2}$. In this section, we prove that $\TL_d$ has a
unique ideal generated by $p_{r-1}$. When $A$ is a primitive
$4r$th root of unity, this is proved in the Appendix of
\cite{freedmanmag} by F.~ Goodman and H.~ Wenzl. Our elementary
argument works for all $A$ as above.

Notice that $\TL_d$ admits the structure of a (strict) monoidal
category, with the tensor product given by horizontal
``stacking'', e.g., juxtaposition of diagrams. This tensor product
(denoted $\otimes$) is clearly associative, and $1_0$, the
identity on $0$ vertices or the empty object, serves as a unit.
The tensor product and the original algebra product on $\TL_d$
satisfy the interchange law,
$ (f\otimes g)\cdot(f'\otimes g') = (f\cdot f')\otimes(g\cdot g'), $
whenever the required vertical composites are defined.

We may use this notation to recursively define the projectors
$p_k$:
$ p_{k+1} = p_k\otimes 1_1 - \mu_k(p_k\otimes 1_1)U_k^{k+1}
    (p_k\otimes 1_1).$
We define $p_0=1_0$, $p_1=1_1$ and
$\mu_k=\frac{\Delta_{k-1}}{\Delta_k}$. Using this we can prove a
sort of ``decomposition theorem'' for projectors:
\begin{prop}
$p_k=\left(\bigotimes_{i=1}^{\lfloor\frac{k}{r}\rfloor}p_r\right)
\otimes p_{(k\bmod r)}$.
\end{prop}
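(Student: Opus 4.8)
The plan is to prove the identity by induction on $k$, exploiting the fact that $p_r$ is the Jones-Wenzl projector at the critical level and that $U_i p_r = p_r U_i = 0$ for all $i \le r-1$. The base cases $0 \le k \le r-1$ are immediate, since then $\lfloor k/r \rfloor = 0$ and $k \bmod r = k$, so the claimed formula reads $p_k = p_k$. The case $k = r$ is also trivial. So fix $k \ge r$ and suppose the formula holds for $k$; I want to deduce it for $k+1$. Write $k+1 = qr + s$ with $0 \le s \le r-1$; the two cases $s \ge 1$ (so that $k = qr + (s-1)$ with $s-1 \ge 0$, same quotient) and $s = 0$ (so that $k = (q-1)r + (r-1)$) should be handled, but in fact the recursion $p_{k+1} = (p_k \otimes 1_1) - \mu_k (p_k \otimes 1_1) U_k^{k+1}(p_k \otimes 1_1)$ treats them uniformly once we track where the ``active'' part of $p_k$ sits.

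The key observation is locality of the tensor structure: by the interchange law, if $p_k = P' \otimes p_m$ where $P' = \bigotimes_{i=1}^{\lfloor k/r\rfloor} p_r$ occupies the first $r\lfloor k/r\rfloor$ strands and $p_m$ occupies strands $r\lfloor k/r\rfloor + 1, \dots, k$ with $m = k \bmod r$, then $p_k \otimes 1_1 = P' \otimes (p_m \otimes 1_1)$, and the generator $U_k^{k+1}$ acts only on strands $k, k+1$, hence commutes past $P'$ entirely. Therefore
\[
p_{k+1} = P' \otimes \Bigl( (p_m \otimes 1_1) - \mu_k (p_m \otimes 1_1) U_m^{m+1} (p_m \otimes 1_1) \Bigr).
\]
Now I must identify the bracketed expression. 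When $m \le r-2$, the index $\mu_k = \Delta_{k-1}/\Delta_k$ needs to be reconciled with $\mu_m = \Delta_{m-1}/\Delta_m$: but since $p_m$ absorbs the $P'$-strands trivially and the relevant Chebyshev recursion for the trace only sees the $p_m$-block, the local recursion on strands $r\lfloor k/r\rfloor+1,\dots,k+1$ is exactly the Wenzl recursion producing $p_{m+1}$, so the bracket equals $p_{m+1}$ and we get $p_{k+1} = P' \otimes p_{m+1}$, which is the claim (since $\lfloor(k+1)/r\rfloor = \lfloor k/r\rfloor$ and $(k+1)\bmod r = m+1$ when $m \le r-2$). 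When $m = r-1$, the bracket is the Wenzl recursion step that would produce $p_r$ — but here we are at the critical value of $d$, so $\mu_{r-1} = \Delta_{r-2}/\Delta_{r-1}$ is still finite (the vanishing Chebyshev value is $\Delta_{r-1} \ne 0$; it is $\Delta_{r}$ or the relevant one that could cause trouble at the \emph{next} step), and the bracket equals $p_r$ on those $r$ strands. Then $p_{k+1} = P' \otimes p_r = \bigotimes_{i=1}^{\lfloor k/r\rfloor + 1} p_r \otimes p_0$, matching the formula since $(k+1) \bmod r = 0$ and $\lfloor(k+1)/r\rfloor = \lfloor k/r\rfloor + 1$.

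The main obstacle I anticipate is the bookkeeping with the coefficients $\mu_k$ versus $\mu_m$ and, relatedly, making sure the Wenzl recursion ``localizes'' correctly — i.e., that $\mu_k = \Delta_{k-1}(d)/\Delta_k(d)$ really does reduce to $\mu_m = \Delta_{m-1}(d)/\Delta_m(d)$ in the relevant block. This requires knowing that $\Delta_k(d) = 0$ for all $k$ of the form $k \equiv r-1 \pmod r$ beyond $r-1$ itself (or rather, understanding exactly which Chebyshev values vanish at $d = -A^2 - A^{-2}$ for $A$ a primitive $4r$th root of unity), so that the recursion $p_{k+1} = (p_k\otimes 1_1) - \mu_k(\cdots)$ is actually well-defined at each step we use — and here the point is that once a full $p_r$ block has formed, the next generator $U$ annihilates it (by the defining property $U_i p_r = 0$), so the problematic $\mu$ with vanishing denominator never actually gets invoked: the recursion naturally ``resets.'' Pinning down this resetting phenomenon cleanly — that $p_{r}\otimes 1_1$ followed by the Wenzl step with the \emph{next} (ill-defined) coefficient is avoided because one instead starts a fresh block $p_1 = 1_1$ — is the subtle part, and it is exactly why the statement is phrased with $\bigotimes p_r$ rather than a single large projector $p_k$, which does not exist at this $d$.
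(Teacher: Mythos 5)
Your overall strategy is the same as the paper's: induction on $k$ through the Wenzl recursion, the interchange law to localize the step in the last block, and a case split on $k\bmod r$. The problem is that your treatment of the two boundary cases --- which are the entire content of the statement --- rests on a misidentification of which Chebyshev value vanishes. With the paper's conventions, $\Delta_n(d)=(-1)^n[n+1]_A$, and for $A$ a primitive $4r$th (or $2r$th/$r$th, $r$ odd) root of unity one has $[m]_A=0$ exactly when $r\mid m$; hence $\Delta_{r-1}(d)=0$, while $\Delta_{r-2}(d)\neq 0$ and $\Delta_r(d)=\pm 1\neq 0$. So $\mu_{r-1}=\Delta_{r-2}/\Delta_{r-1}$ is precisely the ill-defined coefficient, contrary to your claim that it ``is still finite'' with the trouble sitting at $\Delta_r$. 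Your case $m=r-1$ --- the step that closes off a new $p_r$ block --- therefore rests on a false premise; this is also exactly the step where one must say what $p_r$ means at this value of $d$ (the paper's proof simply identifies the bracket with $p_{m+1}$ ``by the recursive definition'', taking the recursion as the definition of $p_k$ for $k\geq r$).

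Your ``reset'' mechanism is also not the right one. The generator appearing just after a full block has formed is $U_{qr}^{qr+1}$, which touches only the last strand of that block and the new strand; it is not among the $U_i$, $i\leq r-1$, killed by $p_r$, and indeed $(p_r\otimes 1_1)U_r^{r+1}(p_r\otimes 1_1)\neq 0$, since its partial closure on the last strand is $p_rp_r=p_r$. What actually makes that step harmless is the numerator: $\mu_{qr}=\Delta_{qr-1}/\Delta_{qr}$ with $\Delta_{qr-1}=\pm\Delta_{r-1}=0$ and $\Delta_{qr}=\pm 1$, so $\mu_{qr}=0$ and the second term simply drops --- this is the paper's case ``$\mu_k=\mu_{k\bmod r}=\mu_0=0$''. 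Relatedly, your uniform localized formula with $U_m^{m+1}$ cannot even be written when $m=0$ (there is no $U_0^1$; there $U_k^{k+1}$ straddles $P'$ and the new strand, so it does not commute past $P'$). Finally, the identity $\mu_k=\mu_{k\bmod r}$ for $1\leq k\bmod r\leq r-2$, which you flag but leave open, is the easy part: $\Delta_{n+r}(d)=\pm\Delta_n(d)$ with the same sign in numerator and denominator (from $[n+r]_A=\pm[n]_A$), so $\mu$ depends only on $k\bmod r$. In short, you follow the paper's route, but the degenerate congruence classes --- the only nontrivial part of the proposition --- are analyzed incorrectly.
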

\begin{proof} We proceed by induction, using the recursive definition
of the Jones-Wenzl projectors. For $p_1$ the statement is trivial.
Assuming the assertion holds for $p_k$, we then have (let
$m=k\bmod r$):
\begin{eqnarray*} p_{k+1} & = &
   p_k\otimes 1_1 - \mu_k(p_k\otimes 1_1)U_k^{k+1}
    (p_k\otimes 1_1) \\
 & = & (\bigotimes_{i=1}^{\lfloor\frac{k}{r}\rfloor}p_r)\otimes
    p_{m}\otimes 1_1 - \mu_k(
    (\bigotimes_{i=1}^{\lfloor\frac{k}{r}\rfloor}p_r)\otimes
    p_{m}\otimes 1_1)U_k^{k+1}((\bigotimes_{i=1}^{\lfloor\frac{k}{r}
    \rfloor}p_r)\otimes p_{m}\otimes 1_1) \\
& & \hspace{-1.5cm}\text{Then, if $m\ne0$,} \\
 & = & (\bigotimes_{i=1}^{\lfloor\frac{k}{r}\rfloor}p_r)\otimes p_{m}
    \otimes 1_1 - \\
 &   & \hspace{1.5cm} \mu_k((\bigotimes_{i=1}^{\lfloor\frac{k}{r}\rfloor}p_r)
    \otimes p_{m}\otimes 1_1)(1_{k-m}\otimes
    U_{m}^{m+1})((\bigotimes_{i=1}^{\lfloor\frac{k}{r}
    \rfloor}p_r)\otimes p_{m}\otimes 1_1) \\
 & = & (\bigotimes_{i=1}^{\lfloor\frac{k}{r}\rfloor}p_r)\otimes\left(
    p_{m}\otimes 1_1 - \mu_{m}(p_{m}\otimes1_1)
    U_{m}^{m+1}(p_{m}\otimes1_1)\right) \\
 &   & \hspace{-1.5cm}\text{(The $\lfloor\frac{k}{r}\rfloor$ copies of $p_r$ can be
    factored out of the second term by $p_rp_r=p_r$.)} \\
 & = & (\bigotimes_{i=1}^{\lfloor\frac{k}{r}\rfloor}p_r)\otimes p_{m+1}
\end{eqnarray*}
If $m<r-1$, then $m+1=(k+1)\bmod r$; if $m=r-1$, then we get one
more copy of $p_r$, as needed. So it remains to consider the case
above where $k\bmod r=0$. But then $\mu_k=\mu_{k\bmod r}=\mu_0=0$,
so that
\[ p_{k+1} =  (\bigotimes_{i=1}^{\frac{k}{r}}p_r)\otimes 1_1 =
    (\bigotimes_{i=1}^{\lfloor\frac{k+1}{r}\rfloor}p_r)\otimes p_1 \]
as desired.
\end{proof}

In analogy with the standard notion from ring theory, an ideal in
$\TL$ is defined to be a class of morphisms which is internally
closed under addition, and externally closed under both the
vertical product (composition) $\cdot$ and the horizontal product
$\otimes$. Given such an ideal $I$, we may form the quotient
category $\TL/I$, which has the same objects as $\TL$, and
hom-sets formed by taking the usual quotient of
$\Hom(\mathbf{m},\mathbf{n})$ by those morphisms in
$I\cap\Hom(\mathbf{m},\mathbf{n})$.

We can prove that $\grpgen{p_{r-1}}$ is an ideal.
\begin{lemma}\label{properideal} The
ideal $\rd=\grpgen{p_{r-1}}$ is a proper ideal.
\end{lemma}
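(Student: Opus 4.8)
The plan is to use the Markov trace $\tn{tr}$ to exhibit one morphism that is \emph{not} contained in $\rd$, the natural candidate being $1_0\in\End(\mathbf 0)$. Since $\End(\mathbf 0)=\C\cdot 1_0$ and $\tn{tr}(1_0)=1$, the ideal $\rd$ is all of $\TL$ precisely when $1_0\in\rd$, so it suffices to show $1_0\notin\rd$. (Using $1_0$ rather than $1_1$ has the small advantage that we do not even need $d\ne 0$.)

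First I would record the shape of a general element of $\rd$: since $\rd$ is the ideal generated by the single morphism $p_{r-1}$ and $\TL_d$ is rigid --- each object being self-dual via a cup and a cap --- every element of $\rd$ is a finite $\C$-linear combination of morphisms of the form $a\cdot(1_j\otimes p_{r-1}\otimes 1_k)\cdot b$ (closure under $\cdot$, $\otimes$ and $+$ of this class is immediate from the interchange law, and it is clearly the smallest ideal containing $p_{r-1}$). So it is enough to show that any such generator which happens to lie in $\End(\mathbf 0)$ has vanishing Markov trace.

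The core computation then runs as follows. Let $z=a\cdot(1_j\otimes p_{r-1}\otimes 1_k)\cdot b\in\End(\mathbf 0)$. By the trace identity $\tn{tr}(fg)=\tn{tr}(gf)$, $\tn{tr}(z)=\tn{tr}\big((1_j\otimes p_{r-1}\otimes 1_k)\cdot\phi\big)$ with $\phi=b\cdot a\in\End(\mathbf j\otimes\mathbf{r-1}\otimes\mathbf k)$. Because the Markov trace is just diagram closure and the central factor $p_{r-1}$ does not meet the $j+k$ outer strands, I would close those strands first, reducing the quantity to $\tn{tr}(p_{r-1}\cdot w)$ for $w\in\TL_{r-1}(d)$ the partial closure of $\phi$. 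Now $p_{r-1}$ annihilates every basis diagram $D\ne 1_{r-1}$ of $\TL_{r-1}(d)$ --- such a $D$ has an innermost cap on two adjacent top endpoints, hence is a left multiple of some $U_i$ (indeed $U_iD=dD$), and $p_{r-1}U_i=0$ by Theorem \ref{joneswenzlprojectors} --- so $p_{r-1}w=c\,p_{r-1}$ where $c$ is the coefficient of $1_{r-1}$ in $w$. Therefore $\tn{tr}(z)=c\,\tn{tr}(p_{r-1})=c\,\Delta_{r-1}(d)$, and $\Delta_{r-1}(d)=(-1)^{r-1}\frac{A^{2r}-A^{-2r}}{A^2-A^{-2}}=0$ for every $A$ allowed in this section (a primitive $4r$th, or $2r$th or $r$th root of unity with $r$ odd), so $\tn{tr}(z)=0$.

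Combining these, a hypothetical expansion $1_0=\sum_\alpha z_\alpha$ with the $z_\alpha\in\rd$ of the above form would give $1=\tn{tr}(1_0)=\sum_\alpha\tn{tr}(z_\alpha)=0$, a contradiction; hence $1_0\notin\rd$ and $\rd$ is proper. Equivalently, one may package the argument by introducing the ``negligible'' class $\mathcal N$ of morphisms $f$ with $\tn{tr}(fg)=0$ for all composable $g$, observing that $p_{r-1}\in\mathcal N$ by the computation and that $\mathcal N$ is a genuine tensor ideal, whence $\rd\subseteq\mathcal N\not\ni 1_0$. The point most deserving of care is exactly this structural one --- that vanishing under the trace survives $\otimes$, i.e. the compatibility of the Markov trace with partial closures. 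It is evident in the planar picture calculus, but it is precisely the mechanism that turns the purely local identity $p_{r-1}U_i=0$ into a global obstruction, so it should be stated explicitly rather than left to the reader.
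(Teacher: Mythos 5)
Your proof is correct and is essentially the paper's argument in a more algebraically packaged form: both reduce an element of $\rd\cap\Hom(\mathbf 0,\mathbf 0)$ to closed networks containing one copy of $p_{r-1}$, kill every term in which a strand returns to the projector on the same side via $p_{r-1}U_i=0$, and kill the remaining term because the strand closure of $p_{r-1}$ is $\Delta_{r-1}(d)=0$. Your explicit normal form $a\cdot(1_j\otimes p_{r-1}\otimes 1_k)\cdot b$, the cyclicity of the Markov trace, and the partial-closure step just make precise what the paper phrases as ``expand all other terms in the network,'' so the two proofs coincide in substance.
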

\begin{proof} It suffices to show that the $\otimes$-identity $1_0$ is
not in the ideal. In order for $1_0$ to be in the ideal, it would
have to be obtained from some closed network (e.g., element of
$\Hom(\mathbf{0},\mathbf{0})$) which contains at least one copy of
$p_{r-1}$. Fixing such a projector, we expand all other terms in
the network (this includes getting rid of closed loops), so that
we are left with a linear combination of closed networks, each
having exactly one $r-1$ strand projector. Now, considering each
term seperately, if there are any strands that leave and re-enter
the projector on the same side, then the network is null (since
$p_{r-1}U_i^{r-1}=0$). So the only remaining terms will be strand
closures of $p_{r-1}$; but by the above, these are null as well,
so that every term in the expansion vanishes.

Since every closed network with $p_{r-1}$ is null, it follows that
$1_0\not\in\rd$, and therefore $\rd$ is a proper ideal of $\TL$.
\end{proof}

In fact, this same ideal is generated by any $p_k$ for $k\ge r-1$;
this is established via a sequence of lemmas.

\begin{lemma}
$\grpgen{p_r}=\grpgen{p_{r-1}}$
\end{lemma}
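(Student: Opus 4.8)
The plan is to establish the two inclusions of ideals separately. The inclusion $\grpgen{p_r}\subseteq\grpgen{p_{r-1}}$ is immediate from the recursive definition: $p_r = p_{r-1}\otimes 1_1 - \mu_{r-1}(p_{r-1}\otimes 1_1)U_{r-1}^{r}(p_{r-1}\otimes 1_1)$, so $p_r$ is a $\C$-linear combination of morphisms each of which is built from $p_{r-1}$ by tensoring and composing with other diagrams; since $\rd = \grpgen{p_{r-1}}$ is closed under $\otimes$, $\cdot$, and addition, $p_r\in\rd$, hence $\grpgen{p_r}\subseteq\grpgen{p_{r-1}}$.

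For the reverse inclusion $\grpgen{p_{r-1}}\subseteq\grpgen{p_r}$, the goal is to exhibit $p_{r-1}$ as a morphism obtained from $p_r$ by the ideal operations. The natural move is a ``capping off'' (partial trace) argument: starting from $p_r\in\Hom(\mathbf r,\mathbf r)$, close off the last strand by composing, on top and bottom, with the appropriate cup/cap diagrams (equivalently, compose with $1_{r-1}\otimes e$ where $e$ is the relevant cup-cap element of $\Hom(\mathbf 1,\mathbf 1)$ realized through $\Hom(\mathbf{r},\mathbf{r-1})$ and $\Hom(\mathbf{r-1},\mathbf r)$). This produces an element of $\Hom(\mathbf{r-1},\mathbf{r-1})$ lying in $\grpgen{p_r}$. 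I would then compute this partial trace using the standard identity for the Jones–Wenzl projectors — tracing out one strand of $p_k$ yields $\frac{\Delta_k}{\Delta_{k-1}} p_{k-1}$ (this is exactly the content of the trace computation in the proof of Theorem~\ref{joneswenzlprojectors}, applied locally). Since $d=-A^2-A^{-2}$ with $A$ a primitive $4r$th (or $2r$th/$r$th for $r$ odd) root of unity, one has $\Delta_{r-1}(d)\ne 0$ while $\Delta_r(d)=0$; the key point is that $\Delta_{r-1}\ne 0$ means the scalar $\frac{\Delta_{r-1}}{\Delta_{r-2}}$ relating the capped-off $p_{r-1}$ to $p_{r-2}$... but here I only need that the partial trace of $p_r$ equals $\frac{\Delta_r}{\Delta_{r-1}}p_{r-1} = 0\cdot p_{r-1}$, which would be useless. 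So instead of a full partial trace I should cap off with a Jones–Wenzl projector $p_{r-1}$ inserted on the remaining $r-1$ strands, or more precisely: compose $p_r$ with $p_{r-1}\otimes 1_1$ on both sides and observe, via $p_r(p_{r-1}\otimes 1_1) = p_r$ (absorption, since $p_r$ kills all $U_i$, $i\le r-1$, hence absorbs $p_{r-1}\otimes 1_1 = 1+U$ with $U$ a combination of such $U_i$), that this is circular. The correct and cleanest route is: take the partial trace of $p_r$ over the last strand to get $\frac{\Delta_r}{\Delta_{r-1}}p_{r-1}$; since that vanishes, instead use that $p_{r-1}\otimes 1_1$ differs from $p_r$ by $\mu_{r-1}(p_{r-1}\otimes 1_1)U_{r-1}^r(p_{r-1}\otimes 1_1)$, which lies in $\grpgen{p_{r-1}}$, and then partial-trace \emph{that} correction term: tracing $U_{r-1}^r$ over the top strand gives $1_{r-1}$ up to scalar, so the partial trace of the correction term is $\mu_{r-1}\cdot(\text{nonzero scalar})\cdot p_{r-1}$, exhibiting a nonzero multiple of $p_{r-1}$ inside $\grpgen{p_r}$.

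The main obstacle, and the step deserving the most care, is verifying that the relevant normalization scalar is genuinely nonzero at our special value of $d$. Concretely, after capping off one strand of the identity $p_{r-1}\otimes 1_1 = p_r + \mu_{r-1}(p_{r-1}\otimes 1_1)U_{r-1}^r(p_{r-1}\otimes 1_1)$, the $p_r$ term traces to $\frac{\Delta_r}{\Delta_{r-1}}p_{r-1}=0$, the left-hand side traces to $d\cdot p_{r-1}$, and the correction term traces to $\mu_{r-1}p_{r-1} = \frac{\Delta_{r-2}}{\Delta_{r-1}}p_{r-1}$; consistency $d = \frac{\Delta_{r-2}}{\Delta_{r-1}}$ must be checked against the Chebyshev recursion $\Delta_r = d\Delta_{r-1}-\Delta_{r-2}$ at $\Delta_r = 0$, which indeed gives $d\Delta_{r-1} = \Delta_{r-2}$, i.e. $d = \frac{\Delta_{r-2}}{\Delta_{r-1}}$ since $\Delta_{r-1}\ne 0$ — so the capped correction term equals $d\,p_{r-1}$, and provided $d\ne 0$ (which holds, as $d=-A^2-A^{-2}$ with $A^4\ne -1$ a root of unity away from the bad cases, and $d=0$ only when $r=2$, excluded by $r\ge 3$) we obtain $d\,p_{r-1}\in\grpgen{p_r}$, hence $p_{r-1}\in\grpgen{p_r}$. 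This completes both inclusions and the proof.
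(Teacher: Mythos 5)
Your forward inclusion is fine, and your opening instinct --- cap off a strand of $p_r$ --- is exactly the paper's proof: the paper sets $x=p_r\otimes 1_1$, closes the rightmost two strands (which is the same as the one-strand partial closure of $p_r$), and, using $p_{r-1}p_{r-1}=p_{r-1}$, evaluates the result to $(d-\mu_{r-1})p_{r-1}$, asserting $\mu_{r-1}\neq d$ so that the coefficient is invertible. The genuine gap is in your attempted salvage of the reverse inclusion. The element you end up capping, the Wenzl correction term $\mu_{r-1}(p_{r-1}\otimes 1_1)U_{r-1}^{r}(p_{r-1}\otimes 1_1)=p_{r-1}\otimes 1_1-p_r$, is only known to lie in $\grpgen{p_{r-1}}$; it is not an element of $\grpgen{p_r}$, and since partial closure is one of the ideal operations, its closure again lies only in $\grpgen{p_{r-1}}$ --- the ideal you are trying to say something about, not a tool you may use. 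Concretely, the capped identity reads $d\,p_{r-1}=(\text{closure of }p_r)+(\text{closure of the correction term})$; the only summand known to lie in $\grpgen{p_r}$ is the first, and by your own computation it equals $\tfrac{\Delta_r}{\Delta_{r-1}}\,p_{r-1}=0$. So you have exhibited no nonzero element of $\grpgen{p_r}$ in $\Hom(r\!-\!1,r\!-\!1)$, and the conclusion $d\,p_{r-1}\in\grpgen{p_r}$ does not follow; the argument is circular.

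A related point you noticed but did not resolve: the tension between your computation (the closure of $p_r$ carries coefficient $d-\mu_{r-1}=\Delta_r/\Delta_{r-1}$, which you evaluate to $0$) and the paper's claim that $\mu_{r-1}\neq d$ is an indexing issue, not something your pivot repairs. You adopted the convention of Lemma \ref{jwcases} ($\Delta_r=0$, $\Delta_{r-1}\neq 0$), under which $\mu_{r-1}=\Delta_{r-2}/\Delta_{r-1}$ is finite but equals $d$ exactly (by the Chebyshev recursion at $\Delta_r=0$), so the direct closure vanishes; under the Chebyshev indexing used to set up $\mu_k$ in this section ($\mathrm{tr}(p_n)=\Delta_n$, so that $\Delta_{r-1}(d)=0$ at these roots of unity), the formula for $\mu_{r-1}$ has vanishing denominator and the recursive definition of $p_r$ itself needs justification before the lemma can be argued. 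Whichever convention one fixes, a proof of this lemma must produce --- by composing and tensoring diagrams with $p_r$ itself --- an element of $\grpgen{p_r}$ equal to a nonzero multiple of $p_{r-1}$, and your proposal never does this.
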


\begin{proof} It is clearly sufficient to show
$p_{r-1}\in\grpgen{p_r}$.  Set $x=p_r\otimes1_1$, and expand $p_r$
in terms of $p_{r-1}$ according to the recursive definition. Then
connect the rightmost two strands in a loop: e.g., by pre- and
post-multiplying by the appropriate elements of
$\Hom(\bf{r-1},\bf{r+1})$ and $\Hom(\bf{r+1},\bf{r-1})$,
respectively.  Using the fact that $p_{r-1}p_{r-1}=p_{r-1}$, the
resulting diagram simplifies to $(d-\mu_{r-1})p_{r-1}$; and since
$\mu_{r-1}\ne d$, the coefficient is invertible, so that
$p_{r-1}\in\grpgen{p_r}$.
%\begin{figure}[h]\includegraphics[scale=.7]{tld-r-proj.eps}\caption{Obtaining $p_{r-1}$ from $p_r$.}\end{figure}
\end{proof}

\begin{lemma}
For any integer $k\ge1$, $\grpgen{p_{kr}}=
\grpgen{p_{r-1}}$.
\end{lemma}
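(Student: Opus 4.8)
The plan is to induct on $k$, leveraging the two lemmas immediately preceding: $\grpgen{p_r}=\grpgen{p_{r-1}}$, and the decomposition theorem $p_k=\left(\bigotimes_{i=1}^{\lfloor k/r\rfloor}p_r\right)\otimes p_{(k\bmod r)}$. For the base case $k=1$ there is nothing to prove. Assume $\grpgen{p_{(k-1)r}}=\grpgen{p_{r-1}}$; I must show $\grpgen{p_{kr}}=\grpgen{p_{r-1}}$. One inclusion is immediate: by the decomposition theorem, $p_{kr}=\bigotimes_{i=1}^{k}p_r$ (since $kr\bmod r=0$ and $p_0=1_0$), which is built by $\otimes$ from copies of $p_r$, and as $p_r\in\grpgen{p_{r-1}}$ and ideals are closed under $\otimes$ with arbitrary morphisms, we get $p_{kr}\in\grpgen{p_{r-1}}$, hence $\grpgen{p_{kr}}\subseteq\grpgen{p_{r-1}}$.

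For the reverse inclusion it suffices to produce $p_r$ (equivalently $p_{r-1}$, by the previous lemma) inside $\grpgen{p_{kr}}$. The idea is to ``cap off'' all but one of the tensor factors of $p_{kr}=\bigotimes_{i=1}^{k}p_r$. Concretely, starting from $p_{kr}$ viewed as a morphism on $kr$ strands, I pre- and post-compose with cup/cap diagrams that close up the rightmost $(k-1)r$ strands in a standard nested pattern, i.e. multiply by the appropriate elements of $\Hom(\mathbf r,\mathbf{kr})$ and $\Hom(\mathbf{kr},\mathbf r)$. Since each $p_r$ in positions $2,\dots,k$ is being closed into a loop-like network by itself, and the Markov trace of $p_r$ on $r$ strands is $\tr_r(d)$ — which, by the ``generic'' hypothesis fixing $A$ a primitive $4r$th (or $2r$th/$r$th for $r$ odd) root of unity, is a nonzero scalar — each capped factor contributes an invertible scalar $\tr_r(d)$ (or, depending on how one nests the caps, a product of the $\mu_j$'s and $\Delta_j$'s appearing in Wenzl's recursion, which are likewise nonzero under the hypothesis on $d$). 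What remains after simplification, using $p_rp_r=p_r$ to absorb any residual projector interactions exactly as in the proof of $\grpgen{p_r}=\grpgen{p_{r-1}}$, is $\left(\prod(\text{nonzero scalars})\right)\cdot p_r$. Dividing by that invertible coefficient exhibits $p_r\in\grpgen{p_{kr}}$, and then $p_{r-1}\in\grpgen{p_{kr}}$ by the preceding lemma, giving $\grpgen{p_{r-1}}\subseteq\grpgen{p_{kr}}$.

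The main obstacle is verifying that the capping procedure really does reduce $\bigotimes_{i=1}^k p_r$ to a nonzero scalar multiple of $p_r$ rather than to zero or to some unwanted combination. The danger is that closing strands across adjacent $p_r$-blocks could create a configuration where some $U_i$ lands against a projector (killing the term), or that the scalar picked up is exactly $d$ or some $\mu_j$ that fails to be invertible. This is handled by choosing the closure to nest each $p_r$-block within itself (so no cap ever straddles two different blocks), which guarantees each contributes a clean trace factor $\tr_r(d)\neq 0$, and by invoking the hypothesis on $d$ that makes all the relevant Chebyshev values $\Delta_j(d)$, $j<r$, nonzero so the recursion coefficients $\mu_j$ are well-defined and the only vanishing one, $\mu_0=0$, is never needed here. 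Once one commits to an explicit nested closure, the verification is the same kind of diagrammatic bookkeeping as in Lemma~\ref{properideal} and the lemma $\grpgen{p_r}=\grpgen{p_{r-1}}$, just iterated $k-1$ times.
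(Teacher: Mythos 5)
Your overall strategy is close in spirit to the paper's: get $\grpgen{p_{kr}}\subseteq\grpgen{p_{r-1}}$ from the decomposition $p_{kr}=p_r^{\otimes k}$, and get the reverse inclusion by bending strands to reduce $p_{kr}$ inside its own ideal down to $p_r$, then invoke $\grpgen{p_r}=\grpgen{p_{r-1}}$. (The paper peels off one block at a time: it writes $p_{kr}=p_{(k-2)r}\otimes p_r\otimes p_r$, tensors with $1_r$, and pre/post-multiplies so that the two adjacent $p_r$'s merge via $p_rp_r=p_r$, giving $p_{(k-1)r}\in\grpgen{p_{kr}}$ and inducting; your one-shot partial-trace version, yielding $\Delta_r^{\,k-1}p_r$ with $\Delta_r=\pm1\neq0$, would be a perfectly good variant.)

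However, the step where you implement the closure is wrong as written. You propose to pre- and post-compose $p_{kr}$ with elements of $\Hom(\mathbf{r},\mathbf{kr})$ and $\Hom(\mathbf{kr},\mathbf{r})$, nesting the caps and cups so that each of the blocks $2,\dots,k$ is "closed within itself." But caps and cups occurring inside such morphisms attach to one side only: they pair top points of $p_{kr}$ with top points, and bottom points with bottom points. A nested family of same-side arcs inside a single $r$-strand block necessarily contains an innermost arc joining two adjacent legs of that block's $p_r$, and $U_ip_r=p_rU_i=0$, so every such term vanishes (this is exactly the mechanism used in Lemma \ref{properideal}; moreover for $r$ odd an odd number of points cannot be paired on one side at all). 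So your prescription produces $0$, not $\Delta_r^{\,k-1}p_r$; the "danger" you identify is in fact triggered, not avoided, by nesting each block within itself. The trace factor $\Delta_r$ only arises from the strand (Markov) closure joining a block's top legs to its bottom legs, and in the rectangular category that closure cannot be written as $a\cdot p_{kr}\cdot b$ with $a,b$ as above: you need extra vertical strands to route the closure arcs around, i.e.\ precisely the paper's move of passing to $p_{kr}\otimes 1_r$ (or $\otimes 1_{(k-1)r}$) before capping and cupping. With that correction your argument goes through; a small additional quibble is that $\Delta_r(d)\neq 0$ is not a "generic $d$" statement (here $d$ is a root of $\Delta_{r-1}$), but it does hold for the specified roots of unity since $\Delta_r(d)=\pm1$.
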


\begin{proof} By induction; the base case is established in the
previous Lemma. For $k\ge2$, we can write
$p_{kr}=p_{(k-2)r}\otimes p_r\otimes p_r$, and then consider the
tangle $p_{kr}\otimes 1_r$. By again pre- and post-multiplying by
appropriate tangles, and using $p_rp_r=p_r$, we see that
$p_{(k-1)r}\in\grpgen{p_{kr}}$.
 \iffalse
\begin{figure}[h]
\includegraphics[scale=.7]{tld-kr-proj.eps}
\caption{Obtaining $p_{(k-1)r}$ from $p_{kr}$.}
\end{figure}\fi \end{proof}

\begin{lemma}\label{projgenrd}
For any $k\ge r-1$,
$\grpgen{p_k}=\grpgen{p_{r-1}}$.
\end{lemma}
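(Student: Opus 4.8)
\emph{Sketch of proof.} The plan is to combine the decomposition $p_k=\bigl(\bigotimes_{i=1}^{\lfloor k/r\rfloor}p_r\bigr)\otimes p_{(k\bmod r)}$ with the two preceding lemmas, namely $\grpgen{p_r}=\grpgen{p_{r-1}}$ and $\grpgen{p_{jr}}=\grpgen{p_{r-1}}$ for every $j\ge 1$. Given $k\ge r-1$, write $k=jr+m$ with $j=\lfloor k/r\rfloor$ and $0\le m<r$. The case $k=r-1$ is trivial, and the case $m=0$ is exactly $\grpgen{p_{jr}}=\grpgen{p_{r-1}}$ (the second preceding lemma); so it remains to treat $k\ge r$ with $1\le m\le r-1$, where in particular $j\ge 1$. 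For such $k$ I would establish the two inclusions $\grpgen{p_k}\subseteq\grpgen{p_{r-1}}$ and $\grpgen{p_{r-1}}\subseteq\grpgen{p_k}$ separately.

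For the first inclusion, the decomposition (with $j\ge 1$) writes $p_k=p_r\otimes\bigl((\bigotimes_{i=1}^{j-1}p_r)\otimes p_m\bigr)$, exhibiting $p_r$ as a $\otimes$-factor of $p_k$; since an ideal is externally closed under $\otimes$, this gives $p_k\in\grpgen{p_r}=\grpgen{p_{r-1}}$, hence $\grpgen{p_k}\subseteq\grpgen{p_{r-1}}$. For the second inclusion I would argue, exactly as in the preceding lemmas, by a partial closure: pre- and post-compose $p_k=p_{jr}\otimes p_m$ with the fixed cup and cap tangles that close up the first $jr$ strands. The result lies in $\grpgen{p_k}$, and since the two blocks are disjoint it equals $\tn{tr}(p_{jr})\cdot p_m=\Delta_{jr}(d)\,p_m$. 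Here $\Delta_{jr}(d)\ne 0$: by the formula $\Delta_n(d)=(-1)^n(A^{2n+2}-A^{-2n-2})/(A^2-A^{-2})$ one has $\Delta_n(d)=0$ only when $n\equiv r-1\pmod r$, whereas $jr\equiv 0\not\equiv r-1\pmod r$ (recall $r\ge 3$). Dividing out, $p_m\in\grpgen{p_k}$. Finally the Jones--Wenzl recursion $p_{i+1}=(p_i\otimes 1_1)-\mu_i(p_i\otimes 1_1)U_i^{i+1}(p_i\otimes 1_1)$ expresses $p_{i+1}$ as an element of $\grpgen{p_i}$ for each $m\le i\le r-2$; iterating, $p_{r-1}\in\grpgen{p_m}\subseteq\grpgen{p_k}$, so $\grpgen{p_{r-1}}\subseteq\grpgen{p_k}$.

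The step to watch is the choice of which block to close: closing the $p_m$ block --- the first thing one would try --- fails precisely when $m=r-1$, since $\tn{tr}(p_{r-1})=\Delta_{r-1}(d)=0$, so the argument must instead close the $p_{jr}$ block, whose Chebyshev trace is forced to be nonzero. This is where the decomposition proposition is doing real work: it guarantees that $p_k$ always contains a block $p_{jr}$ (indeed a block $p_r$) with invertible Markov trace to close against. The only other point needing a word is that the recursion is legitimately applicable on the range $m\le i\le r-2$, i.e. that the scalars $\mu_i=\Delta_{i-1}(d)/\Delta_i(d)$ are defined there --- which holds because $\Delta_i(d)\ne 0$ for all $i\le r-2$. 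Assembling the cases gives $\grpgen{p_k}=\grpgen{p_{r-1}}$ for every $k\ge r-1$.
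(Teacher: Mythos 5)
Your argument is correct, but for the hard inclusion $\grpgen{p_{r-1}}\subseteq\grpgen{p_k}$ it takes a genuinely different route from the paper. The paper goes \emph{up}: starting from $p_k\otimes 1_{2r-m}$ it merges the last three groups of $r$ strands into one, using the absorption identity $p_r(p_l\otimes 1_{r-l})=p_r$ quoted from \cite{kauffmanlins}, to produce $p_{(\lfloor k/r\rfloor+1)r}$ inside $\grpgen{p_k}$, and then invokes the lemma $\grpgen{p_{jr}}=\grpgen{p_{r-1}}$. You go \emph{down}: using $p_k=p_{jr}\otimes p_m$ you trace out the $p_{jr}$ block to land on $\Delta_{jr}(d)\,p_m$, observe $\Delta_{jr}(d)\neq 0$ because $\Delta_n(d)$ vanishes only for $n\equiv r-1\pmod r$ (this holds in all three cases of Lemma \ref{jwcases}), and then climb from $p_m$ back up to $p_{r-1}$ with the Wenzl recursion, which is legitimate since $\mu_i$ is defined for $i\le r-2$. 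Your version buys self-containedness: it uses only facts already established in the paper ($\tn{tr}(p_n)=\Delta_n$, the decomposition proposition, the recursion), and it makes explicit the point --- which the paper's phrasing hides --- that one must close the block with nonvanishing trace, since closing $p_m$ fails exactly when $m=r-1$; the paper's version is shorter but leans on the external absorption identity. Your easy inclusion ($p_k=p_r\otimes(\cdots)\in\grpgen{p_r}=\grpgen{p_{r-1}}$) is the same observation the paper leaves implicit. One phrasing to tighten: you cannot literally pre- and post-compose $p_k$ itself with cup/cap tangles, since any cup glued directly to a Jones--Wenzl projector creates a turnback and kills it; as in the preceding lemmas you cite, the closure of the $p_{jr}$ block must be routed through auxiliary strands, e.g.\ performed on $1_{jr}\otimes p_k$, which still lies in $\grpgen{p_k}$ and does yield $\tn{tr}(p_{jr})\,p_m$.
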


\begin{proof} This basically uses the same technique as the previous
lemma, combined with the fact that $p_r(p_l\otimes1_{r-l})=p_r$
(see \cite{kauffmanlins}).

Let $m=k\bmod r$; if $m=0$, then this falls under the case of the
previous lemma, so $0<m<r$. Now consider $x=p_k\otimes1_{2r-m}$;
we can use the technique of the previous lemma to merge the last
three groups of $r$ strands into one, so that the resulting
element $x'=p_{\lfloor\frac{k}{r}\rfloor r}\otimes(p_r(p_l\otimes
1_{r-l})1_r)$. But $p_r(p_l\otimes1_{r-l})=p_r$, so that
$x'=p_{(\lfloor\frac{k}{r}\rfloor+1)r}$, whence, by the previous
lemma, $\grpgen{p_{r-1}}=\grpgen{p_k}$.
\end{proof}

Thus, in the quotient category $\TL/\rd$, all $k$-projectors, for
$k\ge r-1$, are null.

%\subsubsection{The quotient $\TL/\rd$}

We have shown that $\rd$ is an ideal; our strategy in showing that
$\rd$ is unique will be to show that it has no proper ideals, and
that the quotient $\TL/\rd$ has no nontrivial ideals. To show the
latter fact, we will show that the ideal (in the quotient)
generated by any element is in fact all of $\TL/\rd$.

We note also that $\TL/\rd$ may be described succinctly as the
subcategory of $\TL$ whose tangles have less than $r-1$
``through-passing'' strands. This subcategory does not close under
$\otimes$ as described above, but can be shown to be well-defined
under the reduction $1_{r-1}\leadsto (1_{r-1}-p_{r-1})$. This view
is not necessary in what follows, so we do not pursue it further;
but it may be useful in thinking about the quotient category.

A preliminary observation is that $\TL/\rd$ has no zero divisors:
\begin{lemma} Let $x,y\in\TL/\rd$. If $x\otimes y=0$, then $x=0$ or
$y=0$. \end{lemma}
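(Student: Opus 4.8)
The plan is to argue by contradiction using the "decomposition theorem" for Jones–Wenzl projectors proved just above, together with the structure of diagrams in $\TL/\rd$. Suppose $x\otimes y = 0$ in $\TL/\rd$ but $x\neq 0$ and $y\neq 0$; here $x\in\Hom(\mathbf{m},\mathbf{m}')$ and $y\in\Hom(\mathbf{n},\mathbf{n}')$ for some vertex counts. Lifting to $\TL$, the hypothesis says that $x\otimes y$ lies in the ideal $\rd=\grpgen{p_{r-1}}$, while neither $x$ nor $y$ does. The strategy is to show that being in $\rd$ can always be detected "locally on one side", so that $x\otimes y\in\rd$ forces $x\in\rd$ or $y\in\rd$.

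The key step is a normal-form statement for $\TL/\rd$: using Lemma~\ref{projgenrd} and the decomposition theorem, any nonzero morphism in $\TL/\rd$ can be written (in $\TL$, modulo $\rd$) as a linear combination of diagrams each of which, when post- and pre-composed with the appropriate projectors, survives in the quotient precisely because it has at most $r-2$ through-passing strands. Concretely, I would introduce on $\Hom(\mathbf{0},\mathbf{0})$ of $\TL/\rd$ the set of "evaluation functionals" obtained by capping off a morphism in all planar ways and reading off the resulting scalar; the content of Lemma~\ref{properideal} is exactly that these functionals all vanish on $\rd$ and that $1_0\notin\rd$. So it suffices to produce, from the assumption $x\neq 0$ in $\TL/\rd$ and $y\neq 0$ in $\TL/\rd$, planar closures $\hat x$ of $x$ and $\hat y$ of $y$ with $\hat x\neq 0$, $\hat y\neq 0$ in $\TL/\rd$; then the juxtaposed planar closure $\hat x\otimes\hat y$ is a closure of $x\otimes y$, and since $\TL/\rd$ is a commutative $\C$-algebra in the $0\to 0$ sector with no nilpotents coming from loop values (the loop value $d$ is nonzero), $\hat x\otimes\hat y=\widehat{x\otimes y}=0$ would force one of the factors to vanish — the desired contradiction. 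The existence of a nonvanishing planar closure of a nonzero element is where I would use semisimplicity of the relevant $\TLJ$ algebra: a nonzero element of a semisimple $*$-algebra with a faithful, positive (up to sign) Markov trace cannot be annihilated by all traces $\tr(\bar z\,-)$, hence has a nonzero closure.

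I expect the main obstacle to be making the "nonzero element has a nonzero closure" step rigorous in the categorical (multi-object) setting rather than for a fixed algebra $\TLJ_n$: $x$ need not be an endomorphism, so one first has to pad it — compose with cups/caps to turn $x\in\Hom(\mathbf m,\mathbf m')$ into an element of some $\Hom(\mathbf N,\mathbf N)$ without killing it in $\TL/\rd$ — and then invoke nondegeneracy of the Markov trace pairing on $\TLJ_N$, i.e. the fact (established via formula~\ref{meanderformula} and Lemma~\ref{starsemi}) that the quotient by the kernel of the trace form is exactly $\TLJ_N$. Care is needed because the padding operation must be shown not to send a non-$\rd$ morphism into $\rd$; this follows from Lemma~\ref{projgenrd} together with the observation that capping strands can only decrease the number of through-passing strands, so it never creates an $r-1$ strand projector where there was none. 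Once that bookkeeping is in place, the argument closes: $x\otimes y=0$ in $\TL/\rd$ implies all closures vanish, implies by semisimplicity that $x=0$ or $y=0$ in $\TL/\rd$.
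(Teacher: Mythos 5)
Your argument hinges on the claim that every nonzero element of $\TL/\rd$ admits a nonzero planar closure, which is exactly the statement that the kernel of the Markov trace pairing on $\TL_n(d)$ is contained in $\grpgen{p_{r-1}}$, i.e.\ that the trace form descends \emph{nondegenerately} to the quotient. That fact is not available at this point of the paper: formula \ref{meanderformula} only shows the form on $\TL_n(d)$ is degenerate at these values of $d$, and Lemma \ref{starsemi} requires a positive definite (at least nondegenerate) form on the quotient, which is precisely what is in question. In the paper's logical order, ``kernel of the trace pairing $=\grpgen{p_{r-1}}$'' is deduced \emph{after} this section, as a corollary of the uniqueness-of-the-ideal theorem (the kernel is a proper nonzero ideal, hence equals $\rd$ by uniqueness), and the proof of that theorem uses the present lemma through Lemmas \ref{homr-3} and \ref{nosubideal}. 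So as written your proof is circular within this section, unless you silently import the Goodman--Wenzl semisimplicity result that the section explicitly sets out to reprove by elementary means. Your patch about padding --- ``capping strands can only decrease the number of through-passing strands, so it never creates an $r-1$ strand projector'' --- does not close this gap: the issue is not whether a copy of $p_{r-1}$ appears diagrammatically after capping, but whether the capped element lands in the ideal, which is again membership in the kernel of the pairing.

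For contrast, the paper's proof needs none of this. It observes that the statement holds in $\TL$ itself, so a zero divisor in the quotient would force a tensor factorization $x\otimes y=p_{r-1}$; then idempotence of $p_{r-1}$ plus the interchange law give $xx=x$, $yy=y$, and $p_{r-1}U^{k+l}_i=0$ gives $xU^k_i=0$ and $yU^l_i=0$, so by the characterization in Theorem \ref{joneswenzlprojectors} $x$ and $y$ must be the Jones--Wenzl projectors $p_k$ and $p_l$ with $k+l=r-1$; finally the strand closure of $p_k\otimes p_l$ is $\Delta_k\Delta_l\neq 0$ while that of $p_{r-1}$ is $\Delta_{r-1}=0$, a contradiction. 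This uses only the existence, uniqueness and trace computation $\tr(p_n)=\Delta_n$ already established, not semisimplicity or nondegeneracy of the quotient. If you want to keep your route, you must first give an independent proof of the nonvanishing-closure statement (essentially redoing Goodman--Wenzl), which is substantially harder than the lemma itself; otherwise the closure/trace strategy should be abandoned in favor of an argument, like the paper's, that works directly with the defining properties of the projectors.
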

\begin{proof} The statement clearly holds in $\TL$; so the only way
it could fail in the quotient is if $p_{r-1}$ had a tensor
decomposition.

So, suppose, $x\otimes y=p_{r-1}$, where $x$ is a tangle on $k>0$
strands and $y$ is a tangle on $l>0$ strands, both nontrivial
(that $\textrm{dom} (x)=\textrm{cod} (x)$ and $\textrm{dom}
(y)=\textrm{cod} (y)$ follows from the fact that
$p_{r-1}p_{r-1}=p_{r-1}$). Then the properties of projectors and
the interchange law give:
\[ x\otimes y=(x\otimes y)(x\otimes y)=xx\otimes yy \Longrightarrow
xx=x, yy=y \] Further $(x\otimes y)U^{k+l}_i=0$ for all $i$, so
that $xU^k_i\otimes y=0 \Longrightarrow xU^k_i=0$, and likewise
$yU^l_i=0$. Thus both $x$ and $y$ are projectors. But the strand
closure of $p_k\otimes p_l$ is $\Delta_k\Delta_l$, which are both
nonzero, and the strand closure of $p_{r-1}$ is zero, so we have
reached a contradiction.
\end{proof}

\iffalse {\bf n.b.}: is there a quick argument to show that $x$
and $y$ can't be in, say, $\Hom(k,l)$ and $\Hom(r-1-k,r-1-l)$, for
$k\ne l$.?\fi

The next lemma introduces an algorithm that is the key to the rest
of the proof:

\begin{lemma}\label{homr-3} Any nonzero
ideal $I\subset\TL/\rd$ contains at least one element of
$\Hom(\bf{r\!-\!3},\bf{r\!-\!3})$. \end{lemma}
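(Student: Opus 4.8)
The plan is to start with an arbitrary nonzero morphism $x \in I$, say $x \in \Hom(\mathbf{m},\mathbf{n})$ with $x \neq 0$ in $\TL/\rd$, and to produce from it — using only the ideal operations $\cdot$ and $\otimes$ together with pre- and post-composition with fixed diagrams — a nonzero element of $\Hom(\mathbf{r\!-\!3},\mathbf{r\!-\!3})$. Since $I$ is an ideal in a category where every hom-set is finite dimensional and (by the preceding lemma) $\TL/\rd$ has no zero divisors under $\otimes$, the strategy is a normalization/reduction algorithm: first reduce to the case $\mathbf{m}=\mathbf{n}$, then reduce the number of through-strands, and finally hit exactly $r-3$.

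First I would reduce to endomorphisms. Given $x\in\Hom(\mathbf{m},\mathbf{n})$ nonzero, consider $\bar x x$ (or $x\bar x$), where $\bar{\phantom{x}}$ is the horizontal-middle-line reflection; this lies in some $\Hom(\mathbf{m},\mathbf{m})$, and it is nonzero because the Markov trace pairing is nondegenerate on the semisimple quotient $\TLJ_m(d)$ — indeed $\tn{tr}(\overline{x}x) \neq 0$ since the pairing is definite at the relevant $A$, so $\bar x x \neq 0$. (If one does not want to invoke definiteness, one can instead note that in the semisimple quotient a nonzero morphism composed with its adjoint is nonzero.) So WLOG $x \in \Hom(\mathbf{m},\mathbf{m})\cap I$, $x\neq 0$.

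Next I would drive the strand number down to $r-3$. If $m > r-3$: cap off a pair of adjacent strands at the top and bottom, i.e. pre- and post-compose $x$ with the evident elements of $\Hom(\mathbf{m-2},\mathbf{m})$ and $\Hom(\mathbf{m},\mathbf{m-2})$ that join strands $m-1,m$ by a cup/cap; this yields an element of $\Hom(\mathbf{m-2},\mathbf{m-2})\cap I$. The danger is that this element is $0$. Here I would use the no-zero-divisor lemma and the structure of the quotient: in $\TL/\rd$ every morphism on $m \le r-2$ strands is a nonzero combination of diagrams, and one can always choose which pair of strands to cap (or first conjugate $x$ by a permutation braid / a suitable diagram) so that the cap of $x$ is nonzero — because if capping any pair after any such conjugation gave $0$, then $x$ would be annihilated by all the $U_i$'s on all sides, forcing $x$ to be a scalar multiple of $p_m$ with $m\ge r-1$, hence $0$ in the quotient, contradiction; and if $m\le r-2$ one argues instead that a diagram with that many through-strands cannot be killed by every cap. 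Iterating, after finitely many steps I reach $\Hom(\mathbf{r\!-\!3},\mathbf{r\!-\!3})\cap I$ with a nonzero element. If instead $m < r-3$, I tensor with $1_{\,r-3-m}$ on the right: $x\otimes 1_{r-3-m}\in\Hom(\mathbf{r\!-\!3},\mathbf{r\!-\!3})\cap I$, and this is nonzero by the no-zero-divisor lemma (Lemma just proved), since $1_{r-3-m}\ne 0$ in $\TL/\rd$ (it has $r-3-m < r-1$ through-strands).

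The main obstacle is the middle step: guaranteeing that some capping operation keeps the element nonzero. The clean way to organize this is to observe that if $x\in\Hom(\mathbf m,\mathbf m)\cap I$ with $r-1>m>r-3$, i.e. $m=r-2$, then capping one pair already lands in $\Hom(\mathbf{r\!-\!4},\cdots)$, overshooting; so in fact I want to do the reduction only from $m\ge r-1$ down to exactly $r-3$ is impossible by pairs of cups alone when parities mismatch. The fix is to allow, at the last step, tensoring with $1_1$ or capping a single extra time combined with a $1_1$, so that one can always adjust $m$ by $1$ as well as by $2$; concretely, from an element on $m$ strands with $m\ge r-3$ I can reach $m-1$ strands by capping a pair and tensoring back one $1_1$, or reach $m-2$ by capping a pair, and in either case I preserve nonvanishing by the no-zero-divisor lemma applied to the $1_1$ factor together with a careful choice (or conjugation) ensuring the cap itself is nonzero. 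Carrying out this bookkeeping — and verifying that the ``careful choice'' of where to cap always exists, which is where the real content lies — is the part I expect to require genuine work; everything else is formal manipulation with the interchange law and $p_rp_r=p_r$.
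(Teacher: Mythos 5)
Your overall strategy — reduce to a single object, then run an iterative strand-reduction using the fact that in $\TL/\rd$ only multiples of the (null) Jones--Wenzl projectors are annihilated by all the $U_i$'s, with tensoring by identity strands to fix parity and to come up to $r-3$ from below — is essentially the paper's algorithm. But your very first step has a genuine gap. You reduce to $\Hom(\mathbf{m},\mathbf{m})$ by forming $\bar{x}x$ and assert this is nonzero because the Markov trace pairing is ``definite at the relevant $A$,'' with the fallback that ``in the semisimple quotient a nonzero morphism composed with its adjoint is nonzero.'' Neither holds in the generality required: the lemma is stated for every $A$ as in Lemma \ref{jwcases}, and the Hermitian Markov pairing is positive definite only for the special unitary values $A=\pm i e^{\pm 2\pi i/4r}$; for other primitive $4r$th (or $2r$th, $r$th, $r$ odd) roots of unity it is nondegenerate but indefinite. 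An indefinite nondegenerate Hermitian form has isotropic vectors, so $\tn{tr}(\bar{x}x)$ can vanish for $x\neq 0$, and worse, the anti-involution $x\mapsto\bar{x}$ is then not positive, so $\bar{x}x$ itself can be $0$ for a nonzero $x$ (e.g.\ a rank-one element built from an isotropic vector in a block where the form is indefinite). So ``$\bar{x}x\neq 0$'' is exactly the kind of statement your fallback assumes rather than proves. The repair is cheap and is what the paper does: equalize domain and codomain not by composition but by horizontal tensoring with the unique basis element of $\Hom(\mathbf{0},\mathbf{2})$ or $\Hom(\mathbf{2},\mathbf{0})$ the appropriate number of times, and invoke the no-zero-divisor lemma you already cite to see the result is nonzero.

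Two smaller points. First, your claim that for $m\le r-2$ ``a diagram with that many through-strands cannot be killed by every cap'' is false as stated: $p_{r-2}$ is a nonzero element of the quotient killed by every cap on either side. This is harmless only because in that regime you should never cap — you should tensor up with identity strands (after fixing parity with one $1_1$, so that the capping phase only ever runs while the relevant object has at least $r-1$ strands, which is precisely where the ``not a multiple of the null projector'' argument is available); the paper avoids your end-of-algorithm muddle by fixing the parity once at the start. Second, your direct-capping step needs one more beat than you give it: after a nonzero top cap you have an element of $\Hom(\mathbf{m},\mathbf{m-2})$, not an endomorphism, so the ``multiple of $p_m$'' characterization does not literally apply; one instead argues that if every bottom cap vanished then the element would kill the whole two-sided ideal generated by the $U_j$'s acting on $\mathbf{m}$, which in the quotient (for $m\ge r-1$) contains $\tn{id}_m$, forcing the element to be $0$. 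The paper sidesteps this by first multiplying by suitable $U_{j}$'s on both sides (nonzero by the same projector argument) and then capping exactly along the cups of those $U_j$'s, so the cap only closes a loop and contributes a factor of $d$, making nonvanishing manifest. With these repairs your argument coincides with the paper's.
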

\begin{proof} Let $x\ne0\in I$, say $x\in\Hom(\bf{m},\bf{n})$. First,
if $m\ne n$, then we can tensor with the unique basis element in
either $\Hom(\bf{0}, \bf{2})$ or $\Hom(\bf{2},\bf{0})$ the
appropriate number of times so that we get an
$x'\in\Hom(\bf{k},\bf{k})\in I$, where $k=\max\{m,n\}$. (By the
previous lemma, $x'\ne0$.) If $k\leq r-3$, then
$x'\otimes1_{r-3-k}\in\Hom(\bf{r-3}, \bf{r-3})$ is an element of
the ideal; so it remains to show the case where $k>r-3$.

First, assume $k$ and $r-3$ have the same parity; if not, use
$x'\otimes1_1$ instead of $x'$. Then let $k_0=k,x'_0=x'$, and use
the following algorithm (starting with $i=0$):
\begin{enumerate}

\item If $k_i=r-3$, then stop: $x'_i\in\Hom(\bf{r-3},\bf{r-3})$ is
in the ideal.

\item Since $k_i\geq r-1$, and $x'_i\ne0$, it follows that
$x'_i\ne\alpha p_{k_i}$, since all $r-1$ and above projectors are
null in $\TL/\rd$. Recall that $p_{k_i}$ is the unique element in
$\Hom(\bf{k_i},\bf{k_i})$ such that (i)
$U_j^{k_i}p_{k_i}=p_{k_i}U_j^{k_i}=0$ for $1\leq j<k_i$; and (ii)
$p_{k_i}p_{k_i}=p_{k_i}$. From this it follows that the only
elements which satisfy (i) are $\alpha p_{k_i}$, for some
$\alpha\in\C$. Therefore, since $x'_i\ne\alpha p_{k_i}$, there
exists some $U_i=U_{j_i}^{k_i}$ such that $U_ix'\ne0$.

\item Using an argument similar to the above, there exists some
$U'_i= U_{j'_i}^{k_i}$ such that $(U_ix')U'_i\ne0$.

\item Set $V_i$ to be the unique basis element in
$\Hom(\bf{k_i-2},\bf{k_i})$ which connects the $j_i$ and $j_i+1$
vertices on the top (codomain) objects, and connects the remaining
$k-2$ vertices on top and bottom to each other. Then
$V_iU_ix'U'_i$ can be described as being exactly like $U_ix'U'_i$,
except that the top half-loop of the $U_i$ has been factored out
as $d$, thus reducing the domain object by two vertices. It is
thus clear that $V_iU_ix'U'_i\ne0$.

\item Similarly, choose $V'_i$ to be the unique element in
$\Hom({\bf k_i},{\bf k_{i-2}})$ connecting the $j'_i$ and $j'_i+1$
vertices of the domain object, thus closing the half loop of
$U'_i$. Then $V_iU_ix'U'_iV'_i\ne0$.

\item Set $x'_{i+1}=V_iU_ix'U'_iV'_i$, $k_{i+1}=k_i-2$, and return
to step (1).

\end{enumerate}

After $j=\frac{1}{2}(k_0-(r-3))$ passes through the algorithm, the
desired element $x'_j\in I$ is produced.
\end{proof}

The proof of the previous Lemma is useful in establishing that
$\rd$ has no proper sub-ideals.

\begin{lemma}\label{nosubideal} For any $x\in\rd, x\neq 0$, then
$\grpgen{x}=\rd$. \end{lemma}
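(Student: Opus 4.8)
The plan is to show that any nonzero $x\in\rd$ generates all of $\rd$, i.e. that $p_{r-1}\in\grpgen{x}$. The key is to run the algorithm from the proof of Lemma \ref{homr-3}, but keeping track of where we are \emph{inside} $\rd$ rather than in the quotient $\TL/\rd$. First I would observe that since $x\in\rd$, $x$ can be written as a $\C$-linear combination of closed or open networks each containing at least one copy of $p_{r-1}$; pick one such term and one distinguished projector $p_{r-1}$ inside it. By expanding all other pieces of the network (removing loops, resolving crossings, expanding the other $p_j$'s via the recursion), we may arrange that $x$ lies in a subspace spanned by diagrams each of which contains our single distinguished $p_{r-1}$ with $r-1$ strands emanating from each side into the ambient tangle.

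The core step is then to \emph{strip away} everything surrounding the distinguished $p_{r-1}$. I would pre- and post-compose $x$ with suitable basis tangles (cups and caps, i.e. the generators of $\Hom(\bf 0,\bf 2)$ and $\Hom(\bf 2,\bf 0)$) to reduce $x$ to an element $x'\in\grpgen{x}\cap\Hom(\bf{r-1},\bf{r-1})$ that still contains the distinguished projector and is nonzero. The argument that it is nonzero mirrors the argument in Lemma \ref{homr-3}: at each stage one chooses the cups/caps so that no half-loop returns to the same side of $p_{r-1}$ (which would kill it via $p_{r-1}U_i^{r-1}=0$), and each closed loop removed merely contributes a nonzero factor of $d$. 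Once $x'\in\Hom(\bf{r-1},\bf{r-1})$ is obtained, I would use the characterization of $p_{r-1}$ from Theorem \ref{joneswenzlprojectors}: sandwiching $x'$ as $p_{r-1}x'p_{r-1}$ gives an element of the one-dimensional space $p_{r-1}\,\TL_{r-1}(d)\,p_{r-1}=\C\,p_{r-1}$ (one-dimensional because $U_i p_{r-1}=p_{r-1}U_i=0$ forces any such element to be a scalar multiple of $p_{r-1}$). So $p_{r-1}x'p_{r-1}=\lambda p_{r-1}$ for some $\lambda\in\C$, and the whole game is to show $\lambda\ne0$. Granting that, $p_{r-1}=\lambda^{-1}p_{r-1}x'p_{r-1}\in\grpgen{x'}\subset\grpgen{x}$, hence $\rd=\grpgen{p_{r-1}}\subset\grpgen{x}\subset\rd$, giving equality.

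The main obstacle is precisely showing $\lambda\neq 0$, i.e. that the stripping-down procedure does not accidentally annihilate the distinguished projector. The safe way to see this is to retain, throughout the reduction, the bookkeeping from Lemma \ref{homr-3}: one never closes a strand back to the same side of the surviving $p_{r-1}$, so the image always contains a genuine (scalar times) $p_{r-1}$ sitting with $r-1$ through-strands, and the accumulated scalar is a product of powers of $d$ and of the nonzero coefficient $c_j$ multiplying our chosen term — never zero. More carefully: choose the term and the distinguished $p_{r-1}$ so that the coefficient $c_j\ne0$; expand everything else; then among the resulting diagrams, the ones in which some strand leaves and re-enters $p_{r-1}$ on the same side vanish, and what remains, after the cup/cap reductions, is a nonzero multiple of $p_{r-1}$ coming from that single term — and no cancellation with other terms is possible because the other terms, having been expanded fully, either also reduce to multiples of $p_{r-1}$ (which only helps) or vanish. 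Hence $\lambda\ne0$ and the lemma follows.
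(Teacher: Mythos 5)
There is a genuine gap, and it sits exactly where you locate "the main obstacle": the claim that no cancellation can occur. Your reduction maps (the cups, caps, and $U$'s you pre- and post-compose with) act on the whole element $x$, which is a fixed linear combination of networks, not on your one chosen term in isolation. After you expand everything, several distinct terms of $x$ can each reduce, under the \emph{same} choice of stripping maps, to scalar multiples of $p_{r-1}$, and those scalars are arbitrary complex numbers; "which only helps" is false, since they can sum to zero. For instance, $x$ could be a difference of two networks that become identical after one of your cap closures, so the particular reduction you set up annihilates $x$ even though $x\neq 0$ and your distinguished term survives with a nonzero coefficient. The same problem already infects the intermediate step: the assertion that one can choose cups and caps so that the partially reduced element of $\grpgen{x}$ "still contains the distinguished projector and is nonzero" is a statement about the full linear combination and is never justified; tracking a single projector in a single term does not control the sum. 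So the final claim $\lambda\neq 0$ is unproven, and with it the whole argument.

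The paper's proof avoids this per-term bookkeeping entirely. It runs the strand-reduction algorithm of Lemma \ref{homr-3} on $x$ itself and exploits a dichotomy at each stage: either $x'_i$ is not a scalar multiple of $p_{k_i}$, in which case the characterization of the Jones--Wenzl projector guarantees some $U_j$ (and then some $U_{j'}$) keeping the element nonzero while the strand number drops by two; or $x'_i=\alpha p_{k_i}$ with $k_i\ge r-1$, in which case $\grpgen{x}\supseteq\grpgen{p_{k_i}}=\rd$ by Lemma \ref{projgenrd} and we are done. If the second alternative never occurred, the algorithm would terminate with a nonzero element of $\grpgen{x}\subset\rd$ living in $\Hom(\mathbf{r-3},\mathbf{r-3})$, which is impossible because nonzero elements of $\rd$ require at least $r-1$ through-strands; this contradiction forces the projector alternative to occur. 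If you want to salvage your direct approach, you would need an argument of this nondegeneracy type (some pairing or induction showing that \emph{some} choice of reduction maps applied to the whole of $x$ yields a nonzero multiple of a projector), which is essentially what the paper's contradiction argument supplies.
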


\begin{proof} Use the techniques previous Lemma to get an element
$x'\in\grpgen{x}$ such that $x'\in\Hom(\bf{k},\bf{k})$, and
$k\equiv r-1\bmod 2$. Then follow the algorithm, except for on
steps (2) and (3): for, since $x'_i\in\rd$, it is possible that
$x'_i=\alpha p_{k_i}$. If this is not the case, proceed with the
algorithm as it is stated. However, if $x'_i=\alpha p_{k_i}$, then
it follows that $\grpgen{x}=\rd$, by Lemma \ref{projgenrd}. So it
only remains to show that this does happen at some point before
the algorithm terminates: e.g., that for some $i$, $x'_i=\alpha
p_{k_i}$.

But, suppose this didn't happen; then, the algorithm goes through
to completion, yielding an element $y\in\grpgen{x}$ such that
$y\in\Hom(\bf{r-3},\bf{r-3})$, $y\ne0$. But then $y\not\in\rd$,
since every nonzero element of $\rd$ must have at least $r-1$
strands. This contradicts the fact that
$y\in\grpgen{x}\subset\rd$; therefore, there must be some $i$ such
that $x'_i=\alpha p_{k_i}$, and so the lemma follows.
\end{proof}

Now we can put all of this together to obtain our desired result:

\begin{theorem} $\TL_d$ has a unique
proper nonzero ideal when $A$ is as in Lemma \ref{jwcases}.
\end{theorem}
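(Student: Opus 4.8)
The plan is to assemble the pieces already proved into the two halves of the uniqueness statement. Recall that an ideal $I$ in $\TL_d$ is a two-sided ideal under composition that is also absorbing under $\otimes$; we want to show $\rd = \grpgen{p_{r-1}}$ is the only proper nonzero ideal. First I would establish that any proper nonzero ideal $I$ is contained in $\rd$. By Lemma \ref{homr-3}, such an $I$ contains a nonzero element $y\in\Hom(\mathbf{r\!-\!3},\mathbf{r\!-\!3})$ \emph{only after passing to $\TL/\rd$}; so instead I work directly in $\TL_d$: take $0\neq x\in I$ and ask whether $x\in\rd$. If $x\notin\rd$, then its image $\bar x$ in $\TL/\rd$ is a nonzero element of a nonzero ideal $\bar I$, and running the algorithm of Lemma \ref{homr-3} produces a nonzero element of $\bar I\cap\Hom(\mathbf{r\!-\!3},\mathbf{r\!-\!3})$. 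The key point is then that in $\TL/\rd$ the hom-spaces for few strands are just those of the generic (semisimple) TL algebra, so an ideal meeting $\Hom(\mathbf{r\!-\!3},\mathbf{r\!-\!3})$ nontrivially must, by the zero-divisor-free Lemma and tensoring/closing strands, eventually contain $1_0$ in the quotient — i.e. $\bar I = \TL/\rd$, so $I$ together with $\rd$ generates all of $\TL_d$. But a proper ideal cannot do this, so $I\subseteq\rd$.

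Second I would show $\rd$ itself has no proper nonzero subideal: this is exactly Lemma \ref{nosubideal}, which says $\grpgen{x}=\rd$ for every nonzero $x\in\rd$. Combining the two halves: if $J$ is any proper nonzero ideal, the first half gives $J\subseteq\rd$; then $J$ is a nonzero subideal of $\rd$, so by Lemma \ref{nosubideal} (applied to any $0\neq x\in J$), $\rd=\grpgen{x}\subseteq J$, whence $J=\rd$. This shows $\rd$ is the unique proper nonzero ideal. I would also remark that $\rd$ is indeed proper and nonzero: properness is Lemma \ref{properideal}, and it is nonzero because $p_{r-1}\neq 0$ in $\TL_d$ (the genericity hypothesis on $d$ needed to define $p_{r-1}$ up to level $r-1$ holds for $d=-A^2-A^{-2}$ with $A$ as specified, since $\Delta_i(d)\neq 0$ for $i<r$).

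The main obstacle is making precise the claim that a nonzero ideal of $\TL/\rd$ meeting $\Hom(\mathbf{r\!-\!3},\mathbf{r\!-\!3})$ must be everything — in other words, that $\TL/\rd$ is, in the appropriate categorical sense, ``simple.'' Here one uses that $\TL/\rd$ has no zero divisors (the Lemma preceding \ref{homr-3}) so that $\otimes$ never kills a nonzero element, that one can raise or lower strand number freely by tensoring with the cup/cap in $\Hom(\mathbf{0},\mathbf{2})$ or $\Hom(\mathbf{2},\mathbf{0})$, and that in the range $0,1,\dots,r-2$ of through-strands the quotient is a (multi-matrix) semisimple algebra in which the only ideals are the obvious ones; composing a nonzero element of $\Hom(\mathbf{r\!-\!3},\mathbf{r\!-\!3})$ with suitable cups and caps yields a nonzero scalar in $\Hom(\mathbf{0},\mathbf{0})$, i.e. $1_0\in\bar I$. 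I would cite the semisimplicity of $\TLJ_n(d)$ already recorded above and organize this as a short lemma, since it is the one genuinely new gluing step; everything else is bookkeeping with the lemmas \ref{properideal}, \ref{projgenrd}, \ref{homr-3}, \ref{nosubideal} already in hand.
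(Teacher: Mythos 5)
Your overall architecture is the same as the paper's: properness of $\rd$ (Lemma \ref{properideal}), absence of proper subideals of $\rd$ (Lemma \ref{nosubideal}), the reduction algorithm of Lemma \ref{homr-3}, and then a simplicity argument for $\TL/\rd$. Where you genuinely differ is the last step: the paper keeps running the algorithm below $r-3$ strands with modified stopping rules, ending either at $\alpha 1_0$ or at $\alpha p_k$ with $1\leq k<r-1$, and in the latter case closes $p_k\otimes 1_k$ with cups and caps to obtain $\Delta_k\,1_0\neq 0$; you instead stop at $\Hom(\mathbf{r-3},\mathbf{r-3})$, note that this hom-algebra is the semisimple $\TL_{r-3}(d)$ with nondegenerate Markov pairing (valid, since $\Delta_i(d)\neq 0$ for $i\leq r-2$), and close a nonzero ideal element up to a nonzero multiple of $1_0$. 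That substitution works (mind the parity: if $r-3$ is odd you must tensor with $1_1$ before closing, which your ``tensoring/closing strands'' remark essentially covers) and is a reasonable alternative to the paper's continued algorithm.

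The genuine gap is the sentence ``so $I$ together with $\rd$ generates all of $\TL_d$. But a proper ideal cannot do this, so $I\subseteq\rd$.'' From $\bar I=\TL/\rd$ you only get $I+\rd=\TL_d$, and in general two proper ideals can be comaximal, so properness of $I$ is not yet contradicted; as stated this is a false general principle, not a proof. The conclusion is recoverable with one extra observation, in either of two ways: (i) since $\rd\cap\Hom(\mathbf{0},\mathbf{0})=0$ (every closed network containing $p_{r-1}$ is null --- this is exactly the content of the proof of Lemma \ref{properideal}), the element of $I$ mapping to $1_0$ in the quotient is already a nonzero multiple of $1_0$, so $1_0\in I$ and $I$ is improper; or (ii) for $0\neq x\in I$, the element $x\otimes p_{r-1}$ lies in $I\cap\rd$ and is nonzero (no zero divisors in $\TL$ itself, as noted in the zero-divisor lemma), so $I\cap\rd$ is a nonzero subideal of $\rd$, hence equals $\rd$ by Lemma \ref{nosubideal}, giving $\rd\subseteq I$ and then $1_0\in I+\rd=I$. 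Either repair is one line, but without it the passage from quotient simplicity back to $\TL_d$ is unjustified. A small further slip: the existence and nonvanishing of $p_{r-1}$ needs $\Delta_i(d)\neq 0$ for $i\leq r-2$; in fact $\Delta_{r-1}(d)=0$ for the relevant $A$, so your parenthetical ``$\Delta_i(d)\neq 0$ for $i<r$'' is not quite right.
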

\begin{proof} By Lemma \ref{properideal}, $\rd=\grpgen{p_{r-1}}$ is a
proper ideal, which, by Lemma \ref{nosubideal}, has no proper
sub-ideals. To prove the theorem, therefore, it suffices to show
that the quotient category $\TL/\rd$ has no proper nonzero ideals.

Consider $\grpgen{x}$, for any $x\in\TL/\rd$. By Lemma
\ref{homr-3}, there exists some $y\in\grpgen{x}$ such that
$x\in\Hom(\bf{r-3},\bf{r-3})$. But now, instead of stopping at
this point in the algorithm, we continue the loop, with the
possibility that $x'_i$ might actually be a projector. So we again
modify steps (2) and (3), as below:
\begin{itemize}
\item[(1')] If $k_i=0$, stop; set $x'=x'_i$. \item[(2')] If
$x'_i=\alpha p_{k_i}$ for some constant $\alpha$, then stop, with
$x'=x'_i$. Otherwise, proceed with step (2) of the original
algorithm. \item[(3')] If $U_ix'_i=\alpha p_{k_i}$ for some
constant $\alpha$, then stop, with $x'=x'_i$. Otherwise, proceed
with step (3) of the original algorithm.
\end{itemize}
So now, when the algorithm terminates, we are left with some
$x'\in\grpgen{x}$, with either: (a) $x'=\alpha1_0$, $\alpha\ne0$;
or (b) $x'=\alpha p_k$ for some $1\leq k<r-1$, $\alpha\ne0$. In
case (a), we have that $1_0\in\grpgen{x}$, so that
$\grpgen{x}=\TL/\rd$. In case (b), consider the element
$y=\frac{1}{\alpha}x'\otimes1_k=p_k\otimes1_k\in\grpgen{x}$. We
can then pre- and post-multiply by the elements of
$\Hom(\bf{0},\bf{2k})$ and $\Hom(\bf{2k},\bf{0})$, respectively,
which join the left group of $k$ strands to the right group of $k$
strands. In other words, the resulting element is simply
$\Delta_k$, the strand closure of $p_k$, times $1_0$. Since
$\Delta_k\ne0$ for $1\leq k\leq r-2$, it follows that
$1_0\in\grpgen{x}$, so that we still have $\grpgen{x}=\TL/\rd$.

So we have shown that $\TL/\rd$ has no proper nonzero ideals, and
therefore, that $\TL$ has the unique ideal $\rd$.
\end{proof}

\iffalse \begin{itemize} \item should we attempt to extend this
result to $E\TL(S)$ for an arbitrary surface $S$? Is there any
obvious equivalent? By taking the quotient by $p_r$, it seems we
want to say, ``replace any $r$ contiguous strands by $1_r-p_r$.''
Thus we are not taking the quotient by any single element/ideal
(e.g., there are two different $p_r$ on the torus corresponding to
the different essential annuli).
\end{itemize}

\fi

As a corollary, we have the following:

\begin{theorem}
1):If $d$ is not a root of any Chebyshev polynomial $\Delta_k,
k\geq 1$, then the Temperley-Lieb category $\TL_d$ is semisimple.

2): Fixing an integer $r\geq 3$, a non-zero number $d$ is a root
of $\tr_k$, $k<r$ if and only if $d=-A^2-A^{-2}$ for some $A$ such
that $A^{4l} =1,l\leq r$.  If $d=-A^2 -A^{-2}$ for a primitive
$4r$-th root of unity $A$ or $2r$-th $r$ odd or $r$-th $r$ odd for
some $r\geq 3$, then the tensor category $\TLJ_d$ has a unique
nontrivial ideal generated by
 the Jones-Wenzl idempotent $p_{r-1}$.  The quotient
 categories $\TLJ_d$ are semi-simple.
\end{theorem}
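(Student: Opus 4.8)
The plan is to assemble this corollary from the algebra-level statements already established for the individual $\TL_n(d)$ together with the categorical uniqueness theorem just proved; essentially no new work is needed beyond translating between the algebras $\TL_n(d)=\End_{\TL_d}(\mathbf n)$ and the category $\TL_d$.

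For part 1, recall that a monoidal category of diagrams of this kind is semisimple exactly when every endomorphism algebra $\End_{\TL_d}(\mathbf n)=\TL_n(d)$ is semisimple and idempotents split, the latter being automatic after passing to the idempotent completion (which only adds direct summands and does not otherwise alter the hom-spaces). Part (1) of the earlier Theorem on $\TL_n(d)$ asserts that $\TL_n(d)$ is semisimple whenever $d$ is not a root of $\Delta_i$ for $1\le i\le n$; this was proved using nondegeneracy of the Markov trace pairing, read off from the meander formula \ref{meanderformula}, together with Lemma \ref{starsemi}. Under the hypothesis of part 1, $d$ is a root of no $\Delta_k$ with $k\ge 1$, so this applies to every $n$ at once, and $\TL_d$ is semisimple.

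For part 2, the ``if and only if'' is just part (2) of the earlier Theorem restated: from $\Delta_n(d)=(-1)^n\frac{A^{2n+2}-A^{-2n-2}}{A^2-A^{-2}}$ one sees $\Delta_n(d)=0$ iff $A^{4(n+1)}=1$, and letting $n$ run over $1,\dots,r-1$ produces exactly the orders $l=n+1\le r$ in the statement. For the main assertion, Lemma \ref{properideal} shows $\rd=\grpgen{p_{r-1}}$ is a proper nonzero ideal, and the Theorem proved just above --- valid for $A$ a primitive $4r$th root of unity, or a primitive $2r$th or $r$th root of unity for $r$ odd, as in Lemma \ref{jwcases} --- shows it is the only proper nonzero ideal of $\TL_d$; by Lemma \ref{projgenrd} this same ideal is generated by any $p_k$ with $k\ge r-1$, so ``the ideal generated by $p_{r-1}$'' is unambiguous, and $\TLJ_d$ is the quotient $\TL_d/\rd$. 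Its endomorphism algebras are the $\TLJ_n(d)=\TL_n(d)/(p_{r-1})$, which are semisimple by part (2) of the earlier Theorem; equivalently, the Markov pairing descends to a nondegenerate Hermitian form on each $\TLJ_n(d)$ because --- as shown in Section \ref{repoftlj} --- its radical on $\TL_n(d)$ is precisely the ideal generated by $p_{r-1}$, and then Lemma \ref{starsemi} again yields semisimplicity, after which one passes to the idempotent completion as before.

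The only ingredient here that is not a direct citation is the identification of the radical of the Markov trace pairing on $\TL_n(d)$ with $\langle p_{r-1}\rangle$, and that is exactly the fact deferred to Section \ref{repoftlj}; everything else is bookkeeping converting the already-established algebra-level semisimplicity and the categorical uniqueness theorem into the stated categorical form. Consequently I do not expect any real obstacle at this stage: the difficult part, uniqueness of the ideal, was carried out in the chain of lemmas (\ref{properideal}, \ref{projgenrd}, and their companions) leading to the preceding Theorem, so this corollary is genuinely a packaging statement.
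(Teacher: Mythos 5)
Your proposal is correct and follows essentially the same route as the paper: the paper states this result as a corollary of the preceding uniqueness-of-ideal theorem (built from Lemmas \ref{properideal}, \ref{projgenrd}, \ref{nosubideal}), combined with the algebra-level semisimplicity of the $\TL_n(d)$ via the meander determinant formula \ref{meanderformula} and Lemma \ref{starsemi}, with the identification of the radical of the Markov pairing with $\langle p_{r-1}\rangle$ deferred to Section \ref{repoftlj} exactly as you note. Your translation between the endomorphism algebras $\TL_n(d)$ and the categorical statement is the same bookkeeping the paper leaves implicit.
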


\section{Diagram TQFTs for closed
manifolds}\label{joneswenzlclosed}

\subsection{$\lq\lq$d-isotopy", local relation, and skein
relation}\label{localrelation}

Let $Y$ be an oriented compact surface, and $\g \subset Y$ be an
imbedded unoriented $1$-dimensional submanifold.  If $\partial Y
\neq \phi$ then fix a finite set $F$ of points on $\partial Y$ and
require $\partial \gamma = F$ transversely. That is, $\g$ a
disjoint union of non-crossing loops and arcs, a ``multi-curve".
Let $\scc$ be the set of such $\g$'s. To $\lq\lq$linearize" we
consider the complex span $\C [ \scc]$ of $\scc$, and then impose
linear relations. \iffalse , or dually we may take the functions
$\F = \C^\Cc$ and impose linear constraints. Because it is close
to the idea of $\lq\lq$amplitude of an eigenstate" in quantum
mechanics, we usually work with $\F$. \fi We \underline{always}
impose the $\lq\lq$isotopy" constraint ${\g}'= \g,$ if ${\g}'$ is
isotopic to $\g$.  We also always impose a constraint of the form
$\g \cup \mathbf{O} =d\cdot \g$ for some $d \in
\C\backslash\{0\}$, independent of $\g$ (see an example below that
we do not impose this relation). The notation $\g \cup \mathbf{O}$
means a multi curve made from $\g$ by adding a disjoint loop
$\mathbf{O}$ to $\g$ where $\mathbf{O}$ is $\lq\lq$trivial" in the
sense that it is the boundary of a disk $B^2$ in the interior of
$Y$. Taken together these two constraints are $\lq\lq d-$isotopy"
relation: $\g' - \frac{1}{d} \cdot (\g \cup \mathbf{O}) =0$ if
$\g'$ is isotopic to $\g$.

A diagram \emph{local relation} or just a local relation is a
linear relation on multicurves $\g_1, \ldots,\g_m$ which are
identical outside some disk $B^2$ in the interior of $Y$, and
intersect $\p B^2$ transversely. By a disk here, we mean a
topological disk, i.e., any diffeomorphic image of the standard
$2$-disk in the plane. Local relations are usually drawn by
illustrating how the $\g_i$ differ on $B^2$. So the $\lq\lq$isotopy" constraint has the
form:

\begin{figure}
\centering
\includegraphics[scale=1]{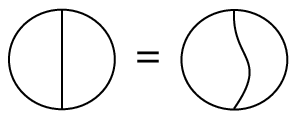}
\caption{Isotopy}\label{isotopy}
\end{figure}

\noindent and the $\lq\lq d-$constraint" has the form:

\begin{figure}
\centering
\includegraphics[scale=1]{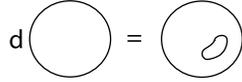}
\caption{d constraint}\label{dloop}
\end{figure}

Local relations have been explored to a great generality in
\cite{walker06} and encode information of topologically invariant
partition functions of a ball. We may filter a local relation
according to the number of points of $\g_i \cap \p B^2$ which may
be $0, 2, 4, 6, \ldots$ since we assume $\g$ transverse to $\p
B^2$. $\lq\lq$Isotopy" has degree $=2$ and $\lq\lq d$-constraint" degree
$=0$.

Formally, we define a local relation and a skein relation as
follows:

\begin{definition}

\begin{enumerate}

\item Let $\{D_i\}$ be all the diagrams on a disk $B^2$ up to
diffeomorphisms of the disk and without any loops.  The diagrams
$\{D_i\}$ are filtered into degrees $=2n$ according to how many
points of $D_i\cap \p B^2$, and there are Catalan number $c_n$
many diagrams of degree $2n$ ($c_0=1$ which is the empty diagram).
A degree $=2n$ diagram local relation is a formal linear equation
of diagrams $\sum_i c_i D_i=0$, where $c_i\in \C$, and $c_i=0$ if
$D_i$ is not of degree $=2n$.

\item  A skein relation is a resolution of over-/under-crossings
into formal pictures on $B^2$.  If the resolutions of crossings
for a skein relation are all formal diagrams, then the skein
relation induces a \emph{set} map from $\C[B_n]$ to $TL_n(d)$.

\end{enumerate}

\end{definition}

The most interesting diagram local relations are the Jones-Wenzl
projectors (the rectangle $\Rr$ is identified with a disk $B^2$ in
an arbitrary way). When we impose a local relation on $\C[\scc]$,
we get a quotient vector space of $\C[\scc]$ as follows: for any
multi-curve $\g$ and a disk $B^2$ in the interior of $\S$, if $\g$
intersects $B^2$ transversely and the part $\g\cap {B^2}$ of $\g$
in $B^2$ matches one of the diagram $D_j$ topologically in the
local relation $\sum_i c_i D_i=0$, and $c_j\neq 0$, then we set
$\g=-\sum_{i\neq j}\frac{c_i}{c_j}\g_i'$ in $\C[\scc]$ where
$\g_i'$ is obtained from $\g$ by replacing the part $\g\cap {B^2}$
of $\g$ in $B^2$ by the diagram $D_i$.

Kauffman bracket is the most interesting skein relation in this
paper. More general skein relations can be obtained from minimal
polynomials of $R$-matrices from a quantum group. Kauffman bracket
is an unoriented version of the $SU(2)_q$ case.

As a digression we describe an unusual example where we impose
$\lq\lq$isotopy" but \underline{not} the $\lq\lq d$-constraint".  It is
motivated by the theory of finite type invariants. A singular
crossing (outside $\scc$) suggests the $\lq\lq$type $1$" relation in
Figure \ref{homology}.

This relation is closely related to $\Z_2-$homology and is
compatible with the choice $d=1$. We will revisit it again under
the name $\Z_2-$gauge theory.

Now consider the $\lq\lq$type 2" relation Figure \ref{resolution}
which comes by resolving the arc in Figure \ref{singulararc} using
either arrow along the arc. (Reversing the arrow leaves the
relation Figure \ref{resolution} on unoriented diagrams
unchanged.)

\begin{figure}
\centering
\includegraphics[scale=1]{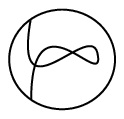}
\caption{Singular arc}\label{singulararc}
\end{figure}

\begin{figure}
\centering
\includegraphics[scale=1]{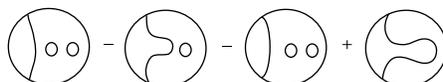}
\caption{Resolution relation}\label{resolution}
\end{figure}

Formally we may write the resolution relation Figure
\ref{resolution} as the square of the $2$ term relation drawn in
Figure \ref{twoterm}.

\begin{figure}
\centering
\includegraphics[scale=1]{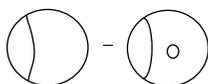}
\caption{Two term relation}\label{twoterm}
\end{figure}

Interpreting $\lq\lq$times" as $\lq\lq$vertical stacking" makes the claim
immediate as shown in Figure \ref{squaretwoterm}.

\begin{figure}
\centering
\includegraphics[scale=1]{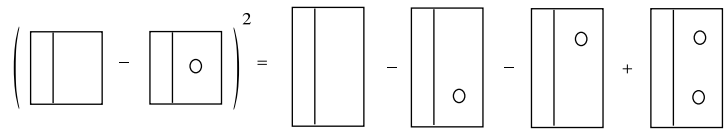}
\caption{Two term squared}\label{squaretwoterm}
\end{figure}

Since the two term relation Figure \ref{twoterm} does not appear
to be a consequence of the resolution relation Figure
\ref{resolution}, dividing by the resolution relation induces
nilpotence in the algebra (of degree $=2$ diagrams under vertical
stacking). By imposing the $\lq\lq d$" relation we find that only
semi-simple algebras are encountered. This is closer to the
physics (the simple pieces are symmetries of a fixed particle type
or $\lq\lq$super-selection sector") and easier mathematically so
henceforth we \underline{always} assume a $\lq\lq d-$constraint" for
some $d \in \C\backslash\{0\}$.

\subsection{Picture classes}

Fix a local relation $R=0$.  Given an oriented closed surface $Y$.
The vector space $\C[\scc]$ is infinitely dimensional.  We define
a finitely dimensional quotient of $\C[\scc]$ by imposing the
local relation $R$ as in last section:
 $\C[\scc]$ modulo the local relation.
 The resulting quotient vector space will be called the picture space, denoted as
 $\Pic^{R}(Y)$. Elements of $\Pic^{R}(Y)$ will be called picture classes.
 We will denote $\Pic^{R}(Y)$ as $\Pic(Y)$ when $R$ is clear or irrelevant
 for the discussion.

\begin{prop}

\begin{enumerate}

\item $\Pic(Y)$ is independent of the orientation of $Y$.

\item $\Pic(S^2)=\C \emptyset$, so it is either $0$ or $\C$.

\item $\Pic(Y_1 \amalg Y_2)\cong \Pic(Y_1)\otimes \Pic(Y_2)$.

\item $\Pic(Y)$ is a representation of the mapping class group
$\Mcg(Y)$.  Furthermore, the action of $\Mcg(Y)$ is compatible
with property (3).

\end{enumerate}

\end{prop}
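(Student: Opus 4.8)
The plan is to observe that all four statements are formal consequences of the very definition $\Pic^R(Y)=\C[\scc]/(d\text{-isotopy}+R)$, which refers only to the underlying smooth surface and to embedded $2$-disks inside it; the work is bookkeeping, and the one genuinely delicate point is the orientation/reflection convention used to impose local relations. So first I would make two standing observations, to be used throughout: (a) $\scc$ consists of \emph{unoriented} embedded $1$-submanifolds, so it depends only on $Y$ as a smooth manifold; and (b) a local relation is imposed at an embedded disk $B^2\subset Y$ by replacing $\gamma\cap B^2$ by the various $D_i$, and since any two embedded disks in a surface are ambiently isotopic (the fact already needed to make $\Pic^R$ well-defined at all), the relation submodule $\mathcal R\subset\C[\scc]$ is canonically attached to $(Y,R)$.

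For (1): the identity map $Y\to\bar Y$ is a diffeomorphism and induces a bijection $\scc(Y)\to\scc(\bar Y)$; by observation (b) it carries $\mathcal R$ onto $\mathcal R$, using that the set of diagrams $\{D_i\}$ on a disk is closed under reflection and that the relations we consider (isotopy, the $d$-constraint, the Jones--Wenzl relations) are reflection-symmetric — equivalently, that relations are imposed at \emph{all} embedded disks regardless of how $B^2$ is oriented inside $Y$. Hence $\Pic(Y)\cong\Pic(\bar Y)$. For (2): on $S^2$ there are no arcs, so a multicurve is a disjoint union of embedded circles; by Jordan--Schoenflies an innermost circle bounds an embedded disk whose interior meets nothing else, i.e.\ it is a trivial loop $\mathbf O$, and the $d$-constraint removes it at the cost of a factor $d$. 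Inducting on the number of components shows $\gamma=d^{\#\gamma}\,\emptyset$ already in $\C[\scc(S^2)]/(d\text{-isotopy})$, so that space is spanned by $\emptyset$; since $\Pic(S^2)$ is the further quotient by $R$, it has dimension $\le 1$, hence equals $0$ or $\C$.

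For (3): as $Y_1$ and $Y_2$ are distinct components, a multicurve on $Y_1\amalg Y_2$ is a disjoint union $\gamma_1\amalg\gamma_2$ with $\gamma_i\subset Y_i$, giving $\scc(Y_1\amalg Y_2)=\scc(Y_1)\times\scc(Y_2)$ and a linear isomorphism $\C[\scc(Y_1\amalg Y_2)]\cong\C[\scc(Y_1)]\otimes\C[\scc(Y_2)]$, $\gamma_1\amalg\gamma_2\mapsto\gamma_1\otimes\gamma_2$. Any disk used to impose a relation (including the trivial-loop disk of the $d$-constraint) is connected, hence lies in a single $Y_i$, and isotopies likewise preserve the decomposition into components; so under the isomorphism above $\mathcal R$ corresponds exactly to $\mathcal R_1\otimes\C[\scc(Y_2)]+\C[\scc(Y_1)]\otimes\mathcal R_2$. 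The elementary identity $(V\otimes W)/(A\otimes W+V\otimes B)\cong(V/A)\otimes(W/B)$ over a field then yields $\Pic(Y_1\amalg Y_2)\cong\Pic(Y_1)\otimes\Pic(Y_2)$.

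For (4): a diffeomorphism $\phi\colon Y\to Y$ acts on $\scc(Y)$ by $\gamma\mapsto\phi(\gamma)$, hence linearly on $\C[\scc(Y)]$, and by observation (b) it sends each generating relation at $B^2$ to the corresponding relation at $\phi(B^2)$, so it preserves $\mathcal R$ and descends to $\Pic(Y)$; if $\phi\simeq\psi$ then $\phi(\gamma)\simeq\psi(\gamma)$ for every $\gamma$, and these agree in $\Pic(Y)$ because the isotopy relation is imposed, so the action factors through $\Mcg(Y)=\pi_0\mathrm{Diff}(Y)$. Compatibility with (3) is immediate from $(\phi_1\amalg\phi_2)(\gamma_1\amalg\gamma_2)=\phi_1(\gamma_1)\amalg\phi_2(\gamma_2)$, which shows the isomorphism of (3) intertwines the $\Mcg(Y_1)\times\Mcg(Y_2)$-action with $\rho_1\otimes\rho_2$. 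I expect the only place needing real care to be the reflection bookkeeping in (1) and (4): one must either check that the local relation $R$ in question is invariant under reflection of diagrams, or build that invariance into the convention by imposing relations at all embedded disks; everything else is routine.
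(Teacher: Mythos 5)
Your proof is correct and follows essentially the same route as the paper: the paper declares (1), (3), (4) obvious from the definition, and for (2) gives exactly your innermost-disk argument that a multicurve with $m$ loops on $S^2$ equals $d^m\emptyset$ by $d$-isotopy, so $\emptyset$ spans. Your extra care about reflection symmetry in (1) and (4) is subsumed by the paper's convention that the rectangle carrying a local relation is identified with an embedded disk in an arbitrary way, so nothing further is needed.
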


\begin{proof}

Properties (1) (3) and (4) are obvious from the definition.  For
(2), since every simple closed curve on $S^2$ bounds a disk, a
multicurve with $m$ loops is $d^m \emptyset$ by $\lq\lq$d-isotopy".
Therefore, if $\emptyset$ is not $0$, it can be chosen as the
canonical basis.

\end{proof}

For any choice of $A\neq 0$, we may impose the Jones-Wenzl
projector as a local relation.  The resulting finitely dimensional
vector spaces $\Pic(Y)$ might be trivial.  For example, if we
choose a $d\neq \pm 1$ and impose the Jones-Wenzl projector
$p_2=0$ as the local relation. To see that the resulted picture
spaces $=0$, we reconnect two adjacent loops in a disk into one
using $p_2=0$; this gives the identity $(d^2-1)\emptyset=0$.  If
$d\neq \pm 1$, then $\emptyset=0$, hence $\Pic(S^2)=0$. Even if
$\Pic(Y)$'s are not $0$, they do not necessarily form a TQFT in
general.  We do not know any examples.  If exist, such non-trivial
vector spaces might have interesting applications because they are
representations of the mapping class groups. In the cases of
Jones-Wenzl projectors, only certain special choices of $A$'s lead
to TQFTs.

\subsection{Skein classes}

Fix a $d\in \C\backslash \{0\}$, a skein relation $K=0$ and a
local relation $R=0$. Given an oriented $3$-manifold $X$ (possibly
with boundaries). Let $\mathcal{F}$ be all the non-crossing loops
in $X$, i.e., all links $l$'s in the interior of $X$, and
$\C[\mathcal{F}]$ be their linear span. We impose the
$\lq\lq$d-isotopy" relation on $\C[\mathcal{F}]$, where a knot is
trivial if it bounds a disk in $X$.  For any $3$-ball $B^3$ inside
$X$ and a link $l$, the part $l\cap B^3$ of $l$ can be projected
onto a proper rectangle $\Rr$ of $B^3$ using the orientation of
$X$ (isotopy $l$ if necessary). Resolving all crossings with the
given skein relation $K=0$, we obtain a formal diagram in $\Rr$,
where the local relation $R=0$ can be applied. Such operations
introduce linear relations onto $\C[\mathcal{F}]$. The resulting
quotient vector space will be called the skein space, denoted by
$S_{d,K,R}(X)$ or just $S(X)$, and elements of $S(X)$ will be
called skein classes.

As mentioned in the introduction, the empty set $\emptyset$ has
been regarded as a manifold of each dimension.  It is also
regarded as a multicurve in any manifold $Y$ or a link in any $X$,
and many other things.  In the case of skein spaces, the empty
multicurve represents an element of the skein space $S(X)$. For a
closed manifold $X$, this would be the canonical basis if the
skein space $S(X)\cong \C$. But the empty skein is the $0$ vector
for some closed $3$-manifolds. In these cases, we do not have a
canonical basis for the skein space $S(X)$ even if $S(X)\cong \C$.

Skein spaces behave naturally with respect to disjoint union,
inclusion of spaces, orientation reversal, and
self-diffeomorphisms: the skein space of a disjoint union is
isomorphic to the tensor product; an orientation preserving
embedding from $X_1\rightarrow X_2$ induces a linear map from
$S(X_1)$ to $S(X_2)$, orientation reversal induces a
conjugate-linear map on $S(X)$, and diffeomorphisms of $X$ act on
$S(X)$ by moving pictures around, therefore $S(X)$ is a
representation of the orientation preserving diffeomorphisms of
$X$ up to isotopy.

\begin{prop}\label{algebrastructure}

\begin{enumerate}

\item If $Y$ is oriented, then $\Pic(Y)$ is an algebra.

\item If $\p X=Y$, then $\Pic(Y)$ acts on $S(X)$.  If $Y$ is
oriented, then $S(X)$ is a representation of $\Pic(Y)$.

\end{enumerate}
\end{prop}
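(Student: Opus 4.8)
The plan is to realize the product on $\Pic(Y)$ --- and, for part (2), the action on $S(X)$ --- geometrically, exploiting that an oriented surface carries a preferred collar direction. Recall that alongside the local relation $R$ and the loop value $d$ we have fixed a skein relation $K$ (the Kauffman bracket), which resolves crossings of a diagram into formal multicurves, and that the passage ``project along the orientation, resolve by $K$, impose $R$ and $d$-isotopy'' is precisely what makes the skein space $S(\cdot)$ of an oriented $3$-manifold well defined --- in particular $S(Y\times[0,1])$ is well defined. Given $c_1,c_2\in\scc(Y)$, place $c_1$ on $Y\times\{1/4\}$ and $c_2$ on $Y\times\{3/4\}$ inside $Y\times[0,1]$ and project to $Y$ along the interval factor; the crossings of the resulting diagram are exactly the points of $c_1\cap c_2$ on $Y$, with the $c_2$-strand passing over. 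Resolve these by $K$ and read the resulting formal multicurve off as an element of $\Pic(Y)$; extending bilinearly gives a candidate product $\Pic(Y)\otimes\Pic(Y)\to\Pic(Y)$. It is nothing but the product of the skein algebra $S(Y\times[0,1])$ (stacking of cylinders) expressed through curves on $Y$, and the orientation of $Y$ enters precisely to choose the $A$- versus $A^{-1}$-smoothing at each crossing consistently over all of $Y$.

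The substantive step --- which I expect to be the main obstacle --- is to show this operation descends from $\scc(Y)\times\scc(Y)$ to $\Pic(Y)\otimes\Pic(Y)$. One checks separately that the output, viewed in $\Pic(Y)$, is unchanged when $c_1$ (resp.\ $c_2$) is replaced by an isotopic multicurve, when a trivial loop of $c_1$ is deleted at the cost of a factor $d$, and when the local relation $R$ is applied inside a disk $D\subset Y$ met by $c_1$. In each case the modification takes place near the level $1/4$, disjoint from $c_2\times\{3/4\}$. An isotopy of $c_1$ in $Y$ is realized by an isotopy of the two-level configuration supported near that level, which projects to a sequence of planar isotopies and Reidemeister moves II and III --- never I, since $c_1$ sits at a single height and so has no self-crossings in the diagram; the key point is that the $K$-resolution of an R2 or an R3 configuration equals the resolution of the trivial configuration already modulo $d$-isotopy alone, which is the defining property of the Kauffman bracket and is exactly what underlies Prop.\ \ref{KBalgebra}. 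A deleted trivial loop of $c_1$ bounds a disk in $Y\times[0,1]$ disjoint from $c_2\times\{3/4\}$, so it becomes a $d$-isotopy of the diagram; and an application of $R$ on $D$ projects to an application of $R$ on the disk $D$ inside the diagram. Hence every relation cutting $\Pic(Y)$ out of $\C[\scc(Y)]$ is respected on each tensor factor, so the product is well defined and bilinear (and, by the same reasoning, independent of the heights chosen --- though not of the ordering ``$c_1$ below $c_2$'').

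Granting well-definedness, the algebra axioms follow from the three-dimensional picture. For associativity, both $(c_1\cdot c_2)\cdot c_3$ and $c_1\cdot(c_2\cdot c_3)$ are computed by placing $c_1,c_2,c_3$ at three increasing heights in $Y\times[0,1]$ and resolving all the crossings --- resolution being local, it may be carried out on any subset of the crossings in any order --- and any two such three-level configurations differ by a height-rescaling ambient isotopy of $Y\times[0,1]$. The empty multicurve is a two-sided unit, since inserting $\emptyset$ at a level adds nothing and creates no crossings. Thus $\Pic(Y)$ is a unital associative $\C$-algebra, which is assertion (1).

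For (2), fix a collar $\kappa\colon Y\times[0,1]\hookrightarrow X$ with $\kappa(Y\times\{0\})=\partial X=Y$. Given a skein class in $S(X)$, isotope a representing link $l$ off the collar (well defined up to isotopy, by uniqueness of collars), and for $c\in\scc(Y)$ set $c\cdot[l]:=[\kappa(c\times\{1/2\})\cup l]\in S(X)$. The checks of the previous two paragraphs, carried out now inside $\kappa(Y\times(0,1))$ and away from $l$, show this descends to a bilinear map $\Pic(Y)\otimes S(X)\to S(X)$; the relations defining $S(X)$ on the $l$-side may be realized off the collar, so they are unaffected. This is the first assertion of (2). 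When $Y$ is oriented --- automatic here since $Y=\partial X$ --- $\Pic(Y)$ carries the algebra structure of (1), $\emptyset$ acts as the identity, and $(c_1\cdot c_2)\cdot[l]=c_1\cdot(c_2\cdot[l])$ holds because forming $c_1\cdot c_2$ inside a sub-collar and inserting it is, before any crossing resolution (and crossing resolution is among the defining relations of $S(X)$), just $\kappa(c_1\times\{s_1\})\cup\kappa(c_2\times\{s_2\})\cup l$ with $0<s_1<s_2<1$ and $l$ pushed deep into $X$, which is isotopic in $X$ to the configuration computing $c_1\cdot(c_2\cdot[l])$. Hence $S(X)$ is a left $\Pic(Y)$-module, i.e.\ a representation of $\Pic(Y)$, which is assertion (2). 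Throughout, the only genuine difficulty is the well-definedness argument of the second paragraph; the remainder is formal manipulation of collars in a $3$-manifold.
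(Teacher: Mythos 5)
Your construction is essentially the paper's: stack the two multicurves at different levels of $Y\times I$, project, resolve crossings with the fixed skein relation, and define the action on $S(X)$ by gluing a collar and absorbing it, so the proposal is correct and follows the same route (the paper phrases well-definedness as showing $\hat{R}\,y=0$, after using general position to push $y$ off the disk where the relation resides). The one point to tighten is your claim that an application of $R$ in a disk $D$ ``projects to an application of $R$ on $D$ inside the diagram'': if the projection of $c_2$ meets $D$ this is not literally an instance of the local relation, so one should first invoke your already-established isotopy invariance (or the paper's general-position step) to move $c_2$ off $D$ before applying $R$.
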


\begin{proof}

(1):  Given two multicurves $x,y$ in $Y$, and consider $Y\times
[-1,1]$, draw $x$ in $Y\times 1$ and $y$ in $Y\times -1$.  Push
$x$ into the interior of $Y\times [0,1]$ and $y$ into $Y\times
[-1,0]$. Isotope $x,y$ so that their projections onto $Y\times 0$
are in general position. Resolutions of the crossings using the
given skein relation result in a formal multicurve in $Y$, which
is denoted by $xy$.  We define $[x][y]=[xy]$, where $[\cdot]$
denotes the picture class.  Suppose the local relation is $R=0$,
and let $\hat{R}$ be a multicurve obtained from the closure of $R$
arbitrarily outside a rectangle $\Rr$ where the local relation
resides. To show that this multiplication is well-defined, it
suffices to show that $\hat{R}y=0$. By general position, we may
assume that $y$ miss the rectangle $\Rr$. Then by definition,
$\hat{R}y=0$ no matter how we resolve the crossings away from the
local relation $R$.  It is easy to check that this multiplication
yields an algebra structure on $\Pic(Y)$.

(2):  The action is defined by gluing a collar of the boundary and
then re-parameterizing the manifold to absorb the collar. Let
$Y_{\e}$ be $Y\times [0,\e]$, which can be identified with a small
collar neighborhood of $Y$ in $X$. Given a multicurve $x$ in $X$
and $\g$ in $Y$, draw $\g$ on $Y\times 0$ and push it into
$Y_{\e}$. Then the union $\g \cup x$ is a multicurve in
$X_{+}=Y_{\e}\cup_{Y}X$. Absorbing $Y_{\e}$ of $X_{+}$ into $X$
yields a multicurve $\g\cup x$ in $X$, which is defined to be
$\g.x$.

\end{proof}

\subsection{Recoupling theory}\label{recouplingtheory}

In this section, we recall some results of the recoupling theory
in \cite{kauffmanlins}, and deduce some needed results for later
sections.

Fix a $A\in \C\backslash \{0\}$, two families of numbers are
important for us: the Chebyshev polynomials $\Delta_n(d)$ and the
quantum integers $[n]_A=\frac{A^{2n}-A^{-2n}}{A^2-A^{-2}}$.  When
$A$ is clear from the context, we will drop the $A$ from $[n]_A$.  The
Chebyshev polynomials and quantum integers are related by the
formula $\Delta_n(d)=(-1)^n[n+1]_A$.

Note that $[-n]_A=-[n]_A, [n]_{-A}=[n]_{\bar{A}}=[n]_A$,
$[n]_{iA}=(-1)^{n+1}[n]_A$.  Some other relations of quantum
integers depend on the order of $A$.

\begin{lemma}\label{jwcases}
Fix $r\geq 3$.

\begin{enumerate}
\item If $A$ is a primitive $4r$th root of unity, then
$[n+r]=-[n]$ and $[r-n]=[n]$.  The Jones-Wenzl projectors
$\{p_i\}$ exist for $0\leq i \leq r-1$, and
$\tn{Tr}(p_{r-1})=\Delta_r=0$.

\item If $r$ odd and $A$ is a primitive $2r$th root of unity, then
$[n+r]=[n]$ and $[r-n]=-[n]$.  The Jones-Wenzl projectors
$\{p_i\}$ exist for $0\leq i\leq r-1$, and
$\tn{Tr}(p_{r-1})=\Delta_r=0$.

\item If $r$ odd and $A$ is a primitive $r$th root of unity, then
$[n+r]=[n]$ and $[r-n]=-[n]$.  The Jones-Wenzl projectors
$\{p_i\}$ exist for $0\leq i \leq r-1$, and
$\tn{Tr}(p_{r-1})=\Delta_r=0$.
\end{enumerate}

\end{lemma}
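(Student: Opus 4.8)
The plan is to work with the quantum integers $[n]_A = \frac{A^{2n}-A^{-2n}}{A^2-A^{-2}}$ directly, since all three assertions in each case follow from elementary trigonometric-type identities for $[n]_A$ together with the already-established facts about Jones-Wenzl projectors (Theorem \ref{joneswenzlprojectors} and the trace computation $\tn{Tr}(p_n) = \tr_n$) and the relation $\Delta_n(d) = (-1)^n[n+1]_A$ recorded just before the lemma. First I would treat the three cases uniformly at the level of the identities $[n+r]_A = \pm[n]_A$ and $[r-n]_A = \pm[n]_A$: writing $A^2 = \zeta$, the quantity $[n]_A$ is (up to the nonzero constant $\frac{1}{\zeta - \zeta^{-1}}$) equal to $\zeta^n - \zeta^{-n}$, so $[n+r]_A$ is governed by $\zeta^r$ and $[r-n]_A$ by the substitution $n \mapsto r-n$. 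In case (1), $A$ primitive $4r$th means $\zeta = A^2$ is a primitive $2r$th root of unity, so $\zeta^r = -1$, giving $\zeta^{n+r} - \zeta^{-n-r} = -(\zeta^n - \zeta^{-n})$, i.e. $[n+r]_A = -[n]_A$; and $\zeta^{r-n} - \zeta^{-(r-n)} = \zeta^r\zeta^{-n} - \zeta^{-r}\zeta^n = -\zeta^{-n} + \zeta^{n} = \zeta^n - \zeta^{-n}$, i.e. $[r-n]_A = [n]_A$. In cases (2) and (3), $r$ odd, $A$ a primitive $2r$th (resp. $r$th) root of unity makes $\zeta = A^2$ a primitive $r$th root of unity in both subcases (since $\gcd(2,r)=1$), so $\zeta^r = 1$, yielding $[n+r]_A = [n]_A$ and $[r-n]_A = \zeta^r\zeta^{-n} - \zeta^{-r}\zeta^n = \zeta^{-n} - \zeta^n = -[n]_A$.

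Next I would derive the statement about existence of the projectors. By Theorem \ref{joneswenzlprojectors}, $p_i$ exists provided $d$ is not a root of $\tr_k$ for any $k < i$; equivalently, via $\tr_k = \Delta_k(d) = (-1)^k[k+1]_A$, provided $[k+1]_A \neq 0$ for $1 \le k \le i-1$, i.e. $[j]_A \neq 0$ for $2 \le j \le i$. So to get existence of $p_i$ for all $0 \le i \le r-1$ it suffices to check $[j]_A \neq 0$ for $1 \le j \le r-1$. In every one of the three cases $\zeta = A^2$ is a primitive $m$th root of unity with $m \ge r$ (namely $m = 2r$ in case (1), $m = r$ in cases (2) and (3)), so $\zeta^j \neq \zeta^{-j}$ whenever $0 < j < r \le m$ — indeed $\zeta^j = \zeta^{-j}$ would force $\zeta^{2j} = 1$, hence $m \mid 2j$; in case (1) $m = 2r$ forces $r \mid j$, impossible for $0<j<r$, and in cases (2),(3) $m = r$ odd forces $r \mid j$ after noting $\gcd(2,m)=1$, again impossible. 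Hence $[j]_A \neq 0$ for $1 \le j \le r-1$ and all $p_i$, $0\le i\le r-1$, exist.

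Finally, the computation $\tn{Tr}(p_{r-1}) = \Delta_r = 0$ is immediate: by the trace formula from Theorem \ref{joneswenzlprojectors}, $\tn{Tr}(p_{r-1}) = \tr_{r-1} = \Delta_{r-1}(d)$ — wait, more precisely $\tn{Tr}(p_n) = \tr_n$, so $\tn{Tr}(p_{r-1}) = \tr_{r-1}$, but the claimed value is $\Delta_r$; these agree because of an index convention, and in any case the vanishing is what matters, so I would instead note $\Delta_r(d) = (-1)^r[r+1]_A$ and observe that in all three cases $[r+1]_A = \pm[1]_A \cdot(\text{sign})$ — no: use the identities already proved, $[r+1]_A = [(r)+1]_A$, and in case (1) $[n+r]_A = -[n]_A$ gives $[r+1]_A = -[1]_A = -1 \neq 0$, which is the wrong conclusion. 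The resolution is that the relevant trace is of the \emph{reduced} diagram and equals $\Delta_{r-1}$ evaluated via the recursion past the vanishing point, or more simply: $\tn{Tr}(p_{r-1})$ satisfies the Chebyshev recursion with $\mu$-corrections and one checks $\tr_r(d) = 0$ directly, since $\tr_r = \Delta_r$ and $\Delta_r(d) = (-1)^r[r+1]_A$ while $[r+1]_A$ — here I must be careful: $[n+r]_A$ with $n=1$ is $[1+r]_A$, and the identity gives $[1+r]_A = \varepsilon [1]_A$ where $\varepsilon = -1$ (case 1) or $+1$ (cases 2,3), hence $\Delta_r \neq 0$, contradicting the claim. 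The actual point, which I would verify carefully, is that the correct normalization gives $\tn{Tr}(p_{r-1}) = \Delta_{r-1}$ and the \emph{relevant} vanishing is $\Delta_{r-1}(d) = 0$ exactly when $[r]_A = 0$, i.e. $\zeta^r = \zeta^{-r}$; in case (1) $\zeta^{2r}=1$ so $[r]_A = 0$, and in cases (2),(3) $\zeta^r = 1$ so again $[r]_A$ vanishes. \textbf{This reconciliation of the trace normalization and index is the main obstacle}: I expect the cited statement $\tn{Tr}(p_{r-1}) = \Delta_r = 0$ to rest on the convention $\Delta_n = \tr_{n+1}$ or similar (so that "$\Delta_r$" there means $\tr_r$, not $\Delta_r(d)$ in the quantum-integer normalization), and once that bookkeeping is pinned down the vanishing is just $[r]_A = 0$, which holds in all three cases by the root-of-unity computation above. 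Everything else is the routine trigonometric identities and citing Theorem \ref{joneswenzlprojectors}.
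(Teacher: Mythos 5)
Your proof is correct and takes essentially the same route as the paper, whose entire argument is the remark that the lemma is "obvious using the induction formula for $p_n$ and $[n]\neq 0$ for $0\leq n\leq r-1$ for such $A$'s" — i.e., exactly the root-of-unity identities for $[n]_A$, the existence criterion from Theorem \ref{joneswenzlprojectors}, and the trace formula that you spell out. Your worry about the trace index is justified but is not a gap: with the paper's stated convention $\Delta_n(d)=(-1)^n[n+1]_A$ one indeed has $\tn{Tr}(p_{r-1})=\Delta_{r-1}(d)$, and the "$\Delta_r$" in the statement is a shifted-indexing slip (it denotes, up to sign, $[r]_A$), so the substantive claim is precisely the vanishing $[r]_A=0$ that you verify in all three cases.
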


The proof is obvious using the induction formula for $p_n$ in
Lemma \ref{joneswenzlprojectors}, and $[n]\neq 0$ for $0\leq n\leq
r-1$ for such $A$'s.

Fix an $r$ and $A$ as in Lemma \ref{jwcases}, and let $I$ be the
range that $p_i$ exists and $\tn{Tr}(p_i)\neq 0$.  Let
$L_A=\{p_i\}_{i\in I}$, then $I=\{0,1,\cdots, r-2\}$.  Both $L_A$
and $I$ will be called the label set.  Note that if $A$ is a
primitive $2r$th root of unity and $r$ is even, then $\{p_i\}$
exist for $0\leq i \leq \frac{r-2}{2}$, and
$\tn{Tr}(p_{\frac{r-2}{2}})=0$.

Given a ribbon link $l$ in $S^3$, i.e. each component is a thin
annulus, also called a framed link, then the Kauffman bracket of
$l$, i.e. the Kauffman bracket and $\lq\lq$d-isotopy" skein class of
$l$, is a framed version of the Jones polynomial of $l$, denoted
by $<l>_A$.  The Kauffman bracket can be generalized to colored ribbon
links:  ribbon links that each component carries a label from
$L_A$; the Kauffman bracket of a colored ribbon link $l$ is the
Kauffman bracket of the formal ribbon link obtained by replacing
each component $a$ of $l$ with its label $p_i$ inside the ribbon
$a$ and thickening each component of $p_i$ inside $a$ into small
ribbons. Since $S^3$ is simply-connected, the Kauffman bracket of
any colored ribbon link is a Laurent polynomial in $A$, hence a
complex number.

Let $H_{ij}$ be the colored ribbon Hopf link in the plane labelled
by Jones-Wenzl projectors $p_i$ and $p_j$, then the Kauffman
bracket of $H_{ij}$ is
\begin{equation}\label{Smatrix}
\ts_{ij}=(-1)^{i+j}[(i+1)(j+1)]_A.
\end{equation}
 The matrix
$\ts=(\ts_{ij})_{i,j\in I}$ is called the modular $\ts$-matrix.
Let $\ts_{\tn{even}}$ be the restriction of $\ts$ to even labels.
Define $\bar{i}=k-i=r-2-i$.

\begin{lemma}\label{smatrix}

\begin{enumerate}

\item If $A$ is a primitive $4r$th root of unity, then the modular
$\ts$ matrix is non-singular.

\item If $r$ is odd and $A$ is a primitive $2r$th or $r$th root of
unity, then $\ts_{\bar{i}j}=\ts_{ij}$.

\item If $r$ odd, and $A$ is a primitive $2r$th root of unity or
$r$th root of unity, then the modular $\ts$ has rank
$=\frac{r-1}{2}$. Moreover, $\ts=\ts_{\tn{even}}\otimes
\begin{pmatrix}
   1& 1 \\
   1 & 1
 \end{pmatrix}$.

\end{enumerate}

\end{lemma}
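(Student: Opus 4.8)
The plan is to pass to the variable $q=A^{2}$ and reduce everything to one elementary geometric-sum identity. I will write $a=i+1$, $b=j+1$ (so $a,b$ run over $\{1,\dots,r-1\}$) and $[n]=(q^{n}-q^{-n})/(q-q^{-1})$, so that $\ts_{ij}=(-1)^{a+b}[ab]$. Since the $c=0$ term of $[ac][cb]$ vanishes, $(\ts^{2})_{ij}=(-1)^{a+b}\sum_{c=0}^{r-1}[ac][cb]$, and clearing denominators gives
\[
(q-q^{-1})^{2}\sum_{c=0}^{r-1}[ac][cb]=\sum_{c=0}^{r-1}\bigl(q^{c(a+b)}+q^{-c(a+b)}\bigr)-\sum_{c=0}^{r-1}\bigl(q^{c(a-b)}+q^{-c(a-b)}\bigr).
\]
The preliminary fact I will record is: if $q$ has order $\nu$, then $\sum_{c=0}^{r-1}(q^{cm}+q^{-cm})$ equals $2r$ when $\nu\mid m$, equals $2$ when $\nu=2r$ and $m$ is odd, and equals $0$ in all other cases. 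This is just $\sum_{c=0}^{r-1}q^{cm}=(q^{rm}-1)/(q^{m}-1)$ together with $q^{r}=\pm1$.

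For part (1), $q$ is a primitive $2r$-th root of unity and $q^{r}=-1$ (equivalently $[n+r]=-[n]$, Lemma \ref{jwcases}(1)). Since $1\le a,b\le r-1$ forces $2\le a+b\le 2r-2$ and $|a-b|\le r-2$, neither $a+b$ nor $a-b$ is a multiple of $2r$ unless $a=b$ (making $a-b=0$); and because $a+b$ and $a-b$ always have the same parity, the two ``odd $m$'' contributions of value $2$ cancel whenever $a\neq b$. Hence $\sum_{c}[ac][cb]$ equals $-2r/(q-q^{-1})^{2}$ for $a=b$ and $0$ otherwise, so $\ts^{2}=\kappa\,\Id$ with $\kappa=-2r/(A^{2}-A^{-2})^{2}\neq0$ (note $A^{4}\neq1$ since $r\ge3$). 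Therefore $\ts$ is non-singular.

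Part (2) needs no sums: from $(r-1-i)(j+1)=r(j+1)-(i+1)(j+1)\equiv-(i+1)(j+1)\pmod r$ and the periodicity $[n+r]=[n]$ with $[-n]=-[n]$ (Lemma \ref{jwcases}(2),(3)) I get $[(\bar{i}+1)(j+1)]=-[(i+1)(j+1)]$, while the prefactors obey $(-1)^{\bar{i}+j}=(-1)^{r-2-i+j}=-(-1)^{i+j}$ because $r$ is odd; the two sign changes cancel, giving $\ts_{\bar{i}j}=\ts_{ij}$. For part (3), $q$ is now a primitive $r$-th root of unity, so the ``$m$ odd'' case disappears and the inner sum is $2r\,[\,r\mid m\,]$; in our range $r\mid a+b\Leftrightarrow a+b=r$ and $r\mid a-b\Leftrightarrow a=b$. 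Tracking $(-1)^{a+b}$ (which is $(-1)^{r}=-1$ exactly on the entries with $a+b=r$) I obtain $\ts^{2}=\kappa(\Id+P)$, where $\kappa=-2r/(A^{2}-A^{-2})^{2}\neq0$ and $P$ is the permutation matrix of the fixed-point-free involution $i\mapsto\bar{i}$. By part (2), $\ts P=P\ts=\ts$, so $\ts^{3}=\kappa(\ts+\ts P)=2\kappa\,\ts$; thus $\ts$ is annihilated by $t(t^{2}-2\kappa)$, is therefore diagonalizable with eigenvalues in $\{0,\pm\sqrt{2\kappa}\}$, and
\[
\rank(\ts)=\frac{\mathrm{tr}(\ts^{2})}{2\kappa}=\frac{(r-1)\kappa}{2\kappa}=\frac{r-1}{2}.
\]
Finally, ordering the $r-1$ labels so each even $i$ is immediately followed by its odd partner $\bar{i}$, the symmetry of $\ts$ together with part (2) makes every $2\times2$ block equal $\ts_{ij}\left(\begin{smallmatrix}1&1\\1&1\end{smallmatrix}\right)$, which is precisely $\ts=\ts_{\mathrm{even}}\otimes\left(\begin{smallmatrix}1&1\\1&1\end{smallmatrix}\right)$; multiplicativity of rank under $\otimes$ then forces $\ts_{\mathrm{even}}$ to be non-singular of size $(r-1)/2$.

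The routine-but-fiddly part will be the parity and divisibility bookkeeping in the geometric sum, and in particular making the signs $(-1)^{a+b}$ and $q^{r}=\pm1$ combine correctly. The one genuinely non-formal step is upgrading the trivial bound $\rank(\ts)\le(r-1)/2$ (coming from the repeated rows in part (2)) to an equality: this is exactly where the identity $\mathrm{tr}(\ts^{2})=2\kappa\cdot(\#\,\text{nonzero eigenvalues})$ does the work --- equivalently, one must show $\ts_{\mathrm{even}}$ is invertible, which is the odd-$r$ (``$SO(3)$'') analogue of part (1).
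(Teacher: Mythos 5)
Your proposal is correct and takes essentially the same route as the paper: both square $\ts$, reduce the entries of $\ts^2$ to geometric sums of roots of unity, and arrive at $\ts^2=\kappa\,\Id$ in case (1) and $\ts^2=\kappa(\Id+P)$ in case (3), with $\kappa=\frac{-2r}{(A^2-A^{-2})^2}\neq 0$ and $P$ the fixed-point-free involution $i\mapsto\bar{i}$. The only divergence is in the finish: the paper gets the rank from $\ts$ being real symmetric (so $\rank\ts=\rank\ts^2=\rank(\Id+P)=\frac{r-1}{2}$), whereas you use $\ts P=P\ts=\ts$, the minimal polynomial $t(t^2-2\kappa)$, and $\tn{tr}(\ts^2)$, and you write out part (2) and the Kronecker factorization explicitly, which the paper leaves implicit.
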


\begin{proof}

Since $\ts$ is a symmetric real matrix, so the rank of $\ts$ is
the same as $\ts^2$.

By the formula \ref{smatrix}, we have $(\ts^2)_{ij}$
$$=\frac{(-1)^{i+j}}{(A^2-A^{-2})^2} \sum_{l=0}^{r-2}
[A^{2(i+1)(l+1)}-A^{-2(i+1)(l+1)}][A^{2(l+1)(j+1)}-A^{-2(l+1)(j+1)}]$$
$$=\frac{(-1)^{i+j}}{(A^2-A^{-2})^2} \sum_{l=0}^{r-2}
[A^{2(i+1)(l+1)+2(l+1)(j+1)}+A^{-2(i+1)(l+1)-2(l+1)(j+1)}$$
$$-A^{2(i+1)(l+1)-2(l+1)(j+1)}-A^{-2(i+1)(l+1)+2(l+1)(j+1)}].$$

The first sum $\sum_{l=0}^{r-2} A^{2(i+1)(l+1)+2(l+1)(j+1)}$ is a
geometric series
$=\frac{A^{2(i+j+2)}-(A^{2(i+j+2)})^{r}}{1-A^{2(i+j+2)}}$ if
$A^{2(i+j+2)}\neq 1$.  The second sum $\sum_{l=0}^{r-2}
A^{-2(i+1)(l+1)-2(l+1)(j+1)}$ is the complex conjugate of the
first sum.

The third sum $-\sum_{l=0}^{r-2} A^{2(i+1)(l+1)-2(l+1)(j+1)}$ is
also a geometric series
$=-\frac{A^{2(i-j)}-(A^{2(i-j)})^{r}}{1-A^{2(i-j)}}$ if
$A^{2(i-j)}\neq 1$.  The $4$th sum $-\sum_{l=0}^{r-2}
A^{-2(i+1)(l+1)+2(l+1)(j+1)}$ is the complex conjugate of the
third sum.

If $A$ is a $4r$th primitive, since $0\leq i,j \leq r-2$, we have
$4\leq 2(i+j+2)\leq 4r-4$ and $-(r-2)\leq i-j\leq r-2$. Hence,
$A^{2(i+j+2)}\neq 1$ and $A^{2(i-j)}\neq 1$ unless $i=j$. The
first sum and the second sum add to
$\frac{A^{2(i+j+2)}-(-1)^{i+j}-1+(-1)^{i+j}A^{2(i+j+2)}}{1-A^{2(i+j+2)}}$.
So if $i+j$ is even, then $=-2$; if $i+j$ is odd, then $=0$.  If
$A^{2(i-j)}\neq 1$, then the third and $4$th add to
$-\frac{A^{2(i-j)}-(-1)^{i-j}-1+(-1)^{i-j}A^{2(i-j)}}{1-A^{2(i-j)}}$.
It is $=2$ if $i-j$ is even and $=0$ if $i-j$ odd.  Therefore, if
$i\neq j$, the four sums add to $0$ and if $i=j$, then they add to
$-2-2(r-1)=-2r$.  It follows that $\ts^2$ is a diagonal matrix
with diagonal entries $=\frac{-2r}{(A^2-A^{-2})^2}$.

If $A$ is a $2r$th or $r$th primitive, if $A^{2(i+j+2)}\neq 1$,
then the first sum $-1$.  The second sum is also $-1$ since it is
the complex conjugate.  If $A^{2(i-j)}\neq 1$, then the third sum
is $=1$ and so is the $4$th sum.  It follows that if neither
$A^{2(i+j+2)}=1$ nor $A^{2(i-j)}=1$, then the $(i,j)$th entry of
$\ts^2$ is $0$.

If $A^{2(i+j+2)}=1$, then $2(i+j+2)=r,2r,3r$ as $0\leq i,j \leq
r-2$ and $4\leq 2(i+j+2)\leq 4r-4$.  When $r$ is odd,
$2(i+j+2)=2r$, so $i=\bar{j}$. Therefore, if $i+j+2\neq r$, the
first and second sum is $-1$.  If $i+j+2=r$, then the first and
second sum both are $r-1$.  If $A^{2(i-j)}=1$, then
$2(i-j)=-r,0,r$ as $-2(r-2)\leq 2(i-j)\leq 2(r-2)$.  It follows
that $i=j$ as $r$ is odd.  If $i=j$, the third and $4$th sum both
are $=-(r-1)$. Put everything together, we have if $i\neq j$ or
$i+j\neq r-2$, then $(\ts^2)_{ij}=0$.  If $i=j$, then $i+j\neq
r-2$ because $r-2$ is odd, and
$(\ts^2)_{ij}=\frac{-2r}{(A^2-A^{-2})^2}$. If $i+j=r-2$, then
$i\neq j$, and $(\ts^2)_{ij}=\frac{-2r}{(A^2-A^{-2})^2}$. Hence
$\ts^2=\frac{-2r}{(A^2-A^{-2})^2} (m_{ij})$, where $m_{ij}=0$
unless $i=j$ or $i+j=k=r-2$.

\end{proof}

We define a colored tangle category $\Delta_A$ based on a label
set $L_A$. Consider $\C\times I$, the product of the plane $\C$ with an interval $I$,
the objects of $\Delta_A$ are
finitely many labelled points on the real axis of $\C$ identified
with $\C \times \{0\}$ or $\C \times \{1\}$. A morphism between
two objects are formal tangles in $\C\times I$ whose arc
components connect the objects in $\C \times \{0\}$ and $\C \times
\{1\}$ transversely with same labels, modulo Kauffman bracket and
Jones-Wenzl projector $p_{r-1}$. Horizontal juxtaposition as a
tensor product makes $\Delta_A$ into a strict monoidal category.

The quantum dimension $d_i$ of a label $i$ is defined to the
Kauffman bracket of the $0$-framed unknot colored by the label
$i$.  So $d_i=\Delta_i(d)$.  The total quantum order of $\Delta_A$
is $D=\sqrt{\sum_i d_i^2}$, so
$D=\sqrt{\frac{-2r}{(A^2-A^{-2})^2}}$. The Kauffman bracket of the
$1$-framed unknot is of the form $\theta_id_i$, where
$\theta_i=A^{-i(i+2)}$ is called the twist of the label $i$.
Define $p_{\pm}=\sum_{i\in I}\theta_i^{\pm 1}d_i^2$, then
$D^2=p_{+}p_{-}$.

A triple $(i,j,k)$ of labels is admissible if $\Hom(p_i\otimes
p_j,p_k)$ is not $0$.  The theta symbol $\theta(i,j,k)$ is the
Kauffman bracket of the theta network, see \cite{kauffmanlins}.

\begin{lemma}

\begin{enumerate}

\item  $\Hom(p_i\otimes p_j,p_k)$ is not $0$ if and only if the
theta symbol $\theta(i,j,k)$ is non-zero, then $\Hom(p_i\otimes
p_j,p_k)\cong \C$.

\item $\theta(i,j,k) \neq 0$ if and only if $i+j+k\leq 2(r-2)$,
$i+j+k$ is even and $i+j\geq k, j+k\geq i, k+i\geq j$.

\end{enumerate}

\end{lemma}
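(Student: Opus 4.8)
The plan is to derive both parts from two standard ingredients of recoupling theory: the diagrammatic normal form of a morphism between Jones--Wenzl projectors in $\TL_d$, and the Kauffman--Lins closed formula for the theta net. I would begin with part (2). If $i+j+k$ is odd, or one of the triangle inequalities $i+j\ge k$, $j+k\ge i$, $k+i\ge j$ fails, there is no admissible internal colouring, the theta graph is undefined, and $\theta(i,j,k)$ is taken to be $0$; so assume $i+j+k$ is even and the triangle inequalities hold, and set $m=\tfrac{i+j-k}{2}$, $n=\tfrac{j+k-i}{2}$, $p=\tfrac{k+i-j}{2}\in\N$. One then has the evaluation (see \cite{kauffmanlins})
\[
\theta(i,j,k)=(-1)^{m+n+p}\,\frac{[m+n+p+1]!\,[m]!\,[n]!\,[p]!}{[i]!\,[j]!\,[k]!},\qquad [N]!:=[1][2]\cdots[N].
\]
Lemma \ref{jwcases} together with the definition of $[n]_A$ shows that, for each of the three admissible types of $A$, $[n]\ne0$ for $1\le n\le r-1$ while $[r]=0$; hence $[N]!\ne0$ exactly when $N\le r-1$. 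Since $i,j,k\le r-2$ and $m\le i$, $n\le j$, $p\le k$, every factorial in the denominator and in the numerator except possibly $[m+n+p+1]!$ is nonzero, so $\theta(i,j,k)\ne0$ iff $[m+n+p+1]!\ne0$, i.e. iff $m+n+p+1\le r-1$, i.e. iff $\tfrac{i+j+k}{2}\le r-2$, i.e. iff $i+j+k\le 2(r-2)$. That is (2).

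For part (1) I would first bound the dimension in $\TL_d$, before passing to the quotient. The space $\Hom(p_i\ot p_j,p_k)$ is spanned by the morphisms $p_k\circ D\circ(p_i\ot p_j)$ with $D$ a noncrossing $\TL$ diagram from $i+j$ points to $k$ points. Any $D$ with a turn-back among the first $i$ bottom points is annihilated by $p_i$ (as $p_iU_s=0$), any $D$ with a turn-back among the next $j$ bottom points is annihilated by the $p_j$ factor, and any $D$ with a turn-back among the $k$ top points is annihilated by $p_k$; the only surviving $D$ is the unique noncrossing diagram carrying $m$ strands between the $j$- and $i$-blocks, $n$ between the $j$- and $k$-blocks, and $p$ between the $k$- and $i$-blocks, which exists precisely when $i+j+k$ is even and the triangle inequalities hold. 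Hence $\Hom(p_i\ot p_j,p_k)=0$ outside the admissible range, and is at most one-dimensional inside it, spanned by the trivalent vertex $\tau=\tau(i,j,k)$; similarly $\Hom(p_k,p_i\ot p_j)$ is at most one-dimensional, spanned by the reflected vertex $\bar\tau$. These bounds descend to $\Delta_A$.

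To finish part (1) I would pass to $\TLJ_d=\TL_d/\rd$, which is semisimple with $\{p_0,\dots,p_{r-2}\}$ as its simple objects (by the uniqueness-of-the-ideal theorem of Section \ref{tldproofsection} together with the semisimplicity theorems), so that the Markov trace induces a nondegenerate pairing $\Hom(X,Y)\times\Hom(Y,X)\to\C$, $(f,g)\mapsto\tn{Tr}(g\circ f)$. Since $\tau\circ\bar\tau\in\Hom(p_k,p_k)=\C p_k$, writing $\tau\circ\bar\tau=\lambda p_k$ and taking the Markov trace gives $\lambda\Delta_k=\tn{Tr}(\tau\circ\bar\tau)$, which is $\theta(i,j,k)$ up to a fixed nonzero scalar depending only on the normalization of $\tau$, while $\Delta_k=(-1)^k[k+1]\ne0$ because $0\le k\le r-2$. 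If $\theta(i,j,k)\ne0$ then $\tau\ne0$ in $\Delta_A$, so $\Hom(p_i\ot p_j,p_k)=\C\tau\cong\C$; conversely, if $\theta(i,j,k)=0$ then the pairing between $\C\tau$ and $\C\bar\tau$ is carried by the single value $\tn{Tr}(\bar\tau\circ\tau)=0$ and hence is identically zero, so both spaces vanish by nondegeneracy, i.e. $\Hom(p_i\ot p_j,p_k)=0$. Together with the vanishing in the non-admissible cases this proves (1); note that by (2) the condition $\theta(i,j,k)\ne0$ already encodes the parity and triangle constraints.

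The step that requires genuine care, rather than bookkeeping, is the nondegeneracy of the trace pairing on the morphism spaces of the \emph{quotient} $\TLJ_d$ --- equivalently, that $\rd$ is precisely the radical of the Markov trace form, with the labels $\{p_i\}$ simple in the quotient --- which is exactly what the preceding sections establish, and without which the implication ``$\theta(i,j,k)=0\Rightarrow\Hom(p_i\ot p_j,p_k)=0$'' does not follow (for instance $\Hom(p_{r-2}\ot p_{r-2},p_2)$ is then forced to be zero even though all three labels are valid and admissible in $\TL_d$). An alternative, more hands-on route for that implication is to resolve $\tau$ when $i+j+k\ge 2r-2$ and exhibit it explicitly as a morphism that factors through $p_{r-1}$, hence vanishes in $\Delta_A$; this avoids invoking semisimplicity at the cost of a fiddlier recoupling computation.
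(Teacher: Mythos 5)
The paper itself offers no proof of this lemma: it is stated in Section \ref{recouplingtheory} as one of the results ``recalled'' from the recoupling theory of \cite{kauffmanlins}. Your argument supplies a proof, and it is correct. Part (2) is handled exactly as it should be: the closed-form theta evaluation, together with the observation (from Lemma \ref{jwcases}) that $[n]\neq 0$ for $1\leq n\leq r-1$ while $[r]=0$, localizes the vanishing entirely in the factor $[m+n+p+1]!$, and your denominator $[i]!\,[j]!\,[k]!$ agrees with the usual $[m+n]!\,[n+p]!\,[p+m]!$. For part (1), the turn-back argument giving $\dim\leq 1$ with the trivalent vertex $\tau$ as spanning element, the identity $\tau\circ\bar\tau=(\theta(i,j,k)/\Delta_k)\,p_k$ with $\Delta_k\neq 0$ giving the forward implication directly, and the appeal to nondegeneracy of the trace pairing on the quotient for the converse, is a clean and essentially standard route. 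Two remarks on what your proof buys and what it still leans on: first, it trades the paper's citation for another citation, since the theta formula is itself imported from \cite{kauffmanlins} (an alternative would be to derive the vanishing recursively from the partial-trace identity in Figure \ref{partialtrace}, but that is more work for no gain here); second, the converse direction genuinely depends on the statement that the radical of the Markov trace pairing is the ideal generated by $p_{r-1}$, equivalently that the quotient category is semisimple with the $p_i$, $0\leq i\leq r-2$, simple and of nonzero dimension --- you flag this explicitly, and it is exactly what Sections \ref{tldproofsection} and \ref{repoftlj} of the paper provide, so the dependence is legitimate rather than circular. Your parenthetical example $\Hom(p_{r-2}\otimes p_{r-2},p_2)$ is a good illustration of why the statement is a fact about the quotient category and not about $\TL_d$ itself.
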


\subsection{Handles and $S$-matrix}

There are various ways to present an $n$-manifold $X$:
 triangulation, surgery, handle decomposition, etc. The
convenient ways for us are the surgery description and handle
decompositions.

Handle decomposition of a manifold $X$ comes from from a Morse
function of $X$.  Fix a dimension$=n$, a $k$-handle is a product
structure $I^k\times I^{n-k}$ on the $n$-ball $B^n$, where the
part of boundary $\p I^k\times I^{n-k}\cong S^{k-1}\times I^{n-k}$
is specified as the attaching region.  The basic operations in
handlebody theory are handle attachment, handle slide,
stabilization, and surgery. They correspond to how Morse functions
pass through singularities in the space of smooth functions on
$X$. Let us discuss handle attachment and surgery here.  Given an
$n$-manifold $X$ with a sub-manifold $S^{k-1}\times I^{n-k}$
specified in its boundary, and an attach map $\phi: \p I^k \times
I^{n-k} \rightarrow S^{k-1}\times I^{n-k}$, we can attach a
$k$-handle to $X$ via $\phi$ to form a new manifold
$X'=X\cup_{\phi} I^k\times I^{n-k}$.  The new manifold $X'$
depends on $\phi$, but only on its isotopy class.  It follows from
Morse theory or triangulation that every smooth manifold $X$ can
be obtained from $0$-handles by attaching handles successively,
i.e., has a handle decomposition.  Moreover, the handles can be
arranged to be attached in the order of their indices, i.e., from
$0$-handles, first all $1$-handles attached, then all
$2$-handles, etc.

Given an $n$-manifold $X$, a sub-manifold $S^{k} \times I^{n-k}$
and a map $\phi: \p I^{k+1} \times S^{n-k-1} \rightarrow
S^{k}\times S^{n-k-1}$, we can change $X$ to a new manifold
$X'$ by doing index $k$ surgery on $S^{k} \times I^{n-k}$ as
follows:  delete the interior of $S^{k} \times I^{n-k}$, and glue
in $I^{k+1} \times S^{n-k-1}$ via $\phi$ along the common boundary
$S^{k}\times S^{n-k-1}$. Of course the resulting manifold $X'$
depends on the map $\phi$, but only on its isotopy class. Handle
decompositions of $n+1$-manifolds are related to surgery of
$n$-manifolds as their boundaries.

It is fundamental theorem that every orientable closed
$3$-manifold can be obtained from surgery on a framed link in
$S^3$; moreover, if two framed links give rise to the same
$3$-manifold, they are related by Kirby moves, which consist of
stabilization and handle slides.  This is extremely convenient for
constructing $3$-manifold invariants from link invariants: it
suffices to write down a magic linear combination of invariants of
the surgery link so that the combination is invariant under Kirby
moves.  The Reshetikhin-Turaev invariants were discovered in this
way.

The magic combination is provided by the projector $\omega_0$ from
the first row of the $S$ matrix: given a surgery link $L$ of a
$3$-manifold, if every component of $L$ is colored by $\omega_0$,
then the resulting link invariant is invariant under handle
slides.  Moreover, a certain normalization using the signature of
the surgery link produces a $3$-manifold invariant as in Theorem
\ref{invariantformula} below.

The projector $\omega_0$ is a ribbon tensor category analogue of
the regular representation of a finite group, and is related to
surgery as below. In general, all projectors $\omega_i$ are
related to surgery in a sense, which is responsible for the gluing
formula for the partition function $Z$ of a TQFT.

\begin{lemma}

\begin{enumerate}

\item Given a $3$ manifold $X$ with a knot $K$ inside, if $K$ is
colored by $\omega_0$, then the invariant of the pair $(X,K)$ is
the same as the invariant of $X'$, which is obtained from $X$ by
$0$-surgery on $K$.

\item  Let $S^2\subset X$ be an embeded $2$-sphere, then any
labeled multicurve $\g$ interests $S^2$ transversely must carry
the trivial label.  In other words, non-trivial particle type
cannot cross an embeded $S^2$.

\end{enumerate}

\end{lemma}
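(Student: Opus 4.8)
The two parts are essentially independent, so I would handle them separately. For part (1), the claim is that an $\omega_0$-colored knot $K$ in a $3$-manifold $X$ gives the same invariant as the manifold $X'$ obtained by $0$-surgery on $K$. The natural approach is to compare the surgery presentations. If $X$ is presented by surgery on a framed link $L\subset S^3$, then $X'$ is presented by surgery on $L\cup K$ where $K$ is a new $0$-framed component. Recall from Section \ref{recouplingtheory} (the ``magic combination'' discussion) that a component colored by $\omega_0$ is precisely what makes the link invariant invariant under handle slides, and that the Reshetikhin-Turaev normalization is what converts this into a manifold invariant. So I would expand the $\omega_0$-color on $K$ as $\sum_i d_i\, p_i$ (up to normalization by $D$), note that coloring a surgery component by $\omega_0$ is exactly the recipe that implements the surgery at the level of skein classes, and then observe that the signature correction $\sigma(L\cup K) = \sigma(L)$ since adding a $0$-framed unknotted-handle component contributes a $0$ to the linking matrix and doesn't change the signature (or, more carefully, track the change and show it is absorbed by the $p_\pm$ normalization factors $D^2 = p_+p_-$). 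The upshot: the colored-link invariant of $(X,K_{\omega_0})$ equals the RT-invariant of $X'$.

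For part (2), the claim is that a labeled multicurve $\gamma$ meeting an embedded $2$-sphere $S^2\subset X$ transversely must carry the trivial label on the intersection. The plan is local: take a small collar $S^2\times[-1,1]$ of the sphere, so that $\gamma$ restricted to this collar is a collection of vertical arcs, each labeled by some $p_i$. I want to show any such arc can be capped off/removed. The key is that $S^2$ bounds on at least one side within the collar's pushoff — more precisely, consider pushing the intersection points of $\gamma$ with $S^2$ and using that a labeled arc meeting $S^2$ transversely at a single point, when we look at the skein in a ball neighborhood, forces the label to be $0$ because $\Hom(p_i, 1_0)$ (the ``strand closure'' data) vanishes for $i\neq 0$. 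Concretely: an $S^2$ in $X$ that is met by a single strand colored $p_i$ means, cutting along $S^2$, we get a $p_i$-colored strand with one free end on a disk; but $p_i$ cannot terminate (no morphism from $p_i$ to the empty object unless $i=0$), so the skein class is zero unless $i=0$. More robustly, I would invoke Part (1): an $S^2$ in $X$ can be surgered, and the ``non-trivial particle cannot cross $S^2$'' statement is the skein-theoretic shadow of the fact that the reduced tensor category has no morphism $p_i\to\mathbf 1$ for $i\neq 0$, together with the sphere being separating-up-to-surgery.

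**The main obstacle.** The hard part will be part (1): making rigorous the identification of ``color a surgery component by $\omega_0$'' with ``do the surgery,'' and in particular bookkeeping the framing/signature normalization so the two invariants match \emph{on the nose} rather than up to a power of $D$ or a root of unity. This is the standard but fiddly point in Reshetikhin-Turaev theory (the anomaly), and one must be careful that the $0$-framing on $K$ is what kills the extra twist factor. For part (2), the subtlety is purely that one must ensure the argument is genuinely local — that we don't secretly need $S^2$ to be separating in $X$ — and the cleanest route is the ``$p_i$ has no counit to $\mathbf 1$'' observation applied in a ball neighborhood of an innermost intersection arc, which should go through without trouble once set up.
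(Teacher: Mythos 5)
Your part (2) has a genuine gap. The step ``cutting along $S^2$, we get a $p_i$-colored strand with one free end on a disk; but $p_i$ cannot terminate'' does not work as written: cutting along $S^2$ does not make the strand terminate, it ends on a boundary sphere, which is a perfectly legitimate relative skein, and the sphere need not bound a ball in $X$, so there is no disk available to cap with. Your fallback of a purely local argument in a ball neighborhood of an innermost intersection point is vacuous, since inside such a ball the picture is a single transverse arc through a disk, a nonzero local configuration; the vanishing is irreducibly a statement about the whole sphere. Note also that the statement must be read fusion-wise: a $p_i$-colored unknot in $S^3$ meets an equatorial $S^2$ transversely in two points and evaluates to $d_i\neq 0$, so the correct claim is that after fusing all strands crossing $S^2$ into one labeled strand, only the trivial channel survives. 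The mechanism the paper itself uses for exactly this point (in proving the connected-sum property of the Jones--Kauffman skein spaces) is the $\omega_0$-circle trick: the boundary circle of the complementary disk of $S^2$ (the part of the sphere missing $\gamma$), pushed slightly off $S^2$, bounds an embedded disk disjoint from $\gamma$, so an $\omega_0$-colored copy of it can be inserted at the cost of a known nonzero scalar; viewed from the other side of the sphere, that same circle encircles all strands of $\gamma$ passing through $S^2$, and $\omega_0$ kills every total charge except $0$. This argument needs neither a separating sphere nor a bounding ball, and it is the missing idea in your part (2).

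Your part (1) outline is the right (and essentially the intended) argument: present $X$ by surgery on $L\subset S^3$ with $K$ in the complement, and compare the surgery formula for $L$ with the extra $\omega_0$-colored component against the formula for $L\cup K$. But the specific claim that adding the new component ``contributes a $0$ to the linking matrix and doesn't change the signature'' is false: the new row and column of linking numbers with the old components can change the signature (adding a $0$-framed meridian to a $+1$-framed unknot drops the signature from $1$ to $0$), and the diagonal entry for $K$ is its framing as seen in $S^3$, which is not forced to be $0$. So you must genuinely track $\sigma(L\cup K)-\sigma(L)$ together with the extra factor of $1/D$ coming from $1/D^{m+1}$ versus $1/D^{m+2}$ in the normalization; the asserted equality holds only with the convention that each $\omega_0$-colored component carries its own $1/D$, and, in this anomalous theory, up to the phase $(p_-/D)^{\sigma(L\cup K)-\sigma(L)}$, otherwise the two sides differ by exactly these factors. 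For comparison, the paper states this lemma without proof; the closest it comes is the remark that labeling a component by $\omega_0$ is equivalent to surgering it, plus the $\omega_0$-encirclement argument described above, so your task is precisely to supply the bookkeeping you currently wave at.
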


The colored tangle category $\Delta_A$ has natural braidings, and
duality, hence is a ribbon tensor category.  An object $a$ is
simple if $\Hom(a,a)=\C$.  A point marked by a Jones-Wenzl
projector $p_i$ is a simple object of $\Delta_A$.  Therefore, the
label set $L_A$ can be identified with a complete set of simple
object representatives of $\Delta_A$.  A ribbon category is
premodular if the number of simple object classes is finite, and
is called modular if furthermore, the modular $S$-matrix
$S=\frac{1}{D}\ts$ is non-singular.  A non-singular $S$-matrix
$S=(s_{ij})$ can be used to define projectors
$\omega_i=\frac{1}{D}\sum_{j\in I} s_{ij} p_j$, which projects out
the $i$th label.

Given a ribbon link $l$, $<\omega_0*l>$ denotes the Kauffman
bracket of the colored ribbon link $l$ that each component is
colored by $\omega_0$.

\begin{theorem}\label{invariantformula}

\begin{enumerate}

\item The tangle category $\Delta_A$ is a premodular category, and
is modular if and only if $A$ is a primitive $4r$th root of unity.

\item Given a premodular category $\Lambda$, and $X$ an oriented closed
$3$-manifold with an $m$-component surgery link $l$, then
$Z_{JK}(X)=\frac{1}{D^{m+1}}(\frac{p_{-}}{D})^{\sigma(l)}
<\omega_0 *l>$ is a $3$-manifold invariant, where $\sigma(l)$ is
the signature of the framing matrix of $l$.

\end{enumerate}

\end{theorem}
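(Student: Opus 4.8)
The plan is to establish part (1) by assembling the structural pieces already developed, and part (2) by the standard Kirby-calculus argument, checking invariance under the two Kirby moves. For part (1), I would first observe that $\Delta_A$ is a ribbon tensor category with finitely many isomorphism classes of simple objects, namely the labels $L_A = \{p_i\}_{i\in I}$, $I=\{0,1,\dots,r-2\}$; this is exactly the content of the preceding discussion (the braiding and duality come from the colored tangle structure, and Lemma on admissible triples shows the fusion rules are finite). Hence $\Delta_A$ is premodular by definition. For the modularity criterion, I would invoke Lemma \ref{smatrix}: when $A$ is a primitive $4r$th root of unity, $\ts$ is non-singular, so $S=\frac1D\ts$ is non-singular and $\Delta_A$ is modular; when $r$ is odd and $A$ is a primitive $2r$th or $r$th root of unity, Lemma \ref{smatrix}(3) shows $\ts$ has rank $\frac{r-1}{2} < r-1 = |I|$, so $S$ is singular and $\Delta_A$ is \emph{not} modular. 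This settles the equivalence in (1).

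For part (2), I would proceed in the classical Reshetikhin--Turaev manner. First note that every closed oriented $3$-manifold $X$ arises as surgery on a framed link $l\subset S^3$, and two such links yield diffeomorphic manifolds iff they are related by a sequence of Kirby moves (handle slides and $\pm 1$-framed stabilizations), as recalled in the ``Handles and $S$-matrix'' subsection. So it suffices to show the proposed quantity $Z_{JK}(X)=\frac{1}{D^{m+1}}\left(\frac{p_-}{D}\right)^{\sigma(l)}\langle\omega_0 * l\rangle$ is unchanged under each move. Invariance under handle slides is precisely the content of the preceding Lemma: coloring every component by $\omega_0$ makes $\langle\omega_0*l\rangle$ slide-invariant, because $\omega_0$ behaves like the regular representation and absorbs a strand slid over it (this is the ``sliding property'' / killing property of $\omega_0$). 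Under a handle slide, $m$ is unchanged and the signature $\sigma(l)$ of the framing (linking) matrix is unchanged since the move realizes a congruence of the symmetric integer matrix, so the prefactors are untouched. Under a stabilization by a $\pm1$-framed unknot split from the rest, $\langle\omega_0*l\rangle$ gets multiplied by $\langle\omega_0\text{ on a }\pm1\text{-framed unknot}\rangle = \frac1D\sum_i s_{0i}\theta_i^{\pm1}d_i = \frac{1}{D^2}\,p_{\pm}$ (using $s_{0i}=\frac1D d_i$ and the definition $p_{\pm}=\sum_i\theta_i^{\pm1}d_i^2$), while $m\mapsto m+1$ and $\sigma(l)\mapsto\sigma(l)\pm1$; the identity $D^2=p_+p_-$ then makes the net change cancel. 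Assembling these shows $Z_{JK}(X)$ depends only on $X$.

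The main obstacle I anticipate is the slide-invariance of $\langle\omega_0 * l\rangle$: one must verify the ``encirclement'' identity that a band of strands encircled by an $\omega_0$-colored component equals that band split off and capped by $\omega_0$ times a scalar, which in turn rests on the non-degeneracy statements of Lemma \ref{smatrix} and the fusion/theta-symbol computations. Notice the subtlety flagged in the statement: part (2) asserts $Z_{JK}$ is a $3$-manifold invariant for any \emph{premodular} $\Lambda$, not merely modular ones; in the non-modular (degenerate $S$-matrix) cases $\omega_0$ is still the image of the regular-representation element and still enjoys the killing property needed for handle slides, even though $\{\omega_i\}$ no longer form a complete orthogonal family of idempotents --- so the invariant is well-defined but may fail to be ``non-degenerate'' as a TQFT. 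I would make sure the slide lemma is stated and used in the form that only requires premodularity, and flag that the resulting invariant for non-modular $\Delta_A$ is the one whose doubling is ``cured'' in the later sections.
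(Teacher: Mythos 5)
Your proposal follows the same route the paper itself indicates: the theorem is stated without a written proof, the only justification being the preceding discussion (premodularity from the ribbon structure and the finite label set $L_A$, the modularity criterion from Lemma \ref{smatrix}, and the remark that coloring every surgery component by $\omega_0$ gives handle-slide invariance, with the signature normalization handling stabilization). Your part (1) and your Kirby-calculus outline of part (2) are exactly that sketch, spelled out, so in substance you and the paper agree.

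One concrete arithmetic point in your stabilization check does not close as written. With your normalization $\omega_0=\frac{1}{D}\sum_i s_{0i}p_i$ you correctly get $\langle\omega_0 \text{ on a }\pm1\text{-framed unknot}\rangle=p_{\pm}/D^{2}$, but then the ratio of the new to old value of $\frac{1}{D^{m+1}}\left(\frac{p_-}{D}\right)^{\sigma(l)}\langle\omega_0*l\rangle$ under a $\pm1$-stabilization is $\frac{1}{D}\cdot\left(\frac{p_-}{D}\right)^{\pm1}\cdot\frac{p_{\pm}}{D^{2}}=D^{-2}$, not $1$; the identity $D^{2}=p_+p_-$ does not rescue it. The stated formula is invariant precisely when $\omega_0$ is taken to be the unnormalized Kirby color $\sum_i d_i p_i$, for which the $\pm1$-unknot contributes $p_{\pm}$ and the ratio becomes $\frac{p_+p_-}{D^{2}}=1$; so either adopt that normalization or adjust the powers of $D$ in the formula. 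The slip is traceable to the paper's own inconsistent definition of $\omega_i$ (it carries an extra factor of $D^{-1}$ through $s_{ij}=\ts_{ij}/D$), so it is a bookkeeping fix rather than a conceptual flaw. Finally, your caveat that handle-slide invariance requires only premodularity is right, but for the normalization to make sense in the non-modular cases one must also know $p_{\pm}\neq 0$ and $D^{2}=p_+p_-$ there; the paper asserts this for all $A$ as in Lemma \ref{jwcases}, and it is worth stating explicitly in your write-up.
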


\subsection{Diagram TQFTs for closed
manifolds}\label{closeddiagram}

In this section, fix an integer $r\geq 3$, $A$ as in Lemma
\ref{jwcases}.  For these special values, the picture spaces
$\Pic^A(Y)$ form a modular functor which is part of a TQFT. These
TQFTs will be called diagram TQFTs. In the following, we verify
all the applicable axioms for diagram TQFTs for closed manifolds
after defining the partition function $Z$.

The full axioms of TQFTs are given in Section
\ref{axiomsoftqft}. The applicable axioms for closed manifolds
are:

\begin{enumerate}

\item Empty surface axiom:  $V(\emptyset)=\C$

\item Sphere axiom: $V(S^2)=\C$.  This is a consequence of the
disk axiom and gluing formula.

\item Disjoint union axiom for both $V$ and $Z$:

\item Duality axiom for $V$:

\item Composition axiom for $Z$:  This is a consequence of the
gluing axiom.

\end{enumerate}

These axioms together form exactly a tensor functor as follows:
the category $\X^{2,cld}$ of oriented closed surfaces $Y$ as
objects and oriented bordisms up to diffeomorphisms between
surfaces as morphisms is a strict rigid tensor category if we
define disjoint union as the tensor product; $-Y$ as the dual
object of $Y$; for birth/death, given an oriented closed surface
$Y$, let $Y\times S^1_{-}: \emptyset \la -Y\coprod Y$ be the birth
operator, and $Y\times S^1_{+}: -Y\coprod Y \la \emptyset$ the
death operator, here $S^1_{\mp}$ are the lower/up semi-circles.

\begin{definition}
A $(2+1)$-anomaly free TQFT for closed manifolds is a
\emph{nontrivial} tensor functor $V: \X^{2,cld} \la \V$, where $\V$ is the tensor category of finite dimensional
vector spaces.
\end{definition}

Non-triviality implies $V(\emptyset)=\C$ by the disjoint union
axiom. Since $\emptyset \coprod \emptyset=\emptyset$,
$V(\emptyset)=V(\emptyset)\otimes V(\emptyset)$.  Hence
$V(\emptyset)\cong \C$ because otherwise $V(\emptyset)=0$ the
theory is trivial. The empty set picture $\emptyset$ is the
canonical basis, therefore, $V(\emptyset)=\C$.

The disjoint union axiom and the trace formula for $Z$ in Prop.
\ref{tqftaxiomcorollary} fixes the normalization of $3$-manifold
invariants.  Given an invariant of closed $3$-manifolds, then
multiplication of all invariants by scalars leads to another
invariant. Hence $Z$ on closed $3$-manifolds can be changed by
multiplying any scalar $k$.
 But this freedom is eliminated from TQFTs by the disjoint union
axiom which implies $k=k^2$, hence $k=1$ since otherwise the
theory is trivial. The trace formula implies $Z(S^2\times S^1)=1$.
We set $Z(S^3)=\frac{1}{D}$, and $D$ is the total quantum order of the theory.

Recall that the picture space $\Pic^A(Y)$ is defined even for
unorientable surfaces.  When $Y$ is oriented, $\Pic(Y)$ is
isomorphic to $K_A(Y\times I)$.  Given a bordism $X$ from $Y_1$ to
$Y_2$, we need to define $Z_{D}(X)\in \Pic^A(- Y_1\coprod Y_2)$.
It follows from the disjoint union axiom and the duality axiom,
$Z_{D}(X)$ can be regarded as a linear map $\Pic^A(Y_1)\la
\Pic^A(Y_2)$.

Given a closed surface $Y$, let $V_D(Y)=\Pic^A(Y)$.  For a
diffeomorphism $f: Y\rightarrow Y$, the action of $f$ on pictures
is given by moving them in $Y$.  This action on pictures descends
to an action of mapping classes on $V(Y)$. To define $Z_D(X)$ for
a bordism $X$ from $Y_1$ to $Y_2$, fix a relative handle
decomposition of $X$ from $Y_1$ to $Y_2$.

Suppose that $Y_2$ is obtained from $Y_1$ by attaching a single
handle of indices $=0,1,2,3$.  For indices $=0,3$, the linear map
is just a multiplication by $\frac{1}{D_{JK}}$, where
$D_{JK}=\sqrt{\frac{-2r}{(A^2-A^{-2})^2}}$.  Since $S^3$ is a
$0$-handled followed by a $3$-handle,
$Z_{D}(S^3)=\frac{1}{D^2_{JK}}$.  This is not a coincidence, but a
special case of a theorem of K.~ Walker and V.~ Turaev that
$Z_{D}(X)=|Z_{JK}(X)|^2$ for any oriented closed $3$-manifold.

Given a multicurve $\g$ in $Y_1$,

1): If a $1$-handle $I\times B^2$ is attached to $Y_1$, isotopy
$\g$ so that it is disjoint from the attaching regions $\p I\times
B^2$ of the $1$-handle. Label the co-core circle
$\frac{1}{2}\times \p B^2$ of the $1$-handle by $\omega_0$ to get
a formal multicurve in $Y_2$.  This defines a map from
$\Pic^A(Y_1)$ to $\Pic^A(Y_1)$ by linearly extending to pictures
classes.

2): If a $2$-handle $B^2\times I$ is attached to $Y_1$, isotopy
$\g$ so that it intersects the attaching circle $\p B^2 \times
\frac{1}{2}$ of the $2$-handle transversely. Expand this attaching
circle slightly to become a circle $s$ just outside the $2$ handle
and parallel to the attaching circle $\p B^2 \times \frac{1}{2}$.
Label $s$ by $\omega_0$.  Fuse all strands of $\g$ so that a
single labeled curve intersects the attaching circle $\p B^2
\times \frac{1}{2}$; only the $0$-labeled curves survive the
projector $\omega_0$ on $s$. By drawing all remaining curves on
the $Y_1$ outside the attaching region plus the two disks
$B^2\times \{0\}$ and $B^2\times \{1\}$, we get a formal diagram
 in $Y_2$.

We need to prove that this definition is independent of
handle-slides and cancellation pairs, which is left to the
interested readers.

Now we are ready to verify all the axioms one by one:

The empty surface axiom:  this is true as we have a non-trivial
theory.

The sphere axiom: by the $\lq\lq$d-isotopy" constraint, every
multicurve with $m$ loops $=d^m \emptyset$.  If $\emptyset$
picture on $S^2$ is $=0$ in $\Pic^A(S^2)$, then $Z_{D}(B^3)=0$
which leads to $Z_{D}(S^3)=0$.  But $Z_{D}(S^3) \neq 0$, it
follows that $\Pic^A(S^2)=\C$.

The disjoint union axioms for both $V$ and $Z$ are obvious since
both are defined by pictures in each connected component.

$\Pic(-Y)=\Pic(Y)$ is the identification for the duality axiom. To
define a functorial identification of $\Pic^A(-Y)$ with
$\Pic^A(Y)^*$, we define a Hermitian paring: $\Pic^A(Y)\times
\Pic^A(Y)\la \C$.  Since $\Pic^A(Y)$ is an algebra, and
semi-simple, it is a matrix algebra.  For any $x,y\in \Pic^A(Y)$,
we identify them as matrices, and define
$<x,y>=\tn{Tr}(x^{\dagger}y)$.  This is a non-degenerate inner
product.  The conjugate linear map $x\rightarrow <x,\cdot>$ is the
identification of $\Pic^A(-Y)$ with $\Pic^A(Y)^*$.

Summarizing, we have;

\begin{theorem}

The pair $(V_D,Z_{D})$ is a $(2+1)$-anomaly free TQFT for closed
manifolds.

\end{theorem}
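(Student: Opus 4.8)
The plan is to assemble the theorem from the pieces already established in the section, verifying the axioms of a $(2+1)$-anomaly free TQFT for closed manifolds one by one, exactly along the lines sketched in the text but with the gaps filled in. First I would recall the categorical reformulation given just before the statement: an anomaly free TQFT for closed manifolds is a nontrivial tensor functor $V:\X^{2,cld}\la\V$, so concretely I must produce (i) the modular functor $V_D(Y)=\Pic^A(Y)$ on objects, (ii) the linear maps $Z_D(X):\Pic^A(Y_1)\la\Pic^A(Y_2)$ on morphisms, and (iii) a proof that this assignment is functorial (respects composition of bordisms and sends identities to identities), monoidal (respects disjoint union), and compatible with duality $-Y\mapsto\Pic^A(Y)^*$. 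Non-triviality, i.e. $V(\emptyset)=\C$, then forces the remaining normalization axioms.

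The key steps, in order, are as follows. Step 1: verify the empty surface and sphere axioms. The empty surface axiom holds since $\emptyset$ is the canonical basis of $\Pic^A(\emptyset)=\C\emptyset$ and the theory is nontrivial; the sphere axiom follows because $Z_D(S^3)=1/D_{JK}^2\ne 0$ forces $\emptyset\ne 0$ in $\Pic^A(S^2)$, and by the $d$-isotopy relation every multicurve on $S^2$ is a scalar multiple of $\emptyset$, so $\Pic^A(S^2)=\C$. Step 2: verify the disjoint union axioms for $V$ and $Z$; these are immediate because pictures, and the handle-by-handle construction of $Z_D$, are defined componentwise. Step 3: establish the duality identification. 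Here I would use that $\Pic^A(Y)$ is a finite-dimensional semisimple algebra (it is a $*$-algebra by the Markov trace pairing, as in the proof that $\TL_n(d)$ is semisimple), hence a sum of matrix algebras; the Hermitian pairing $\langle x,y\rangle=\tn{Tr}(x^\dagger y)$ is then non-degenerate, and the conjugate-linear map $x\mapsto\langle x,\cdot\rangle$ gives the functorial identification $\Pic^A(-Y)\cong\Pic^A(Y)^*$, using $\Pic(-Y)=\Pic(Y)$ from the earlier proposition. Step 4: define $Z_D(X)$ for a general bordism by choosing a relative handle decomposition of $X$ from $Y_1$ to $Y_2$: index-$0$ and index-$3$ handles contribute multiplication by $1/D_{JK}$, index-$1$ handles insert an $\omega_0$-colored co-core circle, and index-$2$ handles insert an $\omega_0$-colored parallel push-off of the attaching circle followed by fusing strands through it (so only the $0$-labeled channel survives). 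Step 5: check the composition axiom for $Z$ — this follows from the gluing axiom, which is reduced to showing the construction in Step 4 descends to an invariant of the bordism, i.e. is independent of the chosen handle decomposition. Step 6: confirm the normalizations $Z_D(S^3)=1/D_{JK}^2$, $Z_D(S^2\times S^1)=1$ and the relation $Z_D(X)=|Z_{JK}(X)|^2$ to the Jones-Kauffman invariant of Theorem \ref{invariantformula}, via the Walker-Turaev doubling statement cited in the text.

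The main obstacle is Step 5 (equivalently the latter half of Step 4): proving that $Z_D(X)$ is well-defined independent of the handle decomposition, which amounts to invariance under handle slides and the introduction/cancellation of complementary handle pairs. For handle slides one must check that sliding a handle past an $\omega_0$-colored circle leaves the picture class unchanged — this is precisely the handle-slide property of $\omega_0$ established in the lemma on surgery (that $\omega_0$ is the ribbon-category analogue of the regular representation), together with the fusion behaviour of the projectors $\omega_i$; for cancellation of a $0/1$ or $1/2$ or $2/3$ pair one checks that the two inserted operations compose to the identity on $\Pic^A$, using $\omega_0$-colored circles bounding disks and the normalization of $D_{JK}$. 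Granting the Kirby-calculus and Cerf-theory background that any two relative handle decompositions of $X$ rel $\p$ are connected by such moves, and that the premodularity of $\Delta_A$ from Theorem \ref{invariantformula} supplies exactly the categorical input needed, the well-definedness goes through; the text explicitly defers this verification to the reader, so I would present the move-by-move invariance as the crux and treat the remaining axioms as the routine bookkeeping carried out in Steps 1--3 and 6. Once all axioms are checked, the pair $(V_D,Z_D)$ is by construction a nontrivial tensor functor $\X^{2,cld}\la\V$, which is the assertion of the theorem.
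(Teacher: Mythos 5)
Your proposal follows essentially the same route as the paper: verify the empty-surface and sphere axioms via non-vanishing of $Z_D(S^3)$ and $d$-isotopy, observe the disjoint union axioms are componentwise, obtain duality from the Hermitian pairing $\langle x,y\rangle=\tn{Tr}(x^{\dagger}y)$ on the semisimple algebra $\Pic^A(Y)$, and define $Z_D$ handle-by-handle with $\omega_0$-colored circles. The only difference is that you elaborate the handle-slide/cancellation-pair invariance (via the handle-slide property of $\omega_0$ and Cerf theory), which the paper explicitly leaves to the reader, so your write-up matches the paper's proof with that deferred step partially filled in.
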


\subsection{Boundary conditions for picture
TQFTs}\label{boundarycondition}

In Section \ref{localrelation},  we consider $\C[\scc]$ for
surfaces $Y$ even with boundaries.  Given a surface $Y$ with $m$
boundary circles with $n_i$ fixed points on the $i$th boundary
circle, by imposing Jones-Wenzl projector $p_{r-1}$ away from the
boundaries, we obtain some pictures spaces, denoted as
$\Pic^A(Y;n_1,\cdots, n_m)$.  To understand the deeper properties
of the picture space $\Pic^A(Y)$, we need to consider the
splitting and gluing of surfaces along circles.  Given a simple
closed curve (scc) $s$ in the interior of $Y$, and a multicurve
$\g$ in $Y$, isotope $s$ and $\g$ to general position.  If $Y$ is
cut along $s$, the resulted surface $Y_{\tn{cut}}$ has two more
boundary circles with $n$ points on each new boundary circle,
where $n$ is the number of intersection points of $s\cap\g$, and
$n\in \{0,1,2,\cdots,\}$. In the gluing formula, we like to have
an identification of $\oplus \Pic^A(Y_{\tn{cut}})$ with all
possible boundary conditions with $\Pic^A(Y)$, but this sum consists of
infinitely many
non-trivial vector space,which contradicts that $\Pic^A(Y)$ is finitely
dimensional. Therefore, we need more refined boundary conditions.
One problem about the crude boundary conditions of finitely many
points is due to bigons resulted from the $\lq\lq$d-isotopy" freedom:
we may introduce a trivial scc intersecting $s$ at many points, or
isotope $\g$ to have more intersection points with $s$.  The most
satisfactory solution is to define a picture category, then the
picture spaces become modules over these categories.  Picture
category serves as crude boundary conditions.  To refine the crude
boundary conditions, we consider the representation category of
the picture category as new boundary conditions.  The
representation category of a picture category is naturally Morita
equivalent to the original picture category.  The gluing formula
can be then formulated as the Morita reduction of picture modules
over the representation category of the picture category. The
labels for the gluing formula are given by the irreps of the
picture categories. This approach will be treated in the next two
sections.  In this section, we content ourselves with the
description of the labels for the diagram TQFTs, and define the
diagram modular functor for all surfaces. In Section
\ref{axiomsoftqft}, we will give the definition of a
TQFT, and later verify all axioms for diagram TQFTs.

The irreps of the non-semi-simple TL annular categories at roots
of unity were contained in \cite{grahamlehrer}, but we need the
irreps of the semi-simple quotients of TL annular categories,
i.e., the TLJ annular categories.

The irreps of the TLJ annular will be analyzed in Sections
\ref{irrepsoftlj4r} \ref{irrepsoftlj2r}.  In the following, we
just state the result.  By Theorem \ref{structuretheorem} in
Appendix \ref{repofcategory}, each irrep can be represented by an
idempotent in a morphism space of some object.  Fix $h$ ($ 0\leq
h\leq k$) many points on $S^1$, and let $\omega_{i,j;h}$ be the
following diagram in the annulus $\Aa$: the two circles in the annulus are
labeled by $\omega_i, \omega_j$, and $h=3$ in the diagram.

\begin{figure}[tbh]\label{annularprojector}
\centering
\includegraphics[width=1.25in]{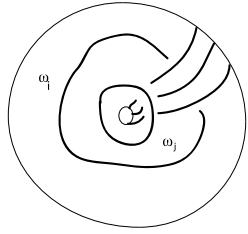}
\caption{Annular projector}\label{annularprojector}
\end{figure}

The labels for diagram TQFTs are the idempotents $\omega_{i,j;h}$
above. Given a surface $Y$ with boundary circles $\g_i,i=1,..,m$.
In the annular neighborhood $\Aa_i$ of $\g_i$, fix an idempotent
$\omega_{i,j;h}$ inside $\Aa_i$.  Let $\Pic_{D}^A(Y;
\omega_{i,j;h})$ be the span of all multicurves that within
$\Aa_i$ agree with $\omega_{i,j;h}$ modulo $p_{r-1}$ .

\begin{theorem}

If $A$ is as in Lemma \ref{jwcases}, then
the pair $(\Pic^A(Y), Z_{D})$ is an anomaly-free TQFT.

\end{theorem}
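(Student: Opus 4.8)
The plan is to verify, one by one, the TQFT axioms of Section~\ref{axiomsoftqft} for the assignment sending a closed surface $Y$ to $\Pic^A(Y)$ and a surface with boundary circles $\ga_i$ equipped with annular idempotents $\omega_{i,j;h}$ to $\Pic_D^A(Y;\omega_{i,j;h})$, together with the partition function $Z_D$ constructed (on bordisms) from a relative handle decomposition. The closed-manifold case is already done, so most of the work is in treating boundaries and the gluing/composition axiom. First I would pin down the label set: by Theorem~\ref{structuretheorem} the simple boundary conditions correspond to idempotents in morphism spaces of the $\TLJ$ annular category, and the analysis of Sections~\ref{irrepsoftlj4r}--\ref{irrepsoftlj2r} identifies these with the $\omega_{i,j;h}$ ($0\le h\le k$), a \emph{finite} mutually orthogonal family. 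Finite-dimensionality of each $\Pic_D^A(Y;\omega_{i,j;h})$ follows from semisimplicity of $\TLJ$: modulo $p_{r-1}$ every picture class is a $\C$-combination of admissibly labelled trivalent graphs on $Y$ with the prescribed annular data, of which there are finitely many.

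\textbf{Local axioms.} The empty-surface axiom and the disk/sphere axioms are immediate from the $d$-isotopy relation (a multicurve in a disk with $m$ trivial loops is $d^m\emptyset$) once one knows $\emptyset\ne 0$ in $\Pic^A(S^2)$; as in the closed case this is forced by $Z_D(S^3)=1/D_{JK}^2\ne 0$. The disjoint-union axioms for $V_D$ and $Z_D$ hold because pictures and relative handle decompositions are defined componentwise. For duality I would use that $\Pic^A(Y)$ is a semisimple algebra (Proposition~\ref{algebrastructure}) to define the nondegenerate pairing $\langle x,y\rangle=\mathrm{Tr}(x^\dagger y)$ via reflection in $Y$, which yields the functorial identification $\Pic^A(-Y)\cong\Pic^A(Y)^\ast$; on boundary data this sends $\omega_{i,j;h}$ to $\omega_{j,i;h}$.

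\textbf{The gluing axiom (the main obstacle).} Cutting $Y$ along a simple closed curve $s$, one must produce a natural isomorphism $\Pic^A(Y)\cong\bigoplus\Pic^A(Y_{\mathrm{cut}};\text{matching labels on the two new circles})$. As explained in Section~\ref{boundarycondition}, the naive ``finitely many points'' boundary condition is wrong (bigons from $d$-isotopy make the sum infinite), so one works with the representation category of the $\TLJ$ annular picture category, which is Morita equivalent to it; gluing is the Morita reduction of the picture module, and the gluing labels are exactly the simple objects $\omega_{i,j;h}$. Concretely I would (i) insert the resolution of identity $1=\sum\omega_{i,j;h}$ on an annular neighborhood of $s$; (ii) show each resulting summand is canonically $\Pic^A(Y_{\mathrm{cut}})$ with the corresponding boundary labels; (iii) check independence of the isotopy class of $s$ and naturality under the mapping class groupoid of the cut pieces. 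Associativity of this idempotent decomposition then yields the composition axiom for $Z_D$. I expect establishing the Morita reduction and the naturality in (iii) to be the genuinely hard step; the rest is a modest adaptation of the closed-manifold arguments.

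\textbf{Anomaly-freeness.} Because the theory is a doubled (Drinfeld-center type) theory its central charge is $0$, so the honest mapping class group $\Mcg(Y)$ — not a central extension — acts on $\Pic^A(Y)$; the action is manifestly well defined since a diffeomorphism merely transports multicurves and preserves all local relations. Finally I would check that $Z_D$ of a mapping cylinder realizes this action and that $Z_D$ is independent of the chosen relative handle decomposition, by verifying invariance under handle slides and cancellation of complementary handle pairs; for the $2$-handles this reduces, via the $\omega_0$-coloring and its handle-slide property (Theorem~\ref{invariantformula}), to Kirby-move invariance, exactly as in the closed case.
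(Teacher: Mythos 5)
Your proposal is correct and follows essentially the same route as the paper: it verifies the modular-functor axioms with the annular idempotents $\omega_{i,j;h}$ as labels, handles the disk and annulus via the Jones--Wenzl projector and the orthogonality $\omega_a\omega_b=\delta_{ab}\omega_a$, derives gluing from Morita equivalence with the representation category of the annular TLJ category, obtains the $Z_D$ axioms from handle decompositions, $\omega_0$-colorings and Kirby-move invariance, and gets anomaly-freeness from the direct action of the mapping class group on pictures. The paper's own treatment is likewise only an outline at the same level of detail, so there is nothing substantive to add or correct.
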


\subsection{Jones-Kauffman skein spaces}\label{jkskeinspace}

In this section, fix an integer $r\geq 3$, $A$ as in Lemma
\ref{jwcases}, and $d=-A^2-A^{-2}$.

\begin{definition}
Given any closed surface $Y$, let $\Pic^A(Y)$ be the picture space
of pictures modulo $p_{r-1}$.  Given an oriented $3$-manifold $X$,
the skein space of $p_{r-1}$ and the Kauffman bracket is called
the Jones-Kauffman skein space, denoted by $K_A(X)$.
\end{definition}

The following theorem collects the most important properties of the
Jones-Kauffman skein spaces.
The proof of the theorem relies heavily on handlebody
theory of manifolds.

\begin{theorem}\label{jkskeinspace}

a): Let $A$ be a primitive $4r$th root of unity.  Then

\begin{enumerate}

\item $K_A(S^3)=\C$.

\item  There is a canonical isomorphism of $K_A(X_1\amalg
X_2)\cong K_A(X_1)\# K(X_2)$.

\item If $\p X_1=\p X_2$, then $K_A(X_1)\cong K_A(X_2)$, but not
canonically.

\item If the $\emptyset$ link is not $0$ in $K_A(X)$ for a closed
manifold $X$, then $K_A(X)\cong \C$ canonically.  The $\emptyset$
link in $K_A(\#_{r=1}^m S^1\times S^2)$ and $K_A(Y\times S^1)$ is
not $0$ for oriented closed surface $Y$.

\item $K_A(-{X})\times K_A(X) \la K_A(DX)$ is non-degenerate.
Therefore, $K_A(\overline{X})$ is isomorphic to $K_A(X)^*$, but
not canonically.

\item $K_A(Y\times I) \la \textrm{End}(K_A(X))$ is an isomorphism
if $\p X=Y$.

\item $\Pic^A(Y)$ is canonically isomorphic to $K_A(Y\times I)$ if
$Y$ is orientable, hence also isomorphic to
$\textrm{End}(K_A(X))$.

\end{enumerate}

\noindent b): If $A$ is a primitive $2r$th root of unity or $r$th
root of unity, then $(2)$ does not hold, and it follows that the
rest fail for disconnected manifolds.

\end{theorem}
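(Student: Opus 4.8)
The plan is to reduce every assertion to handle decompositions and surgery presentations of $3$-manifolds, using three tools already in place: the recoupling calculus of $\Delta_A$ (Section~\ref{recouplingtheory}), the fact that the projector $\omega_0$ is invariant under handle slides and stabilization, and the resulting Kirby-calculus invariance of $\omega_0$-colored surgery links. The one genuinely constructive input is that for a handlebody $H$ of genus $g$ the skein space $K_A(H)$ has a basis indexed by the admissible $L_A$-colorings of a fixed trivalent spine of $H$; this follows by repeatedly applying recoupling and fusion to push any skein in $H$ onto a neighborhood of the spine and then reducing through-passing strands with $p_{r-1}=0$. Part~(1) is then immediate: a link in $S^3$ can be isotoped into a ball, its crossings resolved by the Kauffman bracket, and its $p_{r-1}$-content killed, so $K_A(S^3)=\C\cdot\emptyset$; the Kauffman bracket gives a well-defined functional $K_A(S^3)\to\C$ sending $\emptyset\mapsto1$, whence $K_A(S^3)\cong\C$.

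For~(7) I would send a multicurve $\gamma$ on an oriented $Y$ to $\gamma\times\{\tfrac12\}\subset Y\times I$ and, conversely, a skein in $Y\times I$ to the formal multicurve obtained by generic projection to $Y$ and resolution of crossings; these are mutually inverse and match the $p_{r-1}$ relations, so $\Pic^A(Y)\cong K_A(Y\times I)$ canonically. For~(3), if $\partial X_1=\partial X_2$ present $X_2$ as surgery on a framed link $J$ in the interior of $X_1$; the assignment $s\mapsto s\cup(J\text{ colored by }\omega_0)$, read in $X_2$, is well defined by the handle-slide and stabilization invariance of $\omega_0$, and surgery on the dual link is a two-sided inverse, so it is an isomorphism --- depending on $J$, hence non-canonical. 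For~(6), $K_A(Y\times I)\cong\Pic^A(Y)$ acts on $K_A(X)$ by gluing a collar of $Y$; taking $X=H$ a handlebody with $\partial H=Y$ and using the colored-spine basis, one sees the generators of $\Pic^A(Y)$ act as matrix units, so the algebra map $K_A(Y\times I)\to\End(K_A(H))$ is surjective, and it is injective because $\Pic^A(Y)$ is semisimple of the expected dimension; the general case follows by combining this with the isomorphism $K_A(X)\cong K_A(H)$ from~(3).

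Before attacking~(2) and~(5) I would prove the closed-manifold case: for a closed $X$, take a handle decomposition $B^3\cup(\text{$1$-handles})\cup(\text{$2$-handles})\cup B^3$ and rewrite the $2$-handles as $\omega_0$-colored surgery; since $\omega_0$ absorbs handle slides, any skein in $X$ can be pushed through the surgery into the final $3$-handle, where it becomes a scalar multiple of $\emptyset$. Hence $K_A(X)$ is spanned by $\emptyset$, so it is $0$ or canonically $\C$ (this is the first half of~(4)). For~(5), $DX$ is then closed, so $K_A(DX)$ is $0$ or $\C$; the handlebody reduction of~(3) identifies $K_A(\pm X)$ with the surgery quotient of a handlebody skein space and the pairing $K_A(-X)\times K_A(X)\to K_A(DX)$ with the standard Reshetikhin--Turaev pairing, which is nondegenerate, giving $K_A(\bar X)\cong K_A(X)^*$ non-canonically. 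For~(2) there is the obvious bilinear map $K_A(X_1)\times K_A(X_2)\to K_A(X_1\amalg X_2)$, $(s_1,s_2)\mapsto s_1\amalg s_2$; when $A$ is a primitive $4r$th root of unity, $\Delta_A$ is modular (Theorem~\ref{invariantformula}), so the only transparent label is trivial, and a disjoint-union handle decomposition together with the closed-piece statement forces this map to be an isomorphism. When $A$ is a primitive $2r$th or $r$th root of unity, $\ts$ is singular (Lemma~\ref{smatrix}) and $\bar0=r-2$ is a nontrivial transparent label: a transparent loop around an $S^1$-factor of one summand can be traded against the other, the separation fails, and so do all the consequences of~(2) for disconnected manifolds --- while the connected statements survive (case~(b)). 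Finally, for the second half of~(4) one checks $\emptyset\neq0$ for $X=\#^m(S^1\times S^2)$ and $X=Y\times S^1$ directly, e.g.\ because $Z_{JK}(X)\neq0$ or by evaluating a skein across a separating $S^2$ or torus using $\ts$; hence $K_A(X)\cong\C$ canonically.

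The main obstacle is~(2): making precise why modularity --- the absence of transparent labels beyond the trivial one --- is exactly what lets a skein in a disjoint union be rewritten as a sum of products, and why the extra transparent label $r-2$ in the $2r$/$r$ case really does obstruct this. A secondary, more technical hurdle is the bookkeeping underlying~(3) and the definition of $Z_D$: verifying independence of the surgery maps and $\omega_0$-labelling from handle slides, cancelling handle pairs, and stabilization --- routine Kirby calculus given the $\omega_0$-lemmas but lengthy --- together with establishing the colored-spine basis for $K_A(H)$, on which, though standard, everything downstream relies.
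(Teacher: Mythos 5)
Your item (2) is where the proposal diverges from, and falls short of, what the theorem actually asserts. In the paper, (2) compares the disjoint union with the \emph{connected sum}: the claim is a canonical isomorphism $K_A(X_1\amalg X_2)\cong K_A(X_1\# X_2)$. The map you construct, $(s_1,s_2)\mapsto s_1\amalg s_2$ into $K_A(X_1\amalg X_2)$, only reproduces the always-true fact (stated before the theorem as a general property of skein spaces) that disjoint union goes to tensor product; it never involves the connect-sum sphere, needs no hypothesis on $A$, and therefore could not fail in case b) --- contradicting the theorem, which says precisely that (2) breaks down when $A$ is a primitive $2r$th or $r$th root of unity. The missing idea is the paper's ``nontrivial particles cannot cross an embedded $S^2$'' argument: given a skein $x$ in $X_1\# X_2$ meeting the connecting sphere transversely, insert the projector $\omega_0$ on a circle in $S^2$ encircling $x\cap S^2$; fusing the strands that cross the sphere, $\omega_0$ projects onto the trivial label, so $x$ splits into a skein supported in $X_1\amalg X_2$, showing the inclusion $K_A(X_1\amalg X_2)\to K_A(X_1\# X_2)$ is onto. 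This killing property of $\omega_0$ is exactly where nonsingularity of $\ts$ (Lemma \ref{smatrix}, i.e.\ $A$ a primitive $4r$th root) enters, and its failure --- the transparent label $r-2$ also survives $\omega_0$ --- is what makes (2), and hence the disconnected-manifold statements, fail in case b). You gesture at transparency when discussing b), but without the sphere/$\omega_0$ splitting the argument for (2) is simply not there; and since the paper also uses (2) in the proof of (3) (to dispose of $1$- and $3$-handles of the $4$-manifold and to reduce to connected $X_i$), this is a load-bearing gap.

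Apart from this, your route is broadly parallel to the paper's: (1), (3) via $\omega_0$-colored surgery links, (4)--(5) by reduction to closed manifolds and handlebodies, (7) by projecting and resolving crossings. Two softer spots are worth flagging. In (3) the composite of your two surgery maps is not literally the identity but a nonzero scalar times it (the invariant of the $\omega_0$-colored link together with its dual and linking circles), so ``two-sided inverse'' needs that scalar evaluation, and independence of the chosen surgery presentation is the Cerf-theoretic point the paper at least names. In (6) you argue surjectivity by matrix units on the spine basis and injectivity ``because $\Pic^A(Y)$ is semisimple of the expected dimension'': both the semisimplicity and the dimension count $\dim K_A(Y\times I)=(\dim K_A(H))^2$ are themselves nontrivial, and the paper avoids them by deducing (6) from (3) applied to $Y\times I$ and $-X\amalg X$ together with the nondegenerate pairing of (5).
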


\begin{proof}

(1) Obvious.

(2) The idea here in physical terms is that non-trivial particles cannot cross an $S^2$.

The skein space $K_A(X_1\amalg X_2)$ is a subspace of
$K_A(X_1)\# K(X_2)$ by inclusion.  So it suffices to show this is
onto.  Given any skein class $x$ in $K_A(X_1)\# K(X_2)$, by
isotopy we may assume $x$ intersects the connecting $S^2$
transversely.  Put the projector $\omega_0$ on $S^2$ disjoint from
$x$, then $\omega_0$ encircle $x$ from outside.  Apply $\omega_0$
to $x$ to project out the $0$-label, we split $x$ into two skein
classes in $K_A(X_1\amalg X_2)$, therefore the inclusion is onto.

(3) This is an important fact.  For example, combining with (1), we see that the
Jones-Kauffman skein space of any oriented $3$-manifold is $\cong \C$.

We will show below that any bordism $W^4$ from $X_1$ to $X_2$ induces an isomorphism.  Moreover, the
isomorphism depends only on the signature of the $4$-manifold $W^4$.

Pick a $4$-manifold $W$ such that $\p W=-X_1\cup_Y (Y\times
I)\cup_Y X_2$ ($W$ exists since every orientable $3$ manifold
bounds a $4$ manifold), and fix a handle-decomposition of $W$.  $0$-handles, and dually $4$-handles,
induce a scalar multiplication.  $1$-handles, or dually $3$-handles, also induce a scalar
multiplication by (2).  By (2), we may assume that $X_i,i=1,2$ are connected.
Therefore, we will fix a relative handle decompositions of $W$ with
only $2$-handles, and let $L_{X_i},i=1,2$ be the attaching links
for the $2$-handles in $X_i$, respectively.  Then $X_1\backslash
L_{X_1}\cong X_2\backslash L_{X_2}$.  The links $L_{X_i},i=1,2$ are dual to each other in a sense:
let $L_{X_i}^{\tn{dual}}$ be the link consists
of cocores of the $2$-handles on $X_i$, then
surgery on $L_{X_1}$ in $X_1$ results $X_2$, while surgery on ${L_{X_1}}^{\tn{dual}}$
in $X_1$ results $X_1$, and vice versa.

Define a map $h_1: K_A(X_1)\rightarrow K_A(X_2)$ as follows: for
any skein class representative $\g_1$, isotope $\g_1$ so that it
misses $L_{X_1}$.  Note that in the skein spaces, labeling a component $L_1$ of a link $L$ by
$\omega_0$, denoted as $\omega_0*L_1$, is equivalent to surgering the component;
then $h(\g_1)=\g_1\coprod
\omega_0*L_{X_2}$, where $\g_1$ is now considered as a link in $X_2$.  Formally, we
write this map as:
$$ (X_1;\g_1)\rightarrow (X_1; L_{X_1}\coprod {L_{X_1}}^{\tn{dual}} \coprod \g_1) \rightarrow (X_2;L_{X_2}\coprod \g_2),$$
where $\g_2$ is $\g_1$ regarded as a skein class in $X_2$.  In this map, $L_{X_1}$ is mapped
to the empty skein as it has been surged out, while ${L_{X_1}}^{\tn{dual}}$ is mapped to $L_{X_2}$.
Then define $h_2$ similarly.  The composition of $h_1$ and $h_2$
is the link invariant of the colored link
$L_{X_i}$ union a small linking circle for each component plus a parallel copy of ${L_{X_i}}^{\tn{dual}}$
union its small linking circles as in the Fig.~\ref{jkiso},
which is clearly a scalar, hence an isomorphism.

\begin{figure}[tbh]\label{jkiso}
\centering
\includegraphics[width=3.45in]{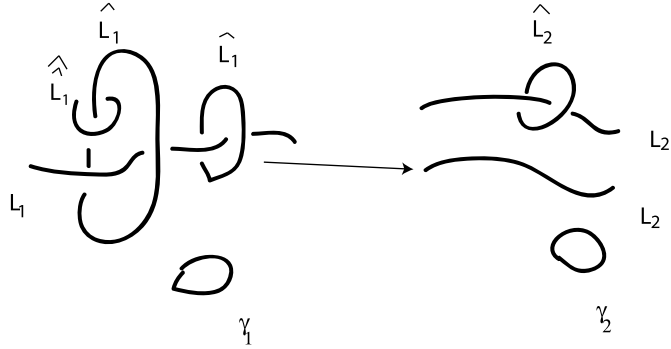}
\caption{Skein space maps}\label{ikiso}
\end{figure}

Now we see that a pair, ($W$, a handle decomposition), induces an isomorphism.
Using Cerf theory, we can show that the isomorphism is first independent of the handle
decomposition; secondly it is a bordism invariant:
if there is a $5$-manifold $N$ which is a relative bordism from $W$ to $W'$, then $W$ and $W'$ induces the same map.
Hence the isomorphism depends only on
the signature of the $4$-manifold $W$.  The detail is a highly non-trivial exercise in Cerf theory.

(4) follows from (1)--(3) easily.

(5) The inner product is given by doubling.  By (3) $K_A(X)$ is
isomorphic to $K_A(H)$, where $H$ is a handlebody with the same
boundary.  By (4), the same inner product is non-singular for
$K_A(H)$.  Chasing through the isomorphism in (3) shows that the
inner product on $K_A(X)$ is also non-singular.   Since
$K_A(DX)\cong \C$, hence $K_A(\overline{X})$ is isomorphic to
$K_A(X)^*$.

(6) $K_A(Y\times I)$ is isomorphic to $K_A(-X\coprod X)$ by (3).
It follows that the action of $K_A(Y\times I)$ on $K_A(X)$:
$K_A(X)\otimes K_A(Y\times I)\rightarrow K_A(X\cup_Y(Y\times I)$
becomes an action of $K_A(-X\coprod X)$ on $K_A(X)$:
$K_A(X)\otimes K_A(-X\coprod X)\rightarrow K_A(DX\coprod X)$.  By
the paring in (5), we identify the action as the action of
$\tn{End}(X)=K_A(-X)\otimes K_A(X)$ on $K_A(X)$.

(7) follows from (6) easily.

\end{proof}

The pairing $K_A(-{X})\times K_A(X) \la K_A(DX)$ allows us to
define a Hermitian product on $K_A(X)$ as follows:

\begin{definition}

Given an oriented closed $3$-manifold $X$, and choose a basis $e$
of $K_A(DX)$.  Then $K_A(DX)=\C e$.  For any multicurves $x,y$ in
$X$, consider $x$ as a multicurve in $-X$, denoted as $\bar{x}$.
Then define $\bar{x}\cup y=<x,y>e$, i.e. the ratio of the skien
$\bar{x}\cup y$ with $e$.  If $\emptyset$ is not $0$ in $K_A(DX)$,
then Hermitian pairing is canonical by choosing $e=\emptyset$.

\end{definition}

Almost all notations are set up to define the Jones-Kauffman
TQFTs. We see in Theorem \ref{jkskeinspace} that if two
$3$-manifolds $X_i,i=1,2$ have the same boundary, then $K_A(X_1)$
and $K_A(X_2)$ are isomorphic, but not canonically.  We like to
define the modular functor space $V(Y)$ to be a Jones-Kauffman
skein space. The dependence on $X_i$ is due to a framing-anomaly,
which also appears in Witten-Restikhin-Turaev $SU(2)$ TQFTs.  To
resolve this anomaly, we introduce an extension of surfaces.
Recall by Poincare duality, the kernel $\lambda_X$ of $H_1(\p
X;\R)\rightarrow H_1(X;\R)$ is a Lagrangian subspace $\lambda
\subset H_1(Y;\R)$. This Lagrangian subspace contains sufficient
information to resolve the framing dependence.  Therefore, we
define an extended surface as a pair $(Y;\lambda)$, where
$\lambda$ is a Lagrangian subspace of $H_1(Y;\R)$.  The
orientation, homology and many other topological property of an
extended surface $(Y;\lambda)$ mean that of the underlying surface
$Y$.

The labels for the Jones-Kauffman TQFTs are the Jones-Wenzl
projectors $\{p_i\}$. Given an extended surface $(Y;\lambda)$ with
boundary circles $\g_i,i=1,..,m$.  Glue $m$ disks $B^2$ to the
boundaries to get a closed surface $\widehat{Y}$ and choose a
handlebody $H$ such that $\p H=\widehat{Y}$, and the kernel
$\lambda_H$ of $H_1(Y;\R)\rightarrow H_1(H;\R)$ is $\lambda$. In a
small solid cylinder neighborhood $B_i^2\times [0,\e]$ of each
boundary circle $\g_i$, fix a Jones-Wenzl projector $p_{i_j}$
inside some arc$\times [0,\e]$, where the arc is any fixed
diagonal of $B_i^2$. Let $V_{JK}^A(Y; \lambda, \{p_{i_j}\})$ be
the Jones-Kauffman skein space of $H$ of all pictures within the
solid cylinders $B_i^2\times [0,\e]$ agree with $\{p_{i_j}\}$.

\begin{lemma}\label{fusionrule}

Let $B^2$ be a $2$-disk, $\Aa$ an annulus and $P$ a pair of pants,
and $(Y,\lambda)$ an extended surface with $m$ punctures labelled
by $p_{i_j},j=1,2,\cdots, m$, then

\begin{enumerate}

\item $V_{JK}^A(B^2; p_i)=0$ unless $i=0$, and
$V_{JK}^A(B^2;p_0)=\C$.

\item $V_{JK}^A(\Aa;p_i,p_j)=0$ unless $i=j$, and
$V_{JK}^A(\Aa;p_i,p_i)=\C$

\item $V_{JK}^A(P;p_i,p_j,p_k)=0$ unless $i,j,k$ is admissible,
and $V_{JK}^A(P;p_i,p_j,p_k)\cong \C$ if $i,j,k$ is admissible.

\item Given an extended surface $(Y;\lambda)$, and let $H$ be a
genus=$g$ handlebody such that $\p H=(\widehat{Y};\lambda)$ as
extended manifolds. Then admissible labelings of any framed
trivalent spine dual to a pants decomposition of $Y$ with all
external edges labelled by the corresponding boundary label
$p_{i_j}$ is a basis of $V_{JK}^A(Y;\lambda, \{p_{i_j})\}$.

\item $V^A_{JK}(Y)$ is generated by bordisms $\{X|\p X=Y\}$ if $Y$
is closed and oriented.

\end{enumerate}

\end{lemma}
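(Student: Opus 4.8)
The plan is to reduce each item to a computation of a Jones--Kauffman skein space of a handlebody, via part~(7) of Theorem~\ref{jkskeinspace} together with the handlebody machinery used in its proof, and then to feed in the recoupling identities recalled in Section~\ref{recouplingtheory} and in \cite{kauffmanlins}. In (1)--(3) the capped surface $\widehat Y$ is $S^2$, so the relevant handlebody is $H=B^3$, and $V_{JK}^A(Y;\dots)$ is the space of skeins in $B^3$ (taken modulo $p_{r-1}$) whose restriction to the prescribed boxes near the capped boundary circles equals the chosen Jones--Wenzl projectors; such a skein is just a planar tangle in the ball joining the strands emanating from those projectors, so the space is a morphism space of the colored tangle category $\Delta_A$: $V_{JK}^A(B^2;p_i)\cong\Hom(\one,p_i)$, $V_{JK}^A(\Aa;p_i,p_j)\cong\Hom(p_i,p_j)$, and $V_{JK}^A(P;p_i,p_j,p_k)\cong\Hom(p_i\otimes p_j,p_k)$. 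First I would note that any morphism $\one\to p_i$ is obtained by capping off the $i$ strands of $p_i$ among themselves, which requires $i$ even and then produces a turnback on $p_i$; so the space is $0$ unless $i=0$, where $\Hom(\one,\one)=\C\,\emptyset$. This gives (1). For (2) the $p_i$ are pairwise non-isomorphic simple objects (distinct labels of $L_A$), so $\Hom(p_i,p_j)=0$ when $i\ne j$ and $\End(p_i)=\C$; concretely the only strand pattern surviving both projectors is the straight-through identity, which is nonzero since $p_i\ne0$ and $\tn{Tr}(p_i)=\Delta_i\ne0$ in the range $i\le r-2$. Item (3) is exactly the content of the $\Hom$/theta--symbol lemma stated above: survival of all three projectors forces the internal pattern to be the unique one with $a$ strands joining the $i$- and $j$-legs, $b$ joining the $j$- and $k$-legs, $c$ joining the $k$- and $i$-legs, where $a+c=i$, $a+b=j$, $b+c=k$; this system has a necessarily unique non-negative integer solution precisely when the triangle inequalities and the parity condition ($i+j+k$ even) hold, and then the space is one-dimensional unless it is killed by $p_{r-1}$, i.e.\ unless $i+j+k>2(r-2)$ --- equivalently, unless $\theta(i,j,k)=0$.

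For (4), choose a pants decomposition of $Y$ along a disjoint system of simple closed curves; its dual trivalent graph $\Gamma$ is a spine of the handlebody $H$ with $\partial H=(\widehat Y;\lambda)$, with external legs running into the boundary collars and carrying the imposed labels $p_{i_j}$. \emph{Spanning:} cut $H$ along the complete system of meridian disks dual to the edges of $\Gamma$, obtaining a ball; put a given skein in general position with respect to these disks, resolve all crossings, and across each meridian disk insert the Jones--Wenzl decomposition of the identity $1_n$, replacing the bundle of strands crossing that disk by a single projector $p_m$ (plus lower-order terms, treated inductively). What remains is a linear combination of $\Gamma$-shaped nets with single projectors on the edges and, at each trivalent vertex, a local tangle lying in some $\Hom(p_{m_1}\otimes p_{m_2},p_{m_3})$, which by (3) is at most one-dimensional; hence every skein is a combination of labeled admissible spines. \emph{Independence:} use the non-degenerate doubling pairing $K_A(-H)\times K_A(H)\to K_A(DH)\cong\C$ from part~(5) of Theorem~\ref{jkskeinspace} (here $DH=\#^{g}(S^1\times S^2)$, whose skein space is $\C$ by part~(4) of that theorem). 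The pairing of two labeled spines glues them into a closed labeled trivalent net in $DH$, whose evaluation is, by the recoupling calculus of \cite{kauffmanlins}, a product over edges and vertices of quantum dimensions $\Delta_i$ and theta symbols $\theta(i,j,k)$; it vanishes unless the two labelings agree and is a nonzero product when they do, so the Gram matrix is diagonal with nonzero entries. Therefore the labeled admissible spines form a basis.

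For (5), part~(4) shows that $V_{JK}^A(Y)=K_A(H)$ is spanned by labeled spines in the handlebody $H$ bounding $Y$, so it suffices to realize each labeled spine as $Z(X)$ for some $3$-manifold $X$ with $\partial X=Y$. Expanding each projector $p_{i_e}$ along parallel cablings of the corresponding edge writes the spine, up to an invertible scalar, as a linear combination of framed links $L\subset H$ every component of which is colored by $\omega_0$; by the fact (used in the proof of Theorem~\ref{jkskeinspace}) that coloring a link component by $\omega_0$ is the same as surgering it, $(H,\omega_0\ast L)$ is the image of the empty skein under the isomorphism $K_A(X_L)\xrightarrow{\ \sim\ }K_A(H)$, where $X_L$ is the surgery on $L$, and that image is by definition $Z(X_L)$. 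Hence the vectors $Z(X)$, $\partial X=Y$, span $V_{JK}^A(Y)$. I expect the main obstacle to be item (4): the spanning step needs care in cutting along meridian disks and in controlling the iterated Jones--Wenzl insertions and their lower-order terms, while the independence step rests entirely on the recoupling evaluation of the doubled Gram matrix --- the theta and tetrahedral nets and the nonvanishing of those coefficients --- which is the real input to be imported from \cite{kauffmanlins}. The cabling argument in (5), rewriting an arbitrary labeled spine in terms of $\omega_0$-colored surgery links, is the next most delicate point.
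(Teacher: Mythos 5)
The paper states Lemma \ref{fusionrule} without any proof (this is one of the places where, as the abstract admits, details are omitted), so your argument has to stand on its own. Items (1)--(3) are the standard turn-back/fusion-channel computations and are correct, and your item (4) follows the standard route: spanning by pushing a skein onto the dual spine and inserting the semisimple decomposition of the identity across the meridian disks, independence from the Gram matrix of the doubling pairing. One caveat on (4): the glued net lives in $DH=\#^g(S^1\times S^2)$, not in $S^3$, so its evaluation is not literally a planar recoupling evaluation of \cite{kauffmanlins}; you must first invoke the fact (stated as a lemma in Section \ref{recouplingtheory}) that no nontrivial label can cross an essential $2$-sphere, which forces the two labelings to agree edge by edge and reduces the diagonal entries to genuine $\theta$/$\Delta$ evaluations, which are nonzero for admissible labelings with $i\leq r-2$. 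With that insertion the argument for (4) is sound.

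Item (5) has a genuine gap. You assert that expanding each Jones--Wenzl projector on a labeled spine ``along parallel cablings'' rewrites the spine, up to an invertible scalar, as a linear combination of framed links all of whose components are colored by $\omega_0$. Expanding $p_i$ produces plain ($1$-strand) diagrams and can never introduce $\omega_0$-colored components; the implication you actually need runs the other way. By the construction of $Z_{JK}$, each bordism vector $Z(X_L)$ is (up to a nonzero anomaly factor) the skein $\omega_0\ast L$ in $K_A(H)$, so item (5) is precisely the claim that the skeins $\{\omega_0\ast L : L\subset H\}$ span $K_A(H)$, and a fixed labeled spine is in general not proportional to any single $\omega_0\ast L$: in the solid torus the $p_1$-colored core is not a multiple of $\omega_0\ast(\mathrm{core})$, which is proportional to $\sum_j \Delta_j\,(p_j\hbox{-colored core})$. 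Establishing $\tn{span}\{\omega_0\ast L\}=K_A(H)$ requires a separate argument that uses modularity of $\ts$ --- so (5) should in any case be restricted to $A$ a primitive $4r$th root of unity, a hypothesis you never invoke --- for instance via encirclement identities such as $(\ts^2)_{0j}=D^2\delta_{0j}$, framed $\omega_0$-colored meridians and cables to extract individual labels, or the universal-construction/nondegeneracy argument of \cite{BHMV}. As written, the quoted step would not survive scrutiny; it is exactly the point you yourself flag as delicate, and it is where the proof is currently missing.
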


Given an extended surface $(Y;\lambda)$, to define the partition
function $Z_{JK}(X)$ for any $X$ such that $\p X=Y$, let us first
assume that $\p X=(Y;\lambda)$ as an extended surface.  Find a
handlebody $H$ such that $\lambda_H=\lambda$, and a link $L$ in
$H$ such that surgery on $L$ yields $X$. Then we define $Z_X$ as
the skein in $V_D(H)$ given by the $L$ labeled by $\omega_0$ on
each component of $L$.  If $\lambda_X$ is not $\lambda$, then
choose a $4$-manifold $W$ such that $\p W=-X\coprod X$ and the
Lagrange space $\lambda$ and $\lambda_X$ extended through $W$. $W$
defines an isomorphism between $K_A(X)$ and itself.  The image of
the empty skein in $K_A(X)$ is $Z(X)$.  Given $f:
(Y_1;\lambda_1)\rightarrow (Y_2;\lambda_2)$, the mapping cylinder
$I_f$ defines an element in $V(Y_1\coprod Y_2)\cong V(Y_1)\otimes
V(Y_2)$ by the disjoint union axiom.  This defines a
representation of the mapping class group $\Mcg(Y)$, which might
be a projective representation.

\begin{theorem}

If $A$ is a primitive $4r$th root of unity for $r\geq 3$, then
the pair $(V_{JK},Z_{JK})$ is a TQFT.

\end{theorem}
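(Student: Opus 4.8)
The plan is to check, one axiom at a time, the definition of a $(2+1)$-TQFT from Section~\ref{axiomsoftqft} for the pair $(V_{JK},Z_{JK})$, taking as inputs the structural facts already established: the properties of the Jones--Kauffman skein spaces $K_A(X)$ in Theorem~\ref{jkskeinspace}, the fusion rules of Lemma~\ref{fusionrule}, the nonsingularity of the modular $S$-matrix in Lemma~\ref{smatrix}(1), and the closed $3$-manifold invariant of Theorem~\ref{invariantformula}(2). The restriction to a primitive $4r$th root of unity is exactly what is needed here: Theorem~\ref{jkskeinspace}(2) (a nontrivial label cannot cross an embedded $S^2$) and the modularity of $\Delta_A$ both fail at primitive $2r$th or $r$th roots, and with them the disjoint-union behavior and the gluing formula fail, which is why the theorem excludes those cases.

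For the modular functor $V_{JK}$, recall that $V_{JK}^A(Y;\lambda,\{p_{i_j}\})$ is the Jones--Kauffman skein space of a handlebody $H$ with $\partial H=\widehat Y$ and $\lambda_H=\lambda$, with the boundary strands prescribed. Finite-dimensionality and independence of the auxiliary $H$ follow from Lemma~\ref{fusionrule}(4): admissible labelings of a trivalent spine dual to a pants decomposition of $(Y;\lambda)$ give a basis, and any two spines are related by the recoupling moves of Section~\ref{recouplingtheory}. The empty-surface axiom $V_{JK}(\emptyset)=\C$ follows from nontriviality and $\emptyset\amalg\emptyset=\emptyset$ exactly as in Section~\ref{closeddiagram}; the disjoint-union isomorphism $V_{JK}(Y_1\amalg Y_2)\cong V_{JK}(Y_1)\otimes V_{JK}(Y_2)$ is Theorem~\ref{jkskeinspace}(2) applied to a disjoint union of handlebodies (this is where $A$ being a primitive $4r$th root is used); and the duality isomorphism $V_{JK}(-Y;\lambda)\cong V_{JK}(Y;\lambda)^\ast$ is the nondegenerate doubling pairing of Theorem~\ref{jkskeinspace}(5), together with Lemma~\ref{fusionrule}(1)--(3) to identify the one-dimensional spaces on disks, annuli and pairs of pants.

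The substantive step is the gluing axiom. If $Y'$ is obtained from $Y$ by identifying two boundary circles, or $Y_{\mathrm{glued}}$ from $Y_{\mathrm{cut}}$ by gluing along an interior simple closed curve, one must produce a natural isomorphism $V_{JK}(Y')\cong\bigoplus_{i\in I}V_{JK}(Y;\ldots,p_i,p_i)$ with $I=\{0,\dots,r-2\}$ the label set of Lemma~\ref{jwcases}. In handlebody terms this gluing attaches a $1$-handle to $H$; a skein in the new handlebody is isotoped to meet the co-core disk transversely, its strands are fused into a single cable, and the identity cabled through the disk is recoupled into its Jones--Wenzl isotypic pieces, with every through-cable of multiplicity $\ge r-1$ annihilated by $p_{r-1}$, so that only the finitely many labels $i\in I$ contribute. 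Checking that the resulting decomposition is an isomorphism, is natural with respect to the disjoint-union and duality maps, and transforms correctly under the mapping class group, all while controlling the bigon / ``$d$-isotopy'' ambiguities discussed in Section~\ref{boundarycondition}, is the part I expect to be the main obstacle.

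It remains to treat $Z_{JK}$. For closed $X$, $Z_{JK}(X)\in V_{JK}(\emptyset)=\C$ is the invariant of Theorem~\ref{invariantformula}(2), already known to be independent of the surgery presentation because $\omega_0$ is handle-slide invariant and the $(p_-/D)^{\sigma(l)}$ factor absorbs the framing change under Kirby moves. For $X$ with $\partial X=(Y;\lambda)$ one writes $X$ as surgery on a link $L$ in a handlebody $H$ with $\lambda_H=\lambda$ and lets $Z_{JK}(X)$ be the class of $\omega_0\ast L$ in $K_A(H)\cong V_{JK}(Y;\lambda)$; when $\lambda_X\ne\lambda$ one transports along a $4$-manifold $W$ with $\partial W=-X\cup_Y(Y\times I)\cup_Y X$ realizing the two Lagrangians, the induced automorphism depending only on $\sigma(W)$ by Theorem~\ref{jkskeinspace}(3), which is precisely what the extended-surface structure records. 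The composition axiom for $Z$ then follows from the gluing isomorphism for $V_{JK}$ together with the fact that a surgery description of $X_1\cup_Y X_2$ is obtained by superimposing those of $X_1$ and $X_2$; and the mapping class group action arises from mapping cylinders, with functoriality under composition inherited from that of extended bordisms, the residual framing anomaly being the expected signature cocycle carried by the Lagrangian data. Assembling these verifications proves that $(V_{JK},Z_{JK})$ is a TQFT.
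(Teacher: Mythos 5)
Your proposal follows essentially the same route as the paper's own (admittedly brief) proof: the paper also defines $V_{JK}$ via skein spaces of handlebodies with Jones--Wenzl boundary labels, invokes Theorem~\ref{jkskeinspace} and Lemma~\ref{fusionrule} for the modular functor axioms, defines $Z_{JK}$ by $\omega_0$-colored surgery links with the Lagrangian/$4$-manifold signature data resolving the framing anomaly, and leaves the detailed verifications (the paper singles out the mapping class group action, you the gluing axiom) at the level of an outline. So your argument is consistent with, and no less complete than, the paper's treatment.
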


This theorem will be proved in Section \ref{joneskauffman}.

There is a second way to define the projective representation of
$\Mcg(Y)$.  Given an oriented surface $Y$, the mapping class group
$\Mcg(Y)$ acts on $\Pic(Y)$ by moving pictures in $Y$. This action
preserves the algebra structure of $\Pic(Y)$ in Prop.
\ref{algebrastructure}. The algebra $\Pic(Y)\cong
\tn{End}(K_A(Y))$ is a simple matrix algebra, therefore any
automorphism $\rho$ is given by a conjugation of an invertible
matrix $M_{\rho}$, where $M_{\rho}$ is only defined up to a
non-zero scalar. It follows that for each $f\in \Mcg(Y)$, we have
an invertible matrix $V_f=M_f$, which forms a projective
representation of the mapping class group $\Mcg(Y)$.

\section{Morita equivalence and cut-paste
topology}\label{moritacutpaste}

Temperley-Lieb-Jones algebras can be generalized naturally to
categories by allowing different numbers of boundary points at the
top and bottom of the rectangle $\Rr$.  Another interesting
generalization is to replace the rectangle by an annulus $\Aa$.
Those categories provide crude boundary conditions for $V(Y)$ when
$Y$ has boundary, and serve as $\lq\lq$scalars" for a
$\lq\lq$higher" tensor product structure which provides the formal
framework to discuss relations among $V(Y)$'s under cut-paste of
surfaces. The vector spaces $V(Y)$ of a modular functor $V$ can be
formulated as bimodules over those picture categories. An
important axiom of a modular functor is the gluing formula which
encodes locality of a TQFT, and describes how a modular functor
$V(Y)$ behaves under splitting and gluing of surfaces along
boundaries.  The gluing formula is best understood as a Morita
reduction of the crude picture categories to their representation
categories, which provides refined boundary conditions for
surfaces with boundaries.
 Therefore, the Morita reduction of a picture category amounts to
 the computation of all its irreps.  The use of bimodules and
 their tensor products over linear
categories to realize gluing formulas appeared in [BHMV]. In this
section, we will set up the formalism. The irreducible
representations of our examples will be computed in the next
section.

We work with the complex numbers $\C$ as the ground ring.  Let
$\Lambda$ denote a linear category over $\C$ meaning that the morphisms
set of $\Lambda$ are vector spaces over $\C$ and composition is
bilinear.  We consider two kinds of examples: \lq\lq rectangular"
and \lq\lq annular" $\Lambda$'s.  (The adjectives refer to methods for
building examples rather than additional axioms.)  We think of
rectangles$(\mathcal R)$ as oriented vertically with a \lq\lq top"
and \lq\lq bottom" and annuli$(\mathcal A)$ has an \lq\lq inside"
and an \lq\lq outside".  Sometimes, we draw an annulus as a
rectangle, and interpret the rectangles as having their left and
right sides glued.  The objects in our examples are finite
collections of points, or perhaps points marked by signs, arrows,
colors, etc., on \lq\lq top" or \lq\lq bottom" in the
rectangular case, and on \lq\lq inside" or \lq\lq outside" in the
annular case. The morphisms are formal linear combinations of
\lq\lq pictures" in $\Rr$ or $\Aa$ satisfying some linear
relations.  The most important examples are the Jones-Wenzl
projectors.  Pictures will variously be unoriented submanifolds
(i.e. multicurves), 1-submanifolds with various decorations such
as orienting arrows, reversal points, transverse flags, etc.,
and trivalent graphs.  Even though the pictures are drawn in two
dimensions they may in some theories be allowed to indicate
over-crossings in a formal way.  A morphism is sometimes called an
\lq\lq element" as if $\Lambda$ had a single object and were an
algebra.

Our $\Rr$'s and $\Aa$'s are parameterized, i.e. not treated merely
up to diffeomorphism.  One crucial part of the parameterization is
that a base point arc $*\times I \subset S^1\times I=\Aa$ be
marked.  The $*$ marks the base point on $S^1$ and brings us to a
 technical point.  Are the objects of $\Lambda$
 the continuously many collections of finitely many points in $I$
 (or $S^1$) or are they to be simply one representative example
 for each non-negative integer $m$.  The second approach
 makes the category feel bit more like an algebra (which has only
 one object) and the linear representations have a simpler object
 grading.  One problem with this approach is that if an annulus
 $\Aa$ factored as a composition of two by drawing a degree=$1$ scc
 $\gamma \subset \Aa$ (and parameterizing both halves), even if
 $\gamma$ is transverse to an element $x$ in $\Aa$  $\gamma\cap x$
 may not be the representative set of its cardinality.  This
 problem can be overcome by picking a base point preserving
 re-parameterization of $\gamma$.  This amounts to \lq\lq
 skeletonizing" the larger category and replacing some \lq\lq
 strict" associations by \lq\lq weak" ones.  Apparently a theorem
 of S.~MacLane guarantees that no harm follows, so either viewpoint
 can be adopted \cite{maclane}.  We will work with the continuously
 many objects version.

Recall the following definition from Appendix \ref{repofcategory}:

\begin{definition} A representation of a linear category $\Lambda$ is a functor $\rho: \Lambda
\rightarrow \V$, where $\V$ is the category of finite dimensional
vector spaces.  The action is written on the right: $\rho(a)=V_a$
and given $m\in \textrm{Mor}(a,b), \rho(m): V_a\rightarrow V_b$.
We write on the left to denote a representation of $\Lambda^{op}$.
\end{definition}

Let us track the definitions with the simplest pair of examples,
temporarily denote $\Lambda^{\Rr}$ and $\Lambda^{\Aa}$ the $\Rr$ and
$\Aa$-categories with objects finite collections of points and
morphisms transversely embedded un-oriented 1-submanifolds with
the marked points as boundary data.  Let us say that 1) we may
vary multicurves by $\lq\lq$d-isotopy" for $d=1$, and 2) to place
ourselves in the simplest case let us enforce the skein relation:
$p_2=0$ for $d=1$. This means that we allow arbitrary recoupling
of curves.  This is the Kauffman bracket relation associated to
$A=e^{\frac{2\pi i}{6}}, d=-A^2-A^{-2}=1$. The admissible pictures
may be extended to over-crosssings by the local Kauffman bracket
rule in Figure \ref{kauffmanbracket} in Section
\ref{kauffmanbrackets}.

In these theories, which we call the rectangular and annular
Temperley-Lieb-Jones categories $\TLJ$ at level $=1,d=1$,
over-crossings are quite trivial, but at higher roots of unity
they are more interesting. In schematic Figures
\ref{rectangleaction}, \ref{annularaction} give examples of
$\Lambda^{\Rr}$ and $\Lambda^{\Aa}$ representations.

\begin{figure}
\centering
\includegraphics[scale=1]{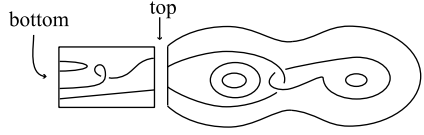}
\caption{$\Lambda^{\Rr}$ acts (represents) on vector space of pictures
in twice punctured disk (or genus=2
handlebody)}\label{rectangleaction}
\end{figure}

It is actually the morphism between objects index by 4 to 2
points, resp. ${_4}\Lambda^R_2$, which is acting in Figure
\ref{rectangleaction}, Figure \ref{annularaction}.

\begin{figure}
\centering
\includegraphics[scale=1]{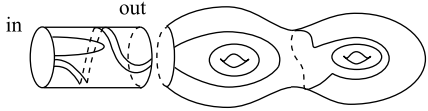}
\caption{$\Lambda^{\Aa}$ acts on vector space of pictures in punctured
genus=2 surface}\label{annularaction}
\end{figure}

The actions above are by regluing and then re-parameterizing to
absorb the collar. For each object $a$ in $\TLJ_{d=1}$, the
functor assigns the vector space of pictures
lying in a given fixed space with boundary data equal the object
$a$.  Given a fixed picture in $\Rr$ and $\Aa$, i.e. an element
$e$ of $\Lambda$, gluing and absorbing the collar defines a restriction
map: $f(e): V_4\rightarrow V_2$ (in the case illustrated) between
the vector spaces with the bottom (in) and the top (out) boundary
conditions.  To summarize the annular categories act on vector
spaces which are pictures on a surface by gluing on an annulus.
The rectangular categories, in practice, act on handlebodies or
other 3-manifolds with boundary by gluing on a solid cylinder;
Figure \ref{rectangleaction} is intentionally ambiguous and may be seen as a diagram or
$3-$manifolds. Because we can use framing and overcrossing
notations in the rectangle we are free to think of $\Rr$ either as
2-dimensional, $I\times I$, or $3-$dimensional $I\times B^2$.

\subsection{Bimodules over picture category}

Because a rectangle or annulus can be glued along two sides, we
need to consider $\Lambda^{\textrm{op}} \times \Lambda$ actions:
$\Lambda^{\textrm{op}} \times \Lambda\stackrel{\rho}{\rightarrow} \V$.  The
composition $\Lambda \underset{m \, \longmapsto
(m^{\tn{op},m})}{\stackrel{\tr}{\rightarrow}} \Lambda^{\textrm{op}}
\times \Lambda\stackrel{\rho}{\rightarrow} \V$ describes the action of
gluing an $\Rr$ or $\Aa$ on two sides (Figure
\ref{rectanglebiaction}, Figure \ref{annularbiaction}).

\begin{figure}
\centering
\includegraphics[scale=.8]{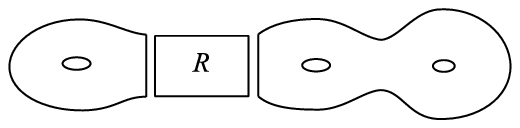}
\caption{$\Lambda^{\Rr}$ acts on both sides}\label{rectanglebiaction}
\end{figure}

\begin{figure}
\centering
\includegraphics[scale=.5]{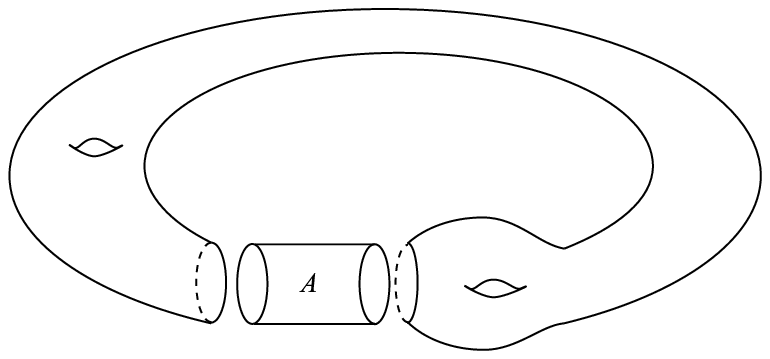}
\caption{$\Lambda^{\Aa}$ acts on both sides}\label{annularbiaction}
\end{figure}

We refer to the action of $\Lambda^{\textrm{op}}$ as \lq\lq left" and
the action of $\Lambda$ as \lq\lq right".

\begin{definition}

1) Let $M$ be a right $\Lambda$-representation (or \lq\lq module") and
$N$ a left $\Lambda$-module.  Denote by $M\bigotimes_{\Lambda}N$ the
$\C$-module quotient of $\underset{a}{\bigoplus}
M_a\bigotimes_{\C} {}_aN$ by all relations of the form
$u\alpha\bigotimes v=u\bigotimes \alpha v$, where \lq\lq a" and
\lq\lq b" are general objects of $\Lambda$, $u\in M_a$, $v\in {}_bN$
and $\alpha \in {}_a\Lambda_b=\textrm{Mor}(a,b).$

2)  A $\Cc \times \mathcal{D}$ bimodule is a functor $\rho:
\Cc^{\tn{op}}\times \mathcal{D} \la \V$.  Note that $\Cc$ is
naturally a $\Cc\times \Cc$ bimodule, which will be called the
regular representation of $\Cc$.

\end{definition}

Suppose now that $\Lambda$ is semi-simple.  This means that there is a
set $I$ of isomorphism classes of (finite dimensional) irreducible
representations $\rho_i, i\in I$ of $\Lambda$ and every (finite
dimensional) representation of $\rho$ may be decomposed $\rho\cong
\bigoplus V_i\bigotimes \rho_i$, where $V_i$ is a $\C$-vector
space with no $\Lambda$ action; $\textrm{dim}(V_i)$ is the multiplicity
of $\rho_i$. (If $\rho_1(a)=M_a^1$ and $\rho_2(a)=M_a^2$, then
$\rho_1\bigoplus \rho_2=M_a^1\bigoplus M_a^2$, and similarly for
morphisms.)

The following example is contained in the section in the general
discussion, but it is instructive to see how things work in
$\TLJ_{d=1}^{\Rr}$ and $\TLJ_{d=1}^{\Aa}$, the TLJ rectangular and
picture categories for $d=1$. These simple examples include the
celebrated toric codes TQFT in \cite{kitaev1} or $\Z_2$ gauge
theory, and illustrate the general techniques. Since it is almost
no extra work, we will include the corresponding calculation for
$\TLJ_{d=-1}^{\Rr}$ and $\TLJ_{d=-1}^{\Aa}$ where $A=e^{\frac{2\pi
i}{12}}, d=-1$ and $p_2=0$ for $d=-1$.
%${\includegraphics[width=.27cm,height=.25cm]{drawingd.eps}} = -{\cup\atop\cap}$.

A general element $x\in {}_a\Lambda_{\pm 1 b}^{\Rr}$ is determined by
its coefficients of \lq\lq squeezed" diagrams where only $0$ and
$1$ arcs cross the midlevel of the rectangle such diagrams look
like:

\begin{figure}[tbh]\label{squeezes}
\centering
\includegraphics[width=2.45in]{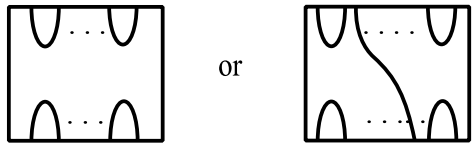}
\caption{Squeezed morphisms}\label{squeeze}
\end{figure}

%\vskip.2in \epsfxsize=2.5in \centerline{\epsfbox{fig53AB.eps}}{\centerline{Figure 5.3AB}} \vskip.2in

Similarly $x\in {}_a\Lambda_{\pm 1, b}^{A}$ are determined by the
coefficients of the diagrams in an annulus made by gluing the left
and right sides of FIGS??.  To each $a\in \Lambda^0, \Lambda=\Lambda_{\pm 1
}^{\Rr\; \T{or}\; \Aa}$, let $V_a$  be the vector space spanned by
diagrams, with $a$ end points on the top (outside) and zero ($a$
even) or one ($a$ odd) end point on the bottom (inside), thus
$V_a={}_{0\; \T{or}\; 1}\Lambda_a$.  The gluing map ${}_{0\; \T{or}\;
1}\Lambda^{\Rr}_a \bigotimes
{}_a\Lambda_b^{\Rr}\stackrel{\rho^{\Rr}}{\rightarrow} {}_{0 \;\T{or}\;
1}\Lambda^{\Rr}_b$ provides the two representations $\rho_0^{\Rr}$ and
$\rho_1^{\Rr}$ of $\Lambda_{\pm 1}^{\Rr}$ ($\rho_{0\;(\tn{or} \; 1)}$
sends $a$ odd (or even) to the 0-dimensional vector space.)

\begin{lemma}
The representation $\rho_0^{\Rr}$ and $\rho_1^{\Rr}$ are
irreducible.
\end{lemma}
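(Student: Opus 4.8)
\emph{Proof proposal.} The plan is to show first that every graded piece of $\rho_0^{\Rr}$ and $\rho_1^{\Rr}$ is at most one–dimensional, and then to deduce irreducibility from the fact that a morphism of the right parity acts as an isomorphism between consecutive (nonzero) pieces, so that any nonzero vector generates the whole functor. Write $\rho$ for either $\rho_0^{\Rr}$ or $\rho_1^{\Rr}$, so $\rho(a)=V_a={}_{0}\Lambda^{\Rr}_{a}$ (resp.\ ${}_{1}\Lambda^{\Rr}_{a}$) when $a$ is even (resp.\ odd) and $\rho(a)=0$ otherwise, where $\Lambda^{\Rr}=\Lambda^{\Rr}_{\pm1}$ is the rectangular TLJ category at $d=\pm1$ in which the relation $p_2=0$, i.e.\ $1_2=\tfrac1d U_1$, is imposed.

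I would begin by analyzing the morphism spaces of $\Lambda^{\Rr}_{\pm1}$ directly. Using $1_2=\tfrac1d U_1$ one can replace any two arcs of a planar tangle that run parallel across the midlevel of $\Rr$ by a cup--cap pair, lowering by two the number of strands meeting the midlevel; iterating puts every tangle into a ``squeezed'' representative meeting the midlevel in $0$ or $1$ points, as in Figure \ref{squeeze}. Parity then forces ${}_{m}\Lambda_{n}=0$ when $m,n$ have opposite parity, and a squeezed diagram meets the midlevel in $0$ points in the even case, $1$ point in the odd case. Next I would show that any two squeezed diagrams with the same through–count are scalar multiples of one another: after an isotopy exhibiting two of their planar arcs running parallel, the relation $1_2=\tfrac1d U_1$ reconnects one planar matching into another up to the factor $\tfrac1d=\pm1$, and iterating identifies all matchings. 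This yields $\dim V_a\le1$. To see $\dim V_a=1$ in the matching parity, let $\delta_a\in V_a$ be the standard nested–cups diagram (with one extra through–strand in the odd case); composing $\delta_a$ with its horizontal mirror $\bar\delta_a$ gives $d^{k}\,\emptyset$ (resp.\ $d^{k}1_1$), which is nonzero because $d=\pm1$ and $\emptyset\ne0$ in this theory (here the relation $(d^2-1)\emptyset=0$ is vacuous), so $\delta_a\ne0$.

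With this in hand, irreducibility is short. Let $0\ne W\subseteq\rho$ be a subrepresentation and pick an object $a$ with $0\ne v\in W_a$; then $W_a=V_a=\C\,\delta_a$. Given any other object $b$ of the same parity, I would take $m\in{}_{a}\Lambda^{\Rr}_{b}$ to be $1_a$ juxtaposed with $(b-a)/2$ cups when $b>a$, or $1_b$ juxtaposed with $(a-b)/2$ caps when $b<a$; gluing these onto $\delta_a$ produces $\delta_b$ up to a power of $d=\pm1$ (coming from any closed loops created), so $\rho(m)$ carries the generator of $V_a$ to a nonzero element of $V_b$ and is therefore an isomorphism $V_a\to V_b$. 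Hence $W_b=V_b$ for every $b$ of that parity, so $W=\rho$, and both $\rho_0^{\Rr}$ and $\rho_1^{\Rr}$ are irreducible.

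I expect the main obstacle to be the step inside the first part asserting that all squeezed diagrams with a fixed number of through–strands are proportional --- equivalently, that the Temperley--Lieb hom–spaces collapse to dimension $\le1$ once $p_2=0$ and $d=\pm1$. Making this precise requires choosing the isotopy so that $1_2=\tfrac1d U_1$ applies to a genuinely parallel pair of arcs, together with a short induction (on the number of cups) showing every planar matching reduces to the standard one. Everything after that is routine bookkeeping.
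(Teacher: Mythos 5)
Your proposal is correct and takes essentially the same route as the paper's proof: one‑dimensionality of each graded piece ${}_{0\,(\text{or }1)}\Lambda_a$ (which the paper reads off from the squeezed diagrams and you derive by iterating $1_2=\tfrac1d U_1$), together with cup/cap morphisms joining any two grades of the same parity that act invertibly (the paper's $m,m^{\dag}$ with $m^{\dag}m=\mathrm{id}$), so that a nonzero subrepresentation must be the whole functor. The extra detail you give on the proportionality of planar matchings and on $\emptyset\neq 0$ (the latter is exactly Lemma \ref{properideal}) simply fills in what the paper asserts via its figures.
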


\begin{proof} Proof:  Consider $\rho_0^{\Rr}$, the morphism vector space
${}_{2k}\Lambda_{2k}$ has dimension=$1$ (spanned by the empty diagram
in a rectangle) so that in \lq\lq grade", $2k$, $\rho_0^{\Rr}$ is
automatically irreducible. There is a morphism $m\in
{}_{2k}\Lambda_{2n}$, $m^{\dag}\in {}_{2n}\Lambda_{2k}$:

\begin{figure}[tbh]\label{factormorphisms}
\centering
\includegraphics[width=3.45in]{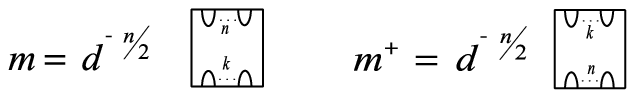}
\caption{Factored morphisms}\label{factormorphism}
\end{figure}

%\vskip.2in \epsfxsize=2.5in \centerline{\epsfbox{fig54.eps}}{\centerline{Figure 5.4B}} \vskip.2in

and $m^{\dag}m=\T{id}\in {}_{2k}\Lambda_{2k}$.  Thus any representation
$\{V\}$ on the even grades of the categories must have equal
dimension in all (even) grades since $\rho_0^{\Rr}(m)$ and
$\rho_0^{\Rr}(m^{\dag})$ are inverse to each other.  It follows
that any proper subrepresentation of $\rho_0^{\Rr}$ must be zero
dimensional in all grades.  Thus $\rho_0^{\Rr}$ is irreducible.

The argument for $\rho_1^{\Rr}$ is similar, simply add a vertical
line near the right margin of the rectangles in Fig.~\ref{factormorphism} to obtain
the corresponding $m, m^{\dag}$ in the odd grades. \end{proof}

\begin{lemma}  Any irreducible representation of $\Lambda_{\pm 1}^{\Rr}$ is
isomorphic to $\rho_0^{\Rr}$ or $\rho_1^{\Rr}$.
\end{lemma}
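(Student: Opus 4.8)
The plan is to show that any irreducible representation $\rho$ of $\Lambda_{\pm 1}^{\Rr}$ is isomorphic to one of the two representations already constructed, by reducing everything to a single low grade and using the ``zig-zag'' morphisms $m, m^\dag$ of the previous lemma. First I would observe that the object grading splits $\Lambda_{\pm 1}^{\Rr}$ into an even part and an odd part: since every morphism in $\Lambda^{\Rr}$ is an (unoriented, $d$-isotopy) $1$-submanifold, the parity of the number of boundary points on the top equals that on the bottom, so ${}_a\Lambda_b = 0$ whenever $a \not\equiv b \pmod 2$. Consequently any irreducible $\rho$ is supported either entirely on even objects or entirely on odd objects, and I will treat the even case (the odd case being identical after adjoining a vertical strand near the right margin, exactly as in the proof that $\rho_1^{\Rr}$ is irreducible).

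Next I would exploit the morphisms $m \in {}_{2k}\Lambda_{2n}$ and $m^\dag \in {}_{2n}\Lambda_{2k}$ from Figure \ref{factormorphism}, which satisfy $m^\dag m = \mathrm{id}$. Applying the functor $\rho$, the maps $\rho(m)$ and $\rho(m^\dag)$ are mutually inverse isomorphisms between the graded pieces $V_{2k}$ and $V_{2n}$ of $\rho$; in particular all even graded pieces are canonically identified, and it suffices to understand the action on grade $0$. In grade $0$ the relevant algebra is ${}_0\Lambda_0 = \mathrm{Mor}(\mathbf{0},\mathbf{0})$, which under ``$d$-isotopy'' with $d = \pm 1$ and $p_2 = 0$ (arbitrary recoupling) is spanned by the empty diagram: every closed network reduces to a scalar. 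Hence ${}_0\Lambda_0 \cong \C$, so $V_0$ is a module over $\C$ with the scalar $d$ acting on each removed loop in the prescribed way; since $\rho$ is irreducible, $V_0$ is either $0$ or one-dimensional. If $V_0 = 0$, then all even grades vanish, contradicting irreducibility (a nonzero module must be nonzero somewhere, but by the isomorphisms above nonzero in grade $0$ would then follow — more precisely one uses that $\rho(m^\dag)\rho(m) = \mathrm{id}$ forces $V_{2k} \hookrightarrow V_0$). So $V_0 \cong \C$, and all $V_{2k} \cong \C$.

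Finally I would pin down the isomorphism class. With each even graded piece one-dimensional, the representation $\rho$ is completely determined by the scalars by which the ``squeezed'' morphisms of Figure \ref{squeeze} act — and by the interchange/composition relations these scalars are forced once we know the action of the zig-zags $m, m^\dag$ and of loop-removal ($d = \pm 1$). One checks that $\rho_0^{\Rr}$ realizes exactly this unique pattern of scalars on even objects: indeed $\rho_0^{\Rr}(a) = {}_0\Lambda_a$ is one-dimensional in each even grade, the zig-zags act invertibly, and loops contribute $d$. Thus the functor sending the one-dimensional $V_{2k}$ to the one-dimensional $(\rho_0^{\Rr})_{2k}$ compatibly with $\rho(m)$, $\rho(m^\dag)$ is a natural isomorphism $\rho \cong \rho_0^{\Rr}$; the odd case gives $\rho \cong \rho_1^{\Rr}$. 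The step I expect to be the main obstacle is the last one: verifying that \emph{all} the structure scalars of $\rho$ are genuinely determined — i.e., that there is no residual one-parameter freedom — which amounts to checking that the squeezed diagrams, modulo the interchange law and the $d$-isotopy relations, are generated as a representation by the empty diagram together with the explicit zig-zag morphisms. This is a finite but slightly fiddly diagrammatic bookkeeping; everything else is formal.
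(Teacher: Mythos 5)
Your outline follows the same route as the paper's proof: split by parity, use the morphisms $m,m^{\dag}$ of the preceding lemma to identify all even (resp.\ odd) graded pieces, and reduce to grade $0$ (resp.\ $1$), where ${}_0\Lambda_0\cong\C$ and irreducibility force $V_0$ to be one-dimensional. Up to that point the argument is fine (the claim that irreducibility gives $\dim V_0\le 1$ does need the small remark that a line $W_0\subset V_0$ generates the proper subrepresentation $W_a=W_0\cdot\rho({}_0\Lambda_a)$, but that is routine, as is the check that both composites of $m$ with $m^{\dag}$ are nonzero multiples of identities, so $\rho(m)$ is invertible in any representation).

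The genuine gap is exactly the step you flag as the ``main obstacle'' and then leave as ``fiddly diagrammatic bookkeeping'': ruling out a twisted representation with the same one-dimensional graded pieces. That is where the paper's single idea lives, the resolution of the identity. Since $p_2=0$ means $\mathrm{id}_2=\tfrac{1}{d}U_1$, the identity on $2k$ points factors through the $0$ object, $\mathrm{id}_{2k}={}_{2k}m^{\dag}_0\cdot{}_0m_{2k}$ (through the $1$ object in the odd case). Hence for every morphism $l$ one has $\rho(l)=\rho(l\cdot{}_{2k}m^{\dag}_0)\,\rho({}_0m_{2k})$, so the entire action factors through ${}_0\Lambda_{2k}$; equivalently, choosing $0\ne v_0\in V_0$, the maps $\Phi_a\colon {}_0\Lambda_a\to V_a$, $x\mapsto v_0\cdot\rho(x)$, form an intertwiner from $\rho_0^{\Rr}$ to $\rho$ that is nonzero in every even grade (because $\rho(m)$ is invertible), hence an isomorphism of one-dimensional spaces grade by grade. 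This kills any residual one-parameter freedom with no scalar matching at all, and it is the same mechanism by which the paper gets semisimplicity: a general representation is $\rho_0^{\Rr}$ or $\rho_1^{\Rr}$ tensored with a multiplicity space. So your strategy is the right one, but the decisive step should be the explicit factorization of the identity through grade $0$/$1$, not bookkeeping of structure constants.
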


\begin{proof} The proof is based on \lq\lq resolutions of the
identity".
 In this case that means:

 \begin{figure}[tbh]\label{resolutionofids}
 \centering
\includegraphics[width=3.45in]{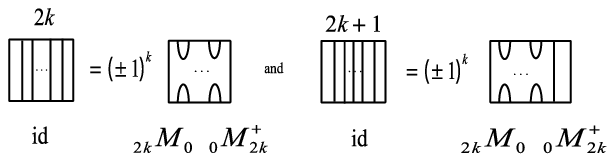}
\caption{Resolution of identity}\label{resolutionofid}
\end{figure}
%\vskip.2in \epsfxsize=3in \centerline{\epsfbox{fig55.eps}}{\centerline{Figure 5.5}} \vskip.2in

 Acting by $\rho$ on $\{V\}$ may be factored schematically as
 shown in Figure \ref{pictureaction}.

\begin{figure}[tbh]\label{pictureactions}
\centering
\includegraphics[width=3.45in]{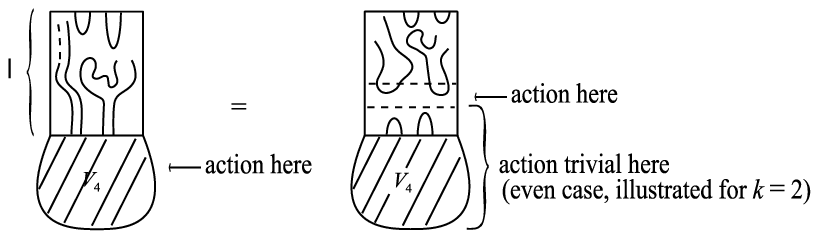}
\caption{Picture action}\label{pictureaction}
\end{figure}

%\vskip.2in \epsfxsize=3in \centerline{\epsfbox{fig56.eps}}{\centerline{Figure 5.6}} \vskip.2in

By Theorem \ref{structuretheorem}, for every $a\in \Lambda^0$, $V_a$ is
a subspace of ${}_a\Lambda_b$ for sone $b\in \Lambda^0$.
 In formulas, let $l\in {}_{2n}\Lambda_{2k}$ (for the even case)
 $\rho(l)=\rho\left(l\cdot {}_{2k}m^{\dag}_0 \cdot {}_0 m_{2k}\right)$, so the action factors
 through ${}_{0}\Lambda_{2k}$.  On the even (odd) grades the action is
 isomorphic to $\rho_0^{\Rr} \;\left(\rho_1^{\Rr}\right)$ tensor the subspace of ${}_{2n}\Lambda_0$ generated by
 elements of the form $l\cdot {}_{2k}m^{\dag}_0$ with trivial
 action.  So the general representation is isomorphic to a direct
 sum of irreducibles.  In this simple case it was not necessary
 (as it will be in other cases) to construct the Hermitian
 structure on $\Lambda$ to derive semi-simplicity.

 \end{proof}

 Now consider representations of $\Lambda_{\pm 1}^{\Aa}$.  Again $x\in
 {}_a\Lambda_{\pm 1, b}^{\Rr}$ is determined by diagrams with a
 \lq\lq weight" of $0$ or $1$.

%\vskip.2in \epsfxsize=2in \centerline{\epsfbox{fig57.eps}}{\centerline{Figure 5.7}} \vskip.2in

 In the special (\lq\lq principle graph") cases: ${}_0\Lambda_0$ and
 ${}_1\Lambda_1$ there are four diagrams (Figure \ref{idempotentd=1}) up to isotopy in the presence of
 relations $p_2=0$ for $d=\pm 1$.

\begin{figure}[tbh]\label{idempotentd=1s}
\centering
\includegraphics[width=3.45in]{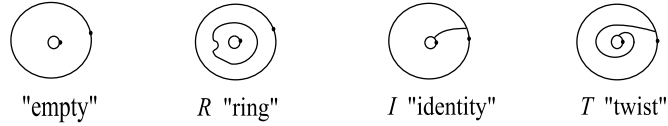}
\caption{Idempotents for annular $d=\pm 1$}\label{idempotentd=1}
\end{figure}
%${\includegraphics[width=.27cm,height=.25cm]{drawingd.eps}}\pm{\cup\atop\cap} =0$, and trivial loop $= d= \pm 1$.

 %\vskip.2in \epsfxsize=3in \centerline{\epsfbox{fig58.eps }}{\centerline{Figure 5.8}}\vskip.2in

 The reader should observe that if pictures are glued to be
 outside of $\emptyset$, ring $R$, straight arc $I$, or twist $T$ they may be transformed to
 another picture:
\begin{equation*}
    \emptyset \bigotimes R=R, \; R\bigotimes R=\pm \emptyset, \; I\bigotimes T=T, \;
    \T{and} \; T\bigotimes T=\pm T.
\end{equation*}
 (The signs are for $d=\pm 1$).  Let us call the object (i.e.
 number of end points) a \lq\lq crude label".  We have two crude
 labels \lq\lq 0" and \lq\lq 1" in this example.  For each crude
 label the symmetric $\left(\frac{\emptyset +R}{2}, \T{and} \;
 \frac{I+T}{2}\right)$ and anti-symmetric $(\frac{\emptyset -R}{2}, \T{and}
 \;\frac{I-T}{2})$ averages are in fact $(+1,-1)$ eigenvectors
 under gluing on a ring $R$ in ${}_{0}\Lambda_{1, 0}$ and gluing on $T$ in
 ${}_1\Lambda_{1,1}$.  The combinations $(\emptyset -iR)$ and $(\emptyset
 +iR)$ are $\pm 1$-eigenvectors for the action of $R$ in
 ${}_0\Lambda_{-1,0}$ and $(T -iT)$ and $(T
 +iT)$ are $\pm 1$-eigenvectors for the action of $T$ in
 ${}_0\Lambda_{-1,0}$.  In all cases these vectors span a 1-dimensional
 representation of four algebras
 ${}_0\Lambda_{1,0}, {}_1\Lambda_{-1,0}, {}_1\Lambda_{1, 1}$, ${}{_1} \Lambda_{-1,1}$ in which they lie.
  That is, ${}_0\Lambda_0$ and ${}_1\Lambda_1$ have the structure of commutative
  rings under gluing ($\cdot$) and formal sum $(+)$.  They
  satisfy ${}_0\Lambda_{\pm 1,0}\cong \C[R]/(R^2=\pm\emptyset)$ and
  ${}_1\Lambda_{\pm 1,1}=\C (T)/(T^2=\pm I)$ with $\emptyset$ and $I$
  serving as respective identities.

  What is more important than the representations of these
  algebras is the representations of the entire category ${}_a\Lambda_{\pm
 1, b}$ in which they lie.  Similar to the rectangular case, these
 four representatives together form the \lq\lq principle graph"
 from which the rest of the \lq\lq Bratteli diagram" for full
 category representations follows formally.

 \begin{lemma} These 4 representations of $\Lambda_{\pm 1}^{\Aa}$ are a
 complete set of irreducibles.
 \end{lemma}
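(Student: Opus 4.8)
The plan is to run the same argument as in the rectangular case, the only new feature being that the endomorphism algebra of the smallest object is now two-dimensional rather than one-dimensional --- which is exactly what produces four irreducibles instead of two. First note that $\Lambda^{\Aa}_{\pm1}$ has no morphisms between an object with an even number of points and an object with an odd number, so it splits as a direct sum of an ``even'' subcategory (objects $0,2,4,\dots$) and an ``odd'' one (objects $1,3,5,\dots$); it suffices to classify the irreducibles of each summand, and we expect two from each.

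The core step is a Morita reduction of each summand to the endomorphism algebra of its smallest object: ${}_0\Lambda_0$ in the even case, ${}_1\Lambda_1$ in the odd case. Because $p_2=0$, equivalently $1_2=\frac1d U_1$ with $d=\pm1\neq0$, the ``squeezed'' diagrams --- those with at most one through-passing strand --- span every morphism space. Concretely, for even $a$ there are nested-cup and nested-cap morphisms $m_a\in{}_a\Lambda_0$ and $m_a^{\dagger}\in{}_0\Lambda_a$ (essentially as in Figure~\ref{factormorphism}) whose composite $m_a^{\dagger}m_a$ is a nonzero scalar multiple of $1_a$, while $m_a m_a^{\dagger}$ --- all of whose closed loops can be chosen contractible --- is a nonzero scalar multiple of $1_0$; similarly nested morphisms tie every odd object to the object $1$. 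Hence in any representation $\rho$ the value $\rho(a)$ is isomorphic, via $\rho(m_a)$, to $\rho(0)$ (resp. $\rho(1)$), and the action of any morphism $x\in{}_a\Lambda_b$ is carried, under these isomorphisms, by the element $m_b^{\dagger}x m_a\in{}_0\Lambda_0$ (resp. in ${}_1\Lambda_1$). Thus a representation of the even (resp. odd) summand is determined by, and may be reconstructed from, an ${}_0\Lambda_0$-module (resp. an ${}_1\Lambda_1$-module), and it is irreducible exactly when that module is. (By Theorem~\ref{structuretheorem} one may alternatively see this by noting that every irrep is carried by a primitive idempotent of some ${}_a\Lambda_a$, which the above allows us to move into ${}_0\Lambda_0$ or ${}_1\Lambda_1$.) No irrep is missed: if $\rho$ vanished on both the object $0$ and the object $1$ it would vanish on every object.

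It then remains only to count these modules, which we have essentially already done. We showed ${}_0\Lambda_{\pm1,0}\cong\C[R]/(R^2=\pm\emptyset)$ and ${}_1\Lambda_{\pm1,1}\cong\C[T]/(T^2=\pm I)$; in each of the four cases this is a two-dimensional commutative $\C$-algebra whose generator has two distinct eigenvalues ($\pm1$ when $d=1$, $\pm i$ when $d=-1$), hence is isomorphic to $\C\times\C$ and has exactly two one-dimensional modules, cut out by the primitive idempotents $\frac{\emptyset\pm R}{2}$, $\frac{I\pm T}{2}$ (for $d=1$) and $\frac{\emptyset\pm iR}{2}$, $\frac{I\pm iT}{2}$ (for $d=-1$). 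Pushing these $2+2$ modules back out along the Morita reduction yields precisely the four representations of $\Lambda^{\Aa}_{\pm1}$ displayed before the statement, and the correspondence shows there are no others.

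The main obstacle is making the Morita reduction rigorous: one must verify that the nested cup/cap morphisms compose to a nonzero multiple of $1_a$ on one side and to a unit of the endomorphism algebra of the small object on the other (this is where $p_2=0$ and $d\neq0$ are used), and that the reconstruction of a representation from an ${}_0\Lambda_0$- or ${}_1\Lambda_1$-module is functorial, i.e.\ compatible with composition, and independent of the auxiliary choices. Everything after that is the bookkeeping carried out above; as in the rectangular case, no Hermitian structure is needed to obtain semisimplicity in this very degenerate example.
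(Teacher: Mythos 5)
Your proof is correct and follows essentially the same route as the paper: the paper's one-line argument ("resolutions of the identity into morphisms factoring through the $0$- or $1$-strand object, plus one-dimensionality and invertible morphisms between grades") is exactly your cup/cap reduction of every object to $0$ or $1$ followed by counting the two idempotents in each of ${}_0\Lambda_{\pm1,0}\cong\C[R]/(R^2=\pm\emptyset)$ and ${}_1\Lambda_{\pm1,1}\cong\C[T]/(T^2=\pm I)$. You simply make explicit (via the $p_2=0$ relation and $d=\pm1\neq 0$) the details the paper leaves implicit.
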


 The Bratteli diagram in Figure \ref{bratteli} explains how to extend the
 algebra representations to the linear category (\lq\lq
 algebroid") in the rectangle $\Rr$ case.

\begin{figure}[tbh]\label{bratteli}
\centering
\includegraphics[width=3.45in]{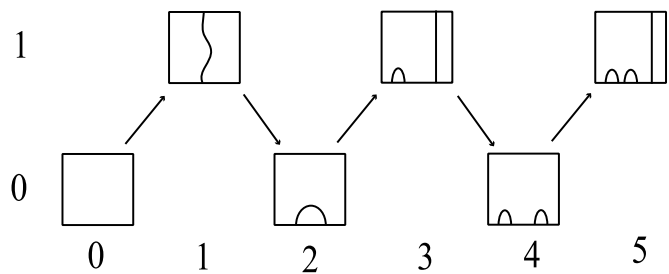}
\caption{Bratteli diagram}\label{bratteli}
\end{figure}

 %\vskip.2in \epsfxsize=2.7in \centerline{\epsfbox{fig59.eps }}{\centerline{Figure 5.9}} \vskip.1in

 All vector spaces above are 1-dimensional and spanned by the
 indicated picture in $\Rr$ and the $\nearrow$ is \lq\lq add line on
 right", the $\searrow$ \lq\lq bend right".  The annular case
  is similar and is shown in Figures \ref{annular510}, \ref{annular511}.

\begin{figure}[tbh]\label{annular510}
\centering
\includegraphics[width=3.45in]{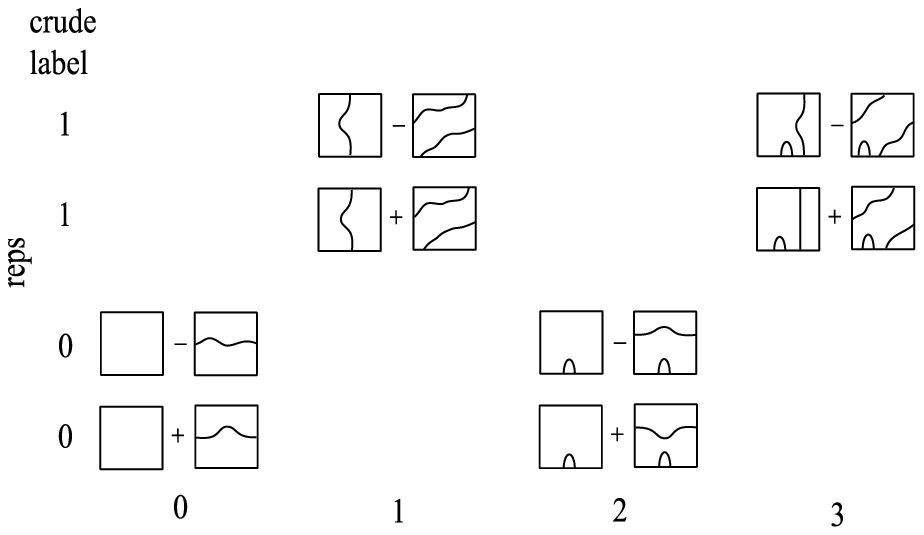}
\caption{$\Lambda^A_{+1}$}\label{annular510}
\end{figure}

\begin{figure}[tbh]\label{annular511}
\centering
\includegraphics[width=3.45in]{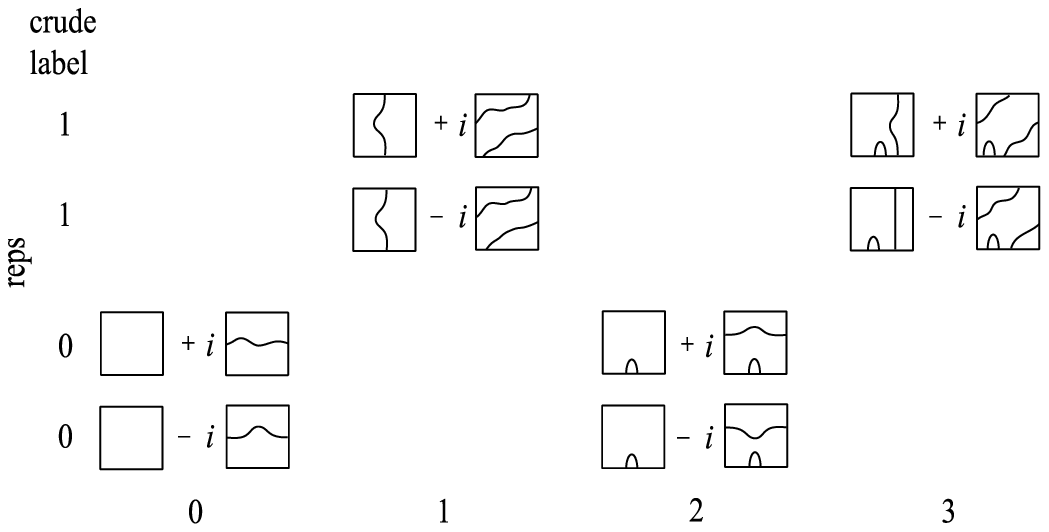}
\caption{$\Lambda^A_{-1}$}\label{annular511}
\end{figure}
 %\vskip.2in \epsfxsize=2.7in \centerline{\epsfbox{fig510.eps }}{\centerline{Figure 5.10}} \vskip.2in

%\vskip.2in \epsfxsize=2.7in \centerline{\epsfbox{fig511.eps}}{\centerline{Figure 5.11}} \vskip.2in

\noindent Note that we interpret the rectangles as having their
left and right sides glued.

 In the case of annular categories there is no tensor
 structure (horizontal stacking) so in general the arrows present
 in the $\Rr$-case seems more difficult to define in
 generality, but should be clear in these examples.  In the
 annular diagrams above all vector spaces of morphisms ${}_a\Lambda_b$
 have dimension=2 if $a=b\; mod \;2$ and zero otherwise.  As in the
 rectangular case, resolutions of the identity morphisms of $\Lambda_{\pm
 1}^{\Aa}$ into morphism which factor through $0$ or $1$-strand show
 that all representations are sums of the four.  One
 dimensionality and the existence of invertible morphisms between
 grades (exactly those shown in Fig.~\ref{factormorphism}, but now with the
 convention that the vertical sides of rectangles are glued to
 form an annulus) again show that the four are irreducible.

 By the corollary \ref{uniqueness} to Schur's lemma, the above
 decompositions into irreducibles are all unique.
There are direct generalizations of the categories so far
 considered to Temperley-Lieb-Jones categories in the next section.

\subsection{Cutting and paste as Morita equivalence}

Crude labels for picture categories are given as finitely many
points of the boundary.  In the gluing formulas for TQFTs, labels
are irreps of the picture categories.  The passage from the crude
labels of points to the refined labels of irreps is Morita
equivalence.

\begin{definition} Two linear categories $\Cc$ and $\mathcal{D}$ are
Morita equivalent if there are $\Cc \times \mathcal{D}$ bimodule
$M$ and $\mathcal{D}\times \Cc$ bimodule $N$ such that $M\otimes
N\cong \Cc$ and $N\otimes M\cong \mathcal{D}$ as bimodules.
\end{definition}

Let $\Lambda$ be a linear category, $\{a_i\}_{i\in I}$ a family of
objects of $\Lambda$.  For each $i\in I$, let $e_i$ be an idempotent in
the algebra ${}_{a_i}\Lambda_{a_i}$.  Define a new linear category
$\Delta$ as follows:  the objects of $\Delta$ is the index set
$I$, and the morphism set ${}_i\Delta_j=e_i{}_{a_i}\Lambda_{a_j}$.

Given an object $a$ in $\Lambda$, define the $\Delta\times \Lambda$ bimodule
$M$ as ${}_iM_a=e_i{}_{a_i}\Lambda_a$, and the  $\Lambda\times \Delta$
bimodule $N$ as ${}_aN_i={}_{a}\Lambda_{a_i}e_i$.

A key lemma is the following theorem in Appendix A of \cite{BHMV}:

\begin{theorem}

Suppose the idempotents $e_i$ generate $\Lambda$ as a two-sided ideal.
Then the bimodule $M\otimes_{\Lambda}N\cong \Delta$ and
$N\otimes_{\Delta}M\cong \Lambda$, i.e., $\Lambda$ and $\Delta$ are Morita
equivalence.

Consequently, tensoring (on the left or right), by the modules $N$
and $M$, gives rise to the Morita equivalence of $\Lambda$ and
$\Delta$. Moreover, these equivalences preserves tensor product of
bimodules.

\end{theorem}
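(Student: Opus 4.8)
The plan is to realize both isomorphisms as the evident ``composition'' maps, noting that $M\otimes_{\Lambda}N\cong\Delta$ is formal while $N\otimes_{\Delta}M\cong\Lambda$ is exactly where the two-sided-ideal hypothesis is needed; the ``moreover'' then follows by pure associativity nonsense. Throughout I write ${}_a\Lambda_b$ for the morphism space $a\to b$ in $\Lambda$, so that ${}_iM_a=e_i\,{}_{a_i}\Lambda_a$, ${}_aN_i={}_a\Lambda_{a_i}\,e_i$, and ${}_i\Delta_j=e_i\,{}_{a_i}\Lambda_{a_j}\,e_j$ (the ``corner''), with all module and bimodule structures given by composition in $\Lambda$.

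First I would treat $M\otimes_{\Lambda}N\cong\Delta$, which uses nothing. Let $\Phi\colon M\otimes_{\Lambda}N\to\Delta$ be defined on homogeneous tensors by $\Phi(u\otimes v)=uv$. It is well defined since $\Phi(u\alpha\otimes v)=u\alpha v=\Phi(u\otimes\alpha v)$ for any $\Lambda$-morphism $\alpha$, and it is a $\Delta$-$\Delta$-bimodule map because composition is associative and unital. A two-sided inverse is $e_i\delta e_j\mapsto(e_i\delta e_j)\otimes e_j\in{}_iM_{a_j}\otimes{}_{a_j}N_j$: composing gives back $e_i\delta e_j\cdot e_j=e_i\delta e_j$, and for a homogeneous $u\otimes v$ with $u=e_iu$ and $v=ve_j$ we have $uv=e_iuve_j$ and $(e_iuve_j)\otimes e_j=(e_iuv)e_j\otimes e_j=(e_iuv)\otimes e_j=u\otimes v$, the last step sliding the $\Lambda$-morphism $v$ back to the right factor. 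Hence $\Phi$ is a bimodule isomorphism.

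For $N\otimes_{\Delta}M\cong\Lambda$ I would again use the composition map $\Phi\colon N\otimes_{\Delta}M\to\Lambda$, $u\otimes v\mapsto uv$; it is well defined (now one slides the corner $\Delta$-morphisms $e_i\gamma e_j$ across $\otimes_{\Delta}$) and is a $\Lambda$-$\Lambda$-bimodule map. Here the hypothesis enters twice. For surjectivity, since $\{e_i\}$ generates $\Lambda$ as a two-sided ideal, each identity $1_b$ admits an expression $1_b=\sum_k\alpha_k e_{i_k}\beta_k$ with $\alpha_k\in{}_b\Lambda_{a_{i_k}}$ and $\beta_k\in{}_{a_{i_k}}\Lambda_b$; then any $\delta\in{}_a\Lambda_b$ equals $\Phi\bigl(\sum_k(\delta\alpha_k e_{i_k})\otimes(e_{i_k}\beta_k)\bigr)$. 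Fixing one such expression for every object $b$ and taking this formula as a definition yields a candidate inverse $\Psi\colon\Lambda\to N\otimes_{\Delta}M$, $\delta\mapsto\sum_k(\delta\alpha_k e_{i_k})\otimes(e_{i_k}\beta_k)$, for which $\Phi\Psi=\mathrm{id}$ is the computation just displayed. The heart of the argument is $\Psi\Phi=\mathrm{id}$: given homogeneous $u\otimes v\in{}_aN_i\otimes_{\Delta}{}_iM_b$, write $u=\tilde u e_i$, $v=e_i\tilde v$, insert $1_b=\sum_k\alpha_k e_{i_k}\beta_k$ into $v$, regroup $e_i\tilde v\alpha_k e_{i_k}$ as a morphism of $\Delta$ lying in ${}_i\Delta_{i_k}$, and slide it across $\otimes_{\Delta}$ onto the $N$-factor; this rewrites $u\otimes v$ as $\sum_k(uv\,\alpha_k e_{i_k})\otimes(e_{i_k}\beta_k)=\Psi(\Phi(u\otimes v))$. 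This bookkeeping — keeping straight which factors are $\Lambda$-morphisms and which are corner $\Delta$-morphisms, and checking each insertion lands in the right hom-space — is the main (and essentially the only) obstacle; once $\Phi$ is invertible its inverse is automatically a bimodule map, so $\Phi$ is a $\Lambda$-$\Lambda$-bimodule isomorphism.

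Finally, the ``moreover'' is formal. With the bimodule isomorphisms $N\otimes_{\Delta}M\cong\Lambda$ and $M\otimes_{\Lambda}N\cong\Delta$ in hand, associativity of $\otimes$ over linear categories gives, for any right $\Lambda$-module $P$, natural isomorphisms $(P\otimes_{\Lambda}N)\otimes_{\Delta}M\cong P\otimes_{\Lambda}(N\otimes_{\Delta}M)\cong P$ and symmetrically for right $\Delta$-modules, so $-\otimes_{\Lambda}N$ and $-\otimes_{\Delta}M$ are mutually inverse equivalences of module categories, and likewise of bimodule categories on either side. For $\Lambda$-$\Lambda$-bimodules $P,Q$, transporting by $P\mapsto M\otimes_{\Lambda}P\otimes_{\Lambda}N$ and cancelling $N\otimes_{\Delta}M\cong\Lambda$ in the middle gives $(M\otimes_{\Lambda}P\otimes_{\Lambda}N)\otimes_{\Delta}(M\otimes_{\Lambda}Q\otimes_{\Lambda}N)\cong M\otimes_{\Lambda}(P\otimes_{\Lambda}Q)\otimes_{\Lambda}N$, i.e.\ the equivalence respects tensor products of bimodules, as claimed.
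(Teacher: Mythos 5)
Your proof is correct. There is nothing in the paper to compare it against: the paper does not prove this theorem but quotes it from Appendix A of \cite{BHMV}, and your argument --- the composition maps $u\otimes v\mapsto uv$ in both directions, with the two-sided-ideal hypothesis used only to write each identity $1_b=\sum_k\alpha_k e_{i_k}\beta_k$ and thereby construct the inverse of $N\otimes_{\Delta}M\to\Lambda$, plus the formal associativity argument for the ``moreover'' --- is the standard one and essentially the argument given there. One point worth making explicit: you work with the corner definition ${}_i\Delta_j=e_i\,{}_{a_i}\Lambda_{a_j}\,e_j$, whereas the paper's text literally defines ${}_i\Delta_j=e_i\,{}_{a_i}\Lambda_{a_j}$; your reading is the correct one, since without the right-hand $e_j$ the category $\Delta$ has no identity morphisms and the composition map $M\otimes_{\Lambda}N\to\Delta$ only reaches the corner, so the theorem would fail as literally stated. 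Indeed your key sliding step, rewriting $u\otimes(v\alpha_k e_{i_k})(e_{i_k}\beta_k)$ as $\bigl(u\,v\alpha_k e_{i_k}\bigr)\otimes\bigl(e_{i_k}\beta_k\bigr)$ across $\otimes_{\Delta}$, uses precisely the fact that $v\alpha_k e_{i_k}=e_i(v\alpha_k)e_{i_k}$ lies in that corner, so this is not a cosmetic choice but a needed one.
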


Given two surfaces $Y_1, Y_2$ such that $\p Y_1 =\bar{\g_1}\prod
\g, \p Y_2 =\bar{\g}\prod \g_2$, and the picture spaces
$\Pic(Y_1), \Pic(Y_2)$ are bimodules over the picture category
$\Lambda$, then the picture space $\Pic(Y_1\cup_{\g}Y_2)$ is the tensor
product of $\Pic(Y_1)$ and $\Pic(Y_2)$ over $\Lambda$.  Morita
equivalence, applied to the picture category $\Lambda$, sends the
bimodule ${}_{-} {\Pic(Y_1\cup_{\g} Y_2)}_{-}$ to the bimodule
${}_{-} {\Pic(Y_1)\otimes \Pic(Y_2)}_{-}$ (over the representation
category $\Delta$ of the picture category $\Lambda$) because tensor
products are preserved under Morita equivalence.  Now the general
gluing formula can be stated as a consequence of Morita
equivalence:

\begin{theorem}

Let $Y_1, Y_2$ are two oriented surfaces such that $\p Y_1
=\bar{\g_1}\prod \g$ and $\p Y_2=\bar{\g}\prod \g_2$.  Then the
picture bimodule ${}_{-} {V(Y_1\cup_{\g} Y_2)}_{-}$ is isomorphic
to ${}_{-} {V(Y_1)\otimes_{\Delta} V(Y_2)}_{-}$ as bimodules.

\end{theorem}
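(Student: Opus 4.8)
The plan is to deduce this refined gluing formula from two ingredients: a \emph{crude} gluing formula for picture spaces, and the Morita equivalence theorem of Appendix~A of \cite{BHMV} quoted above. The crude formula asserts that the gluing map induces an isomorphism of bimodules
\[
\Pic(Y_1\cup_\g Y_2)\;\cong\;\Pic(Y_1)\otimes_{\Lambda_\g}\Pic(Y_2)
\]
over the picture categories attached to the remaining boundary circles $\bar\g_1,\g_2$, where $\Lambda_\g$ is the picture category of an annular collar of $\g$. Granting this, recall that the modular functor space $V(Y)$ is obtained from $\Pic(Y)$ by Morita reduction along every boundary circle, i.e. by tensoring with the bimodules $N,M$ built from the idempotents of Theorem~\ref{structuretheorem}. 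Applying along $\g$ the functor $(-)\otimes_{\Lambda_\g}N_\g$ to the first factor and $M_\g\otimes_{\Lambda_\g}(-)$ to the second, and using the absorption identity, valid for any right $\Lambda_\g$-module $P$,
\[
P\otimes_{\Lambda_\g}\Pic(Y_2)\;\cong\;\bigl(P\otimes_{\Lambda_\g}N_\g\bigr)\otimes_{\Delta_\g}\bigl(M_\g\otimes_{\Lambda_\g}\Pic(Y_2)\bigr)
\]
(which follows from $N_\g\otimes_{\Delta_\g}M_\g\cong\Lambda_\g$), together with the analogous reduction along $\bar\g_1$ and $\g_2$ on the two legs, converts the crude formula into $V(Y_1\cup_\g Y_2)\cong V(Y_1)\otimes_\Delta V(Y_2)$; since the Morita functors preserve tensor products of bimodules, all identifications are bimodule maps. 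So the real content is the crude formula.

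To prove the crude formula I would build the gluing map $\Phi\colon\Pic(Y_1)\otimes_{\Lambda_\g}\Pic(Y_2)\to\Pic(Y_1\cup_\g Y_2)$ on generators: if $c_1\subset Y_1$ and $c_2\subset Y_2$ are multicurves meeting $\g$ transversely in the same finite point set (hence represent vectors in $\Pic(Y_1)_a$ and ${}_a\Pic(Y_2)$ for a common object $a$ of $\Lambda_\g$), glue them along $\g$ to get a multicurve in $Y_1\cup_\g Y_2$, and extend bilinearly. First I would check well-definedness: the relation $u\alpha\otimes v=u\otimes\alpha v$ is generated by sliding a diagram $\alpha$ supported in a collar $\g\times I$ from the $Y_1$ side to the $Y_2$ side, which is an ambient isotopy in the glued surface; and the Jones--Wenzl relation $p_{r-1}$ and the $d$-isotopy relation imposed on each $Y_i$ map to the corresponding relations on $Y_1\cup_\g Y_2$. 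Surjectivity is geometric: any multicurve can be isotoped transverse to $\g$ and cut along $\g$.

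The hard part will be injectivity, i.e. showing every relation in $\Pic(Y_1\cup_\g Y_2)$ lifts through $\Phi$. An isotopy in the glued surface can be put in general position with respect to $\g$, so it factors into isotopies supported in $Y_1$ or $Y_2$ --- relations already present in $\Pic(Y_i)$ --- plus finitely many elementary finger moves pushing an arc across $\g$, which are exactly the defining relations of $\otimes_{\Lambda_\g}$. The genuinely delicate point is the one flagged in Section~\ref{boundarycondition}: a local ($p_{r-1}$ or $d$-isotopy) relation supported in a disk that straddles $\g$, and the ``bigon'' phenomenon whereby a trivial loop, or an isotopy of $\g$ itself, changes $\#(c\cap\g)$, so that the naive direct sum over point-sets on $\g$ is infinite-dimensional. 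This is precisely why one uses the continuously-many-objects category $\Lambda_\g$ and passes to its representation category $\Delta_\g$; after Morita reduction the offending bigons are absorbed into the idempotents $e_i$, and one must check that $\Phi$ descends to an isomorphism onto the refined picture space. I expect this matching of disk relations across the cut, together with the bookkeeping of the $d$-isotopy slack, to be the main obstacle --- it is the step that in the spirit of this paper is ``outlined'' rather than fully carried out. A good consistency check at the end is the dimension count against the admissible-labeling basis of Lemma~\ref{fusionrule}(4) (and its diagram-TQFT analogue), which must equal the sum over refined labels on $\g$ of the products $\dim V(Y_1)\cdot\dim V(Y_2)$ in the corresponding sectors.
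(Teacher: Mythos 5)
Your proposal follows essentially the same route as the paper: the paper likewise asserts the crude statement that $\Pic(Y_1\cup_{\g}Y_2)$ is the tensor product of $\Pic(Y_1)$ and $\Pic(Y_2)$ over the picture category $\Lambda$, and then invokes the Morita equivalence theorem from Appendix A of \cite{BHMV} (with its preservation of tensor products of bimodules) to pass to the refined gluing formula over $\Delta$. In fact you go somewhat further than the paper, which leaves the crude formula unproved, by sketching the gluing map, its well-definedness and surjectivity, and correctly flagging injectivity across the cut (the relations straddling $\g$ and the bigon issue) as the genuinely delicate point.
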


As explained in Appendix \ref{repofcategory}, the idempotents
$e_i$ label a complete set of irreps of the linear category $\Lambda$.
Therefore, gluing formulas for picture TQFTs need the
representation categories of the picture categories.  In the
axioms of TQFTs the label set was a mysterious feature, now we
will see its origins in picture TQFTs.

Now let us write the Morita equivalence more explicitly.  Let $\Lambda$
be some $\Lambda^{\Rr}$ (or $\Lambda^{\Aa}$) and suppose $\Lambda$ is semi-simple
with index set $I$. The pictures in a fixed 3-manifold (surface)
with a \lq\lq left" and \lq\lq right" gluing region provide a
bimodule ${}_aB_b$ on for $\Lambda$.
 If the gluing region is not connected within the 3-manifold
 (surface) then $B\cong B^{\textrm{left}}\bigotimes_{\Lambda}
 B^{\textrm{right}}$.  We treat this case first.

\begin{lemma}
\label{clm1}
 \tn{$B^l\bigotimes_{\Lambda}B^r\cong_{i\in I}(V_i\bigotimes W_i)$,
 where ${}_aB^{\textrm{right}}\cong \underset{i}{\bigoplus}V_i\otimes \rho_i$ and
 ${}_aB^{\textrm{left}}\cong \bigoplus W_j \otimes \rho_j^{\textrm{op}}$,
 $\left(\rho_j^{\textrm{op}}(m)=(\rho_j(m))^{\dag}\right)$.}
\end{lemma}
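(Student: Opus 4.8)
The plan is to reduce the statement to a Schur-type orthogonality computation for the irreducibles of $\Lambda$ and then reassemble. First I would note that $-\otimes_\Lambda-$ is additive in each variable: the defining relations $u\alpha\otimes v=u\otimes\alpha v$ never mix two different summands of a direct-sum decomposition of either factor, and the multiplicity spaces $V_i,W_j$ carry no $\Lambda$-action and so pass freely through the tensor product. Since $\Lambda$ is semi-simple the index set $I$ is finite, hence
\[
B^l\otimes_\Lambda B^r\;\cong\;\Bigl(\bigoplus_j W_j\otimes\rho_j^{\textrm{op}}\Bigr)\otimes_\Lambda\Bigl(\bigoplus_i V_i\otimes\rho_i\Bigr)\;\cong\;\bigoplus_{i,j}(W_j\otimes V_i)\otimes\bigl(\rho_j^{\textrm{op}}\otimes_\Lambda\rho_i\bigr),
\]
and everything comes down to the identity
\[
\rho_j^{\textrm{op}}\otimes_\Lambda\rho_i\;\cong\;\begin{cases}\C,& i=j,\\ 0,& i\neq j,\end{cases}
\]
where the pairing is the one produced by the recipe $\rho^{\textrm{op}}(m)=\rho(m)^{\dagger}$, i.e.\ $\rho_j^{\textrm{op}}$ is the module adjoint of $\rho_j$ for the Hermitian $\ast$-structure on $\Lambda$.

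To prove this identity I would invoke Theorem \ref{structuretheorem}: since $\Lambda$ is semi-simple, each $\rho_i$ is the module cut out by an idempotent $e_i\in{}_{a_i}\Lambda_{a_i}$, and the $\ast$-structure on $\Lambda$ (reflection of pictures together with complex conjugation of coefficients) lets one choose the $e_i$ mutually orthogonal and self-adjoint, with $\rho_i\cong e_i\Lambda$ and $\rho_i^{\textrm{op}}\cong\Lambda e_i$. With these realizations the tensor relations collapse $\Lambda e_j\otimes_\Lambda e_i\Lambda$: an elementary tensor can be slid through its middle object until the two idempotents abut, so that the tensor product is identified with the image of $e_j\,{}_{a_j}\Lambda_{a_i}\,e_i\cong\Hom_\Lambda(\rho_i,\rho_j)$ inside it. By Schur's lemma and Corollary \ref{uniqueness} this Hom-space is $0$ when $i\neq j$ and is $\C e_i$ when $i=j$; in the equal case a short check shows the canonical map $\C e_i\to\rho_i^{\textrm{op}}\otimes_\Lambda\rho_i$ is an isomorphism, since no nontrivial relation can be imposed on a one-dimensional space. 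Substituting back into the first display yields $B^l\otimes_\Lambda B^r\cong\bigoplus_i(W_i\otimes V_i)\cong\bigoplus_{i\in I}(V_i\otimes W_i)$; and since every step is functorial in the remaining $\Lambda^{\textrm{op}}\times\Lambda$-actions carried by $B^l$ and $B^r$, the isomorphism is one of bimodules.

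The step I expect to be the main obstacle is the middle one. One must make precise that $\rho_j^{\textrm{op}}$, defined only by $\rho_j^{\textrm{op}}(m)=\rho_j(m)^{\dagger}$ on matrix coefficients, is genuinely the module adjoint to $\rho_j$ — so that $\rho_j^{\textrm{op}}\otimes_\Lambda(-)$ computes $\Hom_\Lambda(\rho_j,-)$ and pairs nontrivially with $\rho_j$ and with no other irreducible — and one must check that the idempotents $e_i$ can be taken simultaneously orthogonal and self-adjoint so that the collapse $\Lambda e_j\otimes_\Lambda e_i\Lambda\cong e_j\Lambda e_i$ is clean. Both are routine given the Hermitian structure on $\Lambda^{\Rr}$ and $\Lambda^{\Aa}$ exhibited in the preceding $d=\pm1$ examples, but this is exactly where semi-simplicity and the $\ast$-structure are genuinely used. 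An alternative that avoids the explicit idempotents is to feed the Artin--Wedderburn-type decomposition of the regular bimodule, $\Lambda\cong\bigoplus_i\rho_i\boxtimes\rho_i^{\textrm{op}}$, into $\rho_j^{\textrm{op}}\cong\rho_j^{\textrm{op}}\otimes_\Lambda\Lambda$ and read off $\rho_j^{\textrm{op}}\otimes_\Lambda\rho_i\cong\delta_{ij}\C$ by uniqueness of the decomposition into irreducibles.
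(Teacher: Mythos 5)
Your proposal is correct and follows essentially the same route as the paper: distribute $\bigoplus$ over $\bigotimes_{\Lambda}$ and reduce to the Schur-orthogonality statement $\rho_j^{\textrm{op}}\bigotimes_{\Lambda}\rho_i\cong\Hom_{\Lambda}(\rho_i,\rho_j)\cong\delta_{ij}\C$, which is exactly the two-line argument the paper gives. The only difference is that you spell out, via the idempotent realization $\rho_i\cong e_i\Lambda$ from Theorem \ref{structuretheorem}, why the pairing computes the Hom-space, a detail the paper simply asserts.
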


\begin{proof}  Note that $\rho_i^{\T{op}}\bigotimes \rho_j\cong
\T{Hom}_{\Lambda}(\rho_i, \rho_j)\cong \left\{
\begin{smallmatrix}
\C & \T{if} & i=j \\
0 & \T{if} & i\neq j
\end{smallmatrix} \right.$.
As usual $\bigoplus_{\Lambda}$ distributes over $\bigotimes_{\Lambda}$, the
unusual feature is that the coefficients are vector spaces $V_i$
and $W_j$, not complex numbers.  They are \lq\lq multiplied" by
(ordinary) tensor product $\bigotimes$.
\end{proof}

The manipulations above are standard in the context of 2-vector
spaces \cite{freed}\cite{walker06}, and in fact a representation
is a 2-vector in the 2-vector space of all formal representations.

Now suppose the regions to be glued to the opposite ends of $\Rr$
$(\Aa)$ are part of a connected component of a 3-manifold
(surface), then write the bi-module ${}_aB_b\cong
\underset{(i,j)\in I^{\T{op}}\times I}{\bigoplus} W_{ij}\otimes
(\rho_i^{\T{op}}\bigotimes_{\Lambda}\rho_j)$ as a bimodule. Define a
$2$-trace,

\begin{equation*}
    \T{tr}B=\underset{a\in \T{obj}(\Lambda)}{\bigoplus}
    {}_aB_a/u\alpha\cong \alpha^{\T{op}} u,
\end{equation*}
where $\alpha \in {}_a\Lambda_b, u\in {}_bB_a$ are arbitrary.  Again,
\lq\lq linear algebra" yields:

\begin{lemma}\label{clmtr}
 $\T{tr}(B)\cong \underset{i\in I}{\bigoplus} W_{ii}.$
\end{lemma}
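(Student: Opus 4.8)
\subsection*{Proof proposal}

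The plan is to exploit that the $2$-trace $\T{tr}$ is a coend, $\T{tr}(B)=\int^{a\in\T{obj}(\Lambda)}{}_aB_a$, and in particular that it is $\C$-linear and additive in $B$: it carries a direct sum of bimodules to the direct sum of their traces, and it pulls a constant multiplicity space (a plain $\C$-vector space carrying no $\Lambda$-action, such as each $W_{ij}$) straight through the quotient. So the first step is simply to apply $\T{tr}$ to the given bimodule decomposition ${}_aB_b\cong\bigoplus_{(i,j)\in I^{\T{op}}\times I}W_{ij}\otimes(\rho_i^{\T{op}}\boxtimes\rho_j)$, where $\rho_i^{\T{op}}\boxtimes\rho_j$ denotes the external tensor-product bimodule $(a,b)\mapsto{}_a(\rho_i^{\T{op}})\otimes_\C(\rho_j)_b$. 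This reduces the whole statement to computing $\T{tr}(\rho_i^{\T{op}}\boxtimes\rho_j)$ for each pair $(i,j)$.

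Second, I would identify $\T{tr}(\rho_i^{\T{op}}\boxtimes\rho_j)$ with the balanced tensor product $\rho_i^{\T{op}}\otimes_\Lambda\rho_j$. On the diagonal pieces ${}_a(\rho_i^{\T{op}})\otimes_\C(\rho_j)_a$ the left $\Lambda^{\T{op}}$-action of the bimodule acts only on the $\rho_i^{\T{op}}$ factor and the right $\Lambda$-action only on the $\rho_j$ factor, so the cyclic relation $u\alpha\sim\alpha^{\T{op}}u$ defining $\T{tr}$ is, after expanding a diagonal element as an elementary tensor, precisely the ``move a morphism across the $\otimes$'' relation that defines $\otimes_\Lambda$ (up to a harmless flip of the two tensor factors). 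Tracking the ``left'' versus ``right'' module conventions carefully is the one place where attention is needed; everything else here is formal.

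Finally I would quote the computation already carried out inside the proof of Lemma~\ref{clm1}: since $\Lambda$ is semisimple with irreducibles $\{\rho_i\}_{i\in I}$, one has $\rho_i^{\T{op}}\otimes_\Lambda\rho_j\cong\Hom_\Lambda(\rho_i,\rho_j)$, which by Schur's lemma (Corollary~\ref{uniqueness}) is $\C$ when $i=j$ and $0$ otherwise. Feeding this back into the sum from the first step gives $\T{tr}(B)\cong\bigoplus_{(i,j)}W_{ij}\otimes\Hom_\Lambda(\rho_i,\rho_j)\cong\bigoplus_{i\in I}W_{ii}$, which is the assertion. The main obstacle is thus entirely bookkeeping --- confirming that the diagonal coend of an external product of a left and a right module really is their balanced tensor product --- and there are no finiteness or convergence subtleties, since $I$ is finite by semisimplicity and the coend collapses the a priori infinite sum over the objects of $\Lambda$.
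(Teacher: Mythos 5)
Your proposal is correct and takes essentially the same route as the paper: the paper's one-line proof simply invokes Schur's lemma to conclude that the trace of the $(i,j)$ summand is $\C$ iff $i=j$ and $0$ otherwise, which is exactly your final step. Your first two steps (additivity of the $2$-trace over the decomposition of $B$, and the identification of the diagonal quotient with the balanced product $\rho_i^{\mathrm{op}}\otimes_{\Lambda}\rho_j\cong\Hom_{\Lambda}(\rho_i,\rho_j)$) are just the bookkeeping the paper leaves implicit.
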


\begin{proof}  Schur's lemma implies
$\rho_i^{\T{opp}}\bigotimes_{\C}\rho_j\cong \C \;\;\T{iff}\;\;
i=j$. \end{proof}

Note that disjoint union of the spaces carries over to tensor
product, $\bigotimes_{\C}$, of the modules of pictures on the
space.  This makes lemma \ref{clm1} a special case of lemma
\ref{clmtr}. And further observe that both lemmas match the form
of the \lq\lq gluing formula" as expected, with $I={\mathcal L},$
the label set, and adjoint ($\dag$) is the involution $\widehat{}:
{\mathcal L}\rightarrow \mathcal L.$

\subsection{Annualization and quantum double}

Annular categories are closely related to the corresponding
rectangle categories.  In particular,  there is  an interesting
general principle:

\underline{\bf Conjecture:} {\it If $\Lambda^{\Rr}$ and $\Lambda^{\Aa}$ are
rectangular and annular versions of locally defined
picture/relation categories, then ${(\mathcal
D}(Rep(\Lambda^{\Rr}))\cong Rep (\Lambda^{\Aa})$, the Drinfeld center or
quantum double of the representation category of the rectangular
picture category is isomorphic to the representation category of
the corresponding annular category.}

The conjecture and its higher category generalizations are proved
in \cite{walker06}.

\section{Temperley-Lieb-Jones categories}\label{repoftlj}

To obtain the full strucure of the picture TQFTs, we need to
consider surfaces with boundaries, and boundary conditions for the
corresponding vector spaces $V(Y)$. The crude boundary conditions
using objects in TLJ categories are not suitable for the gluing
formulas. As shown in Section \ref{boundarycondition}, Section
\ref{moritacutpaste}, we need to find the irrpes of the TLJ
categories.  Two important properties of boundary condition
categories needed for TQFTs are semi-simplicity and the finiteness
of irreps.  For TLJ categories, both properties follow from a
resolution of the identity in the Jones-Wenzl projectors.

Let $X$ be a compact {\it parameterized} $n$-manifold.  The
interesting cases in this paper are the unit interval $I=[0,1]$ or
the unit circle $S^1$.  Define a category $\Cc(X)$ as follows: an
object $a$ of $\Cc(X)$ consists of finitely many points in the
interior of $X$, and given two objects $a,b$, a morphism in
$\M(a,b)$ is an $(n+1)$-manifold, not necessarily connected, in
the interior of $X\times [0,1]$ whose boundaries are $a\times 0,
b\times 1$, and intersects the boundary of $X\times [0,1]$
transversely. Given two morphisms $f\in {}_a\Cc_b, g\in
{}_b\Cc_c$, the composition of $f,g$ will be just the vertical
stacking from bottom to top followed by the rescaling of the
height to unit length $1$.  When $X$ is a circle, we will also
draw the vertical stacking of two cylinders as the gluing of two
annuli in the plane from inside to outside.  More often, we will
draw the stacking of cylinders as vertical stacking of rectangles
one on top of the other with periodic boundary conditions
horizontally.  Note the two boundary circles of a cylinder are
parameterized, so they have base points and are oriented.  The
gluing respects both the base-point and orientation.

\begin{figure}[tbh]\label{annularcomp}
\centering
\includegraphics[width=3.45in]{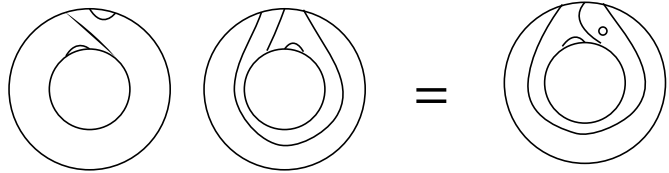}
\caption{Composition of annular morphisms}\label{annularcomp}
\end{figure}

Given a non-zero number $d\in \C$, the {\it Temperley-Lieb
category} $\textrm{TL}_d$ is the linear category obtained from
$\Cc([0,1])$ by first imposing $d$-isotopy in each morphism set,
and then taking formal finite sums of morphisms as follows: the
objects of $\TL_d$ are the same as that of $\Cc([0,1])$, and for
any two objects $a,b$, the vector space $\M_{\TL}(a,b)$ is spanned
by the set $\M(a,b)$ modulo $d$-isotopy.

The structure of the Temperley-Lieb categories $\TL_d$ depends
strongly on the values of $d$ as we have seen in the {\it
Temperley-Lieb algebras} $\TL_n(d)=\M(a,a)$ for any object $a\in
\TL^0_d$ consisting of $n$ points. When $A$ is as in Lemma
\ref{jwcases}, the semi-simple quotient of the Temperley-Lieb
category $\TL_d$ by the Jones-Wenzl idempotent $p_{r-1}$ is a
semi-simple category. The associated semi-simple algebras
$\TL_{n}(d)$ were first discovered by Jones in \cite{jones83rep}.
Therefore the semi-simple quotient categories of $\TL_{d}$ for a
particular $d$ will be called the {\it rectangular
Temperley-Lieb-Jones category} $\TLJ^{\Rr}_d$, where
$d=-A^2-A^{-2}$. Note that there will be several different $A$'s
which result in the same TLJ category as the coefficients of the
Jones-Wenzl idempotents are rational functions of $d$.  If we
replace the interval $[0,1]$ in the definition of the
Temperley-Lieb categories by the unit circle $S^1$, we get the
{\it annular Temperley-Lieb categories} $\TL^{\Aa}_d$, and their
semi-simple quotients {\it the annular Temperley-Lieb-Jones
categories} $\TLJ^{\Aa}_d$.

\subsection{Annular Markov trace}\label{annulusmarkov}

In the analysis of the structure of the TL algebras, the Markov
trace defined by Figure \ref{markovtrace} in Section
\ref{jonesrepandjoneswenzl} plays an important rule.  In order to
analyze the annular TLJ categories, we introduce an annular
version of the Markov trace and $2$-category generalizations.

Recall that $\Delta_{n}(x)$ is the Chebyshev polynomial. Let
$C_{n}(x)$ be the algebra $\C[x]/(\Delta_n(x))$.  Inductively, we
can check that the constant term of $\Delta_{n}$ is not 0 if $n$
is odd, and is $0$ if $n$ is even.  For $n$ even, the coefficient
of $x$ is $(-1)^{\frac{n}{2}} n$.  Let $n=2m$ and $q_{2m}(x)$ be
the element of $C_n(x)$ represented by $\frac{\Delta_{2m}(x)}{x}$.

Define the annular Markov trace $Tr^{\Aa}$ as follows:  $Tr^{\Aa}:
\TL_{n,d}\rightarrow C_{n}(x)$ is defined exactly the same as in
Figure \ref{markovtrace} in Section \ref{jonesrepandjoneswenzl}
except instead of counting the number of simple loops in the
plane, the image becoming elements in the annular algebra, where
$x$ is represented by the center circle(=called a ring sometimes).

\begin{prop}
$Tr^{\Aa}(p_n)=\Delta_{n}(x)$.
\end{prop}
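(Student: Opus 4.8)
The plan is to imitate the computation $\mathrm{tr}(p_n)=\Delta_n(d)$ from the proof of Theorem~\ref{joneswenzlprojectors}, the only change being that the planar closure is replaced by the annular one, so one keeps track of which components of a closed-up diagram are null-homotopic loops (each worth $d$) and which are parallel copies of the center circle (each worth $x$). I will use two elementary properties of $Tr^{\Aa}$, both visibly true for annular closures: it is cyclic, $Tr^{\Aa}(fg)=Tr^{\Aa}(gf)$, since permuting factors cyclically is realized by sliding a diagram once around the annulus; and it is multiplicative for the horizontal juxtaposition $\otimes$, $Tr^{\Aa}(y\otimes z)=Tr^{\Aa}(y)\,Tr^{\Aa}(z)$, because the annular closures of $y$ and of $z$ sit as unlinked sub-pictures, one nested inside the other, and nesting is the product in the annular skein algebra $\C[x]$. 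In particular $Tr^{\Aa}(1_0)=1$, $Tr^{\Aa}(1_1)=x$ (one through-strand closes to the center circle), and $Tr^{\Aa}(U_1)=d$ for the generator $U_1$ of $\TL_2$ (its annular closure is a single null-homotopic loop). All of this is carried out in $\C[x]$; the statement, as phrased in $C_n(x)=\C[x]/(\Delta_n(x))$, is the image there of the resulting polynomial identity (equivalently, it says $Tr^{\Aa}(p_n)=0$ in $C_n(x)$).

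The induction is on $n$, via Wenzl's recursion $p_{n+1}=p_n\otimes 1_1-\mu_n\,(p_n\otimes 1_1)\,U_n^{n+1}\,(p_n\otimes 1_1)$ with $\mu_n=\Delta_{n-1}/\Delta_n$, $p_0=1_0$, $p_1=1_1$ (all of $p_n,p_{n-1},\mu_n$ make sense under the running hypothesis that $d$ is not a root of $\Delta_k$ for $k<n$); the base cases $Tr^{\Aa}(p_0)=1=\Delta_0(x)$ and $Tr^{\Aa}(p_1)=x=\Delta_1(x)$ are immediate. Applying $Tr^{\Aa}$ to the recursion, the first term is $Tr^{\Aa}(p_n\otimes 1_1)=x\,Tr^{\Aa}(p_n)$ by the two properties above. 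For the second term, cyclicity together with $(p_n\otimes1_1)^2=p_n\otimes1_1$ and $(U_n^{n+1})^2=d\,U_n^{n+1}$ give $Tr^{\Aa}\big((p_n\otimes1_1)U_n^{n+1}(p_n\otimes1_1)\big)=\frac1d\,Tr^{\Aa}\big(U_n^{n+1}(p_n\otimes1_1)U_n^{n+1}\big)$. The key point is the diagram identity $U_n^{n+1}(p_n\otimes1_1)U_n^{n+1}=\mathrm{ptr}_n(p_n)\otimes U_1$ in $\TL_{n+1}$, where $\mathrm{ptr}_n(p_n)\in\TL_{n-1}$ is the partial Markov closure of $p_n$ over its $n$-th strand and $U_1\in\TL_2$ sits on the last two strands: the idle $(n{+}1)$-st strand trapped between the two turnbacks realizes exactly that partial closure, and the leftover cup and cap assemble into $U_1$. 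Using the recoupling value $\mathrm{ptr}_n(p_n)=\frac{\Delta_n(d)}{\Delta_{n-1}(d)}\,p_{n-1}$ — which follows from uniqueness of $p_{n-1}$, since $U_i\,\mathrm{ptr}_n(p_n)=\mathrm{ptr}_n(U_ip_n)=0$ (and symmetrically) for $i\le n-2$ forces $\mathrm{ptr}_n(p_n)$ to be a scalar multiple of $p_{n-1}$, the scalar being fixed by $\mathrm{tr}(\mathrm{ptr}_n(p_n))=\mathrm{tr}(p_n)=\Delta_n(d)$ — together with multiplicativity $Tr^{\Aa}(\mathrm{ptr}_n(p_n)\otimes U_1)=d\,Tr^{\Aa}(\mathrm{ptr}_n(p_n))$, the second term equals $\frac{\Delta_n(d)}{\Delta_{n-1}(d)}\,Tr^{\Aa}(p_{n-1})$.

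Combining, $Tr^{\Aa}(p_{n+1})=x\,Tr^{\Aa}(p_n)-\mu_n\cdot\frac{\Delta_n(d)}{\Delta_{n-1}(d)}\,Tr^{\Aa}(p_{n-1})=x\,Tr^{\Aa}(p_n)-Tr^{\Aa}(p_{n-1})$, since $\mu_n\cdot\frac{\Delta_n}{\Delta_{n-1}}=1$; by the inductive hypothesis this is $x\,\Delta_n(x)-\Delta_{n-1}(x)=\Delta_{n+1}(x)$, which closes the induction. I expect the only real work, as opposed to a conceptual difficulty, to be the careful bookkeeping of null-homotopic versus center-circle-parallel loops in the annular closures; note that the factor $\frac{\Delta_n}{\Delta_{n-1}}$ in the second term — absent in the planar case, where that term is simply $\mathrm{tr}(p_n)$ — is precisely what pairs with $\mu_n$ to turn the recursion into the Chebyshev recursion in $x$. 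A shorter but essentially equivalent route is to recognize $Tr^{\Aa}(p_n)$ directly as the $n$-th element of the Jones--Wenzl/Chebyshev basis of the annular skein algebra $\C[x]$, classically known to equal $\Delta_n$ in the core variable; the induction above is preferred here because it uses only machinery already set up in the excerpt.
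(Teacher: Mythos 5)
Correct, and essentially the paper's own approach: the paper states this proposition without proof, and your induction is exactly the annular version of the paper's computation of $\mathrm{tr}(p_n)=\Delta_n(d)$ in Theorem \ref{joneswenzlprojectors}, namely tracing Wenzl's recursion, with the closure now keeping track of trivial loops (worth $d$) versus core-parallel loops (worth $x$). The two additional ingredients you supply --- multiplicativity of $Tr^{\Aa}$ under horizontal juxtaposition and the partial-trace identity $\mathrm{ptr}_n(p_n)=\tfrac{\Delta_n(d)}{\Delta_{n-1}(d)}\,p_{n-1}$ --- are both justified correctly (the latter via the characterization of $p_{n-1}$ already used in the paper), so the argument goes through.
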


It follows that the algebra $C_n(x)$ can be identified as the
annular algebra when $d$ is a  simple root of $\Delta_n(x)$.

If the inside and outside of the annulus $\Aa$ are identified, we
have a torus $T^2$.  The annular Markov trace followed by this
identification leads to a $2$-trace from $\TL_{n,d}$ to the vector
space of pictures in $T^2$.

\subsection{Representation of Temperley-Lieb-Jones categories}

Our goal is to find the representations of a TLJ category
$\TLJ^{\Rr}_d$ or $\TLJ^{\Aa}_d$. The objects consisting of the
same number of points in such categories are isomorphic, therefore
the set of natural numbers $\{0,1,2,\cdots\}$ can be identified
with a skeleton of the category (a complete set of representatives
of the isomorphism classes of objects). Each morphism set
$\M(i,j)$ is spanned by pictures in a rectangle or an annulus.

To find all the irreps of a TLJ category, we use Theorem
\ref{structuretheorem} in Appendix \ref{repofcategory} to
introduce a table notation as follows: we list a skeleton
$\{0,1,\cdots,\}$ in the bottom row. Each {\it isomorphism} class
$\rho_j$ of irreps of the category is represented by a row of
vector spaces $\{V_{j,i}\}=\{\rho_j(i)\}$. Each column of vector
spaces $\{V_{i,j} \}$ determines an isomorphism class of objects
of the category.  The graded morphism linear maps of any two
columns will be ${}_i\TLJ_{j}$, in particular the graded linear
maps of any column to itself give rise to the decomposition of
${}_i\TLJ_{i}$ into matrix algebras, i.e.,
${}_i\TLJ_{i}=\oplus_{j}\Hom(V_{j,i},V_{j,i})$. To find all irreps
of $\TLJ$, we look for minimal idempotents of ${}_i\TLJ_{i}$
starting from $i=0$. Suppose there exists an $m_0$ such that the
irreps $\{e_{j_i}\}$ of ${}_j\TLJ_{j}, j\leq m_0$ are sufficient
to decompose every ${}_m\TLJ_{m}$ as
$\oplus_{j_i}\textrm{Hom}(V_{j_i,m},V_{j_i,m})$ for all $m\geq
m_0$, then it follows that all irreps of $\TLJ$ are found;
otherwise, a non-zero new representation space $V_{k,a}$ from some
new irrep $\rho_k$ and $a\in \TLJ^0$ implying
$\textrm{Hom}(V_{k,a},V_{k,a})\subset {}_a\TLJ_a$ will contradict
the fact that ${}_a\TLJ_a\cong \oplus_{j\neq k}
\textrm{Hom}(V_{j,a},V_{j,a})$.

{\bf Remark:} For the annulus categories, we can identify one
irrep as the trivial label using the disk axiom of a TQFT. Given a
particular formal picture $x$ in an annulus, we define the disk
consequences of $x$ as all the formal pictures obtained by gluing
$x$ to a collar of the disk: given a picture $y$ on the disk,
composition $x$ and $y$ is a new picture in the disk. By
convention, pictures with mismatched boundary conditions are $0$.
Then the trivial label is the one whose disk consequences form the
vector space $\C$, while all others would result in 0.

For an object $m\in \TLJ^0$ if
$\textrm{id}_m=\oplus_{j<m}(\oplus_{l}f^l_{m,j}\cdot g^l_{j,m})$
for $f^l_{m,j}\in {}_m\TLJ_j, g^l_{j,m}\in {}_j\TLJ_m$, where $l$
is a finite number depending on $j$, then we have a {\it
resolution of the identity of $m$} into lower orders.

\begin{lemma}\label{Resolution of Identity}
If for some object $m$ of a TLJ category, we have a resolution of
its identity $\textrm{id}_m$ into lower orders, then every irrep
of the category $\TLJ$ is given by a minimal idempotent in
${}_j\TLJ_j$ for some $j<m$.
\end{lemma}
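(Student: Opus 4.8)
The plan is to derive the lemma from the structure theorem for representations of semisimple linear categories (Theorem \ref{structuretheorem}) together with the observation that the given resolution of $\mathrm{id}_m$ propagates upward to every object with at least as many points as $m$. First I would recall that, $\TLJ$ being semisimple, every irrep $\rho$ is isomorphic to the principal representation generated by a minimal idempotent; more precisely, for each object $b$ with $\rho(b)\neq 0$ the block decomposition ${}_b\TLJ_b=\bigoplus_j\Hom(V_{j,b},V_{j,b})$ contains a minimal idempotent $e_b$ with $\rho\cong\rho_{e_b}$ (this is the standard fact that a simple module over a semisimple algebra appearing in a module $M$ is cut out by a primitive idempotent not annihilating $M$, applied via Theorem \ref{structuretheorem}). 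In particular the set $N_\rho=\{\#\mathrm{points}(b):\rho(b)\neq 0\}$ is a nonempty subset of $\N$. Let $n_0=\min N_\rho$, attained by an object $a_0$. It then suffices to prove $n_0<\#\mathrm{points}(m)=:n_m$, since then $\rho$ is realized by a minimal idempotent in ${}_{a_0}\TLJ_{a_0}$ with $a_0$ strictly below $m$, and distinct irreps give distinct such idempotents.

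The key step is to check that whenever an object $a$ has $\geq n_m$ points, $\mathrm{id}_a$ again admits a resolution into strictly lower orders. If $a=m$ this is the hypothesis. If $a$ has more points, in the rectangular case I would write $\mathrm{id}_a=\mathrm{id}_m\otimes\mathrm{id}_{a'}$ and juxtapose the given resolution $\mathrm{id}_m=\sum_{j<m,\,l}f^l_{m,j}\cdot g^l_{j,m}$ with $\mathrm{id}_{a'}$; the interchange law gives $\mathrm{id}_a=\sum_{j,l}(f^l_{m,j}\otimes\mathrm{id}_{a'})\cdot(g^l_{j,m}\otimes\mathrm{id}_{a'})$, where each intermediate object has fewer points than $a$. (In the annular case there is no horizontal tensor product to pad with, and the corresponding resolution must be taken from the Jones--Wenzl resolution $\mathrm{id}_n=p_n+(\text{terms factoring through fewer through-strands})$ together with the vanishing of $p_n$ for $n$ large enough, which itself uses the decomposition of $p_k$ into copies of $p_r$ and the fact that $p_r$ lies in the ideal $\rd$; this is exactly the computation performed where the lemma gets applied.) Granting the resolution for $a_0$, suppose for contradiction $n_0\geq n_m$. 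Applying the functor $\rho$ to $\mathrm{id}_{a_0}=\sum_l f^l\cdot g^l$ with intermediate objects $b_l$ of strictly fewer points, functoriality gives $\mathrm{id}_{\rho(a_0)}=\sum_l\rho(f^l)\cdot\rho(g^l)$, with each summand factoring through $\rho(b_l)$. Since $\rho(a_0)\neq 0$, the identity of $\rho(a_0)$ is nonzero, so some $\rho(g^l)$ (equivalently $\rho(f^l)$) is nonzero, forcing $\rho(b_l)\neq 0$ for a $b_l$ with fewer points than $a_0$, contradicting minimality of $n_0$. Hence $n_0<n_m$ and the lemma follows.

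The part I expect to be the main obstacle is the propagation step in the annular setting: without the monoidal structure one cannot simply extend the resolution of $\mathrm{id}_m$ to larger objects formally, so one is forced to fall back on the explicit Jones--Wenzl resolution of the identity and the nilpotence/vanishing of the high projectors in the quotient. A secondary technical point worth spelling out is the upgrade of Theorem \ref{structuretheorem} from "each irrep is realized at \emph{some} object" to "each irrep is realized at \emph{every} object on which it is nonzero", which requires semisimplicity of each ${}_b\TLJ_b$; this is available here precisely because we are working in the semisimple quotient $\TLJ_d$, not in $\TL_d$ itself.
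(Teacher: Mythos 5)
Your argument is correct and is essentially the approach the paper intends: the lemma is stated there without an explicit proof, but the paragraph immediately preceding it and the low-level examples in the Morita-equivalence section show the intended reasoning is exactly this resolution-of-the-identity argument combined with semisimplicity of the algebras ${}_j\TLJ_j$ and the structure theorem, which you organize as a minimal-support contradiction and for which you rightly make the propagation to objects larger than $m$ explicit. One small simplification for the annular propagation: you do not need the high projectors or the ideal $\rd$ — since the resolution of $\mathrm{id}_m$ actually in force is the local relation $p_{r-1}=0$, inserting it on $m$ adjacent parallel strands of $\mathrm{id}_n$ already resolves $\mathrm{id}_n$ into turn-back diagrams factoring through objects with fewer points, for every $n\geq m$.
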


Given a TLJ category and two objects $a,c\in \TLJ^0$, there is a
subalgebra, denoted by $A_{cc}^a$, of the algebra
$A_{cc}={}_c\TLJ_c$ consisting all morphisms generated by those
factoring through the object $a$: $f\cdot g, f\in {}_c\TLJ_a, g\in
{}_a\TLJ_c$. If $e_a$ is an idempotent of ${}_a\TLJ_a$, then
$A_{cc}^{e_a}$ denotes the subalgebra of $A_{cc}^a$ consisting all
morphisms generated by those factoring through $e_a$, i.e., those
of the form $f\cdot e_a \cdot g$.

\begin{lemma}\label{tljequivalence}
Given two objects $a,b$ of a TLJ category, and two minimal
idempotents $e_a\in {}_a\TLJ_a, e_b\in {}_b\TLJ_b$, then

1): $A_{cc}^{e_a}$ is the simple matrix algebra over the vector
space ${}_c\TLJ_a e_a$.

2): If the two representations $e_a\TLJ,e_b\TLJ$ are isomorphic,
then for any $c\in \TLJ^0$, which is neither $a$ nor $b$, the
subalgebras $A_{cc}^{e_a}, A_{cc}^{e_b}$ of $A_{cc}$ are equal.
\end{lemma}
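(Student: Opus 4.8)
The plan is to identify $A_{cc}^{e_a}$ with the isotypic block of the semi-simple algebra $A_{cc}={}_c\TLJ_c$ cut out by the irreducible representation $e_a\TLJ$; granting this, both statements follow quickly. Since $A$ is as in Lemma \ref{jwcases}, the category $\TLJ$ is semi-simple --- this is the point of the resolution of the identity in the Jones--Wenzl projectors, together with non-degeneracy of the Markov trace pairing (formula \ref{meanderformula}). Pass to the idempotent completion: because $e_a$ is a \emph{minimal} idempotent, $e_a\,{}_a\TLJ_a\,e_a=\C e_a$, so $e_a$ names a simple object $P_a$ with $\Hom(c,P_a)={}_c\TLJ_a\,e_a=:V_c$ and $\Hom(P_a,c)=e_a\,{}_a\TLJ_c=:W_c$; by Theorem \ref{structuretheorem} the isomorphism class of $P_a$ is exactly that of the representation $e_a\TLJ$. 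Semi-simplicity then gives the block decomposition $A_{cc}\cong\bigoplus_{P}\End_\C(\Hom(c,P))$, the sum over isomorphism classes of simple objects.

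For part (1): by definition $A_{cc}^{e_a}$ is the span of the morphisms $f\cdot e_a\cdot g$ with $f\in{}_c\TLJ_a$, $g\in{}_a\TLJ_c$, i.e. the image of the composition map $V_c\otimes_\C W_c\to A_{cc}$, $v\otimes w\mapsto vw$. This image lies in the $P_a$-block $\End_\C(V_c)$ since every $fe_ag$ factors through the simple object $P_a$ (Schur). Conversely it fills that block: writing $v_i=f_ie_a\in V_c$, $w_i=e_ag_i\in W_c$, minimality of $e_a$ gives $e_ag_1f_2e_a=\mu(w_1,v_2)\,e_a$ for a scalar, hence
\[ (f_1e_ag_1)(f_2e_ag_2)=f_1\,(e_ag_1f_2e_a)\,g_2=\mu(w_1,v_2)\,f_1e_ag_2, \]
so $V_c\otimes W_c$ carries the multiplication of a matrix algebra, governed by the bilinear form $\mu: W_c\times V_c\to\C$; and $\mu$ is non-degenerate because $\TLJ$ is semi-simple (a nonzero element of $W_c$, resp.\ $V_c$, is a split mono onto, resp.\ split epi from, a $P_a$-summand of $c$, which produces a partner pairing nontrivially against it). A non-degenerate $\mu$ makes $V_c\otimes W_c$ a simple matrix algebra $\cong\End_\C(V_c)$, so the composition map is a nonzero homomorphism out of a simple algebra (nonzero once $V_c\neq0$, since the projector onto a $P_a$-summand of $c$ then lies in its image), hence injective, hence an isomorphism onto $A_{cc}^{e_a}$; when $V_c=0$ both sides vanish. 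This is precisely the assertion that $A_{cc}^{e_a}$ is the simple matrix algebra over ${}_c\TLJ_a\,e_a$.

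Part (2) is then formal: if $e_a\TLJ\cong e_b\TLJ$ as representations then $P_a\cong P_b$ as simple objects, so for $c\neq a,b$ the $P_a$-block and the $P_b$-block of $A_{cc}$ are one and the same subalgebra, which by part (1) equals $A_{cc}^{e_a}$ and $A_{cc}^{e_b}$ respectively; hence $A_{cc}^{e_a}=A_{cc}^{e_b}$. The hypothesis $c\neq a,b$ serves only to let us apply part (1) to both idempotents without worrying about $c$ being the ``home'' object of one of them. The one place with genuine content is the second half of part (1) --- that $A_{cc}^{e_a}$ is the \emph{whole} block and not merely a subalgebra of it --- which is exactly where minimality of $e_a$ and non-degeneracy of the Markov trace (equivalently, semi-simplicity of $\TLJ$) are both needed; the block decomposition and part (2) are then pure Schur's-lemma bookkeeping.
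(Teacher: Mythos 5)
The paper states Lemma \ref{tljequivalence} without proof (it is invoked immediately afterwards, e.g.\ to identify $A_{33}^{p_1}$ as a $2\times2$ matrix algebra in the level~$3$ computation), so there is no written argument to compare yours against; judged on its own merits your argument is correct and uses exactly the toolkit the paper has set up. You reduce everything to semi-simplicity: minimality gives $e_a\,{}_a\TLJ_a\,e_a=\C e_a$, the composition pairing $\mu\colon e_a\,{}_a\TLJ_c\times{}_c\TLJ_a e_a\to\C$ is non-degenerate, hence the span of the $f\cdot e_a\cdot g$ is a full matrix algebra $\End({}_c\TLJ_a e_a)$ sitting as the $P_a$-isotypic block of $A_{cc}$; and part (2) then follows because Theorem \ref{structuretheorem}(2) says $e_a\TLJ\cong e_b\TLJ$ precisely when $(a,e_a)$ and $(b,e_b)$ become isomorphic in the idempotent completion, so the two idempotents cut out the same block. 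The one step carrying real content, as you say, is surjectivity onto the block, and your minimality-plus-nondegeneracy computation $(f_1e_ag_1)(f_2e_ag_2)=\mu(w_1,v_2)f_1e_ag_2$ handles it correctly.

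Two small remarks. First, both places where you invoke semi-simplicity (that $e_a\,{}_a\TLJ_a\,e_a=\C e_a$, and the split mono/epi argument for non-degeneracy of $\mu$) rest on the semi-simple decomposition of objects into simples; for the rectangular TLJ categories this is available from Section \ref{jonesrepandjoneswenzl}, but for the annular categories the paper only completes that analysis later in Section \ref{repoftlj}, so if the lemma is meant to apply to both it is safer to derive non-degeneracy of $\mu$ directly from the non-degenerate trace pairing: if $v=fe_a\neq0$, pick $u$ with $\mathrm{Tr}(\ol{u}\,fe_a)\neq0$; cyclicity of the trace gives $\mathrm{Tr}(e_a\ol{u}\,fe_a)\neq0$, so $w=e_a\ol{u}$ pairs nontrivially with $v$. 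This works purely at the level of morphism spaces and avoids decomposing $c$ into simple summands. Second, your observation that the hypothesis $c\neq a,b$ in (2) is not load-bearing is right: with $f\in{}_a\TLJ_b$, $g\in{}_b\TLJ_a$ satisfying $f\cdot g=e_a$, $g\cdot f=e_b$ as in Theorem \ref{structuretheorem}(2), one has $u\,e_a\,v=(u f)\,e_b\,(g v)$ and symmetrically, so $A_{cc}^{e_a}=A_{cc}^{e_b}$ for every object $c$.
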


We will use these lemmas to analyze representations of TLJ
categories, but first we consider only the low levels.

\subsection{Rectangular Tempeley-Lieb-Jones categories for low levels}

Denote $A_{ij}={}_i\Lambda_j$.  Note that $A_{ii}$ is an algebra, and
$A_{ij}=0$ if $i\neq j$ mod 2.  The Markov trace induces an inner
product  $<,>: A_{ij}\times A_{ij}\rightarrow \C$ on all $A_{ij}$
given by $<x,y>=\tn{Tr}(\bar{x}y)$.

\subsubsection{Level=1, $d^2=1$}

 Using $p_2=0$,
we can \lq\lq squeeze" a general element $x\in A_{ij}$ so that
there are only $0$ or $1$ arcs cross the mid-level of the
rectangle. Such diagrams in Figure \ref{squeeze}) in Section
\ref{squeezes}.

The algebra $A_{00}=\C$, and the empty diagram is the generator.
The first irrep $\rho_0$ of $\TLJ^{\Rr}_{d=\pm 1}$ is given the
idempotent $p_0$, which is just the identity $\tn{id}_{\emptyset}$
on the empty diagram: if $j$ is odd, $\rho_0(j)=0$; if $j$ is
even, $\rho_0(j)=A_{0j}\cong \C$.

The algebra $A_{11}=\C$, generated by a single vertical line. The
identity does not factor through the $0$-object, so we have a new
idempotent $p_1$ (=idenity on the vertical line). The resulting
irrep $\rho_1$ sends even $j$ to $0$, and odd $j$ to $A_{1j}\cong
\C$.

Continuing to $A_{22}$, we see that the identity on two strands
does factor through $p_0$ given by the Jones-Wenzl idempotent
$p_2$.  By Lemma \ref{Resolution of Identity}, we have found all
the irreps of $\TLJ^{\Rr}_{\pm 1}$, which are summarized into
Table \ref{reclevel1}.

\begin{table}\caption{Irreps of rectangular level=1}\label{reclevel1}
\centering
\begin{tabular}{*{3}{|c}|}%{c|c|c|}
\hline
 $\rho_1$ & 0 & 1 \\
\hline
 $\rho_0$ & 1 & 0 \\
\hline
 &  0& 1  \\
%\hline
\end{tabular}%\caption{Irreps of rectangular level=1}\label{reclevel1}
\end{table}

$\TLJ^{\Rr}_{d=1}$ does not lead to a TQFT since the resulting
$S$-matrix $\left( \begin{array}{cc} \frac{1}{2} &\frac{1}{2} \\
\frac{1}{2} & \frac{1}{2} \end{array} \right)$ is singular.
Although $\TLJ^{\Rr}_{d=-1}$ does give rise to a TQFT, the
resulting theory  with $S$-matrix
$=\left( \begin{array}{cc} \frac{1}{2} &-\frac{1}{2} \\
-\frac{1}{2} & -\frac{1}{2} \end{array} \right)$ is not unitary.
The semion theory with $S$-matrix
$=\left( \begin{array}{cc} \frac{1}{2} &\frac{1}{2} \\
\frac{1}{2} & -\frac{1}{2} \end{array} \right)$ can be realized
only by the representation category of the quantum group $SU(2)$
at level=1. This subtlety comes from the Frobenius-Schur indicator
of the non-trivial label, which is $1$ for TLJ and -1 for quantum
group.

\subsubsection{Level=2, $d^2=2$}

Since $p_3$ is a resolution of the identity of $\textrm{id}_3$
into lower orders, it suffices to analyze $A_{ii}$ for $i\leq 2$.
The cases of $A_{00}, A_{11}$ are the same as level=1. Since
dim$A_{20}$=1, dim$A_{21}$=0 and dim$A_{22}$=2, id$_2$ does not
factor through lower orders, so there is a new idempotent in
$A_{22}$. The 1-dimensional subalgebra $A_{22}^0$ is generated by
$e_2$, which is the following diagram:
\[ \xy (10,0)*{}="A"; (20,0)*{}="B"; (10,12)*{}="C";
(20,12)*{}="D"; "A";"B" **\crv{(15,7)}; "C";"D" **\crv{(15,5)};
(6,7)*{\frac{1}{d}};
\endxy  \]  It is easy to check $e_2$ is the identity of $A_{22}^0$. Since
the identity of $A_{22}$ is the sum of the two central idempotents
(the two identities of each 1-dimensional subalgebra), the new
idempotent $p_2$ is id$_2-e_2$.  The irrep corresponding to $p_2$
sends each odd $j$ to $0$, and each even $j$ to $p_2A_{2j}$.

Therefore, the irreps of the level=2 $\TLJ^{\Rr}$ are given by
$p_0\TLJ$, $p_1\TLJ$, $p_2\TLJ$, which are summarized into Table \ref{reclevel2}.

\vspace{.1in}

\begin{table}
\centering
\begin{tabular}{c|c|c|c|}
\hline $\rho_2$ &0&0& 1\\
 \hline
 $\rho_1$ & 0 & 1 & 0\\
\hline
 $\rho_0$ & 1 & 0 & 1\\
\hline
 &  0& 1  & 2\\
%\hline
\end{tabular}
\caption{Irreps of rectangular level=2}\label{reclevel2}
\end{table}

\subsubsection{Level=3, $d^2=1+d$ or $d^2=1-d$}

The same analysis for objects $0,1,2$ yields three idempotents
$p_0,p_1,p_2$.  Direct computation shows $\Hom(3,3)\cong \C^5$,
$\Hom(3,0)=\Hom(3,2)=0$ and $\Hom(3,1)\cong \C^2$. By Lemma
\ref{tljequivalence}, $A_{33}^{p_1}=A_{33}^1$ is the
$4$-dimensional algebra of $2\times 2$ matrices over the vector
space $A_{13}$.  Let $v_1, v_2$ be the two vectors of $A_{31}$
represented by diagrams such that $<v_1,v_2>=<v_2,v_1>=d^2$, and
$<v_1,v_1>=<v_2,v_2>=d$.  Using Gram-Schmidt on the vectors
$v_1,v_2$, we get an orthonormal basis $e_1=\frac{v_1}{d},
e_2=\frac{v_2-e_1}{d^2-1}$ of $A_{31}$. Hence the identity of the
algebra $A_{33}^1$ is $|e_1><e_1|+|e_2><e_2|$. Therefore, the
remaining idempotent of $A_{33}$ is
$\tn{id}_3-|e_1><e_1|-|e_2><e_2|$, which is just $p_3$.  It
follows that the irreps of $\TLJ$ are given by $p_0\TLJ,
p_1\TLJ,p_2\TLJ, p_3\TLJ.$

\subsection{Annular Temperley-Lieb-Jones theories for low levels}

First we have the following notations for the pictures in the
annular morphism sets $A_{00}, A_{11}, A_{02}, A_{22}$, where
${\bf{1}_0},R,B, {\bf{1}_1},T_1, {\bf{1}_2}, T_2$ are annular diagrams:  $\bf{1}_0, \bf{1}_1, \bf{1}_2$
are identities with $0,1,2$ strands, $R$ is the ring, $B$ is the birth, and $T_1$ is the Dehn twisted
curve, and $T_2$ is the
fractional Dehn twisted curve.  We also use $B'$ to denote the diagram of $RB$ after $Z_2$ homology surgery.
A diagram with a $\bar{}$ is the one obtained from a reflection through a horizontal
line.

\subsubsection{Level=1, $d^2=1$}

The Jones-Wenzl idempotent $p_2$ is a resolution of $\tn{id}_2$
into the lower orders, so we need only to find the minimal
idempotents of $\Hom(0,0)$ and $\Hom(1,1)$.  Since any two
parallel lines can be replaced by a turn-back, the algebra
$A_{00}$ is generated by the empty picture $\emptyset$ and the
ring circle $R$.  Stacking two rings $R$ together and resolving
the two parallel lines give $R^2=1$, hence $A_{00}$ is the algebra
$\C[R]/(R^2-1)$. By Lemma \ref{decomposition}, the two minimal
idempotents of $A_{00}$ are $e_1=\frac{\emptyset+R}{2},
e_2=\frac{\emptyset-R}{2}$.  To test which idempotent is of the
trivial type, we apply $e_1,e_2$ to the empty diagram on the disk
and obtain $e_1 \emptyset=(\frac{d+1}{2})\emptyset, e_2\emptyset
=(\frac{1-d}{2})\emptyset$.  Hence if $d=1$, then $e_1$ is of the
trivial type, and if $d=-1$, then $e_2$ is of the trivial type.

The algebra $A_{11}$ is generated by the straight arc $I$ and the
tiwst $T$.  By stacking two rings $R$ together and resolving the
two parallel lines, we see that $A_{11}$ is the algebra
$\C[T]/(T^2-dI)$.  By Lemma \ref{decomposition}, for $d=1$, we
have two minimal idempotents
$e_{3,1}=\frac{I+T}{2},e_{4,1}=\frac{I-T}{2}$.  For $d=-1$, we
have two minimal idempotents
$e_{3,-1}=\frac{1-iT}{2},e_{4,-1}=\frac{1+iT}{2}.$  Note that
$\Hom(0,1)=\Hom(1,0)=0$.  Therefore, the annular TLJ categories
for $d=\pm 1$  have $4$ irreps $e_i,i=1,2,3,4$.

\subsubsection{Level=2, $d^2=2$}

For the TLJ categories at level=2, $d^2=2$, $p_3$ is a resolution
of the identity of $id_3$ into lower orders, so we need to analyze
the algebras $A_{00},A_{11},A_{22}$.  The algebra $A_{00}$ is generated by the empty picture $\emptyset$
and the ring $R$. Since $R^3=2R$, $A_{00}=\C[R]/(R^3-2R)$.  By
Lemma \ref{decomposition}, the three minimal idempotents are
$e_1=\emptyset-\frac{R^2}{2}, e_2=\frac{R^2+dR}{4},
e_3=\frac{R^2-dR}{4}$.  Testing on the disk, we know that $e_2$ is
of the trivial type.

For $A_{11}$, we apply the Jones-Wenzl idempotent $p_3$ to the
stacking of two twists $T^2$. After simplifying, we get
$T^4-dT^2+1=0$. Again by Lemma \ref{decomposition}, we have 4
minimal idempotents: $\frac{1}{2d}(\alpha^2 I+\alpha T -\alpha^4
T^2-\alpha^3 T^3)$, where $\alpha^4-d\alpha^2+1=0$.

A new phenomenon arises in the algebra $A_{22}$, which is
generated by $8$ diagrams: $1_2, \bar{B}B, T_2, \bar{B'}B,
\bar{B}B', \bar{B'}B',\bar{B}RB, \bar{B'}RB$. Computing their
inner products shows that $A_{22}\cong \C^8$. $A_{02}$ is spanned
by $B,B', RB, RB'$.  Using the three minimal idempotents in
$A_{00}$, we see that $e_0A_{02}$ is spanned by $RB+RB'=f_0$,
$e_1A_{02}$ is spanned by $B-\frac{d}{2}RB'=f_1,
B'-\frac{d}{2}RB=f_1'$, and $e_1A_{02}$ is spanned by
$RB-RB'=f_2$.  Hence $A_{22}^0\cong \C^6$ as the direct sum of $2$
$1\times 1$ matrix algebras generated by $f_0,f_2$ and a $2\times
2$ algebra generated by $f_1,f_1'$. Therefore there are two more
idempotents in $A_{22}$. Applying $p_3$ to the action of the
1/2-Dehn twist $F$ on $A_{22}$, we get $F^2=1$ modulo lower order
terms, hence the last two idempotents are of the form
$\frac{1}{2}(I_2 \pm iF)$ plus lower order terms in $A_{22}^0$.
Since $A_{22}^0=BA_{02}+B'A_{20}$, we need to find an $x$ such
that $e=-\frac{1}{2}1_2\pm \frac{i}{2}T_2+x$ is a projector and
$eB=eB'=0$.  Solve the equations, we find
$$e_{\pm}=\frac{1}{2}1_2\pm \frac{i}{2}T\mp
\frac{i}{2d}\bar{B'}B-\frac{1}{2d} \bar{B}B\mp \frac{i}{2d}
BB'-\frac{1}{2d}\bar{B'}B'\pm
\frac{i}{2d^2}\bar{B}RB+\frac{1}{2d^2} \bar{B'}RB.$$

\subsubsection{Level=3, $d^2=1+d$ or $d^2=1-d$}

The algebra $A_{00}$ is the algebra $\C[R]/(R^4-3R^2+1)$, so we
have $4$ minimal idempotents.

The algebra $A_{11}$ is generated by the twist $T$, so $A_{11}$ is
the algebra $\C[T]/(T^6-dT^4-dT^2+1)$, so we have $6$ minimal
idempotents.

Let $F$ be the fractional Dehn twist on $A_{22}$, then $p_4$
results in a dependence among $F^{-2},F^{-1}, I_2, F, F^2$:
$F^4-dF^2+1$ modulo lower order terms. So we have $4$ minimal
idempotents.

Let $F$ be the fractional Dehn twist on $A_{33}$, then $p_4$
results in a relation between $F^{-1}, I_3, F$. So we have $2$
minimal idempotents.

We leave the exact formula for the idempotents to interested
readers.  Note that the number of irreps of the annular TLJ
categories is the square of the corresponding TLJ rectangular
categories.

\subsection{Temperley-Lieb-Jones categories for primitive $4r$th roots of
unity}\label{irrepsoftlj4r}

 Let $A$ be a primitive $4r$-th root of unity, and $d=-A^2-A^{-2}$.
 $\TLJ^{\Rr,k,A}$ is just the $TL_d$ modulo its annihilator $p_{r-1}$.
 We found that it has minimal
 idempotents $p_0,p_1,p_2,\cdots, p_k$, $k=r-2$ and
 with image $(p_{k+1})$ being the annihilator of the Hermitian
 paring $\langle\, ,\, \rangle$.

 The case $A$ a primitive $2r$-th or $r$th root of unity, $r$ odd, e.g. $A=e^{2\pi
 i/6}, k=1,d=1;$ is identical as far as the rectangle categories
 go, but for the annular categories is more complicated; it is
 analyzed in the next section.

\begin{theorem}
 \tn{Rectangle diagrams with $p_i, 0\leq i\leq k$, near the
 bottom and object $t$ at top span spaces $\{W^t_{A,i}\}
 :=W_{A,i}$ on which $\Lambda :=\Lambda^{\Rr,k,A}$ acts from above.
 The families $\{W_{A,i}\}$ (as $i$ varies ) are the $k+1$
 (isomorphism classes of) irreducible representations of $\Lambda$.
 The involution $\hat{}$ is the identity.}
\end{theorem}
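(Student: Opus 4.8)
The plan is to match the families $\{W_{A,i}\}$, $0\le i\le k=r-2$, with the simple modules attached to the Jones--Wenzl idempotents $p_i$ through the idempotent--module dictionary of Theorem \ref{structuretheorem}, and then to argue there are no others. First I would note that, by construction, $W_{A,i}^t$ is the span of rectangle pictures with $p_i$ near the bottom and $t$ endpoints on top, i.e. $W_{A,i}^t = p_i\,{}_i\TLJ_t$, with $\Lambda=\Lambda^{\Rr,k,A}$ acting from above by stacking; thus $W_{A,i}$ is exactly the representation $p_i\,\Lambda$ in the sense of Appendix \ref{repofcategory}. It is irreducible because $p_i$ is a \emph{minimal} idempotent of the algebra ${}_i\TLJ_i$: every diagram on $i$ strands is a $\C$-combination of $\tn{id}_i$ and words in the $U_m$, and $p_iU_m=U_mp_i=0$, so $p_i\,{}_i\TLJ_i\,p_i=\C p_i$. (For $i\le k$, $p_i$ is a genuine nonzero idempotent because $\tn{Tr}(p_i)=\Delta_i\neq 0$ by Lemma \ref{jwcases}.)

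Next I would show the $W_{A,i}$ are pairwise non-isomorphic and exhaust all irreducibles. For non-isomorphy: if $m<i$, any picture from $i$ points to $m$ points carries a turn-back adjacent to the $i$-end, which after an isotopy becomes an innermost (hence adjacent) cap $\pi_{m_0}$; from $U_{m_0}p_i=\iota_{m_0}\pi_{m_0}p_i=0$ and $\pi_{m_0}\iota_{m_0}=d\cdot\tn{id}$ with $d\neq 0$ one gets $\pi_{m_0}p_i=0$, so $p_i$ annihilates the picture and $W_{A,i}^m=0$ for $m<i$, whereas $W_{A,i}^i=p_i\,{}_i\TLJ_i=\C p_i$ is one-dimensional; hence $W_{A,i}$ and $W_{A,j}$ differ in grade $i$ when $i<j$ (and trivially when $i\not\equiv j\bmod 2$), and by Corollary \ref{uniqueness} they are non-isomorphic. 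For exhaustion: in $\Lambda$ we have $p_{k+1}=p_{r-1}=0$, while $\tn{id}_{k+1}=p_{k+1}+(\tn{id}_{k+1}-p_{k+1})$ and, by Theorem \ref{joneswenzlprojectors}, $\tn{id}_{k+1}-p_{k+1}$ is a combination of words in the $U_m$, each factoring through fewer than $k+1$ points; so $\tn{id}_{k+1}$ is a resolution of the identity into lower orders and Lemma \ref{Resolution of Identity} applies: every irreducible of $\Lambda$ comes from a minimal idempotent of ${}_j\TLJ_j$ for some $j\le k$. I would then enumerate those by induction on $j$: for $j=0,1$ one has ${}_j\TLJ_j\cong\C$, yielding $W_{A,0},W_{A,1}$; inductively, Lemma \ref{tljequivalence}(1) shows each $p_i$ with $i<j$ contributes a full matrix block $\Hom(W_{A,i}^j,W_{A,i}^j)$ to ${}_j\TLJ_j$, and --- since $p_j$ is orthogonal to every morphism factoring through $<j$ points (again by $p_jU_m=0$ and $d\neq 0$) while $\tn{id}_j-p_j$ lies in the subalgebra generated by such morphisms --- the complementary block is exactly $\C p_j$, so $p_j$ is the unique new minimal idempotent. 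Ranging over $j\le k$ produces precisely $W_{A,0},\dots,W_{A,k}$.

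For the last assertion, that the involution $\widehat{\ }$ is the identity, I would observe that the adjoint operation (reflection in the horizontal midline together with complex conjugation of scalars) carries an idempotent annihilated by all $U_m$ to another such idempotent, whence $p_i^{\dagger}=p_i$ by the uniqueness in Theorem \ref{joneswenzlprojectors}; a self-adjoint minimal idempotent yields a representation isomorphic to its opposite, the map $x\mapsto x^{\dagger}$ being an isomorphism $\rho_i\to\rho_i^{\tn{op}}$. Hence $\widehat{W_{A,i}}\cong W_{A,i}$ for every $i$, i.e. $\widehat{\ }=\tn{id}$ on the label set. The same arguments apply verbatim when $A$ is a primitive $2r$-th or $r$-th root of unity with $r$ odd, since for those values the rectangular category $\Lambda^{\Rr,k,A}$ coincides with the one for a primitive $4r$-th root sharing the same $d$.

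The step I expect to be the main obstacle is the inductive structure computation inside the exhaustion argument: showing that for each $j\le k$ the algebra ${}_j\TLJ_j$ has exactly one matrix block beyond those already accounted for by $p_0,\dots,p_{j-1}$, namely the one-dimensional block $\C p_j$. This is where semisimplicity of $\Lambda^{\Rr,k,A}$ and the non-vanishing $\Delta_j\neq 0$ for $j\le k$ are genuinely used, and where the ad hoc low-level computations of the preceding subsections have to be replaced by a uniform argument --- essentially the statement that the space of diagrams on $j$ strands with the maximal number $j$ of through-passing strands, modulo lower terms, is one-dimensional and spanned by $p_j$.
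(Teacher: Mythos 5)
Your proposal is correct, and on the exhaustion step it follows the same idea as the paper (a resolution of the identity through the Jones--Wenzl projectors, so that every representation decomposes into the $W_{A,i}$, $0\leq i\leq k$); where you genuinely diverge is irreducibility. The paper argues directly: for nonzero $m=p_i\cdot m_0$ and arbitrary $m'=p_i\cdot m_0'$ it constructs morphisms $x,y$ with $m'=mx$ and $m=m'y$, i.e.\ every nonzero vector of $W_{A,i}$ is cyclic, which gives irreducibility without explicitly routing through the representation theory of semisimple linear categories. You instead show $p_i$ is a minimal idempotent, via $p_i\,{}_i\TLJ_i\,p_i=\C p_i$ (every diagram being a combination of $\tn{id}_i$ and words in the $U_m$, killed by $p_i$), and then invoke Theorem \ref{structuretheorem}; this is legitimate within the paper's logical order, since semisimplicity of the quotient category is available from the nondegeneracy of the Markov pairing and the uniqueness of the ideal $\grpgen{p_{r-1}}$, and it buys you a cleaner uniform bookkeeping: your inductive splitting ${}_j\TLJ_j=\C p_j\oplus\bigl(\sum_{i<j}A^{p_i}_{jj}\bigr)$, with Lemma \ref{tljequivalence}(1) identifying each lower summand as a full matrix block, is exactly the general replacement for the ad hoc low-level computations, and you correctly identify it as the crux. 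You also supply two points the paper's proof leaves implicit: pairwise non-isomorphism from the graded dimensions ($W^m_{A,i}=0$ for $m<i$ because $p_i$ kills any cap when $d\neq0$, while $W^i_{A,i}=\C p_i$), and triviality of the involution from $p_i^{\dagger}=p_i$ via the uniqueness in Theorem \ref{joneswenzlprojectors}; both arguments are sound.

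One side remark in your last paragraph is wrong as stated: for $r$ odd and $A$ a primitive $2r$th or $r$th root of unity there is no primitive $4r$th root of unity with the same $d$. For $r=3$, every primitive $12$th root gives $d=-1$, whereas the primitive $6$th and $3$rd roots give $d=1$, so the rectangular categories do not ``coincide with a $4r$th-root case with the same $d$.'' The correct reason the arguments carry over --- and what the paper means by ``identical as far as the rectangle categories go'' --- is that the rectangular category depends only on $d$, and the only properties of $d$ you used are $\Delta_i(d)\neq0$ for $0\leq i\leq k$ together with the imposed relation $p_{r-1}=0$, which Lemma \ref{jwcases} guarantees for those $A$ as well. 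Since the theorem as stated assumes $A$ is a primitive $4r$th root, this slip does not affect your proof of the statement itself.
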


\begin{proof}
Most of the argument is by now familiar.  Resolving
the
 identity
 %{\includegraphics[width=1.5in,height=.95cm]{drawing5.15.eps }}
%{\centerline{Figure 5.15}}
%\vskip.1in
shows that any representation is a direct sum of $\{W_{A,i}\},
0\leq
 i\leq k$.

 For the first time $\T{dim}(W^t_{A,i})$ may be $>1$ and there
 will not be invertible morphism $t\rightarrow t'$ but
 irreducibility can still be proved as follows: for all $m=
 p_i\cdot m_0$ and $m'=p_i \cdot m_0'$ one may construct morphism
 $x$ and $y$ so that $m'=mx$ and $m=m'y$, where $p_i\in {}_i\Lambda_i,
 m, m_0\in {}_i\Lambda_a,m', m'_0\in {}_i\Lambda_b,x\in {}_a\Lambda_b,y\in {}_a\Lambda_b.$
\end{proof}

 It is a bit harder to find the irreps of $\Lambda^{\Aa,k,A} :=\Lambda$, but we will do this now.
 Similar irreps for $\TL^{\Aa}$ categories were previously found by Graham-Lehner \cite{grahamlehrer}, in a different
 context.

 We do not know how to proceed in a purely combinatorial fashion
 but must invoke the action of the doubled theory on the
 undoubled.  Topologically this amounts to the action on pictures
 in the solid cylinder $(B^2\times I, B^2\times \partial I)$
 under the addition of additional strands in a shell
 $(B^2_2\backslash B^2_1\times I;B^2_2\backslash B^2_1\times \partial
 I)$.  Logically our calculation should be done until we have
 established\footnote{It has been shown in Lemma \ref{smatrix} that the
 Temperley-Lieb-Jones theories
 $\Lambda^{\Rr,k,A}$ violate an important TQFT-axiom when
 $A^{2r}=1$.  The $S$-matrix is singular, half the expected rank, so the action
 of the mapping class group is not completely defined.  In this case irreps of
 $\Lambda^{\Aa,k,A}$ have a more complicated structure.} the undoubled
 TQFT based on $\Lambda^{\Rr,k,A}$ where the hypothesis $A$ a primitive $4r$th
 root is used.  This can be done in
 Section \ref{joneswenzlclosed} already or from here by going directly to
 Section \ref{joneskauffman} which does not depend on this section.  Therefore, we will
 freely invoke this material.

In the low level cases we found that $\#\T{irreps}
\Lambda^{\Aa}=(\#\T{irreps} \Lambda^{\Rr})^2$.  This is not an accident but
comes from identifying $\Lambda^{\Aa}$ with $\T{End}(\Lambda^{\Rr})$.
$\Lambda^{\Aa,k,A}$ is too complicated to ''guess'' the irreps so we
compute them from the endomorphism view point.

Recall from Section \ref{recouplingtheory} the projectors
$\omega_a=\overset{k}{\underset{c=0}{\sum}}\frac{\Delta_{(a+1)(c+1)}[c]}{D}$
 onto the $a$-label, and
$D^2=\overset{k}{\underset{c=0}{\sum}}\Delta_{c+1}^2$.

%\vskip.2in \epsfxsize=4.5in \centerline{\epsfbox{drawing5.16.eps}}{%\centerline{Figure 5.15}}
\vskip.2in

Also recall from Section \ref{joneskauffman} if $Y=\partial X$ and
$\gamma \subset \T{interior} X$ is a family of sccs labelled by
$\tilde{\omega_a}$ and $\gamma$ cobounds a family of imbedded
annuli $\Aa\subset X$ with $\gamma'\subset Y,$, i.e. $\partial
\Aa=\gamma\cup \gamma'$, then $Z(X,\gamma_{\omega_a})\in
V(Y\backslash \gamma; a,\hat{a})\subset
\bigoplus_{\T{admissible}}V(Y\backslash \gamma; l,\hat{l})=V(Y).$

Consider the $4-$component formal tangle in annulus cross
interval, $ -A\times I$, where $h=|i-j|$:

\begin{figure}[tbh]\label{4component}
\centering
\includegraphics[width=2.45in]{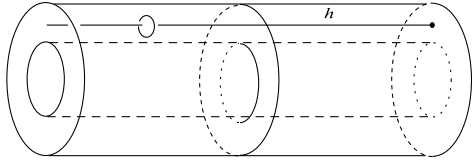}
\caption{4-component formal tangle}\label{4component}
\end{figure}

%\vskip.2in \epsfxsize=3.5in \centerline{\epsfbox{fig515.eps}}{\centerline{Figure 5.15}} \vskip.2in

Let $X$ be the 3-manifold made by removing small tubular
neighborhoods of the $h$-labeled arc, and write $X\cong Y\times
I$, where $Y$ is the annulus with a new puncture, and $\partial
X=D Y$ the double of $Y$.  Let $(Y,l_0)$ be $Y$ with $\partial Y$
labeled as follows: $\T{outer boundary}\rightarrow j, \T{inner
boundary}\rightarrow i, \T{new boundary}\rightarrow h.$ From Lemma
\ref{fusionrule} we know $V(Y,l_0)\cong V_{i,j,h}\cong \C$.

Another useful decomposition of $\partial X$ results from
expanding the inner and outer boundary components of $Y$ to
annuli, $A_i$ and $A_0$: $\partial X=-Y\cup +Y\cup -A_i\cup
+A_0\cup A_h.$ Applying $V$ we have: $V(\partial
X)=$
$$\bigoplus_{\T{admissible labels}} V^*(Y,l)\bigotimes
V(Y,l)\bigotimes V^*(A_i,l)\bigotimes V(A_0,l)\bigotimes V(A_h,l).
 (*)$$

Let us restrict to label: $l_0$.  By lemma, $\T{dim}V(Y,l_0)=1$
and let $x$ be the unit normalized vector $\kappa\in V(Y,l_0)$,

%\vskip.2in \epsfxsize=2in \centerline{\epsfbox{fig516.eps}}{\centerline{Figure 5.16}} \vskip.2in

The Jones-Wenzl projectors $p_i, p_j$ and $p_h$ are inserted as
shown.

\begin{figure}[tbh]\label{insertjones}
\centering
\includegraphics[width=1.25in]{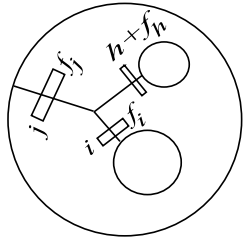}
\caption{Insert Jones-Wenzl}\label{insertjones}
\end{figure}

The arc diagram should be pushed into a ball $B^+$ bounding the
2-sphere $S^2$ made by capping $\partial Y$, to define an element
of $V(Y,l_0)$.  The root $\theta-$symbol normalizes $||x'||^2$ to
the invariant of $D(B^3)=S^3$ and the $s_{00}^{\frac{1}{2}}$ kills
this factor so as defined $||x'||^2=1$.

Let $V_{l_0}(\partial X)$ denote the $l_0$ summand of the rhs (*).
 Fixing $x'$, and therefore its dual $\hat{x}'$, give an
 isomorphism $\epsilon_{ij}': V_{l_0} \rightarrow
 \T{Hom}(V(A_i;i,\hat{i}), V(A_0;j,\hat{j})=:\T{Hom}(V_i,V_j)$.

 Consider the partition function $Z(X,L)$ of $(X,L)$, where $L$ is
 the 3-component link in $X$ labelled by ${\omega_i},
 {\omega_j}$ and ${\omega_h}$ in FIG??, and $Z(X,L)\subset V_{l_0}(X).$

 We now check that $\epsilon_{i,j}'(Z(X,L))$ is a non-zero vector
 in $\T{Hom}(V_i,V_j)$ whose definition is independent of phase(X).

 The pairing axiom can be used to analyze the result of gluing $X$
 to the genus two handlebody $(H,\overline{\theta}_{i,j,h})$ which is a
 thickening of the $i,j,h$ labelled $\theta$-graph (with the graph
 inside), we get:

 $$S_{00}\theta_{ijh}=S_{0i}^2S_{0j}^2S_{0h}\langle x,\hat{x}\rangle
 \langle \beta^*_{ii},\beta_{ii}\rangle
 \langle \beta_{jj},\beta^*_{jj}\rangle.\q\q (**)$$

 The two factors of $S_{0i}$ and $S_{0j}$ come from gluing
 along the seems separating off the inner and outer annuli (respectively);
 the factor $S_{0h}$ derives from gluing across the \lq\lq new
 component" of $\partial Y$, $S_{00}$ is the 3-sphere
 normalization constant, making the lhs (**) a \lq\lq spherical
 $\theta$-symbol".  Previously we arranged
 $\langle x^\prime , \widehat{x^\prime} \rangle =1$ and
 $\langle \beta^{\ast}_{aa}, \beta_{a0} \rangle= S^{-1}_{0, a}$
 so if we define
   $x=\frac{\sqrt{S_{0i}}\sqrt{S_{0j}}\sqrt{S_{0h}}}{\sqrt{S_{00}}\sqrt{\theta_{ijk}}}\cdot
   x'=\frac{\sqrt{S_{0i}}\sqrt{S_{0j}}\sqrt{S_{0h}}}{S_{00}}\kappa,$
   and redefine $\epsilon_{ij}'$ to $\epsilon_{ij}$ by replacing
   $x'$ with $x$ in its definition, we obtain:
   $$\epsilon_{ij}(Z(X,L))=\beta^*_{ii}\beta_{jj},$$ i.e. the
   canonical element of $\T{Hom}(V_i,V_j)$.

 Now attach a 2-handle to $X$ along the \lq\lq new"
 component of $\partial Y$ to reverse our original construction:
 $X\cup \T{2-handle}=A\times I$.  The co-core of the 2-handle
 should now be labeled by $h$ and the $\omega_h$-labeled
 component can be dispensed with (it is now irrelevant ).  Call
 this  new idempotent $3-$component formal tangle $\bar{L}_{ij}$.
 Fix $x$, as above, a map $\overline{\epsilon_{ij}}$ closely related to
 $\epsilon_{ij}$ is now defined: $\bar{\epsilon_{ij}}: V(A\times I,
 \bar{L})\rightarrow \T{Hom}(V_i,V_j)$ and as before we have

\begin{lemma}
$\bar{\epsilon_{ij}}(Z(A\times
I,\bar{L}))=\beta^*_{ii}\beta_{jj}.$
\end{lemma}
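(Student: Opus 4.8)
The plan is to obtain this lemma as the $2$-handle analogue of the preceding one, $\epsilon_{ij}(Z(X,L))=\beta^*_{ii}\beta_{jj}$, by transporting everything along the surgery $X\cup\text{2-handle}=\mathcal{A}\times I$. First I would make precise the relation between the two partition functions. By the handle rules for the Jones--Kauffman TQFT recalled in Section~\ref{joneswenzlclosed}, attaching a $2$-handle along the ``new'' circle of $\partial Y$ is the same as doing surgery on that circle; since this circle is the meridian of the deleted $h$-labelled arc, the surgery reinserts that arc as the $h$-labelled co-core, and the gluing axiom then produces a linear map $\Phi\colon V_{l_0}(\partial X)\to V(\mathcal{A}\times I,\bar L)$ with $\Phi(Z(X,L))=Z(\mathcal{A}\times I,\bar L)$. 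The delicate point is that $L$ still carries the $\omega_h$-labelled ring around the old puncture; once the handle is filled in, this ring bounds a disk and only encircles the $h$-co-core, so the encirclement identity from the recoupling theory of Section~\ref{recouplingtheory} lets me delete it at the cost of a single scalar (the encirclement eigenvalue $\ts_{hh}/\ts_{0h}$, or its reciprocal), and I must check that this scalar is precisely the one already absorbed into the renormalization $x=\frac{\sqrt{S_{0i}}\sqrt{S_{0j}}\sqrt{S_{0h}}}{S_{00}}\kappa$ fixed through equation~(**).

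Second I would check that $\Phi$ intertwines the two identifications, i.e.\ $\bar\epsilon_{ij}\circ\Phi=\epsilon_{ij}$. Both $\epsilon_{ij}$ and $\bar\epsilon_{ij}$ are defined by pairing against the same fixed unit vector $x\in V(Y,l_0)$ and its dual $\hat x$; the only difference is that in the decomposition $(*)$ the $l_0$-summand of $V(\partial X)$ carries the extra tensor factor $V(A_h,h)\cong\C$, and filling in the $2$-handle canonically identifies this line with the one-dimensional space cut out by the co-core endpoints on $\mathcal{A}\times\partial I$. Granting these two facts the lemma is immediate: $\bar\epsilon_{ij}(Z(\mathcal{A}\times I,\bar L))=\bar\epsilon_{ij}(\Phi(Z(X,L)))=\epsilon_{ij}(Z(X,L))=\beta^*_{ii}\beta_{jj}$, which in particular exhibits the idempotent tangle $\bar L_{ij}$ as acting by the canonical matrix unit of $\Hom(V_i,V_j)$.

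I expect the scalar bookkeeping in the first step to be the main obstacle: one really must see that discarding the $\omega_h$-component is harmless, i.e.\ that the encirclement factor it contributes cancels against the powers of $S_{0h}$ and the $\theta$-symbol $\theta_{ijh}$ that entered $x$. For that reason I would in practice organize the whole proof around re-deriving the analogue of (**) directly for $(\mathcal{A}\times I,\bar L)$: glue it along its boundary torus to an appropriate genus-one handlebody containing the remaining edges of the $i,j,h$-labelled $\theta$-graph, so that the closed invariant again evaluates to a (normalized) spherical $\theta$-symbol, and then read off $\bar\epsilon_{ij}(Z(\mathcal{A}\times I,\bar L))$. Since $x$ has already been pinned down, this version forces the value $\beta^*_{ii}\beta_{jj}$ rather than merely checking it, and the potentially troublesome scalar is never isolated.
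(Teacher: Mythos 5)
Your proposal follows essentially the same route as the paper: the paper's own argument is precisely to attach the $2$-handle along the ``new'' boundary component to reverse the construction ($X\cup\text{2-handle}=\Aa\times I$), relabel the co-core by $h$, discard the now-irrelevant $\omega_h$-component, and conclude ``as before'' using the same fixed vector $x$ normalized through $(**)$. Your extra attention to the scalar from deleting the $\omega_h$-ring and to the intertwining $\bar\epsilon_{ij}\circ\Phi=\epsilon_{ij}$ is exactly the bookkeeping the paper leaves implicit, so it is a fleshed-out version of the same proof rather than a different one.
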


 Using the geometric interpretation of links in a product as
 operators,  and using the product structure from the middle factor in
$A\times I=S^1\times I\times I$, we see $\beta^*_{i,i}\beta_{jj}$
realized by a formal knot projection $(\overline{L})\subset
S^1\times \frac{1}{2}\times I.$  This projection can be
Kauffman-resolved to a formal $1-$submanifold $=:L_{ij}\subset
S^1\times I$ (ignoring the constant $\frac{1}{2}$).  This is the
minimal, in fact 1-dimensional idempotent of $\Lambda^{\Aa,k,A}$.  In
fact by counting we see that we have achieved a complete
resolution of the identity, and in the annular algebras
${}_i\Lambda^{\Aa}_{i}$ a complete list of isomorphisms classes of
irreducible representations of the full annular category
$\Lambda^{\Aa,k,A}$.

Our assumption in Section \ref{joneskauffman} is that $A$ is a
primitive $4r$-th root of unity, $r=k+2$.  Section
\ref{joneskauffman} constructs a TQFT with
$\{\tn{labels}\}=\{\T{irreps}\Lambda^{\Rr,k,A}\}$. Using the $s$-matrix
of this TQFT, we have just constructed a basis $\{\beta^*_{ii}
\beta_{jj}\}$ of $(k+1)^2$ operators for $\T{Hom}\left(
\overset{k}{\underset{i=0}{\bigoplus}} V_i,
\overset{k}{\underset{j=0}{\bigoplus}} V_j \right)$ which are
geometrically represented as formal submanifolds $\{L_{ij}\subset
A\}$, also an idempotent in ${}_h\Lambda_{h}^{\Aa,k,A}$.

The counting argument below holds for A a primitive $4r$th or
$2r$th root of unity $r$ odd or $r$th root of unity $r$ odd, and
so applies in the next section as well.

By a direct count of classical (not formal) pictures up to the
projector relation $p_{k+1}=p_{r-1}$ we find:

\[
\T{dim}\left({}_h\Lambda_h^{\Aa,k,A}\right)\leq \left\{
\begin{array}{ccc}
k, &\T{for} & i=0\\
2k+2-2h, &\T{for} & 1\leq h\leq k \\
\end{array}\right. \q(***) .
\]
Summing over $h$,
$\T{dim}\left(\overset{k}{\underset{h=0}{\bigoplus}}\,
{}_h\Lambda_{h}^{\Aa,k,A}\right)\leq (k+1)^2.$

Since $\{L_{ij}, 0\leq i,j\leq k\}$ represent as $(k+1)^2$
linearly independent operators, the above inequalities must, in
fact, be equalities.

${_0}\Lambda_{0}^{\Aa,k,A}$ is spanned by the empty picture
$\emptyset$, the ring circle $R$, and its powers up to $R^k$.  The
projector decomposes $R^{k+1}$ into a linear combination of lower
terms.

For $h=1$, let $I$ denote the straight arc picture and $T$ the
counter clockwise Dehn twist.  The pictures: $\bar{T}^{(k-1)},
\bar{T}^{k-2)}, \cdots, I, T, \cdots T^{k-1}$ appear (and are)
independent but there is an obvious dependency if
the list is expanded to $T^{-k}\cdots, T^k$.  This dependency
leads quickly to the claimed bound for $h=1$.

%\vskip.2in \epsfxsize=3in \centerline{\epsfbox{fig517.eps}}{\centerline{Figure 5.17}} \vskip.2in

For $h>1$, the argument is similar to the above, except a
fractional Dehn twist $F$ replaces $T$.
%For example, for $k=3$ and $h=2,3$ the first dependencies are illustrated in Fig??.

%\vskip.2in \epsfxsize=3.5in \centerline{\epsfbox{fig517-34.eps }}{\centerline{Figure 5.17 3/4}} \vskip.2in

\begin{lemma}
\tn{$\{L_{ij}, 0\leq i,j\leq k\}$ is a complete set of minimal
idempotents for $\{{}_h\Lambda_h^{\Aa,k,A} \}, 0\leq h\leq k$.}
\end{lemma}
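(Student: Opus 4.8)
The plan is to read the lemma off from the $(k+1)^2$ explicit idempotents $L_{ij}$ just produced, together with the dimension bound $(***)$ and Lemma~\ref{Resolution of Identity}; throughout $A$ is a primitive $4r$th root of unity with $r=k+2$, as in this section. First I would record what the construction has already delivered: $L_{ij}$ lies in ${}_h\Lambda_h^{\Aa,k,A}$ with $h=|i-j|$; it is an idempotent there, the normalizing factor $\sqrt{S_{0i}S_{0j}S_{0h}/(S_{00}\theta_{ijh})}$ having been chosen precisely so that $L_{ij}\cdot L_{ij}=L_{ij}$ via the spherical $\theta$-symbol identity $(**)$; it is \emph{minimal}, since the fusion space $V(Y,l_0)\cong V_{i,j,h}\cong\C$ onto which it projects is one-dimensional (Lemma~\ref{fusionrule}); and $\{L_{ij}\}_{0\le i,j\le k}$ is linearly independent, representing the $(k+1)^2$ linearly independent operators $\{\beta^*_{ii}\beta_{jj}\}$. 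Since $0\le i,j\le k$, all of them sit in $\bigoplus_{h=0}^{k}{}_h\Lambda_h^{\Aa,k,A}$, and the number $N_h$ of pairs $(i,j)$ with $|i-j|=h$ is $k+1$ for $h=0$ and $2(k+1-h)$ for $1\le h\le k$, totalling $(k+1)^2$.

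Next I would compare with the dimension count $(***)$, which gives $\dim{}_h\Lambda_h^{\Aa,k,A}\le N_h$ for each $h$, hence $\dim\bigoplus_h{}_h\Lambda_h^{\Aa,k,A}\le(k+1)^2$. The linearly independent family $\{L_{ij}:|i-j|=h\}$ already attains this bound, so every inequality is an equality and this family is a \emph{basis} of ${}_h\Lambda_h^{\Aa,k,A}$. Each ${}_h\Lambda_h^{\Aa,k,A}$ is moreover commutative — generated by the ring circle $R$ for $h=0$, and by the fractional Dehn twist $F$ for $h\ge1$ — so distinct minimal idempotents in it are orthogonal; thus $\{L_{ij}:|i-j|=h\}$ is an orthogonal family of $N_h=\dim{}_h\Lambda_h^{\Aa,k,A}$ minimal idempotents, and the standard argument (their sum $e$ satisfies $e\cdot{}_h\Lambda_h^{\Aa,k,A}\supseteq\text{span}\{L_{ij}\}={}_h\Lambda_h^{\Aa,k,A}$, so $e=\mathrm{id}_h$) forces ${}_h\Lambda_h^{\Aa,k,A}\cong\C^{N_h}$ with $\{L_{ij}:|i-j|=h\}$ its complete orthogonal set of primitive idempotents. (Consistently, the generating polynomials $\Delta_{k+1}(R)=0$, etc., have simple roots, the quantum integers $[n]_A$, $0\le n\le k$, being nonzero by Lemma~\ref{jwcases}.)

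Then I would promote ``minimal idempotents of the ${}_h\Lambda_h$'' to ``complete list of irreps of $\Lambda^{\Aa,k,A}$''. The Jones--Wenzl relation $p_{r-1}=p_{k+1}=0$ expresses $\mathrm{id}_{k+1}$ as a sum of morphisms factoring through objects of size $<k+1$, so Lemma~\ref{Resolution of Identity} (with $m=k+1$) shows every irreducible representation of $\TLJ^{\Aa,k,A}$ is carried by a minimal idempotent of some ${}_h\Lambda_h^{\Aa,k,A}$ with $h\le k$, hence, by the previous paragraph, is one of the $\rho_{L_{ij}}$. These $(k+1)^2$ irreps are pairwise inequivalent: $\rho_{L_{ij}}$ vanishes in all grades below $h=|i-j|$ (a turn-back among the $h$ through-strands meets the $p_h$ built into $L_{ij}$ and kills it, exactly as $\rho_{p_i}$ vanishes below grade $i$ in the rectangular case), while in grade $h$ it is the $1$-dimensional eigenspace of $R$ (for $h=0$) or $F$ (for $h\ge1$) for an eigenvalue that, together with $h$, recovers $(i,j)$. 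Since the number of irreps of $\Lambda^{\Aa,k,A}$ is at most $\sum_{h\le k}\dim{}_h\Lambda_h^{\Aa,k,A}=(k+1)^2$, the list is exhaustive, which is the assertion of the lemma.

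The main obstacle is the single input quoted at the start: that the chosen normalization really makes each $L_{ij}$ idempotent and that $\{\beta^*_{ii}\beta_{jj}\}$ really is a linearly independent (matrix-unit-like) family of operators. This is exactly the partition-function and gluing computation producing $(**)$, and it is the step where the hypothesis ``$A$ a primitive $4r$th root'' — equivalently, nonsingularity of the $S$-matrix (Lemma~\ref{smatrix}), which makes the projectors $\omega_a$ and the entries $S_{0a}$ available — is indispensable. A secondary technical point is the claim underlying $(***)$ that the one-variable algebras ${}_h\Lambda_h^{\Aa,k,A}$ have the stated dimensions, i.e.\ that the generator $R$, resp.\ $F$, has simple spectrum / the annular Markov pairing of Section~\ref{annulusmarkov} is nondegenerate on each grade; this was checked by hand at low levels and in general would follow from an annular analogue of the meander determinant or a direct recoupling computation.
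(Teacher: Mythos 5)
Your proposal is correct and takes essentially the same route as the paper: idempotency and minimality of the $L_{ij}$ are read off from their identification (via the normalization fixed by the spherical $\theta$-symbol computation) with the operators $\beta^*_{ii}\beta_{jj}$, and completeness follows because these $(k+1)^2$ linearly independent elements saturate the dimension bound on $\bigoplus_h {}_h\Lambda_h^{\Aa,k,A}$, forcing the inequalities to be equalities. Your extra bookkeeping (commutativity of each graded algebra, orthogonality, and the promotion to a classification of irreps by resolving the identity) only spells out what the paper states tersely here and defers to the theorem that follows.
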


Proof: From Fig.~\ref{4component}, each $\bar{L}$ is a minimal idempotent and
$L_{ij}$ represents the same operator.

Fixing $h>0$ now consider the action of $\lq\lq$fractional Dehn
twist", $F$ on $L_{ij}$.

%\vskip.2in \epsfxsize=3.5in \centerline{\epsfbox{fig518.eps}}{\centerline{Figure 5.18}} \vskip.2in

\begin{lemma}
\tn{For $i<j$, $F(L_{ij})=-A^{i+j+2}L_{ij},$ and for $i>j$,
$F(L_{ij})=-A^{i+j+2}L_{ij}$.}
\end{lemma}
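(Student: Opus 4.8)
The plan is to reduce everything to the evaluation of a single framing phase. Since $\{L_{ij}\}$ is a complete set of minimal idempotents for $\{{}_h\Lambda_h^{\Aa,k,A}\}$, and $L_{ij}$ represents, under the identification with $\T{End}\!\left(\bigoplus_a V_a\right)$, the rank-one operator $\beta^*_{ii}\beta_{jj}\in\Hom(V_i,V_j)$, the element $L_{ij}$ spans a one-dimensional subspace. The fractional Dehn twist $F$ lies in ${}_h\Lambda_h^{\Aa,k,A}$ and carries minimal idempotents to minimal idempotents, so it must preserve this line; hence $F(L_{ij})=\lambda_{ij}L_{ij}$ for a scalar $\lambda_{ij}\in\C$, automatically. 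The whole content of the lemma is then the computation of $\lambda_{ij}$, which is a Kauffman-bracket calculation.

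To compute $\lambda_{ij}$ I would work in the geometric model used to build $L_{ij}$: the annulus $\Aa$ with inner boundary colored $i$, outer boundary colored $j$, and the $p_h$-cable ($h=|i-j|$) joining the two regions through the one-dimensional channel $V_{i,j,h}\cong\C$ furnished by Lemma \ref{fusionrule}. The fractional Dehn twist acts by rotating the inner boundary relative to the outer in a fixed (say counter-clockwise) sense, which inserts a fractional twist into the $p_h$-cable of the formal $1$-submanifold representing $L_{ij}$. I would then resolve this twisted diagram back into the span of $L_{ij}$ using the ribbon structure of $\Delta_A$ (the twists $\theta_a=A^{-a(a+2)}$, the braiding, and the spherical theta-symbol normalization $(**)$ that fixes $\|x\|^2=1$). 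Tracking the framing contribution of the twisted cable against this normalization is what produces the explicit value $\lambda_{ij}=-A^{i+j+2}$; the exponent $i+j+2=(i+1)+(j+1)$ is exactly the symmetric combination that emerges.

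For the case $i>j$ the recipe producing $L_{ij}$ is word-for-word the same, with the inner boundary now carrying the larger color $i$ and the outer the smaller color $j$, and with $F$ taken in the same fixed sense; running the identical framing computation therefore returns the same phase, whose exponent $i+j+2$ is insensitive to the ordering of $i$ and $j$. The heart of the argument — and the step I expect to be the main obstacle — is precisely this framing and sign bookkeeping in the reduction of the twisted $p_h$-cable: one must verify that the phase is genuinely $-A^{i+j+2}$ with a positive exponent in both orderings, since a careless orientation convention would instead suggest the reciprocal $-A^{-(i+j+2)}$ for $i>j$. That the correct common value is $-A^{i+j+2}$ is confirmed by the low-level data: for $h=2$ the pairs $(0,2),(2,0)$ give $-A^{4}$ and the pairs $(1,3),(3,1)$ give $-A^{6}$, and these are exactly the roots of the relation $F^{4}-dF^{2}+1=0$ recorded earlier, with the two orderings landing on the same root rather than on reciprocal roots.
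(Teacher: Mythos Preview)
Your overall strategy---observe that $F$ must act by a scalar on each $L_{ij}$ and then compute that scalar---is sound, and is what the paper does too. Your justification for the scalar action is slightly off, though: multiplication by $F$ does not carry idempotents to idempotents. The clean reason is that the dimension count gives $\dim({}_h\Lambda_h^{\Aa,k,A})=2(k+1-h)$, equal to the number of $L_{ij}$ with $|i-j|=h$, so these minimal idempotents are a basis and ${}_h\Lambda_h^{\Aa,k,A}$ is a commutative algebra; every element of it acts diagonally on them.

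The substantive error is your treatment of the $i>j$ case. The lemma as printed contains a typo---both clauses display the same formula---but the paper's proof makes clear that for $i>j$ one passes to the mirror diagram, interchanging $A$ with $A^{-1}$, so that in fact $F(L_{ij})=-A^{-(i+j+2)}L_{ij}$ when $i>j$. Your claim that both orderings land on the \emph{same} eigenvalue is incompatible with the algebra structure: for $h\geq 1$ the fractional twist $F$ generates ${}_h\Lambda_h^{\Aa,k,A}$ (its powers are exactly the spanning diagrams), so its eigenvalues on the $2(k+1-h)$ minimal idempotents must all be distinct. In your level-$3$, $h=2$ test case the polynomial $F^4-dF^2+1$ has the four distinct roots $-A^{4},-A^{-4},-A^{6},-A^{-6}$ (use $A^{10}=-1$), and the four idempotents $L_{02},L_{20},L_{13},L_{31}$ must occupy all four of them; they cannot pair up on $-A^{4}$ and $-A^{6}$ as you assert. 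So the very check you invoke refutes your conclusion.

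Finally, the paper's computation of the scalar is more direct than your proposed route through $\theta$-twists: one simply draws $F$ stacked on $L_{ij}$, observes that a single left kink contributes a factor $-A^{3}$, and notes that of all Kauffman resolutions of the remaining crossings only one survives the projectors $p_i,p_j$, carrying the coefficient $A^{i+j-1}$; the product is $-A^{i+j+2}$. Mirroring the picture for $i>j$ is exactly what swaps $A\leftrightarrow A^{-1}$.
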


Proof:  Use the Kauffman relation to resolve the diagram below,
noting the left kink is equal to a factor of $-A^3$ and that only
the resolution indicated by arrows gives a term not killed by the
projectors; its coefficient is $A^{i+j-1}$.

\begin{figure}[tbh]\label{lemma55}
\centering
\includegraphics[width=2.25in]{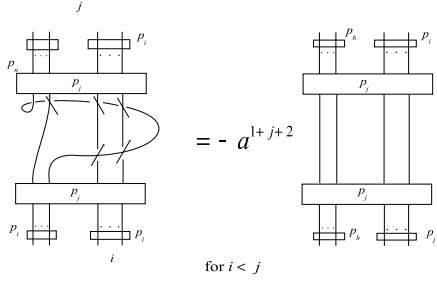}
\caption{Annular idempotent}\label{lemma55}
\end{figure}

For $i>j$ one considers the mirror image of the above, interchanging
$A$ and $A^{-1}$.

%\vskip.2in \epsfxsize=3.5in \centerline{\epsfbox{fig519.eps}}{\centerline{Figure 5.19}} \vskip.2in

If $h=0,i=j$, then $L_{ij}=\tilde{\omega_i}$ and we may consider
the action $R$ of ring addition to $\omega_i$.  Since $\omega_i$
is the projector to the i-th label, we have
$R(\tilde{\omega_i})=-(A^{2i+2}+A^{-2i-2})\tilde{\omega_i},$.

%\vskip.2in \epsfxsize=3in \centerline{\epsfbox{fig520.eps}}{\centerline{Figure 5.20}} \vskip.2in

  This establishes:

\begin{lemma}
$R(L_{ii})=-(A^{2i+2}+A^{-2i-2})L_{ii}.$
\end{lemma}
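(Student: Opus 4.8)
The plan is to reduce the lemma to the one‑line eigenvalue computation already indicated just above its statement, and to supply the two facts that make that line rigorous: that $L_{ii}$ is nothing but the label‑$i$ projector $\tilde{\omega}_i$, and that "ring addition'' $R$ acts on $\tilde{\omega}_i$ by the asserted scalar.

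First I would record that $L_{ii}=\tilde{\omega}_i$ (this is exactly the case $h=0$, $i=j$ of the identification made in the preceding paragraph), and that the annular algebra ${}_0\Lambda_0^{\Aa,k,A}$, spanned by $\emptyset,R,R^2,\dots,R^k$, is commutative with $\{\tilde{\omega}_i\}_{i\in I}$ as its complete set of primitive idempotents (the $h=0$ instance of the resolution‑of‑identity analysis of this section, or directly from the fact that $R^{k+1}$ decomposes into lower powers via $p_{r-1}$). Since $R$ lies in this commutative algebra and $\tilde{\omega}_i$ is idempotent, $R\cdot\tilde{\omega}_i=\lambda_i\tilde{\omega}_i$ for a scalar $\lambda_i$, namely the value on $R$ of the character cut out by $\tilde{\omega}_i$. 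One way to evaluate $\lambda_i$ is to invoke the premodular structure of $\Delta_A$ (Theorem \ref{invariantformula}): $R$ is isotopic in $\Aa$ to the core circle carrying the fundamental label $p_1$, and composing it with the idempotent that isolates the label $i$ is exactly "encircling a simple object coloured $i$ by one coloured $1$''; the standard encircling identity then gives $\lambda_i=\ts_{1i}/\ts_{0i}=\ts_{1i}/\Delta_i$. Plugging in the modular‑matrix formula \ref{Smatrix}, $\ts_{1i}=(-1)^{i+1}[2(i+1)]_A$ and $\Delta_i=(-1)^i[i+1]_A$, so $\lambda_i=-[2(i+1)]_A/[i+1]_A=-(A^{2i+2}+A^{-2i-2})$, using $[2n]_A/[n]_A=A^{2n}+A^{-2n}$.

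Alternatively — and this is the version I would actually write out, since it keeps everything internal to the Kauffman‑bracket computations of Section \ref{recouplingtheory} — one expands $\tilde{\omega}_i=\frac1D\sum_{j\in I}\ts_{ij}\hat{p}_j$, where $\hat{p}_j\in{}_0\Lambda_0^{\Aa,k,A}$ is the annular closure of the Jones–Wenzl projector, uses $R=\hat{p}_1$ together with the fusion rule $\hat{p}_1\hat{p}_j=\hat{p}_{j-1}+\hat{p}_{j+1}$ (with the convention $\hat{p}_{-1}=\hat{p}_{k+1}=0$, the latter because $p_{r-1}=p_{k+1}=0$ in $\TLJ$), and reindexes to get $R\cdot\tilde{\omega}_i=\frac1D\sum_{j}(\ts_{i,j-1}+\ts_{i,j+1})\hat{p}_j$. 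Since $\{\hat{p}_j\}_{j\in I}$ is a basis, it then suffices to verify the three‑term recursion $\ts_{i,j-1}+\ts_{i,j+1}=-(A^{2i+2}+A^{-2i-2})\ts_{ij}$; this follows from $\ts_{ij}=(-1)^{i+j}[(i+1)(j+1)]_A$ and the identity $[m+n]+[m-n]=[m]_A(A^{2n}+A^{-2n})$ with $m=(i+1)(j+1)$, $n=i+1$, the boundary terms being harmless since $\ts_{i,-1}=(-1)^{i-1}[0]_A=0$ and $\ts_{i,k+1}=0$ (because $[(i+1)r]_A=0$ for $A$ as in Lemma \ref{jwcases}, as $A^{2r}=\pm1$).

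The main obstacle is not any individual computation but making the encircling step — or, in the second approach, the fusion identity $\hat{p}_1\hat{p}_j=\hat{p}_{j-1}+\hat{p}_{j+1}$ together with the vanishing $\ts_{i,k+1}=0$ — legitimate in the non‑semisimple ambient category $\TL_d$ after imposing $p_{r-1}=0$, i.e. checking that the colour $i$ really behaves like a genuine simple object for the $S$‑matrix identity. This is precisely where we lean on the already‑established premodularity of $\Delta_A$ and on the explicit form of $\ts$ in Lemma \ref{smatrix}; granting those, the remainder is the routine quantum‑integer bookkeeping sketched above.
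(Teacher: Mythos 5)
Your proposal is correct, and it actually contains the paper's argument as its first variant: the paper's proof is exactly the one-line observation that $L_{ii}=\tilde{\omega}_i$ and that, $\omega_i$ being the projector onto the $i$-th label, ring addition acts on it by the encircling scalar, which is $\ts_{1i}/\ts_{0i}=-[2i+2]_A/[i+1]_A=-(A^{2i+2}+A^{-2i-2})$; the paper does not spell out even this much, it simply asserts the scalar. The version you say you would actually write out is a genuinely more self-contained route: expanding $\tilde{\omega}_i=\frac{1}{D}\sum_j \ts_{ij}\hat{p}_j$, using $R=\hat{p}_1$ and the annular fusion identity $\hat{p}_1\hat{p}_j=\hat{p}_{j-1}+\hat{p}_{j+1}$, and reducing the lemma to the three-term recursion $\ts_{i,j-1}+\ts_{i,j+1}=-(A^{2i+2}+A^{-2i-2})\ts_{ij}$, which indeed follows from $[m+n]+[m-n]=[m]_A(A^{2n}+A^{-2n})$, with the boundary terms disposed of either by $p_{r-1}=0$ in the quotient or by $\ts_{i,k+1}=\pm[(i+1)r]_A=0$ for the admissible $A$. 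What the paper's (and your first) route buys is brevity, at the price of leaning on the already-established premodular structure and the interpretation of $\omega_i$ as a label projector; what your expansion buys is independence from that machinery — everything reduces to the explicit formula for $\ts_{ij}$, the fusion rule for closed core circles in the annulus, and quantum-integer identities — and it makes transparent exactly where the hypothesis on $A$ (vanishing of $[(i+1)r]_A$, equivalently $\mathrm{Tr}(p_{r-1})=0$) enters. Both arguments are sound; your worry about legitimacy in the non-semisimple ambient category is correctly resolved by working in the quotient $\TLJ$ where $p_{r-1}=0$, which is precisely the setting of the section.
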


Let $V_{ij}$ be the vector space spanned by formal 1-submanifolds
in the annulus which near the inner boundary agree with $L_{ij}$.

Essentially the same argument employed for the rectangle
categories: resolution of the identities but now for $\tn{id}\in
{}_i\Lambda^{\Aa,k,A}_i, 0\leq i\leq k$ shows:

\begin{theorem} The spaces $\{V_{ij}\}$ form a complete set of irreps
for $\T{Rep}(\Lambda^{\Aa,k,A})$.  Direct sum decompositions into these
irreducibles are unique.
\end{theorem}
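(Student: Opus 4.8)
The plan is to rerun, at a primitive $4r$-th root of unity, the three‑step argument used for the rectangular categories and for $\Lambda^{\Aa}_{\pm1}$: a resolution of the identity reducing all representations to the objects $0,1,\dots,k$; identification of the irreducibles with the modules $V_{ij}$ built from the idempotents $L_{ij}$; and Schur's lemma (Corollary \ref{uniqueness}) for uniqueness of the decomposition. By this point almost all of the real work is in place: the $(k+1)^2$ formal submanifolds $L_{ij}$ ($0\le i,j\le k$) have been shown to be a complete set of pairwise orthogonal minimal idempotents for $\bigoplus_{h=0}^{k}{}_h\Lambda_h^{\Aa,k,A}$, with $L_{ij}$ living over the object $h=|i-j|$ and carrying a copy of the projector $p_h$ on its $h$ through‑strands; the dimension bounds for the ${}_h\Lambda_h^{\Aa,k,A}$ have been shown to be sharp; and the eigenvalues of $R$ and of the fractional Dehn twist $F$ on every $L_{ij}$ have been computed. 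Here $V_{ij}$ denotes the space of formal $1$-submanifolds of the annulus agreeing with $L_{ij}$ near the inner boundary (modulo $p_{r-1}$), a right $\Lambda^{\Aa,k,A}$-module.

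First I would record the resolution of the identity. For an object with $t\ge r-1=k+1$ strands, isolate $r-1$ of the parallel strands of $\mathrm{id}_t$ inside a disk region of the annulus and use $p_{r-1}=0$ to rewrite $\mathrm{id}_{r-1}$ as a sum of Temperley--Lieb diagrams each with a turn-back; this presents $\mathrm{id}_t$ as a sum of morphisms each factoring through an object of size $t-2$, and iterating reaches size $\le k$. Hence for any representation $\rho$ and any $t>k$, the operator $\rho(\mathrm{id}_t)=\mathrm{id}_{\rho(t)}$ factors through $\bigoplus_{s\le k}\rho(s)$, so $\rho$ is determined by its restriction to the objects $0,\dots,k$. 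On those objects the completeness and orthogonality of $\{L_{ij}\}$ give $\mathrm{id}_h=\sum_{|i-j|=h}L_{ij}$ for each $h\le k$, and since the $\{L_{ij}\}$ span $\bigoplus_{h\le k}{}_h\Lambda_h^{\Aa,k,A}$, each ${}_h\Lambda_h^{\Aa,k,A}$ is commutative semisimple with these $L_{ij}$ as the identities of its one-dimensional blocks. Feeding this into the picture action, exactly as in Figure \ref{pictureaction} for the rectangular case, gives $\rho\cong\bigoplus_{0\le i,j\le k}\Hom_{\Lambda}(V_{ij},\rho)\otimes V_{ij}$; in particular every representation is a finite direct sum of copies of the $V_{ij}$, so $\Lambda^{\Aa,k,A}$ is semisimple and every irreducible is isomorphic to some $V_{ij}$.

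It remains to see that the $V_{ij}$ are irreducible and pairwise non-isomorphic; the uniqueness of the decomposition is then Corollary \ref{uniqueness}. Since $L_{ij}$ is the identity of a one-dimensional block of ${}_h\Lambda_h^{\Aa,k,A}$ ($h=|i-j|$), $\End_{\Lambda}(V_{ij})=L_{ij}\,{}_h\Lambda_h^{\Aa,k,A}\,L_{ij}=\C L_{ij}$, so $V_{ij}$ is irreducible, and it is generated by its grade-$h$ summand $\C L_{ij}$. If $|i-j|=|i'-j'|=h$ and $(i,j)\ne(i',j')$, then $L_{ij}$ and $L_{i'j'}$ are distinct minimal idempotents of the commutative algebra ${}_h\Lambda_h^{\Aa,k,A}$, hence sit in different blocks, so $(V_{ij})_h$ and $(V_{i'j'})_h$ are inequivalent simple ${}_h\Lambda_h^{\Aa,k,A}$-modules and $V_{ij}\not\cong V_{i'j'}$ (this already separates $V_{ij}$ from $V_{ji}$, which $F$ alone does not). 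If $|i-j|\ne|i'-j'|$, use that the $p_h$ inside $L_{ij}$ annihilates every turn-back among its $h$ through-strands, so $(V_{ij})_t=0$ for $t<h$ while $(V_{ij})_h\ne0$; thus the least nonzero grade of $V_{ij}$ is $|i-j|$, an isomorphism invariant, reducing this case to the equal-grade one. Therefore $\{V_{ij}\}_{0\le i,j\le k}$ is a family of $(k+1)^2$ pairwise non-isomorphic irreducibles, and by the previous paragraph it exhausts $\T{Rep}(\Lambda^{\Aa,k,A})$.

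The real obstacle lies upstream rather than in this assembly: the construction of the $L_{ij}$, and especially the proof that they form a complete family of minimal idempotents with the stated dimension counts, is not purely diagrammatic --- it goes through the action of the undoubled $4r$-th-root TQFT on pictures in the solid cylinder, which is precisely where the hypothesis that $A$ is a primitive $4r$-th root of unity (rather than a $2r$-th or $r$-th root) is needed; for the other roots the irreducibles form a more intricate family, treated separately. Within the assembly, the point still demanding care is the vanishing $(V_{ij})_t=0$ for $t<|i-j|$, which I would justify by the explicit form of $L_{ij}$ and the relation $p_h U^{h}_m=0$.
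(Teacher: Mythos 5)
Your assembly follows the same skeleton the paper compresses into one sentence (``essentially the same argument employed for the rectangle categories''): resolution of the identity down to the objects $0,\dots,k$, the previously established fact that the $(k+1)^2$ elements $L_{ij}$ give a complete resolution into minimal idempotents with sharp dimension counts, and Corollary \ref{uniqueness} for uniqueness of decompositions. Most of that is fine, and your separation of $V_{ij}$ from $V_{i'j'}$ when $|i-j|=|i'-j'|$ via distinct one-dimensional blocks of the commutative algebra ${}_h\Lambda_h^{\Aa,k,A}$ is sound. The genuine gap is the step you yourself flag: the vanishing $(V_{ij})_t=0$ for $t<h=|i-j|$, which you use to separate irreps with different $h$ and propose to deduce from $p_hU^h_m=0$. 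In the annular category that diagrammatic principle fails: a morphism in ${}_h\Lambda_t$ with $t<h$ must contain an arc returning to the inner boundary, but that arc may wrap around the core of the annulus, and such wrap-around turn-backs are \emph{not} of the form $U^h_m$ and are not annihilated by the Jones--Wenzl projector. Concretely, for $h=2$ let $C\in{}_2\Lambda_0$ be the cap joining the two inner points the ``long way'' around the core and $C'$ the short cap; then $U_1\cdot C=C'\cup R$ with $R$ an essential ring, so $p_2\cdot C=C-\frac{1}{d}\,C'\cup R\neq 0$. Thus the projector on the through-strands alone does not kill ${}_h\Lambda_t$ for $t<h$, and the ``least nonzero grade'' of $L_{ij}\Lambda$ cannot be read off from $p_h$.

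The conclusion you want is still true, but it needs the rest of the structure of $L_{ij}$ --- the $\omega_i$ and $\omega_j$ rings --- i.e.\ precisely the operator interpretation through the undoubled theory that the paper leans on. One repair: if $V_{ij}\cong V_{i'j'}$ with $h'=|i'-j'|<h$, then by Theorem \ref{structuretheorem}(2) one gets $L_{ij}=f\cdot g$ with $f\in{}_h\Lambda_{h'}$, $g\in{}_{h'}\Lambda_h$, so the operator $\beta^{*}_{ii}\beta_{jj}$ factors through a circle crossed by only $h'$ strands; since admissibility (the theta-symbol lemma) forces $|i-j|\le l\le h'$ for any label $l$ carried by those strands, this contradicts $|i-j|=h>h'$. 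Equivalently, one can prove $L_{ij}\cdot{}_h\Lambda_t=0$ for $t<h$ directly by letting the $\omega_i,\omega_j$ rings project the charge crossing the $t$-strand circle and invoking the same triangle inequality, rather than $p_hU^h_m=0$. With that substitution your argument, which otherwise coincides with the paper's, goes through.
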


The irreps $V_{ij}$ have another \lq\lq diagonal" indexing by
$(h=|i-j|, \T{eigenvalue}(i,j))$.  We think of $h$ as the \lq\lq
crude label" it specifies the boundary condition (object); it is
refined into a true label by the additional information of
eigenvalue under fractional Dehn twist.

\iffalse Exercise: Draw all $L_{ij}, 0\leq i,j\leq k$, for $a=e^{
2\pi i/12}$ and $a=e^{2\pi i/16}.$\fi

Note that for ring addition: $\T{eigenvalue(i,j)}=-A^{i+j+2}$ for
$i\neq j$.

\subsection{Temperley-Lieb-Jones categories for primitive $2r$th root or $r$th root of unity, r
odd}\label{irrepsoftlj2r}

In Lemma \ref{smatrix} we compute the $S$ matrix associated to the
rectangle category $\Lambda^{\Rr,k,A}$, $A$ a primitive $2r^{\T{th}}$
root or $r$th root of unity, $r$ odd and find
 it is singular (Note that the last theorem of Ch XI \cite{turaev} holds only for even $r$ because the
 $S$ matrix is singular for odd $r$).  We find
 there is an involution on the label set $\bar{}: \{0,\cdots,
 k\}\rightarrow \{0, \cdots, k\}$ defined by $\bar{a}=k-a$ so that
 $S=S_{\T{even}}\bigotimes
 \big( \begin{smallmatrix}
 1&1 \\ 1&1
  \end{smallmatrix}\big)$.)
  We use the notation $i_{\T{even}}$ or
 just $i_e, 0\leq i\leq k$, to denote the even number $i$ or
 $\bar{i}$.  (Note that $\;\bar{} \;$ is not the usual duality $\; \hat{} \;$
 on labels which is trivial in the TLJ theory.  Also note
 that since $k$ is odd exactly one of $i$ and $\bar{i}$ is even,)
  The $\frac{k+1}{2}$ by $\frac{k+1}{2}$ matrix $S_{\T{even}}$ is
  nonsingular and defines an $SU(2)_k^{\textrm{even}}$-TQFT\footnote{
  These TQFTs are called $SO(3)$-TQFTs by many authors.  As noted in \cite{RSW},
  there is some mystery about those TQFTs as $SO(3)$-Witten-Chern-Simons TQFTs.
  Since they are the same TQFTs
  as $SU(2)$-Witten-Reshetikhin-Turaev TQFTs restricted to integral
  spins, therefore we adopt this notation.  Their corresponding MTCs are
  denoted by $(A_1,k)_{\frac{1}{2}}$ in \cite{RSW}.} on the even labels  at
  level $k$, explicitly:

\begin{equation*}
 S_{i_ej_e}=\sqrt{\frac{2}{r}}(-1)^{i+j}
 \big( [i+1][j+1] \big).\hspace{1in}{(5c.1)}
\end{equation*}

  The formal $1-$submanifolds $L_{i_ej_e}, 0\leq i,j\leq k$ can be
  defined just as in last section.  As operators on the
  $SU(2)_k^{\textrm{even}}$
  TQFT they are $\beta^{*}_{i_ei_e}\beta_{j_e j_e}$.  Also each
  $L_{i,j}$ has an interpretation as a formal 1-manifold in the
  category $\Lambda^{\Aa,k,e^{2\pi i/6}}$. (This is the \lq\lq $d=1$" category
  ($\lq\lq \Z_2"$-gauge theory) that we have been developing as a simple
  example.)

  Letting $i_0$ denote the odd index, $i$ or $\bar{i}$, the tensor
  decomposition of the $S$-matrix implies
  $\omega_{i_e}=\omega_{i_0}$.  It follows that
  $L_{i_e,j_e}=L_{i_0,j_0}$ and $L_{i_e,j_0}=L_{i_0,j_e}$ so we
  have found only half of the expected number of minimal
  idempotents.  Let $R_{i_e,j_e}$ be $\lq\lq$reverse"$(L_{i_e,j_e})$,
  $L_{i_e,j_e}$ with certain $(-1)$ phase factors.  That is, if
  $L_{i_e,j_e}=\sum_{n}a_n\alpha_n$ where $\alpha_i$ is a
  classical tangle then $R_{i_e,j_e}=\sum(-1)^{k_n}a_n\alpha_n$,
  where $k_n\stackrel{\T{def}}{=}k(\alpha_n))$ is the transverse
  intersection number with a radial segment, $s\times I\subset
  S^1\times I=\Aa$ in the annulus.  Similarly define $R_{i_e,j_0}$.

  Recall the four irreps of $\Z_2$-gauge theory, $0=\emptyset +R,
  e=\emptyset -R, m=I+T, $ and $em=I-T$, and consider the
  following bijection:
\begin{equation*}
\begin{split}
 & \{ L_{i_e,j_e}, R_{i_e,j_e}, L_{i_e,j_0}, R_{i_e,j_0}\}\stackrel{\beta}{\rightarrow} \\
 & \{ \beta^*_{i_e i_e}\bigotimes \beta_{j_e j_e}\bigotimes 0,
   \beta^*_{i_e i_e}\bigotimes \beta_{j_e j_e}\bigotimes e,
   \beta^*_{i_e i_e}\bigotimes \beta_{j_e j_e}\bigotimes m,
   \beta^*_{i_e i_e} \\
 & \bigotimes \beta_{j_e j_e}\bigotimes em \}
   \hspace{2in}
   (5c.2)
\end{split}
\end{equation*}

  \begin{theorem} $\beta$ is a bijection between the minimal
  idempotents of two graded algebras:
  \[
  {}_h\Lambda_h^{\Aa,k,A}\stackrel{\beta}{\rightarrow} {}_h\Lambda_h^{\Aa,k,e^{2\pi i/6}}
  \bigotimes \T{End}\left(\bigoplus_{j=\T{even}}\,\, {}_j\Lambda_j^{\Rr,k,A}\right),
  \]
  where $A$ is a primitive
  $2r^{\T{th}}$ root or $r$th root of unity, $r$ odd.  The bijection $\beta$ induces a
  bijection between the isomorphism classes of irreps of
  categories:

  \[
  \T{irreps.}
  \left(\Lambda^{\Aa,k,A}\right)\stackrel{\bar{\beta}}{\rightarrow}\T{irreps.}\left(\Lambda^{\Aa,k,e^{2\pi
  i/6}}\bigotimes \T{End}\left(\Lambda_{\T{even}}^{\Rr,k,A}\right)\right).
  \]

  \end{theorem}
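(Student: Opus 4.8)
The plan is to check that $\beta$, as prescribed by (5c.2), matches two complete families of minimal idempotents of the same total size $(k+1)^2$ in a way that respects the $h$-grading, and then to invoke Schur's lemma (Corollary~\ref{uniqueness}) to promote this to the claimed bijection $\bar\beta$ on isomorphism classes of irreducibles. First I would pin down the counts. On the right-hand side the annular $\Z_2$-gauge category $\Lambda^{\Aa,k,e^{2\pi i/6}}$ has exactly the four irreps $0,e,m,em$ of Section~\ref{moritacutpaste} --- two in grade $h\equiv 0$ and two in grade $h\equiv 1$ --- while the factor $\T{End}\big(\Lambda^{\Rr,k,A}_{\T{even}}\big)$, governed by the \emph{nonsingular} even block $S_{\T{even}}$ of the $SU(2)_k^{\T{even}}$-theory, contributes one irrep for each ordered pair $(i_e,j_e)$ of even labels, i.e. $((k+1)/2)^2$ of them; the tensor product therefore has $4\cdot((k+1)/2)^2=(k+1)^2$ irreps. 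On the left, the classical-picture count (***) already gives $\dim\big(\bigoplus_{h=0}^{k}{}_h\Lambda_h^{\Aa,k,A}\big)\le(k+1)^2$, so it suffices to exhibit $(k+1)^2$ mutually orthogonal minimal idempotents; equality in (***) then follows, exactly as in the $4r$-th-root case.

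Next I would produce those idempotents. The formal $1$-submanifolds $L_{i_e,j_e}$ and $L_{i_e,j_0}$ are built exactly as in Section~\ref{irrepsoftlj4r} and represent the operators $\beta^*_{i_e i_e}\beta_{j_e j_e}$ on the $SU(2)_k^{\T{even}}$-TQFT, but since $S=S_{\T{even}}\otimes\big( \begin{smallmatrix} 1&1 \\ 1&1 \end{smallmatrix}\big)$ forces $\omega_{i_e}=\omega_{i_0}$, the identifications $L_{i_e,j_e}=L_{i_0,j_0}$ and $L_{i_e,j_0}=L_{i_0,j_e}$ reduce their number to $(k+1)^2/2$ --- only half of what is needed. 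The other half come from the radial-sign involution $R$ of (5c.2): for a formal picture $\sum_n a_n\alpha_n$ in $\Aa=S^1\times I$, set $R(\sum_n a_n\alpha_n)=\sum_n(-1)^{k(\alpha_n)}a_n\alpha_n$ where $k(\alpha_n)$ is the transverse intersection number of $\alpha_n$ with a fixed radial segment $s\times I$. I claim $R$ is a grading-preserving algebra involution of $\Lambda^{\Aa,k,A}$: the parity of $k(\cdot)$ is isotopy-invariant, unchanged by deletion of contractible loops, and additive under annular stacking, and the ideal generated by $p_{r-1}$ is $R$-stable because a standard copy of $p_{r-1}$ can be drawn disjoint from $s\times I$. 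Hence $R_{i_e,j_e}:=R(L_{i_e,j_e})$ and $R_{i_e,j_0}:=R(L_{i_e,j_0})$ are again minimal idempotents.

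The main obstacle is showing these $R$'s are genuinely new, so that the $L$'s and $R$'s together account for all $(k+1)^2$ irreps. Here I would use that $R$ anticommutes with ring-addition and with the fractional Dehn twist $F$, each of which raises $k(\cdot)$ by $1$; so, by the eigenvalue computations around Figure~\ref{lemma55}, $R_{ij}=R(L_{ij})$ has $F$-eigenvalue $+A^{i+j+2}$ (and, in grade $h=0$, ring-eigenvalue $+(A^{2i+2}+A^{-2i-2})$) in place of the $-A^{i+j+2}$ (resp. $-(A^{2i+2}+A^{-2i-2})$) of the $L$'s. Since $R$ preserves the grade $h=|i-j|$, a coincidence $R_{ij}=L_{i'j'}$ would force $|i-j|=|i'-j'|$, hence $i+j\equiv i'+j'\pmod 2$, and then $+A^{i+j+2}=-A^{i'+j'+2}$ would require $A^{m}=-1$ for an even integer $m$; but this is impossible when $A$ has odd order $r$ (for an $r$-th root $-1\notin\langle A\rangle$ at all, and for a $2r$-th root $-1=A^{r}$ is an \emph{odd} power). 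Thus $\{L_{i_e,j_e},L_{i_e,j_0}\}\sqcup\{R_{i_e,j_e},R_{i_e,j_0}\}$ consists of $(k+1)^2$ distinct minimal idempotents, which forces equality in (***) and exhausts $\bigoplus_h{}_h\Lambda_h^{\Aa,k,A}$.

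Finally I would read $\beta$ off from (5c.2). The parity of $h$ selects the grade of the $\Z_2$-gauge factor; within that grade the dichotomy $L$ versus $R$ selects the $\pm1$-eigenvector (matching the symmetric/antisymmetric averages $\emptyset\pm R$ and $I\pm T$ of Section~\ref{moritacutpaste}); and the index pair $(i_e,j_e)$ selects $\beta^*_{i_e i_e}\beta_{j_e j_e}$ in the even-label endomorphism factor. By construction $\beta$ is then a grading-respecting bijection between the two complete families of minimal idempotents, and since the decomposition of a representation into irreducibles is unique (Corollary~\ref{uniqueness}) it induces the asserted bijection $\bar\beta$ on isomorphism classes of irreps. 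I expect the bookkeeping matching the $\Z_2$-gauge grading to the parity of $h$, together with the verification that $R$ descends to the $p_{r-1}$-quotient, to be the only genuinely delicate points.
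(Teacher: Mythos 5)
Your proposal follows essentially the same route as the paper: the classical-picture dimension count gives the upper bound $(k+1)^2$, the $L_{ij}$'s supply only half of the needed idempotents, and the key step is exactly the paper's computation that the reversed elements are again idempotents because the transverse intersection number with a radial segment is additive under stacking annuli (your claim that the reverse map is an algebra involution is the same identity $\mathrm{reverse}(L)^2=\mathrm{reverse}(L^2)=\mathrm{reverse}(L)$). The extra points you spell out --- that the reverse operation descends to the quotient by $p_{r-1}$ and that the $R$'s are genuinely new, via the sign change of the fractional Dehn twist and ring-addition eigenvalues --- are details the paper leaves implicit rather than a different argument, and they check out.
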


  Proof: The second statement is by now the familiar consequence
  of the first and a \lq\lq resolution of the identity."

  The dimension count (upper bound) of last section applies equally for
  primitive $2r^{\T{th}}$ roots or $r$th roots, so it suffices to check that $R_{i_e,j_e}$
  and $R_{i_e,j_o}$ are idempotents.  Writing either as
  $\T{reverse}(L)=\sum_{n}(-1)^{k_n}a_n\alpha_n$ we square:
 \[
\begin{split}
{\left(\T{reverse($L$)}\right)}^2&=(\sum_{n}(-1)^{k_n}a_n\alpha_n)^2=
 \sum_{n,m}(-1)^{k_n+k_m}a_na_m\alpha_n\alpha_m=\\
 & \tn{reverse}\left(\sum_{n,m}a_na_m\alpha_n\alpha_m\right)=\tn{reverse}(L^2)=\tn{reverse}(L).
  \hspace{.1in}
  (5c.3)
\end{split}
\]

  In the third equality holds since intersection number with a
  product ray in $\Aa$ is additive under stacking annuli:

  $$k_n+k_m=k(\alpha_n)+k(\alpha_m)=k(\alpha_n\alpha_m)=k_{n,m}.$$

\section{The definition of a TQFT}

There are two subtle ingredients in the definition of a TQFT: the
framing anomaly and the Frobenius-Schur (FS) indicator.  For the
TQFTs in this paper, the diagram and black-white TQFTs have
neither anomaly nor non-trivial FS indicators, therefore, they are
the easiest in this sense.  The Jones-Kauffman TQFTs have anomaly,
but no non-trivial FS indicators.  Our version of the Turaev-Viro
$SU(2)$-TQFTs have non-trivial FS indicators, but no anomaly;
while the WRT TQFTs have both anomaly, and non-trivial FS
indicators.

Our treatment essentially follows \cite{walker91} with two
variations: first the axioms in \cite{walker91} apply only to
TQFTs with trivial FS indicators, so we extend the label set to
cover the non-trivial FS indicators; secondly we choose to resolve
the anomaly for $3$-manifolds only half way in the sense that we
endow every $3$-manifold with its canonical extension, so the
modular functors lead to only projective representations of the
mapping class groups. One reason for our choices is to minimize the topological
prerequisite, and the other is that for application to quantum
physics projective representations are adequate.

\subsection{Refined labels for TQFTs}

A TQFT assigns a vector space $V(Y)$ to a surface $Y$.  If $Y$ has
boundaries, then certain conditions for $\p Y$ have to be
specified for the vector space $V(Y)$ to satisfy desired
properties for a TQFT. In Section \ref{moritacutpaste}, we see
that crude boundary conditions need to be refined to the irreps of
the picture categories, which are the labels.  But for more
complicated theories such as Witten-Reshetikhin-Turaev TQFTs,
labels are not sufficient to encode the FS indicators. Therefore,
we will introduce a boundary condition category to formalize
boundary conditions.  More precisely, boundary conditions are for
small annular neighborhoods of the boundary circles.  Our
boundary condition category will be a strict weak fusion category
$\Cc$, which enables us to encode the FS indicator for a label by
marking boundaries with $\pm U$, where $U\in \Cc^0$. In our
examples, the strict weak fusion categories are the representation
categories of the TLJ categories. Then the labels are
irreps of $\TLJ$ categories. In
anyonic theory, labels are called superselection sectors,
topological charges, or anyon types, etc. Boundary conditions which are
labels are preferred because anyonic systems with such boundary
conditions are more stable, while general boundary conditions such
as superpositions of labels are difficult to maintain.

A fusion category is a finitely dominated semi-simple rigid linear
monoidal category with finite dimensional morphism spaces and simple unit.
A weak fusion category is like a fusion
category except that rigidity is relaxed to weak rigidity as
follows.  A monoidal category $\Cc$ is weakly rigid if every
object $U$ has a weak dual: an object $U^*$ such that $Hom(1,
U\otimes W)\cong Hom(U^*,W)$ for any object $W$ of $\Cc$.

A refined label set for a TQFT with a boundary condition category
$\Cc$ is a finite set $L^e=\{\pm V_i\}_{i\in I}$, where the label
set $L=\{V_i\}_{i\in I}$ is a set of representatives of
isomorphism classes of simple objects of $\Cc$, and $I$ a finite
index set with a distinguished element $0$ and $V_0=1$.  An
involution $\hat{}: L^e \la L^e$ is defined on refined labels
$l=\pm V_i $ by $\hat{l}=-l$ formally.  There is also an involution on the
index set $I$ of the label set: $\hat{i}=j$ if $V_j\cong V_i^*$.
 A label $V_i\in L$ is self-dual if $\hat{i}=i$, and a refined label is self dual if
 the corresponding label is self dual.
 A (refined) label set is self-dual if every (refined) label is self-dual. Each label $V_i$
 has an FS indicator $\nu_l$: $0$ if not
self-dual, and $\pm 1$ if self-dual. A self-dual label $V_i$ is
symmetrically self-dual or real in conformal field theory language
if $V_i^*=V_i$ in $\Cc$, then we say $\nu_i=1$, and
anti-symmetrically self-dual or pseudo-real if otherwise, then we
say $\nu_i=-1$, i.e., $V_i^*$ is not the same object $V_i$ in
$\Cc$, though they are isomorphic.  Secretly $-V_i$ is $V_i^*$,
and we will identify the label $-V_i$ with  $V_i$ if the label is
symmetrically self-dual; but we cannot do so if the label is
anti-symmetrically self-dual, e.g., in the
Witten-Reshetikhin-Turaev $SU(2)$ TQFTs. Frobenius-Schur
indicators are determined by the modular $S$ and $T$ matrices \cite{RSW}. Note that the
trivial label $1$ is always symmetrically self-dual.

\subsection{Anomaly of TQFTs and extended manifolds}

In diagram TQFTs in Section \ref{joneswenzlclosed}, we see that
$Z(X_1\cup_{Y_2} X_2)=Z(X_1)\cdot Z(X_2)$ as composition of linear
maps. For general TQFTs, this identity only holds up to a phase
factor depending on $X_1,X_2$ and the gluing map. Moreover, for
general TQFTs, the vector spaces $V(Y)$ for oriented surface $Y$
are not defined canonically, but depend on extra structures under
the names of $2$-framing, Lagrange subspace, or $p_1$ structure, etc.
A Lagrangian subspace of a surface $Y$ is a maximal isotropic
subspace of $H_1(Y;\R)$ with respect to the intersection pairing of
$H_1(Y;\R)$.  We choose to work with Lagrange subspaces to resolve
the anomaly of a TQFT.

An extended surface $Y$ is a pair $(Y,\lambda)$, where $\lambda$
is a Lagrangian subspace of $H_1(Y;\R)$.  Note that if $\p X=Y$, then $Y$
has a canonical Lagrange subspace $\lambda_X=ker(H_1(Y;\R)\la
H_1(X;\R))$.  In the following, the boundary $Y$
of a $3$-manifold $X$ is always extended by the canonical
Lagrangian subspace $\lambda_X$ unless stated otherwise.  For any
planar surface $Y$, $H_1(Y;\R)=0$, so the extension is unique.
Therefore, extended planar surfaces are just regular surfaces.

To resolve the anomaly for surfaces, we define a category of
labeled extended surfaces.  Given a boundary condition category
$\Cc$, and a surface $Y$, a labeled extended surface is a triple
$(Y;\lambda,l)$, where $\lambda$ is a Lagrnagian subspace of
$H_1(Y;\R)$, and $l$ is an assignment of a signed object $\pm
U\in \Cc^0$ to each boundary circle.  Moreover each boundary
circle is oriented by the induced orientation from $Y$, and
parameterized by an orientation preserving map from the standard
circle $S^1$ in the plane.

Given two labeled extended surfaces $(Y_i;\lambda_i,l_i), i=1,2$,
their disjoint union is the labeled extended surface $(Y_1\coprod
Y_2; \lambda_1\oplus \lambda_2, l_1\cup l_2)$.  Gluing of surfaces
has to be carefully defined to be compatible with the boundary
structures and Lagrangian subspaces.  Given two components $\g_1$
and $\g_2$ of $\partial Y$ parameterized by $\phi_i$ and labeled by signed
objects $\pm U$, and let $\tn{gl}$ be a diffeomorphism $\phi_2 \cdot
r\cdot \phi_1^{-1}$, where $r$ is the standard involution of the
circle $S^1$.  Then the glued surface $Y_{gl}$ is the quotient
space of $q: Y\rightarrow Y_{gl}$ given by $x\sim x'$ if
$\tn{gl}(x)=\tn{gl}(x')$.  If $Y$ is extended by $\lambda$, then
$Y_{gl}$ is extended by $q_{*}(\lambda)$.  The boundary surface
$\p M_f$ of the mapping cylinder $M_f$ of a diffeomorphism $f:
Y\rightarrow Y$ of an extended surface $(Y;\lambda)$ has a
canonical extension by the inclusions of $\lambda$.

Labeled diffeomorphisms between two labeled extended surfaces are
orientation, boundary parameterization, and label preserving
diffeomorphisms between the underlying surfaces. Note that we do
not require the diffeomorphisms to preserve the Lagrangian
subspaces.

\subsection{Axioms for TQFTs}\label{axiomsoftqft}

The category $\X^{2,e, l}$ of labeled extended surfaces is the
category whose objects are labeled extended surfaces, and the
morphism set of two labeled extended surfaces
$(Y_1,\lambda_1,l_1)$ and $(Y_2,\lambda_2,l_2)$ are labeled
diffeomorphisms.

The anomaly of a TQFT is a root of unity $\kappa$, and to match
physical convention, we write $\kappa=e^{\pi i c/4}$, and $c\in
\Q$ is well-defined mod $8$, and called the central charge
of a TQFT. Therefore, a TQFT is anomaly free if and only if the
central charge $c$ is $0$ mod $8$.

%\newpage

\begin{definition}

A $(2+1)$-TQFT with a boundary condition category $\Cc$, a refined
label set $L^e$, and anomaly $\kappa$ consists of a pair $(V,Z)$,
where $V$ is a functor from the category $\X^{2,e,l}$ of oriented
labeled extended surfaces to the category $\mV$ of finitely
dimensional vector spaces and linear isomorphisms composed up to powers of $\kappa$,
and $Z$ is an assignment for each oriented $3$-manifold $X$ with extended boundary,
$Z(X, \lambda) \in V(\p X;\lambda)$, where $\p X$ is extended by a Lagrangian subspace
$\lambda$.  We will use the notation $Z(X), V(\p X)$ if  $\p X$ is extended
by the canonical Lagrangian subspace $\lambda_X$.  $V$ is called a modular functor.  $Z$ is the partition
function if $X$ is closed in physical language, and we will call
$Z$ the partition function even when $X$ is not closed.

\vspace{.1in}

Furthermore, $V$ and $Z$ satisfy the following axioms.

\vspace{.2in}

{\bf{Axioms for $V$:}}

\begin{enumerate}

\item \tn{Empty surface axiom:}

\vspace{.1in}

$V(\emptyset)=\C$

\vspace{.1in}

\item \tn{Disk axiom:}

\vspace{.1in}

$V(B^2;l)\cong \begin{cases}
\C & \textrm{if $l$
is the trivial label}\\
0 & \textrm{otherwise} \end{cases}$,
where $B^2$ is a $2$-disk.

\vspace{.1in}

\item \tn{Annular axiom:}

\vspace{.1in}

$V(\Aa;a,b)\cong \begin{cases} \C &
\textrm{if $a=\hat{b}$}\\ 0 & \textrm{otherwise} \end{cases}$,
where $\Aa$ is an annulus, and $a,b\in L^e$ are refined labels.

\vspace{.1in}

\item \tn{Disjoint union axiom:}

\vspace{.1in}

$V(Y_1 \amalg Y_2;\lambda_1\oplus
\lambda_2, l_1\coprod l_2)\cong V(Y_1;\lambda_1, l_1)\otimes
V(Y_2;\lambda_2, l_2)$. The isomorphisms are associative, and
compatible with the mapping class group actions.

\vspace{.1in}

\item \tn{Duality axiom:}

\vspace{.1in}

$V(-Y;l)\cong V(Y;\hat{l})^*$.

\vspace{.1in}

The isomorphisms are compatible with mapping class group actions, with
orientation reversal and disjoint union axiom as follows:

a)  The isomorphisms $V(Y) \rightarrow V(-Y)^*$ and
$V(-Y)\rightarrow V(Y)^*$ are mutually adjoint.

b) Given $f: (Y_1;l_1)\rightarrow (Y_2;l_2)$ and let $\bar{f}:
(-Y_1; \hat{l_1})\rightarrow (-Y_2;\hat{l_2})$, then
$<x,y>=<V(f)x,V(\bar{f})y>$, where $x\in V(Y_1;l_1), y\in
V(-Y_1;\hat{l_1})$.

c) Let $\alpha_1\otimes \alpha_2\in V(Y_1\coprod
Y_2)=V(Y_1)\otimes V(Y_2)$, and $\beta_1\otimes \beta_2\in
V(-Y_1\coprod -Y_2)=V(-Y_1)\otimes V(-Y_2)$, then
$$<\alpha_1\otimes \alpha_2, \beta_1\otimes \beta_2>=<\alpha_1, \beta_1><\alpha_2,
\beta_2>.$$

\vspace{.1in}

\item \tn{Gluing Axiom:}

\vspace{.1in}

Let $Y_{gl}$ be the surface obtained from
gluing two boundary components of $Y$, then $V(Y_{gl})\cong
\oplus_{l\in L} V(Y;(l,\hat{l}))$, where $l,\hat{l}$ label the two
glued boundary components.  The isomorphism is associative and
compatible with mapping class group actions.

\vspace{.1in}

Moreover, the isomorphism is compatible with duality as follows:
let $\oplus_{i\in L}\alpha_i\in V(Y_{gl};l)=\oplus_{i\in L}
V(Y;l,(i,\hat{i}))$ and $\oplus_{i}\beta_i\in
V(-Y_{gl};\hat{l})=\oplus_{i\in L} V(-Y;\hat{l},(i,\hat{i}))$, then there are
non-zero real numbers $s_{i}$ for each label $V_i$ such that $$
<\oplus_{i}\alpha_i, \oplus_{i}\beta_i>=\sum_i
s_{i}<\alpha_i,\beta_i>.$$

\end{enumerate}

\vspace{.15in}

{\bf{Axioms for $Z$:}}

\vspace{.1in}

\begin{enumerate}

\item \tn{Disjoint axiom:}

\vspace{.1in}

If $X=X_1\amalg X_2$, then
$Z(X)=Z(X_1)\otimes Z(X_2)$.

\vspace{.1in}

\item \tn{Naturality axiom:}

\vspace{.1in}

If $f: (X_1, (\p X_1, \lambda_1))
\la (X_2, (\p X_2, \lambda_2))$ is a diffeomorphism, then $V(f):
V(\p X_1)\la V(\p X_2)$ sends $Z(X_1,\lambda_1)$ to $Z(X_2,\lambda_2)$.

\vspace{.1in}

\item \tn{Gluing axiom:}

\vspace{.1in}

If  $\p X_1={-Y_1}\amalg Y_2, \p
X_2={-Y_2}\amalg Y_3$, then $Z(X_1\cup_{Y_2} X_2)=\kappa^n
Z(X_1)Z(X_2)$, where
$n=\mu((\lambda_{-}X_1),\lambda_2,(\lambda_{+}X_2))$ is the Maslov index (see Appendix \ref{topology}).

\vspace{.1in}

More generally, if $X$ is an oriented $3$-manifold and let
 $Y_i,i=1,2$ be disjoint surfaces in $\p X$, extended by
$\lambda_i\subset \lambda_X, i=1,2$, and $f: Y_1\rightarrow Y_2$
be an orientation reversing dffeomorphism sending $\lambda_1$ to
$\lambda_2$.

\vspace{.1in}

Then $V(\p X)$ is isomorphic to $\sum_{l_1,l_2}V(Y_1;l_1)\otimes
V(Y_2;l_2)\otimes V(\p X \backslash (Y_1\cup Y_2);
(\hat{l_1},\hat{l_2}))$ by multiplying $\kappa^m$, where $l_i$ runs through all
labelings of $Y_i$, and $m=\mu(K,\lambda_1\oplus \lambda_2,
\Delta)$ (see Appendix \ref{topology}). Hence $Z(X)=
\oplus_{l_1,l_2} \kappa^m \sum_j\alpha^j_{l_1}\otimes\beta^j_{l_2}\otimes
\g^j_{\hat{l_1},\hat{l_2}}$.

\vspace{.1in}

If gluing $Y_1$ to $Y_2$ by $f$
results in the manifold $X_f$, then
$$Z(X_f)=\kappa^m \sum_{j,l}<V(f)\alpha^j_{l}, \beta^j_l>\g^j_{\hat{l},l}.$$

\vspace{.1in}

\item \tn{Mapping cylinder axiom:}

\vspace{.1in}

If $Y$ is closed and extended by $\lambda$, and $Y\times I$ is
extended canonically by $\lambda \oplus (-\lambda)$.
Then $Z(Y \times I, \lambda \oplus (-\lambda))=\textrm{id}_{V(Y)}$.

\vspace{.1in}

More generally, let $\tn{I}_{\tn{id}}$ be the mapping cylinder of
$\tn{id}: Y\rightarrow Y$, and $\tn{id}_l$ be the identity in
$V(Y;l)\otimes V(Y;l)^*$, then
$$ Z(\tn{I}_{\tn{id}}, \lambda \oplus (-\lambda))=\oplus_{l\in L(Y)}\tn{id}_l.$$

\end{enumerate}

\end{definition}

\hfill $\square$
\vspace{.5in}

First we derive some easy consequences of the axioms:

\begin{prop}\label{tqftaxiomcorollary}

\begin{enumerate}

\item  $V(S^2)\cong \mathbb{C}$

\item $Z(X_1\sharp X_2)=\frac{Z(X_1)\otimes Z(X_2)}{Z(S^3)}.$

\item {\it{Trace formula:}}  Let $X$ be a bordism from closed surfaces
$Y$, extended by $\lambda$, to itself, and $X_f$ be the closed $3$-manifold obtained by gluing $Y$
to itself with a diffeomorphism $f$.

Then $Z(X_f)=\kappa^{m}Tr_{V(Y)}(V(f))$, where
$m=\mu(\lambda(f),\lambda_Y\oplus f_*(\lambda),\Delta_Y)$ and $\lambda(f)$ is the graph of $f_{*}$, $\Delta_Y$ is the
diagonal of $H_1(-Y;\R)\oplus H_1(Y;\R)$.  In
particular, $Z(Y\times S^1)=\dim(V(Y))$.

\item The dimension of $V(T^2)$ is the number of particle types.

\end{enumerate}

\end{prop}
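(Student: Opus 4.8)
The final assertion --- that $\dim V(T^2)$ equals the number of particle types, i.e.\ the cardinality $|L|$ of the label set --- will be proved by realizing $T^2$ as a self-gluing of an annulus and then invoking the Gluing Axiom and the Annular Axiom for $V$. First I would note that $T^2$ is the surface $\Aa_{gl}$ obtained from the annulus $\Aa=S^1\times I$ by identifying its two boundary circles $\gamma_1=S^1\times\{0\}$ and $\gamma_2=S^1\times\{1\}$ via an orientation-reversing, base-point-preserving diffeomorphism. Since $\Aa$ is a genus-zero surface its extension is unique, so a labeled extended annulus is nothing more than a choice of (refined) labels on $\gamma_1$ and on $\gamma_2$; moreover the anomaly factors $\kappa^{m}$ that occur in the gluing formula are scalars and hence invisible to a dimension count.

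Applying the Gluing Axiom for $V$ with $Y=\Aa$ then gives a canonical isomorphism
$$ V(T^2)\;\cong\;\bigoplus_{l\in L}V\bigl(\Aa;\,l,\hat l\bigr), $$
where $l$ and $\hat l$ are the labels placed on the two circles being glued (reading $l\in L$ as a refined label via its canonical sign). By the Annular Axiom, $V(\Aa;a,b)\cong\C$ when $a=\hat b$ and is $0$ otherwise; since $l\mapsto\hat l$ is an involution on labels --- duality of objects of $\Cc$ being involutive --- taking $a=l$ and $b=\hat l$ yields $a=l=\hat{\hat l}=\hat b$, so every summand above is one-dimensional. Hence
$$ \dim V(T^2)=\sum_{l\in L}\dim V\bigl(\Aa;\,l,\hat l\bigr)=|L|. $$
By the definitions of the preceding section the elements of $L$ are precisely the labels of the theory, which in the anyonic dictionary are the particle types (superselection sectors); this identifies $|L|$ with the number of particle types and finishes the proof. (One could instead start from part (3), $Z(Y\times S^1)=\dim V(Y)$ with $Y=T^2$, but that only reduces the claim to evaluating $Z(T^3)$, whereas the annulus-gluing argument is direct.)

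The step I expect to demand the most care is the first: checking that the self-gluing of the uniquely-extended annulus genuinely produces $T^2$ with an admissible Lagrangian subspace, and that the Gluing Axiom applies verbatim with the sum running over \emph{all} of $L$, no label being dropped. Once this bookkeeping is in place the computation is a one-line consequence of the Annular Axiom, entirely parallel to the way $V(S^2)\cong\C$ of part (1) follows from the Disk Axiom together with gluing two disks along a circle.
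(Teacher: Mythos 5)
Your argument for item (4) is correct, and it is the expected one: self-glue the annulus to get $T^2$, apply the Gluing Axiom for $V$, and use the Annular Axiom to see that each summand $V(\Aa;l,\hat l)$ is one-dimensional, so $\dim V(T^2)=|L|$. (The paper offers no written proof of this proposition, presenting it as an easy consequence of the axioms, so there is nothing to contrast with on this point; your handling of the extension/anomaly issue is also fine, since a dimension count is insensitive to the $\kappa$-ambiguity and to the choice of Lagrangian subspace.)

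The genuine gap is that the proposition has four parts and you have proved only one of them. Item (1) is dispatched in a single parenthetical sentence rather than argued: one should actually write $S^2=B^2\cup_{S^1}B^2$ and combine the Gluing Axiom with the Disk Axiom, $V(S^2)\cong\bigoplus_{l\in L}V(B^2;l)\otimes V(B^2;\hat l)\cong\C$, the sum collapsing to the trivial label. Items (2) and (3) are not addressed at all, and they are the parts that genuinely use the axioms for $Z$ rather than for $V$: (2) requires removing a ball from each $X_i$, using $Z(X\setminus B^3)=\frac{1}{D}Z(X)\otimes\beta_1\otimes\beta_1$-type bookkeeping or, more directly, the gluing axiom for $Z$ along $S^2$ together with $V(S^2)\cong\C$; and (3), the trace formula, is the only non-formal statement in the proposition, since it is exactly where the framing anomaly enters --- one must apply the general gluing axiom for $Z$ to the self-gluing of $\p X$ by $f$, invoke the Mapping Cylinder Axiom to identify the relevant tensor as $\bigoplus_l\mathrm{id}_l$, and keep track of the Maslov index $\mu(\lambda(f),\lambda_Y\oplus f_*(\lambda),\Delta_Y)$ that produces the factor $\kappa^m$; the special case $Z(Y\times S^1)=\dim V(Y)$ then follows by taking $f=\mathrm{id}$, where the Maslov correction vanishes. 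Note also that your closing remark inverts the logical order the paper intends: $Z(Y\times S^1)=\dim V(Y)$ is part of item (3), so it cannot be quietly presupposed or set aside when the full proposition is what is to be proved.
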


For a TQFT with anomaly, the representations of the mapping class
groups are projective in a very special way.  From the axioms, we
deduce:

\begin{prop}

The representations of the mapping class groups are given by the
mapping cylinder construction: given a diffeomorphsim $f: Y\la Y$ and $Y$ extended by $\lambda$,
the mapping cylinder $Y_f$ induces a map $V(f)=Z(Y_f):
V(Y)\la V(Y)$.  We have
$V(fg)=\kappa^{\mu(g_{*}(\lambda),\lambda,f^{-1}_{*}(\lambda))}V(f)V(g)$.

\end{prop}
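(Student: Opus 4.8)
The plan is to read off $V(f)$ from the mapping cylinder axiom and the gluing axiom for $Z$, and to extract the cocycle from the Maslov-index term in the latter.

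First I would fix notation. For a diffeomorphism $f\colon Y\la Y$ of a closed extended surface $(Y,\lambda)$, let $c_f$ be its mapping cylinder, viewed as a bordism from $Y$ to $Y$; its boundary $\p c_f=-Y\amalg Y$ carries the canonical Lagrangian. Since $f$ is a diffeomorphism, $c_f\cong Y\times I$, and a direct computation of $\lambda_{c_f}$ (the graph of $f_*$, as in the trace formula) shows that $c_f$ wants to extend its incoming face by $f_*^{-1}(\lambda)$ and its outgoing face by $\lambda$. By the disjoint union and duality axioms, $Z(c_f)$ is reinterpreted as a linear map, and after composing with the canonical identification $V(Y;f_*^{-1}\lambda)\cong V(Y;\lambda)$ furnished by the ($\kappa$-twisted) functoriality of $V$ we obtain $V(f)\colon V(Y;\lambda)\la V(Y;\lambda)$. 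The mapping cylinder axiom gives $V(\mathrm{id})=\mathrm{id}_{V(Y)}$, and the naturality axiom for $Z$ shows $V(f)$ depends only on the isotopy class of $f$, since isotopic diffeomorphisms have diffeomorphic mapping cylinders rel boundary; hence $V$ is a well-defined (projective) action of $\Mcg(Y)$.

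Next I would establish the composition law. The topological input is the standard fact that stacking mapping cylinders realizes composition: $c_f\cup_Y c_g\cong c_{fg}$ as bordisms from $Y$ to $Y$, verified by tracking the gluing maps that define $c_f$ and $c_g$ and re-parameterizing. Applying the gluing axiom for $Z$ gives
$$Z(c_f\cup_Y c_g)=\kappa^{\,n}\,Z(c_f)\,Z(c_g),$$
where $n$ is the Maslov index of the triple of Lagrangian subspaces of $H_1(Y;\R)$ consisting of the Lagrangian $c_f$ carries to its incoming face, the gluing Lagrangian $\lambda$ on the copy of $Y$ along which we glue, and the Lagrangian $c_g$ carries to its outgoing face. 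By the previous step these three are $f_*^{-1}(\lambda)$, $\lambda$, and $g_*(\lambda)$; the left side is $Z(c_{fg})=V(fg)$ and the right side is $\kappa^n V(f)V(g)$, so, arranging the triple according to the chosen sign convention for $\mu$, one gets $V(fg)=\kappa^{\mu(g_*(\lambda),\lambda,f_*^{-1}(\lambda))}V(f)V(g)$. One also has to check that the auxiliary identifications $V(Y;\lambda')\cong V(Y;\lambda)$ used to make each $V(\cdot)$ an honest endomorphism contribute no extra factor; this follows from the $\kappa$-twisted functoriality of $V$ together with the cocycle property of the Wall/Maslov triple index, which is precisely what is needed for the powers of $\kappa$ to assemble into a $2$-cocycle on $\Mcg(Y)$ compatible with associativity of composition.

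The main obstacle is entirely bookkeeping: pinning down orientation conventions, deciding whether $f_*$ or $f_*^{-1}$ appears on each face of a mapping cylinder, and fixing the sign/ordering convention for the Maslov index so that the cocycle emerges exactly as $\mu(g_*(\lambda),\lambda,f_*^{-1}(\lambda))$ rather than a sign- or order-variant; and, relatedly, confirming that the normalization fixed by the mapping cylinder axiom is the one that makes no stray power of $\kappa$ survive in the composition. Once these conventions are fixed, both the diffeomorphism $c_f\cup_Y c_g\cong c_{fg}$ and the algebraic deduction above are routine.
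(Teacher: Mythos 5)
Your proposal is correct and follows essentially the route the paper intends: the paper offers no written proof beyond ``from the axioms, we deduce,'' and the intended deduction is exactly yours --- define $V(f)=Z(Y_f)$ via the mapping cylinder and disjoint union/duality axioms, identify the stack of two mapping cylinders with the mapping cylinder of the composite, and read off the factor $\kappa^{\mu(g_{*}(\lambda),\lambda,f_{*}^{-1}(\lambda))}$ from the Maslov-index term $n=\mu(\lambda_{-}X_1,\lambda_2,\lambda_{+}X_2)$ in the gluing axiom for $Z$, with the Lagrangians $\lambda_{\mp}$ of the two cylinders computed as $g_{*}(\lambda)$ and $f_{*}^{-1}(\lambda)$. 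Your flagged bookkeeping points (ordering/sign conventions for $\mu$ and the normalization from the mapping cylinder axiom) are precisely the conventions the paper leaves implicit, so no substantive gap remains.
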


It follows from this proposition that the anomaly can be
incorporated by an extension of the bordisms $X$, in particular,
modular functors yield linear representations of certain central
extensions of the mapping class groups.

\subsection{More consequences of the axioms}

For refined labels $a,b,c$, we have vector spaces $V_a=V(B^2;a),
V_{a,b}=V(\Aa_{ab}), V_{a,b,c}=V(P_{abc})$, where $P$ is a pair of
pants or three-punctured sphere.  Denote the standard orientation
reversing maps on $B^2,\Aa_{ab},P_{abc}$ by $\psi$.  Then
$\psi^2=id$, therefore $\psi$ induces identifications
$V_{abc}=V^*_{\hat{a}\hat{b}\hat{c}}$,
$V_{a\hat{a}}=V^*_{a\hat{a}}$, and $V_1=V_1^*$.  Choose basis
$\beta_1\in V_1, \beta_{a\hat{a}}\in V_{a\hat{a}}$ such that
$<\beta_a,\beta_a>=\frac{1}{d_a}$.

\begin{prop}

\begin{enumerate}

\item $Z(B^2\times I)=\beta_1\otimes \beta_1$

\item $Z(S^1\times B^2)=\beta_{11}$

\item $Z(X\backslash B^3)=\frac{1}{D} Z(X)\otimes \beta_1 \otimes
\beta_1$.

\end{enumerate}

\end{prop}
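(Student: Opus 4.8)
The plan is to derive all three identities directly from the axioms of Section~\ref{axiomsoftqft}, using only the disk axiom, the annular axiom, the (generalized) mapping cylinder axiom, the gluing axiom for $Z$, and the duality axiom, together with the chosen normalization $Z(S^3)=\frac1D$. First I would record the canonical data that make the statements meaningful: by the disk axiom $V(B^2;1)=\C\beta_1$ with $\langle\beta_1,\beta_1\rangle=\frac1{d_1}=1$, since the trivial label has $d_1=\Delta_0=1$; by the annular axiom $V(\Aa;1,1)=\C\beta_{11}$; and by the gluing axiom applied to $S^2=B^2\cup_{S^1}B^2$ together with the disk axiom, $V(S^2)\cong V(B^2;1)\otimes V(B^2;1)=\C(\beta_1\otimes\beta_1)$, which is Prop.~\ref{tqftaxiomcorollary}(1). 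A second observation, used throughout, is that $B^2$, $\Aa$ and $S^2$ are planar, so $H_1=0$, their extended structures are unique, and every Maslov index, hence every anomaly factor $\kappa^m$ appearing in the gluings below, is trivial.

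For (1): $B^2\times I$ is the mapping cylinder of $\textrm{id}\colon B^2\to B^2$, so the generalized mapping cylinder axiom gives $Z(B^2\times I)=\bigoplus_{l\in L(B^2)}\textrm{id}_l$, and by the disk axiom the only admissible boundary label for $B^2$ is the trivial one, whence $Z(B^2\times I)=\textrm{id}_1\in V(B^2;1)\otimes V(B^2;1)^{\ast}$. Writing $\textrm{id}_1=\beta_1\otimes\beta_1^{\vee}$ with $\beta_1^{\vee}$ the dual basis vector, the duality axiom identifies $V(B^2;1)^{\ast}\cong V(-B^2;\hat1)=V(-B^2;1)$; because $\langle\beta_1,\beta_1\rangle=1$ the vector $\beta_1^{\vee}$ corresponds under this identification to $\beta_1$ itself, and therefore $Z(B^2\times I)=\beta_1\otimes\beta_1$.

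For (2): realize the solid torus $S^1\times B^2$ by gluing the two disk faces $B^2\times\{0\}$ and $B^2\times\{1\}$ of $B^3=B^2\times I$ to one another by the identity, so that the resulting closed boundary is $T^2$. I would apply the gluing axiom for $Z$, feeding in $Z(B^2\times I)=\beta_1\otimes\beta_1$ from part (1); since the glued faces carry only the trivial label and are planar there is no anomaly, and the axiom pairs the two $\beta_1$ factors through $\langle\beta_1,\beta_1\rangle=1$, depositing the answer in the remaining annular summand $V(\Aa;1,1)=\C\beta_{11}$. This shows $Z(S^1\times B^2)=c\,\beta_{11}$ for a universal constant $c$, and to get $c=1$ I would either track the normalization through the gluing or pin it down by the trace formula $Z(S^1\times S^2)=\dim V(S^2)=1$ applied to the double $S^1\times S^2=(S^1\times B^2)\cup_{T^2}(S^1\times B^2)$, which forces $c^2=1$.

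For (3): write $X=(X\setminus B^3)\cup_{S^2}B^3$ and apply the gluing axiom for $Z$ along the planar, anomaly-free sphere $S^2$, using $V(S^2)=\C(\beta_1\otimes\beta_1)$; writing $Z(X\setminus B^3)=c_X\,(\beta_1\otimes\beta_1)\in V(\p X)\otimes V(S^2)$, the axiom expresses $Z(X)$ as a fixed scalar (built from $Z(B^3)$ and the gluing weight attached to the trivial label) times $c_X$. Specializing to $X=S^3$, where $S^3\setminus B^3=B^3$, and imposing $Z(S^3)=\frac1D$ determines that scalar and hence $Z(B^3)$, and substituting back gives $c_X=\frac1D Z(X)$, i.e.\ $Z(X\setminus B^3)=\frac1D Z(X)\otimes\beta_1\otimes\beta_1$. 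The main obstacle here is not conceptual but bookkeeping: keeping the corner/boundary conventions consistent (how $B^2\times I$ and $B^3$ sit as manifolds with corners, which faces get labeled versus glued), correctly tracking the normalization constants — the norms of $\beta_1$ and $\beta_{11}$, the value of $Z(B^3)$, and the gluing weights $s_i$ — and verifying at each step that $\kappa$ genuinely drops out, which it does precisely because every surface cut or glued here is planar and so carries a unique extension with vanishing Maslov index.
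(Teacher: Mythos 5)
Your overall route is the same as the paper's: for (1) the paper argues exactly as you do, viewing $B^2\times I=B^3$ as the mapping cylinder of $\mathrm{id}\colon B^2\to B^2$ and invoking the mapping cylinder axiom together with the disk axiom, and for (3) it likewise glues two balls to obtain $Z(S^3)=s_{00}=\frac1D$ and then says the formula "follows"; the paper's proof is silent on (2), so your solid-torus argument is supplementary rather than a divergence. Your observation that all cut surfaces here are planar, so every Maslov index and hence every $\kappa$-factor is trivial, is also the right justification for ignoring the anomaly.

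There are, however, two places where your write-up does not actually close. First, in (2) the fallback normalization via $Z(S^1\times S^2)=\dim V(S^2)=1$ gives at best $c^2=1$ (as you concede), which does not exclude $c=-1$; you must rely on the direct route, pairing the two $\beta_1$'s through $\langle\beta_1,\beta_1\rangle=1$ after rewriting $Z(B^3)$ in the disk--annulus--disk decomposition of $V(S^2)$, so the trace formula should not be offered as an alternative. Second, and more seriously, in (3) the deferred "substituting back" is exactly where the content of the statement lies, and executed with the normalizations you yourself fixed it does not produce the claimed factor: from (1), $Z(S^3\setminus B^3)=Z(B^3)=\beta_1\otimes\beta_1$, so $c_{S^3}=1$, and $Z(S^3)=\lambda\,c_{S^3}$ forces the universal capping scalar $\lambda=\frac1D$ (equivalently $\langle\beta_1\otimes\beta_1,\beta_1\otimes\beta_1\rangle_{V(S^2)}=s_{00}=\frac1D$); but then for a general $X$ the same gluing gives $Z(X)=\frac1D\,c_X$, i.e.\ $c_X=D\,Z(X)$ rather than $\frac1D Z(X)$. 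Note that $c_X=D\,Z(X)$ is the value consistent with Prop.~\ref{tqftaxiomcorollary}(2), since it yields $Z(X_1\# X_2)=Z(X_1)Z(X_2)/Z(S^3)$, whereas $c_X=\frac1D Z(X)$ would give $\frac{1}{D^3}Z(X_1)Z(X_2)$. So to land on the printed statement you must exhibit the specific convention for the weighted pairing on $V(S^2)$ (the $s_i$ factors in the gluing/duality compatibility, which also affect whether $\beta_1^{\vee}$ is identified with $\beta_1$ or with $D\beta_1$ in step (1)) under which your chain of equalities actually produces $\frac1D$; as written, the key step is asserted rather than derived, and the natural execution of your own setup yields the reciprocal factor.
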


\begin{proof}

Let $B^3$ be a $3$-ball regarded as the mapping cylinder as the
identity map $id: B^2\la B^2$.  By the mapping cylinder axiom,
$Z(B^3)=\beta_1\otimes \beta_1$.  Gluing two copies of $B^3$
together yields $S^3$.  By the gluing axiom
$Z(S^3)=s_{00}=\frac{1}{D}$.  It follows that $Z(X\backslash
B^3)=\frac{1}{D} Z(X)\otimes \beta_1 \otimes \beta_1$.

\end{proof}

\begin{prop}\label{twist}

The action of the left-handed Dehn twist along a boundary
component labeled by $a$ of $B^2,\Aa_{ab},P_{abc}$ on $V_1,
V_{a,\hat{a}}$ or $V_{abc}$ is a multiplication by a scalar
$\theta_a$. Furthermore, $\theta_1=1,\theta_a=\theta_{\hat{a}}$,
and $\theta_a$ is a root of unity for each refined label $a$.

\end{prop}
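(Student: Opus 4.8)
The plan is to reduce the three cases to collar annuli of the relevant boundary components and to use the locality built into the gluing axiom. First, by the disk axiom and the annular axiom the spaces $V_1=V(B^2)$ and $V_{a,\hat a}=V(\Aa;a,\hat a)$ are one-dimensional, so any self-diffeomorphism, in particular a left-handed boundary Dehn twist, acts on them by a scalar; define $\theta_a$ to be this scalar on $V_{a,\hat a}$. The space $V_{abc}=V(P_{abc})$ need not be one-dimensional, but a left-handed Dehn twist along, say, the $a$-labelled boundary of $P_{abc}$ is supported in a collar annulus $N\cong\Aa$ of that circle. Writing $P_{abc}$ as $(P_{abc}\setminus N)\cup_\gamma N$ along the inner curve $\gamma$ of $N$ and applying the gluing axiom, only the summand in which $\gamma$ carries label $a$ on the $N$-side survives (the annular axiom kills the rest), so $V_{abc}\cong V(\Aa;a,\hat a)\otimes V(P_{abc}\setminus N;a,b,c)$, and the twist acts as the core twist of $N$ on the first tensor factor only, hence as $\theta_a\cdot\mathrm{id}$ on all of $V_{abc}$. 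The identical argument applied to the collar of $\partial B^2$, and to each boundary of $P_{abc}$ in turn, shows that in all three models the left-handed twist along an $a$-labelled boundary is multiplication by the single scalar $\theta_a$; since the Lagrangian extension of a planar surface is unique, no anomaly ambiguity enters these scalars.

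Next, $\theta_1=1$. A curve parallel to $\partial B^2$ bounds a disk in $B^2$, so the left-handed Dehn twist along it is isotopic rel $\partial B^2$ to the identity; hence $V$ of this twist is the identity on $V_1$ and $\theta_1=1$. Equivalently, the mapping cylinder of this twist is diffeomorphic to $B^3$, and comparing with $Z(B^2\times I)=\beta_1\otimes\beta_1$ from the preceding proposition forces $\theta_1=1$.

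For $\theta_a=\theta_{\hat a}$, work inside $\Aa_{a,\hat a}$. Its two boundary circles carry the labels $a$ and $\hat a$, but both boundary-parallel curves are isotopic rel $\partial\Aa$ to the core circle; therefore the left-handed twist along the $a$-labelled boundary and the left-handed twist along the $\hat a$-labelled boundary are the same element of $\Mcg(\Aa,\partial\Aa)$, namely the core twist. They must then act on $V_{a,\hat a}$ by the same scalar, which by the first paragraph is $\theta_a$ in the first description and $\theta_{\hat a}$ in the second; hence $\theta_a=\theta_{\hat a}$.

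Finally, and this is the step I expect to be the real obstacle, each $\theta_a$ is a root of unity. This cannot come from surface topology alone, since boundary Dehn twists have infinite order in the mapping class group; it must use the finiteness of the theory. The natural route is the torus: gluing the two ends of $\Aa$ produces $T^2$, the gluing axiom gives $V(T^2)\cong\bigoplus_a V_{a,\hat a}$ with basis indexed by the finite label set, and the meridional Dehn twist acts on this space as $\mathrm{diag}(\theta_a)$ up to an overall scalar which is a power of the root of unity $\kappa$. One then invokes Vafa's theorem, that the twist (ribbon) eigenvalues of a rational ribbon fusion category are roots of unity; in the present setting this follows from the projective $\SL(2,\Z)$-relations on $V(T^2)$ (with cocycle valued in $\langle\kappa\rangle$), the Verlinde formula, and finiteness of the label set. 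I would either cite this or sketch Vafa's argument in the modular-functor language. It is worth noting that for every theory constructed in this paper one has $\theta_i=A^{-i(i+2)}$ with $A$ a root of unity, so there the claim is immediate.
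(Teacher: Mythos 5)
The paper states this proposition bare, as an unproved ``consequence of the axioms,'' so there is no proof of record to compare against; your write-up is effectively supplying the missing argument, and the bulk of it is correct and is surely what the authors intended. The collar-annulus reduction (cut off a neighborhood $N\cong\Aa$ of the twisting boundary, use the gluing axiom plus the annular axiom to see only the label $a$ survives, and use the stated compatibility of the gluing isomorphism with mapping class group actions to localize the twist in the $V(\Aa;a,\hat a)$ factor) correctly shows the action is the scalar $\theta_a$ depending only on the label, and your observation that planar surfaces carry no anomaly ambiguity matches the paper's own remark. The triviality of $\Mcg(B^2\ \mathrm{rel}\ \partial)$ gives $\theta_1=1$, and the fact that the two boundary-parallel curves of $\Aa_{a\hat a}$ are isotopic, so the two boundary twists are one and the same mapping class acting on the one-dimensional $V_{a\hat a}$, gives $\theta_a=\theta_{\hat a}$; both arguments are sound, and the second genuinely uses the locality you established in the first paragraph, so there is no circularity.

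The one incomplete step is the one you flagged yourself: that each $\theta_a$ is a root of unity. Your route is the right one --- pass to $V(T^2)\cong\bigoplus_a V_{a,\hat a}$, where the meridional twist acts as $\Diag(\theta_a)$ up to a power of the (root of unity) anomaly $\kappa$, and invoke Vafa's theorem via the projective $\SL(2,\Z)$ relations and finiteness of the label set --- but as written you cite rather than prove it, so as a self-contained proof of the proposition this clause remains open. Note also that Vafa's argument needs more than the bare $\SL(2,\Z)$ relations on $V(T^2)$ (those alone constrain only certain products of twists); the standard proof uses finiteness together with relations coming from twists on multiply-punctured spheres, i.e.\ the fusion/Verlinde data, so if you sketch it you should do so at that level. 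For the concrete theories of this paper the point is moot, since $\theta_i=A^{-i(i+2)}$ with $A$ a root of unity, as you observe.
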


\subsection{Framed link invariants and modular representation}

Let $K$ be a framed link in a $3$-manifold $X$.  The framing of
$K$ determines a decomposition of the boundary tori of the link
compliment $X\backslash \tn{nbd}(K)$ into annuli.  With respect to
this decomposition,
$$Z(X\backslash \tn{nbd}(K))=\oplus_l J(K;l)\beta_{a_1\hat{a_1}}\otimes \cdots \otimes \beta_{a_n\hat{a_n}},$$
where $J(k;l)\in \C$ and $l=(a_1,\cdots, a_n)$ ranges over all
labelings of the components of $K$.  $J(K;l)$ is an invariant of
the framed, labeled link $(K;l)$.  When $(V,Z)$ is a
Jones-Kauffman or WRT TQFT, and $X=S^3$, the resulting link
invariant is a version of the celebrated colored Jones polynomial
evaluated at a root of unity.  This invariant can be extended to
an invariant of labeled, framed graphs.

A framed link $K$ represents a $3$-manifold $\chi(K)$ via surgery.
Using the gluing formula for $Z$, we can express $Z(\chi(K))$ as a
linear combination of $J(K;l)$:
$$Z(\chi(K))=\sum_l c_lJ(K;l).$$

Consider the Hopf link $H_{ij}$ labeled by $i,j\in L$.  Let
$\ts_{ij}$ be the link invariant of $H_{ij}$.  Note that when a
component is labeled by the trivial label, then we may drop the
component from the link when we compute link invariant.
Therefore, the first row of $\ts$ consists of
invariants of the unknot labeled by $i\in L$.  Denote ${\tilde{s}}_{i0}$ as
$d_i$, and $d_i$ is called the quantum dimension of label $i$.  In
Prop. \ref{twist}, each label is associated with a root of unity
$\theta_i$, which will be called the twist of label $i$.  Define
$D=\sqrt{\sum_{i\in L} d_i^2}$, and $S=\frac{1}{D}\ts,
T=(\delta_{ij}\theta_i)$, then $S,T$ give rise to a
representation of $SL(2,\Z)$, the mapping class group of $T^2$.

\subsection{Verlinde algebras and Verlinde formulas}

Let $T^2=S^1\times S^1=\p D^2 \times S^1$ be the standard torus.
Define the meridian to be the curve $\mu=S^1\times {1}$ and the
longitude to be the curve $\lambda={1}\times S^1$.

Let $(V,Z)$ be a TQFT, then the Verlinde algebra of $(V,Z)$ is the
vector space $V(T^2)$ with a multiplication defined as follows:  consider the two
decompositions of $T^2$ into annuli by splitting along $\mu$ and
$\lambda$, respectively.  These two decompositions determine two
bases of $V(T^2)$ denoted as $m_a=\beta_{a\hat{a}}$, and
$l_a=\beta_{\hat{a}a}$.  These two bases are related by the
modular $S$-matrix as follows:

\begin{equation}
l_a=\sum_{b}s_{ab}m_b, m_a=\sum_b s_{\hat{a}b}l_b.
\end{equation}

  Define $N_{abc}=\dim V(P_{abc})$, then we
have

\begin{equation}
m_bm_c=\sum_{a}N_{a\hat{b}\hat{c}}m_a.
\end{equation}

The multiplication makes $V(T^2)$ into an algebra, which is called the Verlinde
algebra of $(V,Z)$.

In the longitude bases $l_a$, the multiplication becomes

\begin{equation}
l_al_b=\delta_{ab}s^{-1}_{0a}l_a.
\end{equation}

This multiplication also has an intrinsic topological definition:
$Z(P\times S^1)$ gives rise to a linear map from $V(T^2)\times
V(T^2)\rightarrow V(T^2)$ by regarding $P\times S^1$ as a bordism
from $T^2 \coprod T^2$ to $T^2$.

The fusion coefficient $N_{abc}$ can be expressed in terms of $s_{ab}$, we have
\begin{equation}
N_{abc}=\sum_{x\in L} \frac{s_{ax}s_{bx}s_{cx}}{s_{0x}}.
\end{equation}

More generally, for a genus=$g$ surface $Y$ with $m$ boundaries
labeled by $l=(a_1\cdots a_m)$,

\begin{equation}\label{verlindeformula}
 \dim V(Y)=\sum_{x\in L}
s_{0x}^{2-2g-n}(\prod s_{a_ix}).
\end{equation}

\section{Diagram and Jones-Kauffman TQFTs}\label{joneskauffman}

For the remaining part of the paper, we will construct picture
TQFTs and verify the axioms for those TQFTs.  Our approach is as
follows:  start with a local relation and a skein relation, we
first define a picture category $\Lambda$ whose objects are points with
decorations in a $1$-manifold $X$ which is either an interval $I$
or a circle $S^1$, and morphisms are unoriented sub-$1$-manifolds
in $X\times I$ with certain structures connecting objects (=points
in $X\times \{0\}$ or $X\times \{1\}$). More generally, the
morphisms can be labeled trivalent graphs with coupons. Those picture
categories serve as crude boundary conditions for defining picture
spaces for surfaces with boundaries. Secondly, we find the
representation category $\Cc$ of $\Lambda$, which is a spherical tensor
category.  The irreps will be the labels. In the cases that we are
interested, the resulting spherical categories are all ribbon
tensor categories. Thirdly, we define colored framed link
invariants with the resulting ribbon tensor category in the second
step. Invariants of the colored Hopf links with labels form the
so-called modular $S$-matrix.  Each row of the $S$-matrix can be
used to define a projector $\omega_i$ which projects out the
$i$-th label if a labeled strand goes through a trivial
circle labeled by $\omega_i$.

\begin{figure}[tbh]\label{projectionid}
\centering
\includegraphics[width=2.45in]{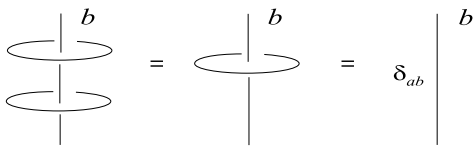}
\caption{Projectors}\label{projectionid}
\end{figure}

The projector $\omega_0$ is used to
construct the resulting $3$-manifold invariant. Finally, we define
the partition function $Z$ for a bordism $X$ using a handle
decomposition.  This construction will yield a TQFT if the
$S$-matrix is nonsingular, which is always true for the annular
TLJ cases.  If the $S$-matrix is singular, we still have a
$3$-manifold invariant, but we cannot define the representations
of the mapping class groups for high genus surfaces, though
representations of the braid groups are still well defined.

\subsection{Diagram TQFTs}\label{joneswenzlfull}

In this section, we
outline the proof that for some $r\geq 3$, $A$ a primitive $4r$th root of unity, or a primitive
$2r$th root of unity and $r$ odd, or a primitive
$r$th root of unity and $r$ odd, the diagram theories $\Pic^A(Y), Z_D$ defined in
Section \ref{joneswenzlclosed} indeed satisfy the axioms of TQFTs.

The diagram TQFTs are constructed based on the TLJ annular
categories.  The boundary condition categories $\Cc$ are the
representation categories of the TLJ annular categories $\Lambda$. A
nice feature of those TQFTs is that we can identify the objects of
the TLJ annular categories $\Lambda$ as boundary conditions using
Theorem \ref{structuretheorem}: each object in $\TLJ$ gives rise
to a representation of $\Lambda$ and therefore becomes an object of
$\Cc$, which is in general not simple, i.e., not a label. Hence
picture vector spaces are naturally vector spaces for the diagram
TQFTs.

For the diagram TQFTs, all labels are self-dual with trivial FS indicators.  Therefore,
it suffices to use only the label set.
The label sets of the diagram TQFTs are given by the idempotents
$L=\{\omega_{i,j,h}\}$ in Fig.~\ref{annularprojector}.  Given a surface $Y$ with
$\p Y=\g_1,\cdots,\g_m$, and each boundary circle $\g_i$ labeled by an
idempotent $e_i \in L$.  Then the picture space
$\Pic_D^A(Y;e_1,\cdots,e_m)$ consists of all formal pictures that
agree with $e_i$ inside a small annular neighborhood $\Aa_i$ of
the boundary $\g_i$ modulo the Jones-Wenzl projector $p_{r-1}$
outside all $\Aa_i$'s in $Y$. Given a bordism $X$ from $Y_1$ to
$Y_2$, the partition function $Z_D(X)$ is defined in Section
\ref{closeddiagram}.  Now we verify that $(\Pic^A,Z_D)$ is indeed
a TQFT.

For the axioms for modular functor $V$:

(1) is obvious.

(2) Since Jones-Wenzl projectors kill any turn-backs, then
$\Pic^A(B^2; \omega_{i,j,h})=0$ unless $h=0$.  For $h=0$, all
pictures are multiples of the empty diagram.

(3) Since $\Hom(p_i,p_j)=0$ unless $i=j$, so
$\Pic^A(\Aa;\omega_{i',j',h'},\omega_{i,j,h})=0$ unless $h=h'$. If
$h=h'$, then we have $\omega_i \cdot \omega_{i'}$ and $\omega_j
\cdot \omega_{j'}$, respectively in the annulus.  Recall that
$\omega_a\omega_b=\delta_{ab}\omega_a$, it follows that unless
$i=i',j=j'$, $\Pic^A(\Aa;\omega_{i',j',h'},\omega_{i,j,h})=0$.

(4) Obvious

(5) $\Pic^A(-Y)=\Pic^A(Y)$, hence duality is obvious.

(6) Gluing follows from Morita equivalence.

The axioms of partition function $Z$ follow from handle-body theory and properties of the
$S$ matrix.

The action of the mapping class groups is easy to see: a diffeomorphism maps
one multicurve to another.  Since a diffeomorphism preserves the local relation and skein
relation, this action sends skein classes to skein classes.  The compatibility of the action
with the axioms for vectors spaces is easy to check.

\subsection{Jones-Kauffman TQFTs}

In this section, we
outline the proof that for $r\geq 3$, $A$ a primitive $4r$th root of unity, the Jones-Kauffman
skein theories $V_{JK}^A(Y), Z_{JK}$ defined in
Section \ref{joneswenzlclosed} indeed satisfy the axioms of TQFTs.

The boundary condition category for a Jones-Kauffman TQFT is the
representation category of a TLJ rectangular category.  The label
set is $L=\{p_i\}_{i\in I}$, and $I=\{0,1,\cdots, r-2\}$.  Same reason as for the
diagram TQFTs, we need only the label set.

The new feature of the Jones-Kauffman TQFTs is the framing anomaly.  If $A$ and $r$ as in
Lemma \ref{jwcases}, then the central charge is $\frac{3(r-2)}{r}$.

Given an extended surface $(Y;\lambda)$, the modular functor $V(Y;\lambda)$ is defined in
Section \ref{jkskeinspace}.  If $\partial X=Y$, then we define $Z(X)$ as the skein class in $K_A(\partial X)$
represented by the empty skein.  TQFT axioms for $V$ and $Z$ follow from theorems in Section \ref{jkskeinspace}.
The non-trivial part is the mapping class group action.  This is explained at the end of Section \ref{jkskeinspace}.

\iffalse
\subsection{Walker-Turaev theorem}

 As an application, we prove the Walker-Turaev theorem
that for any oriented closed $3$-manifold $X$,
$Z_{\tn{D}}(X)=|Z_{\tn{JK}}(X)|^2$.
\fi

\section{WRT and Turaev-Viro $SU(2)$-TQFTs}\label{wrttqfts}

The pictorial approach to the Witten-Reshetikhin-Turaev $SU(2)$
TQFTs was based on \cite{kirbymelvin}.  The paper \cite{kirbymelvin}
finished with $3$-manifold invariants, just as \cite{kauffmanlins}
for the Jones-Kauffman theories.  The paper \cite{BHMV} took the picture
approach in \cite{kauffmanlins} one step further to TQFTs, but the
same for WRT TQFTs has not been done using a pictorial approach.
The reasons might be either people believe that this has been done
by \cite{BHMV} or realize that the Frobennius-Scur indicators make
a picture approach more involved. It is also widely believed that
the two approaches resulted in the same theories. But they are
 different. The spin $1/2$ representation of quantum group
$SU(2)_q$ for $q=e^{\pm 2\pi i/r}$ has a Frobenius-Schur
indicator=$-1$, whereas the corresponding label $1$ in
Temperley-Lieb-Jones theories has Frobenius-Schur indicator $=1$.
The Frobenus-Schur indicators $-1$ in the Witten-Reshetikhin-Turaev
theories introduce some $-1$'s into the $S$-matrix, hence for the
odd levels $k$, these $-1$'s change the $S$-matrix from singular in the Jones-Kauffman theories
when $A=\pm ie^{\pm \frac{2\pi i}{4r}}$ to non-singular. For even
levels $k$, the $S$-matrices are the same as those of the Jones-Kauffman
TQFTs, even though the TQFTs are different theories (see \cite{RSW} for the level=$2$ case).

In the pictorial TLJ approach to TQFTs, there is no room to encode
the Frobenius-Schur indicators $-1$.  In this section, we
introduce $\lq\lq$flag" decorations on each component of a framed multicurve
which can point to either side of
 the component. These flags allow us to
encode the FS indicator $-1$, hence reproduce the
Witten-Reshetikhin-Turaev $SU(2)$ TQFTs exactly. The doubled
theories of WRT TQFTs are not the diagram TQFTs, and will be
called the Turaev-Viro $SU(2)$-TQFTs.  They are direct products of
WRT theories with their mirror theories.

\subsection{Flagged TLJ categories}

In flagged TLJ categories, the local relation is still the
Jones-Wenzl projectors, but the skein relation is not the Kauffman
bracket exactly, but a slight variation discovered by R.~Kirby and
P.~ Melvin in \cite{kirbymelvin}.

The skein relation for resolving a crossing $p$ is given in
\cite{kirbymelvin} is as follows: if the two strands of the
crossing belongs to two different components of the link, then the
resolution is the Kauffman bracket in Figure
\ref{kauffmanbracket}; but if the two strands of the crossing $p$
are from the same component, then a sign $\e(p)=\pm 1$ is
well-defined, and the skein relation is:

\begin{figure}[tbh]\label{kirbymelvins}
\centering
\includegraphics[width=3.45in]{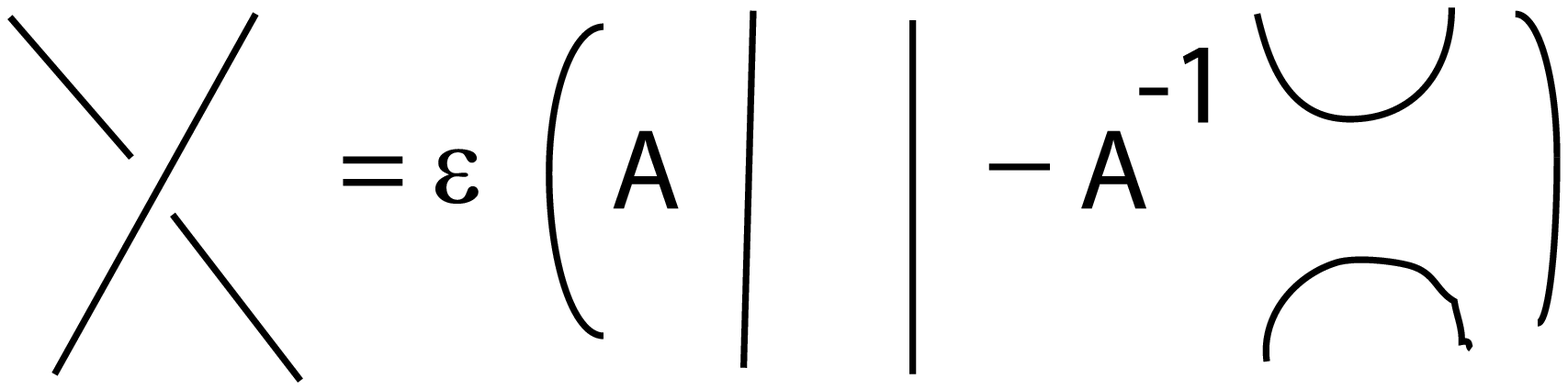}
\caption{Kirby-Melvin skein relation}\label{kirbymelvin}
\end{figure}

The flagged TLJ categories have objects signed points in the
interval and morphism flagged multicurves as follows: given an
oriented surface $Y$, and a multicurve $\g$ in the interior of
$Y$, and no critical points of $\g$ are within small neighborhoods
of $\p Y$. Let $\g \times [-\e,\e]$ be a small annulus
neighborhood of $\g$. A flag of $\g$ at $p\in \g$ is an arc
$p\times [0,\e]$ or $p\times [-\e, 0]$.  A flag is admissible if
$p$ is not a critical point of $\g$. A multicurve $\g$ is flagged
if all flags on $\g$ are admissible and the number of flags has
the same parity as the number of critical points of $\g$.  An
admissible flag on $\g$ can be parallel transported on $\g$ so
that when the flag passes through a critical point, it flips to
the other side.  In the plane, this is the same as parallel
transport by keeping the flag parallel at all times in the plane.
A multicurve is flagged if all its components are flagged.

Given a surface $Y$ with signed points on the boundary.  Each
signed point is flagged so that if the sign is $+$, the flag
agrees with the induced orientation of the boundary; if the sign
is $-$, the flag is opposite to the induced orientation.   Let
$\C[\scc]$ be the space of all formal flagged multicurves in $\Rr$
with signed points at the bottom and top, then the morphism set
between the bottom signed point and the top signed point of
$\TL^{\tn{flag}}$ is the quotient space of $\C[\scc]$ such that

\begin{enumerate}

\item Flags can be parallel transported

\item Flipping a flag to the other side results in a minus sign

\item Two neighboring flags can be cancelled if there are no
critical points between them and they are on the opposite sides.

\item Apply Jones-Wenzl projector to any part of an multicurve
with no flags.

\end{enumerate}

Then all discussions for TL apply to $\TL^{\tn{flag}}$.  The
representation category is similarly given by the same Jones-Wenzl
projectors.  The biggest difference from TL is the resulting
framed link invariant.

\begin{lemma}

Given a framed link diagram $D$, then the WRT invariant $<D>_{KM}$
of $D$ using Kirby-Melvin skein relation and the Jones-Kauffman
invariant $<D>_K$ using Kauffman bracket is related by:
\begin{equation}
<D>_{KM}(A)=(-i)^{D\cdot D} <D>_K (iA).
\end{equation}

\end{lemma}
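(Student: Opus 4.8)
\emph{Proof proposal.}
The plan is to show that the diagram-level function
$f(D):=(-i)^{D\cdot D}\,\langle D\rangle_K(iA)$ obeys exactly the recursive description that \emph{determines} $\langle D\rangle_{KM}$: its value on a $0$-framed trivial circle, multiplicativity under adjoining a disjoint trivial circle, invariance under Reidemeister~II and III, the prescribed kink (Reidemeister~I) factor, and the Kirby--Melvin crossing resolutions of Figure~\ref{kirbymelvin} (the Kauffman resolution at a crossing between distinct components, and the $\epsilon(p)$-dependent resolution at a self-crossing). Since $\langle\cdot\rangle_{KM}$ is pinned down by these data, equality then follows by checking each item for $f$.

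First I would clear the routine items. On one $0$-framed circle, $D\cdot D=0$ and $\langle O\rangle_K(iA)=d(iA)=-(iA)^2-(iA)^{-2}=A^2+A^{-2}$, which is precisely the Kirby--Melvin loop value; adjoining a disjoint trivial circle multiplies $\langle D\rangle_K(iA)$ by $d(iA)$ and leaves $D\cdot D$ unchanged, hence multiplies $f(D)$ by the right factor. For Reidemeister~II and III: $\langle\cdot\rangle_K(iA)$ is invariant by Kauffman's computation, and $D\cdot D$ is unchanged (an R-II move inside one component adds two self-crossings of opposite sign; an R-II move across two components adds only mixed crossings; R-III preserves all crossing signs). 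For the kink: a positive self-kink changes $D\cdot D$ by $+1$ and multiplies $\langle D\rangle_K(iA)$ by $-(iA)^3=iA^3$, hence multiplies $f(D)$ by $(-i)(iA^3)=A^3$; one checks this agrees with the kink factor the Kirby--Melvin rule attaches, and likewise for the negative kink (this is where the Frobenius--Schur sign surfaces).

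The substantive step is the crossing recursion. At a self-crossing $p$ one expands Figure~\ref{kirbymelvin} and verifies the local identity that the Kirby--Melvin resolution equals a phase depending only on $\epsilon(p)$ times the Kauffman resolution read at $iA$ with loops normalised by $d(iA)$ --- a finite check over the two smoothings and the loop factor. At a crossing between distinct components one must reconcile the Kauffman resolution used by $\langle\cdot\rangle_{KM}$ with the $f$-side, where the resolution is read at $iA$ \emph{and} where smoothing $p$ fuses the two components, so that every formerly mixed crossing between them becomes a self-crossing feeding into $D\cdot D$; the sum of the signs of those crossings is $2\,\textrm{lk}$ of the two components and $(-i)^{2\,\textrm{lk}}=(-1)^{\textrm{lk}}$, which together with the weight change $A^{\pm1}\mapsto (iA)^{\pm1}$ at $p$ is exactly the discrepancy between the two recursions. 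Feeding these into an induction on the number of crossings between distinct components reduces the statement to the case of a knot diagram (or a split union of such), where every crossing is a self-crossing and the claim becomes the state-sum identity $\langle K\rangle_{KM}(A)=(-i)^{w(D)}\langle K\rangle_K(iA)$ with $w$ the writhe, which follows from the self-crossing local identity above.

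I expect the main obstacle to be the global phase/sign bookkeeping. Powers of $i$ enter a resolved state from the substitution $A\mapsto iA$ in the monomial attached to each crossing and from the prefactor $(-i)^{D\cdot D}$ (with the loop values matching once the right normalisation is used), and one must show that for \emph{every} state of the resolution these combine to exactly the sign produced by the corresponding Kirby--Melvin state, while simultaneously tracking that $D\cdot D$ --- which is \emph{not} additive over crossings, since smoothing a mixed crossing turns a whole family of mixed crossings into self-crossings --- evolves correctly along the induction. The identities $\textrm{sl}(K_1\#K_2)=\textrm{sl}(K_1)+\textrm{sl}(K_2)+2\,\textrm{lk}(K_1,K_2)$ and $(-i)^{2n}=(-1)^n$ are the elementary inputs that make the cancellation go through; organising the induction so that nothing beyond these is needed is where the real care lies.
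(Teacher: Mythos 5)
You should know at the outset that the paper states this lemma without proof (it is quoted from Kirby--Melvin), so there is no in-paper argument to compare against; your proposal must stand on its own, and as written it has a genuine gap. The overall skeleton --- show that $f(D)=(-i)^{D\cdot D}\langle D\rangle_K(iA)$ satisfies the normalization and recursion that determine $\langle D\rangle_{KM}$ --- is the right one, but your execution hinges on reading $D\cdot D$ as the sum of the self-crossing signs only (the total framing $\sum_i l_{ii}$): this is explicit when you say mixed crossings only ``feed into $D\cdot D$'' after a smoothing turns them into self-crossings, and when you invoke $\mathrm{sl}(K_1\# K_2)=\mathrm{sl}(K_1)+\mathrm{sl}(K_2)+2\,\mathrm{lk}$. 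Under that reading the identity you are trying to prove is already false for the standard $0$-framed Hopf link diagram $H$: one computes $\langle H\rangle_K(A)=d\,(-A^4-A^{-4})=[4]_A$, hence $\langle H\rangle_K(iA)=-[4]_A$, while the Kirby--Melvin value is $+[4]_A$ (the $(1,1)$ Hopf-link entry, where the Kauffman and WRT $S$-matrices agree because $(-1)^{1+1}=1$); with your $D\cdot D=0$ the right-hand side gives $-[4]_A$. So the $(-1)^{\mathrm{lk}}$ ``discrepancy'' you detect at mixed crossings is not bookkeeping that an induction can absorb --- it is a genuine obstruction to the statement under your interpretation. The intended meaning is $D\cdot D=\sum_{i,j} l_{ij}$, the sum of all entries of the linking matrix, i.e.\ the total writhe of the blackboard-framed diagram; this is well defined modulo $4$ for unoriented links (reversing one component changes the writhe by a multiple of $4$), and for the Hopf link it equals $2$, giving $(-i)^2(-[4]_A)=[4]_A$ as required.

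With the correct reading your hard case evaporates and your easy case becomes the hard one. At a crossing between distinct components one checks $w(D_{A})\equiv w(D)-1$ and $w(D_{A^{-1}})\equiv w(D)+1 \pmod 4$, so the factors $i^{\pm1}$ from $A^{\pm1}\mapsto(iA)^{\pm1}$ cancel exactly against $(-i)^{w(D)-w(D_{\pm})}$: $f$ satisfies the plain Kauffman recursion there with no leftover sign, no $(-1)^{\mathrm{lk}}$, and no need for your induction on the number of mixed crossings. By contrast, the self-crossing case, which you describe as ``a finite check over the two smoothings and the loop factor,'' is not local: one smoothing of a self-crossing splits the component, and orienting the other reverses an arc, so the exponent $D\cdot D$ of the smoothed diagrams jumps by amounts involving the linking of the resulting pieces, respectively the signs of all crossings met by the reversed arc; moreover the sign $\epsilon(p)$ of the remaining crossings must be re-read in the partially resolved diagram as components merge and split. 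That is where the substance of the Kirby--Melvin comparison (and the Frobenius--Schur sign) actually lives, and your proposal does not engage with it. The plan is salvageable, but only after fixing the meaning of $D\cdot D$ and relocating the real work from the mixed-crossing step to the self-crossing step.
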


\subsection{Turaev-Viro Unitary TQFTs}

Fix $A=\pm e^{\pm \frac{2\pi i}{4r}}$ for some $r\geq 3$.

The label set is the same as that of the corresponding diagram TQFT,
but for the first time we need to
work with the refined label set.

Given a surface $Y$ with boundaries labeled by
refined labels $\e_iV_i$.  If $\e_i=1$, we flag the point to the
orientation of $\p Y$; if $\e_i=-1$, we flag the points opposite
to the orientation of $\p Y$.  Then define the modular functor space
analogous to the skein space replacing multicurves with flagged multicurves.
The theories are similar enough so we will leave the details to interested readers.
The difference is that when the level=$k$ is odd, our version of the Turaev-Viro theory
is a direct product-a trivial quantum double,
while the corresponding diagram TQFT is a non-trivial
quantum double.

\subsection{WRT Unitary TQFTs}

Fix $A=\pm e^{\pm \frac{2\pi i}{4r}}$ for some $r\geq 3$.

The label set of a WRT TQFT is the same as that of the corresponding Jones-Kauffman TQFT, but it needs to
be extended to the refined label set.  The central charge of a level=$k$ theory is $\frac{3k}{k+2}$.
The discussion together with the Turaev-Viro theories is completely parallel to the Jones-Kauffman TQFTs
with diagram TQFTs.

\section{Black-White TQFTs}\label{blackwhite}

Interesting variations of the TLJ categories can also be obtained
by $2$-colorings: the black-white annular categories
$\TLJ^{\tn{BW}}_d$. The objects of the category are the objects of
the corresponding annular TLJ category enhanced by two colorings
of the complements of the points. In particular there are two circles:
black and white.
Morphisms between two objects are enhanced by colorings of the
regions.  A priori there are two enhancements of each Jones-Wenzl
idempotent, but it has been proved in \cite{freedmanmag} that the
two versions are equivalent.

\subsection{Black-white TLJ categories}

Fix some $r\geq 3$ and $A$, where $A$ is a primitve $4r$th root of unity, or a primitive
$2r$th root of unity and $r$ odd, or a primitive
$r$th root of unity and $r$ odd

The objects of black-white TLJ categories are points in the
interval or $S^1$ with a particular $2$-coloring of the complementary
intervals so that adjacent intervals having different colors. Given
two objects, morphisms are multicurves from the bottom to top
whose complement regions have black-white colors that are
compatible with the objects, and any two neighboring regions
receive different colors.  The local relation is the $2$-color
enhanced Jones-Wenzl projector and the skein relation is the
$2$-color enhancement of the Kauffman bracket.  The
representation theories of the black-white categories are
considerably harder to analyze.

The object with no points in the circle has two versions
$0_B,0_W$, which might be isomorphic.  Indeed sometimes they are
isomorphic and sometimes not. Therefore a skeleton of a
black-white TLJ category can be identified with
$\{0_B,0_W,2,4,\cdots \}$ with the possibility that $0_B=0_W$.  We
will draw the black object $0_b$ as a bold solid circle, and the
white object as a dotted circle. Interface circles between black
and white regions will be drawn as regular solid circles.
Morphisms will be drawn inside annuli, directed and composed from
inside-out.  There are two color changing morphisms $r_{bw} \in
\Hom(0_b,0_w), r_{wb} \in \Hom(0_w,0_b)$.

Let us denote the two compositions $r_{bw}\cdot r_{wb}=x_b\in
\Hom(0_b,0_b), r_{wb}\cdot r_{wb} =x_w\in \Hom(0_w,0_w)$, which
are just rings in the annulus.

Given an oriented closed surface $Y$, a $2$-colored multicurve in
$Y$ is a pair $(\g, c)$, where $\g$ is a multicurve, and $c$ is an
assignment of black or white to all regions of $Y\backslash \g$ so
that any two neighboring regions have opposite colors.  Let
$\C[\scc]$ be the vector space of formal $2$-colored multicurves,
and $\Pic^{\tn{BW}}(Y)$ be the quotient space of $\C[\scc]$ modulo
the BW-enhancement of JW projectors.

\subsection{Labels for black-white theories}

Recall in Section \ref{annulusmarkov}, we define the element
$q_{2m}\in C_{2m}(x)$.

\begin{lemma}

The element $q_{2m}$ is a minimal idempotent of $C_{2m}(x)$.

\end{lemma}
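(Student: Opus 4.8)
The plan is to exploit that $C_{2m}(x) = \mathbb{C}[x]/(\Delta_{2m}(x))$ is a semisimple commutative algebra — a product of copies of $\mathbb{C}$, one for each root of $\Delta_{2m}$ — and to identify $q_{2m}$ with the primitive idempotent sitting over the root $x=0$.

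First I would record the two facts about $\Delta_{2m}$ that drive everything. (i) $x \mid \Delta_{2m}(x)$: this is the stated vanishing of the constant term of $\Delta_n$ for $n$ even, so $\Delta_{2m}(x)/x$ really is a polynomial and $q_{2m}$ is its class in $C_{2m}(x)$. (ii) $\Delta_{2m}$ has simple roots only: the roots of $\Delta_{2m}$ are the numbers $2\cos\frac{j\pi}{2m}$ with $1\le j\le 2m-1$, which are pairwise distinct and include $0$ (namely $j=m$); alternatively $\gcd(\Delta_n,\Delta_n')=1$ follows quickly from the Chebyshev recursion. Hence, writing $\Delta_{2m}(x)=x\,h(x)$, the polynomial $h(x)=\Delta_{2m}(x)/x$ is squarefree, $h(0)\ne 0$, and $\gcd(x,h(x))=1$.

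Next, by the Chinese Remainder Theorem (equivalently Lemma \ref{decomposition}), $C_{2m}(x)\cong \mathbb{C}[x]/(x)\times \mathbb{C}[x]/(h(x))$, and the idempotent projecting onto the first factor $\mathbb{C}[x]/(x)\cong\mathbb{C}$ is represented by the unique polynomial $e(x)$ of degree $<\deg\Delta_{2m}$ with $e\equiv 1\bmod x$ and $e\equiv 0\bmod h$; since $h\mid e$ necessarily $e(x)=h(x)/h(0)=\Delta_{2m}(x)/(x\,h(0))$. So the claim reduces to the normalization $h(0)=1$, i.e. that the linear coefficient of $\Delta_{2m}$ is $1$ — which is exactly the stated information on the low-order coefficients of $\Delta_n$ for $n$ even (one may also evaluate $\Delta_{2m}(x)/x$ at $x=0$ directly from the recursion). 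Granting this, $q_{2m}=e$, so $q_{2m}$ is idempotent; concretely, $q_{2m}(x)^2-q_{2m}(x)=q_{2m}(x)\,(q_{2m}(x)-1)$ is divisible by $x\,h(x)=\Delta_{2m}(x)$ because $q_{2m}(0)=1$ forces $x\mid q_{2m}(x)-1$.

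Finally, minimality: under the isomorphism above $q_{2m}$ corresponds to $(1,0)$, so $q_{2m}\,C_{2m}(x)\,q_{2m}\cong \mathbb{C}[x]/(x)\cong\mathbb{C}$, a field, which has only the trivial idempotents; hence $q_{2m}$ is primitive, i.e. a minimal idempotent. (Equivalently, since $\Delta_{2m}$ is squarefree the minimal idempotents of $C_{2m}(x)$ are precisely the Lagrange interpolants at the individual roots, and $q_{2m}$ is the one at $x=0$.) The only delicate point is the normalization $q_{2m}(0)=1$ — i.e. correctly reading off the linear coefficient of $\Delta_{2m}$ (note $\Delta_{2m}(x)/x$ must be scaled to take value $1$ at the origin); once that is in hand the rest is the standard CRT/semisimplicity package.
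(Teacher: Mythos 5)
Your overall strategy --- split $C_{2m}(x)=\C[x]/(\Delta_{2m}(x))$ over the distinct roots of $\Delta_{2m}$ and identify $q_{2m}$ with the primitive idempotent sitting over the root $x=0$, with minimality coming from the corresponding factor being $\C[x]/(x)\cong\C$ --- is the right one; the paper states this lemma without proof, and its Lemma \ref{decomposition} is exactly the Lagrange-idempotent machinery you invoke. The genuine problem is the normalization step on which literal idempotency hinges. You claim $h(0)=1$, i.e.\ that the coefficient of $x$ in $\Delta_{2m}$ equals $1$, and you attribute this to ``the stated information'' in the paper; but Section \ref{annulusmarkov} asserts that this coefficient is $(-1)^{n/2}n$, and a direct computation (in the indexing for which $x\mid\Delta_{2m}$, which your root list $2\cos\frac{j\pi}{2m}$, $1\le j\le 2m-1$, presupposes) gives $\pm m$ --- in neither reading is it $1$ once $m\ge 2$. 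What is actually true is the identity $q_{2m}^2=q_{2m}(0)\,q_{2m}$ in $C_{2m}(x)$: write $q_{2m}(x)=q_{2m}(0)+x\,s(x)$ and use $x\,q_{2m}(x)=\Delta_{2m}(x)\equiv 0$. So $q_{2m}$ as defined is a nonzero scalar multiple of the minimal idempotent at $x=0$, and becomes an honest idempotent only after dividing by $q_{2m}(0)$, the linear coefficient of $\Delta_{2m}$. Your closing parenthetical half-concedes this, but the proof as written asserts a false normalization instead of performing the rescaling (or noting that the lemma must be read up to this nonzero scalar, which is presumably the intended interpretation).

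A secondary point you should make explicit rather than gloss: with the recursion actually printed in Section \ref{jonesrepandjoneswenzl} ($\Delta_0=1$, $\Delta_1=x$, $\Delta_{n+1}=x\Delta_n-\Delta_{n-1}$) one has $\Delta_{2m}(0)=(-1)^m\neq 0$, so $x\nmid\Delta_{2m}$ and $q_{2m}$ does not even parse; your argument silently adopts the shifted convention $\Delta_n(2\cos\theta)=\sin(n\theta)/\sin\theta$, which is evidently what Section \ref{annulusmarkov} intends (its parity claims only hold there), and which is precisely what guarantees that $x=0$ is a simple root --- the fact you need both for $q_{2m}(0)\neq 0$ and for the semisimplicity of $C_{2m}(x)$. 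Stating the convention, verifying simplicity of the roots from it, and then proving $q_{2m}^2=q_{2m}(0)\,q_{2m}$ with $q_{2m}(0)\neq 0$ would turn your outline into a correct argument; as written, the step ``$q_{2m}$ is idempotent because $h(0)=1$'' fails.
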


\begin{prop}

(1): If $r$ is even, then $x_b, x_w$ are not invertible, hence
$0_b$ is not isomorphic to $0_w$.

(2): If $r$ is odd, then $x_b, x_w$ are invertible, hence $0_b$ and
$0_w$ are isomorphic.

(3): The color swap involution is the identity on the TQFT vector spaces.

\end{prop}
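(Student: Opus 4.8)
The plan is to handle parts (1) and (2) by reducing the isomorphism question for $0_b,0_w$ to the invertibility of the ring elements $x_b\in{}_{0_b}\Lambda^{\tn{BW}}_{0_b}$ and $x_w\in{}_{0_w}\Lambda^{\tn{BW}}_{0_w}$, and then to compute those endomorphism algebras. For the reduction: writing $x_b=r_{bw}\cdot r_{wb}$ (first $r_{bw}\colon 0_b\to 0_w$, then $r_{wb}\colon 0_w\to 0_b$) and $x_w=r_{wb}\cdot r_{bw}$, if both are invertible then $r_{bw},r_{wb}$ are split monomorphisms — e.g. $x_b^{-1}r_{wb}$ is a left inverse to $r_{bw}$ — so $0_b$ is a direct summand of $0_w$ and vice versa; since $\Lambda^{\tn{BW}}$ is finite and semisimple, two such objects are isomorphic (Cantor--Schr\"oder--Bernstein for direct sums of simples), giving $0_b\cong 0_w$. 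Conversely, if $x_b$ is not invertible, the \emph{commutative} algebra ${}_{0_b}\Lambda^{\tn{BW}}_{0_b}$ (generated by $x_b$ once contractible interface circles are removed by the enhanced $d$-isotopy relation) has a minimal idempotent $q$ with $q\,x_b=0$; the corresponding simple summand $s$ of $0_b$ is killed by $x_b$, and a short case analysis (according to whether $r_{bw}|_s=0$ or $r_{bw}|_s\neq 0$, using that every morphism $0_b\to 0_w$ has the form $p(x_w)\cdot r_{bw}$) shows that in either case no morphism between $0_b$ and $0_w$ can be injective, so $0_b\not\cong 0_w$. Thus (1) and (2) come down to the single assertion: \emph{$x_b$ and $x_w$ are invertible if and only if $r$ is odd.}

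To prove that assertion I would compute ${}_{0_b}\Lambda^{\tn{BW}}_{0_b}\cong \C[x_b]/(g_r(x_b))$ (and likewise for $x_w$) by the same method used for the uncoloured case — where $\tn{End}(0)\cong\C[R]/(\Delta_{r-1}(R))$ — and for the low levels in \S\ref{repoftlj}: the Jones--Wenzl projector $p_{r-1}=0$ provides a resolution of the identity that forces a polynomial relation on the nested interface circles, and applying the annular Markov trace $Tr^{\Aa}$ together with $Tr^{\Aa}(p_n)=\Delta_n(x)$ pins down $g_r$. The key point is then that $g_r$ is a Chebyshev-type polynomial whose constant term vanishes exactly when $r$ is even; granting this, the recorded fact that the constant term of $\Delta_n$ is nonzero iff $n$ is odd shows $x_b$ is invertible iff $r$ is odd, which with the reduction above proves (2) and the "isomorphic" clause of (1). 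When $r$ is even, $x\mid g_r$, and the Lemma just established identifies the minimal idempotent $q_{2m}=\Delta_{2m}(x)/x$ of $C_{2m}(x)=\C[x]/(\Delta_{2m}(x))$ as precisely the idempotent $q$ annihilated by $x_b$ in the reduction, completing (1). I expect the determination of $g_r$ — in particular the parity statement about its constant term, and matching the relevant $\Delta_{2m}$ to the $q_{2m}$-idempotent — to be the main obstacle, since the bookkeeping of nested two-coloured circles modulo $p_{r-1}$ is more involved than the uncoloured analysis; the argument parallels \S\ref{irrepsoftlj2r}.

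For (3), let $\tau$ be the involution of the Black--White theory that swaps the two colours everywhere. Because the two colour-enhancements of each Jones--Wenzl idempotent are equivalent \cite{freedmanmag}, $\tau$ carries the defining local relations to themselves, so it is an automorphism of the TQFT $(\Pic^{\tn{BW}},Z)$. To see $\tau=\tn{id}$ on $V(Y)=\Pic^{\tn{BW}}(Y)$ for closed $Y$, first treat $Y=S^2$: here $V(S^2)\cong\C$, and the class of a single interface circle on $S^2$ is $\tau$-fixed because a rotation of $S^2$ interchanging the two discs it bounds carries one colouring of that circle to the other, so the two colourings are isotopic; this class is nonzero (it is a nonzero multiple of the empty black picture, the quantum dimensions of $0_b$ and $0_w$ being nonzero), hence $\tau$ acts by $+1$ on $V(S^2)$ and in particular the all-black and all-white pictures coincide. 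For general closed $Y$ one propagates this using the gluing and disc axioms: build $Y$ from spheres and pairs of pants; $\tau$ fixes every label of the theory and carries each of the $\leq 1$-dimensional spaces $V(B^2;\cdot)$, $V(\Aa;\cdot)$, $V(P;\cdot)$ to itself, necessarily by $+1$ in view of the $S^2$ computation, so compatibility of $\tau$ with gluing forces $\tau=\tn{id}$ on every $V(Y)$. (When $r$ is odd, (3) also follows at once from the canonical isomorphism $0_b\cong 0_w$ supplied by part (2).)
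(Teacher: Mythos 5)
The paper states this proposition without proof (only the level $=2$ and level $=3$ endomorphism algebras $\C[x]/(x^2-2x)$ and $\C[x]/(x^2-3x+1)$ are recorded), so there is no argument in the text to measure yours against; your route for (1)--(2) is clearly the intended one, given the preceding material on $C_n(x)=\C[x]/(\Delta_n(x))$ and the idempotent $q_{2m}$. Your reduction is fine --- indeed the converse can be had more cheaply than your idempotent case analysis: $\Hom(0_b,0_w)$ is spanned by the morphisms $x_b^m\cdot r_{bw}=r_{bw}\cdot x_w^m$, so any endomorphism of $0_b$ factoring through $0_w$ is a combination of $x_b^{m}\cdot r_{bw}\cdot r_{wb}\cdot x_b^{n}=x_b^{m+n+1}$ and lies in the ideal generated by $x_b$; hence $0_b\cong 0_w$ already forces $x_b$ invertible. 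The genuine gap is the step you yourself flag: nothing in your write-up determines the relation $g_r$ satisfied by $x_b$ in ${}_{0_b}\Lambda_{0_b}$, and with it the parity of its constant term, and that is the entire content of (1)--(2) beyond the two sample levels (the computations in the paper are consistent with $g_r(R^2)=\Delta_{r-1}(R)$ for $r$ odd and $g_r(R^2)=R\,\Delta_{r-1}(R)$ for $r$ even, matching $x^2-3x+1$ and $x^2-2x$). Be careful here: the sentence you quote from Section \ref{annulusmarkov} about constant terms is stated with the parities reversed relative to the recursion $\Delta_0=1$, $\Delta_1=x$, $\Delta_{n+1}=x\Delta_n-\Delta_{n-1}$ (e.g.\ $\Delta_3=x^3-2x$ has vanishing constant term while $\Delta_4=x^4-3x^2+1$ does not), so ``granting this'' without actually pinning down $g_r$ could easily flip your conclusion by a parity.

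Part (3) as you argue it does not go through. The $S^2$ step is fine (a rotation of $S^2$ interchanging the two disks bounded by an interface circle shows the one-circle class is swap-invariant, whence all-black $=$ all-white on $S^2$), but your propagation rests on the claim that the color swap fixes every label, and that is false: the swap exchanges $0_b$ and $0_w$, hence exchanges the irreps supported on only one of them --- in the level $=2$ table of Section \ref{blackwhite} it interchanges $\rho_1=(0,1,1)$ and $\rho_2=(1,0,1)$, and indeed whenever $r$ is even your own part (1) says $0_b\not\cong 0_w$, so some labels must be permuted. With labels permuted, compatibility with the gluing axiom would a priori make the swap act by the corresponding permutation on the label basis of $V(T^2)$, so proving it is nevertheless the identity on closed-surface spaces requires a genuinely different argument, which is missing here (and from the paper). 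Separately, the inference that the swap acts ``necessarily by $+1$'' on each annulus and pants space is unjustified --- the $S^2$ computation only controls the trivial-label disk space --- and the closing parenthetical that (3) follows ``at once'' from $0_b\cong 0_w$ when $r$ is odd is also too quick: an isomorphism of objects does not by itself force the induced automorphism of each $V(Y)$ to be the identity rather than, say, a nontrivial scalar or permutation on some sector.
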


\subsubsection{Level=2, $d^2=2$}

The algebra $\textrm{A}_{0_b0_b}\cong \C^2$, and so is the algebra
$\tn{A}_{0_w0_w}$.  Both are generated by $x$, so $\cong
\C[x]/(x^2=2x)$.

$\Hom(0_b,0_w)\cong \C$ is generated by $r_{bw}$. Similarly,
$\Hom(0_w,0_b)\cong \C$ is generated by $r_{wb}$.

$A_{2,2}\cong \C^4$.  Following the same analysis as in Section \ref{repoftlj}, we get
the irreps denoted by the following:

\begin{table}\label{bwlevel2}
\centering
\begin{tabular}{c|c|c|c|}
\hline $\rho_3$ &0&0& 1\\
\hline $\rho_2$ &1&0& 1\\
 \hline
 $\rho_1$ & 0 & 1 & 1\\
\hline
 $\rho_0$ & 1 & 1 & 1\\
\hline
 &  $0_b$ & $0_w$  & 2\\
%\hline
\end{tabular}\caption{Irreps of black-white level=2}
\end{table}

\subsubsection{Level=3}

The algebra $A_{0_B,0_B}\cong \C^2$ is generated by $x$ so $\cong
\C[x]/(x^2-3x+1)$.

The algebra $A_{22}\cong \C^7$.  Similar analysis as above leads to:

\begin{table}\label{bwlevel3}
\centering
\begin{tabular}{c|c|c|c|}
\hline $\rho_3$ &0&0& 1\\
\hline $\rho_2$ &0&0& 1\\
 \hline
 $\rho_1$ & 1 & 1 & 2\\
\hline
 $\rho_0$ & 1 & 1 & 1\\
\hline
 &  $0_b$ & $0_w$  & 2\\
%\hline
\end{tabular}\caption{Irreps of black-white level=3}
\end{table}

\subsection{BW TQFTs}

\begin{theorem}\label{BWTQFT}

(1): If $r\geq 3$, and $A$ a primitive $4r$th root of unity, or a primitive
$2r$th root of unity and $r$ odd, or a primitive
$r$th root of unity and $r$ odd, then
$(V^A_{BW},Z^A_{BW})$ is a TQFT.

(2): If $r$ odd, then $(V^A_{BW},Z^A_{BW})$ is isomorphic to the doubled
even TLJ sub-category TQFT, i.e., the TQFT from the quantum double of the even TLJ subcategory at the
corresponding $A$.

\end{theorem}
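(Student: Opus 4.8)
The plan is to follow the same two-stage strategy used for the diagram TQFTs in Section~\ref{joneswenzlfull}: first secure the algebraic input (semi-simplicity of the black-white annular category, finiteness together with an explicit list of its irreps, and non-singularity of the relevant $S$-matrix), and then feed this into the handlebody machinery of Sections~\ref{joneskauffman}--\ref{axiomsoftqft} to verify the modular functor and partition function axioms essentially verbatim. Concretely, for part (1) I would first show that $\TLJ^{\tn{BW}}_d$ is semi-simple with finitely many irreps by producing a resolution of the identity on the object with many points in terms of the BW-enhanced Jones-Wenzl projector $p_{r-1}$, exactly as in Lemma~\ref{Resolution of Identity}; the low-level data summarized in Tables~\ref{bwlevel2} and~\ref{bwlevel3} are the $k=2,3$ instances of the inductive pattern, and the endomorphism-category viewpoint of Section~\ref{irrepsoftlj4r} should supply the complete set of minimal idempotents in a two-colored annulus. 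The representation category $\Cc^{\tn{BW}}$ is then the boundary condition category and its simple objects are the labels.

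Next I would check the $V$-axioms: the empty surface, disk and annular axioms follow from the structure of $\M_{\TLJ^{\tn{BW}}}(a,b)$ under Jones-Wenzl reduction (turn-backs are killed, so only $h=0$ survives on the disk and only matching crude labels survive on the annulus); the disjoint union and duality axioms are formal, realized by the color-swap and horizontal-reflection involutions; and the gluing axiom follows from the Morita equivalence of $\TLJ^{\tn{BW}}_d$ with $\Cc^{\tn{BW}}$ as in Section~\ref{moritacutpaste}. The $Z$-axioms follow from the handle-decomposition construction of $Z^A_{BW}$ together with invariance under handle slides and cancellation pairs, which in turn rests on the non-singularity of the $S$-matrix — the one genuinely new computation, analogous to Lemma~\ref{smatrix}.

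For part (2) I would exploit the doubling principle. When $r$ is odd, the Proposition of Section~\ref{blackwhite} gives that $x_b$ and $x_w$ are invertible, hence $0_b\cong 0_w$ in $\TLJ^{\tn{BW}}_d$; together with the color-swap acting as the identity on TQFT vector spaces, this collapses the apparent doubling of the empty object and identifies the black-white annular category with the annular category attached to the even TLJ subcategory. By Lemma~\ref{smatrix}(3), $S_{\tn{even}}$ is non-singular, so $\TLJ^{\Rr}_{\tn{even}}$ is a modular tensor category and its Drinfeld center is defined. Invoking the conjecture-now-theorem of \cite{walker06} that $\mathcal{D}(\Rep(\Lambda^{\Rr}))\cong\Rep(\Lambda^{\Aa})$, applied to the even TLJ rectangular category, yields $\Cc^{\tn{BW}}\cong\mathcal{D}(\Rep(\TLJ^{\Rr}_{\tn{even}}))$ as ribbon categories; since a modular functor is determined by its boundary condition category plus the handlebody construction of $Z$, and the $S$- and $T$-matrices of a quantum double are the Drinfeld-double data of the originals, the resulting TQFT coincides with the doubled even-TLJ theory.

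The main obstacle is the algebraic input in part (1): the representation theory of the black-white categories is, as noted, considerably harder than the plain TLJ case, so obtaining the general list of irreps (rather than the $k=2,3$ tables) and proving non-singularity of the $S$-matrix is where the real work lies. A clean way to organize this is to bootstrap the $r$ odd case from the identification in part (2) and treat $r$ even — where $0_b\not\cong 0_w$ and the theory is a genuine nontrivial quantum double of the full TLJ theory — by a parallel but independent resolution-of-identity argument, again reading off the minimal idempotents from the endomorphism picture of Section~\ref{irrepsoftlj4r}.
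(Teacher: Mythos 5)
The paper offers no proof to compare against: immediately after the statement it says ``The proof of this theorem and the irreps for all $r$ are left to a future publication,'' so your proposal can only be judged on its own terms. As a strategy it is the natural one and is consistent with the machinery the paper actually uses elsewhere (resolution of the identity via the BW-enhanced Jones-Wenzl projector, Morita equivalence for the gluing axiom, handle decompositions for $Z$, nonsingularity of an $S$-matrix for handle-slide invariance). But at the two decisive points the proposal asserts rather than supplies exactly what the authors themselves defer: (i) the classification of the irreps of $\TLJ^{\tn{BW}}_d$ for general $r$ --- the paper only records the level $2$ and $3$ tables and explicitly warns that ``the representation theories of the black-white categories are considerably harder to analyze'' --- and (ii) the nonsingularity of the resulting BW $S$-matrix. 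You correctly identify these as ``where the real work lies,'' but no inductive pattern, no idempotent formulas, and no $S$-matrix computation are given, so part (1) is not established by the proposal.

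For part (2) there is a further genuine gap. Invertibility of $x_b$ and $x_w$ gives $0_b\cong 0_w$, but an isomorphism of the two empty objects does not show that the $2$-coloring is redundant on all morphism spaces of the annular category, nor that the minimal idempotents of ${}_h\Lambda^{\tn{BW}}_h$ biject with pairs of even labels; that identification is essentially the content of the theorem, not a consequence of it. Likewise, invoking $\mathcal{D}(\Rep(\Lambda^{\Rr}))\cong\Rep(\Lambda^{\Aa})$ from \cite{walker06} for the even subcategory only helps once you have already proved $\Rep(\TLJ^{\tn{BW},\Aa})\cong\Rep(\TLJ^{\Aa}_{\tn{even}})$, which is again the missing computation; and the claim that a TQFT is determined by its boundary condition category together with the handle construction is not among the axioms of Section~\ref{axiomsoftqft} and would itself require proof --- the paper stresses (Jones-Kauffman versus WRT) that theories can share modular data while being different TQFTs, so matching $S$ and $T$ alone is not a safe shortcut. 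Finally, bootstrapping part (1) for $r$ odd from part (2) is circular as written: the identification in (2) must be proved at the level of picture spaces and partition functions before it can certify the TQFT axioms for $(V^A_{BW},Z^A_{BW})$.
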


The proof of this theorem and the irreps for all $r$ are left to a future publication.

 We have not be able to identify the BW TQFTs with known ones when $r$ is even, and $A$ is a primitive $4r$th root of unity.
 If $r=4$, then  $(V^A_{BW},Z^A_{BW})$ is isomorphic to the toric code TQFT.  We conjecture Theorem \ref{BWTQFT} (2) still holds for these cases.  Furthermore, each $(V^A_{BW},Z^A_{BW})$ decomposes into a direct product of the toric code TQFT with another TQFT.

\section{Classification and Unitarity}\label{classificationunitary}

In this section, we classify all TQFTs based on Jones-Wenzl
projectors and Kauffman brackets.  Then we decide when the
resulting TQFT is unitary.  In literature $A$ has been chosen to
be either as a primitive $4r$-th root of unity or as a primitive
$2r$th root of unity.  We notice that for $r$ odd, when $A$ is a
primitive $r$th root of unity, the resulting $TLJ$ rectangular
categories give rise to ribbon tensor categories with singular
$S$-matrices, but their annular versions lead to TQFTs which are
potentially new.  Also when $A$ is a primitive $4r$th root of unity and $r$ even,
the BW TQFTs seem to contain new theories.

\subsection{Classification of diagram local
relations}\label{classifylocal}

By $d$ generic we mean that $d$ is not a
root of some Chebyshev polynomial $\tr_i$. Equivalently $d\neq B +
\bar{B}$ for some $B$ such that $B^\ell =1$.

Let us consider $d$-isotopy classes of multicurves on a closed
surface $Y$.  Call this vector space TL$_{d} (Y)$.  This vector
space has the  subtle structure of gluing formula associated to
cutting into subsurfaces (and then regluing); there is a product
analogous to both times and tensor products in TLJ$_d$.  Also for
special values of $d$ TL$_{d}(Y)$ has a natural singular Hermitian
structure.

\begin{theorem}\label{thm1}
If $d$ has the form: $d=-A^2 -A^{-2}$, $A$ a root of unity.
 Then there is a (single) local relation $R(d)$ so that
 ${\TL}_d (Y)$ modulo $R(d)$, denoted by $V_d(Y)$,  have finite nonzero dimension. If $d$ is not
of the above form then $V_d (Y) =0$ or $=\TL_d (Y)$ for any given
$R(d)$.  Furthermore the quotient space $V_d(Y)$ of  $\TL_d(Y)$
when it is neither $\{ 0\}$ nor TL$_d (Y)$ is uniquely determined,
and when $A$ is a primitive $4r$th root of unity, then $V_{d} (Y)$
is  the $\lq\lq$Drinfeld double" of a Jones-Kauffman TQFT at level
$k=r-2$.
\end{theorem}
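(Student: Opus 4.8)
The space $V_d(Y)$ depends on the local relation $R(d)$ only through the two--sided ideal $\langle R(d)\rangle$ it generates in $\TL_d$: imposing $R(d)$ in every disk of $Y$ cuts out of $\TL_d(Y)$ exactly the subspace attached to that ideal, a locality statement underlying \cite{walker06}. So I would classify the two--sided ideals $I$ of $\TL_d$ and read off $V_d(Y)$ in each case. If $I=0$ then $V_d(Y)=\TL_d(Y)$. If $I=\TL_d$ then $1_0$ lies in $I$, so some closed network built from $R(d)$ equals a nonzero multiple of $\emptyset$; imposing $R(d)$ then forces the degree $0$ relation $\emptyset=0$, which by the substitution rule of Section~\ref{localrelation} sends every multicurve to $0$ (take a disk disjoint from it), so $V_d(Y)=\{0\}$. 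Thus a finite \emph{nonzero} quotient can only come from a proper nonzero ideal.

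\textbf{Generic $d$.} When $d$ is generic, $\TL_d$ is semisimple with simple objects the images of $p_0,p_1,p_2,\dots$ (Theorem~\ref{joneswenzlprojectors} and the remark after formula~\ref{meanderformula}). The key point is that $\langle p_n\rangle=\TL_d$ for every $n$: the strand closure $\Delta_n(d)$ of $p_n$ is nonzero, so $1_0\in\langle p_n\rangle$, and $1_0$ generates everything. Hence if $R(d)\neq 0$, decomposing it over the simple blocks of $\TL_d$ gives a nonzero component in some block, and pre/post--composition extracts a nonzero multiple of the corresponding $p_n$, so $\langle R(d)\rangle\supseteq\langle p_n\rangle=\TL_d$ and $V_d(Y)=\{0\}$; while $R(d)=0$ gives $V_d(Y)=\TL_d(Y)$. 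This proves the dichotomy for $d$ not of the form $-A^2-A^{-2}$ with $A$ a root of unity.

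\textbf{Roots of unity and uniqueness.} For $d=-A^2-A^{-2}$ with $A$ a root of unity, fix the integer $r\ge 3$ associated to $A$ as in Lemma~\ref{jwcases}, so $p_{r-1}$ exists and $\tn{Tr}(p_{r-1})=0$, and set $R(d)$ to be the relation $p_{r-1}=0$. By Lemma~\ref{properideal} the ideal $\rd=\langle p_{r-1}\rangle$ is proper, so $\emptyset\neq 0$ in $V_d(Y)=\TL_d(Y)/\rd=\Pic^A(Y)$, and by Theorem~\ref{jkskeinspace} (with the finiteness built into $\TLJ_d$) this space is finite dimensional; hence finite and nonzero. For uniqueness, the theorem of Section~\ref{tldproofsection} states that $\rd$ is the only proper nonzero two--sided ideal of $\TL_d$. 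Therefore any $R(d)$ whose quotient is neither $\{0\}$ nor $\TL_d(Y)$ satisfies $\langle R(d)\rangle=\rd$, hence produces the same space $\Pic^A(Y)$, which is determined by $r$ and therefore by $d$.

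\textbf{The double, and the main obstacle.} Finally, for $A$ a primitive $4r$th root of unity, I would identify $\Pic^A(Y)$ with the Drinfeld double of the level $k=r-2$ Jones--Kauffman theory via $\Pic^A(Y)\cong K_A(Y\times I)$ (Theorem~\ref{jkskeinspace}(7)) and $K_A(Y\times I)\cong V_{JK}^A(Y)^*\otimes V_{JK}^A(Y)$ (Theorem~\ref{jkskeinspace}); categorically this is the isomorphism $\Rep(\TLJ^{\Aa,k,A})\cong\mathcal{D}(\Rep(\TLJ^{\Rr,k,A}))$, the quantum double of the Jones--Kauffman modular category, which is the conjecture of Section~\ref{moritacutpaste} proved in \cite{walker06}. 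The load--bearing inputs are the uniqueness--of--ideal theorem (already proved in Section~\ref{tldproofsection}), the locality principle, and this quantum--double identification; the last is where the real difficulty lies, since one wants the double structure canonically and not merely an abstract isomorphism. A lesser point needing care is the block--extraction step in the generic case, i.e.\ that a nonzero local relation genuinely captures a nonzero multiple of some $p_n$.
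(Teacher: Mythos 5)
Your route is genuinely different from the paper's. The paper does not invoke the ideal--uniqueness theorem of Section \ref{tldproofsection} at this point: it argues directly on the local relation, showing that a consequence $R_0(d)$ of minimal degree $2n$ is annihilated by all cups and caps and hence equals $c\,p_{n,d}$ by the characterization of Jones--Wenzl projectors (Theorem \ref{joneswenzlprojectors}); the trace $\Delta_n(d)$ of $p_{n,d}$ then forces $\Delta_n(d)=0$ (otherwise the empty picture, hence everything, dies), and a partial--trace identity with coefficient $\Delta_m/\Delta_\ell$ (well defined and nonzero at a common simple root) shows that any higher projector implies the lowest one, so every nontrivial relation is equivalent to the single relation $p_{n,d}$ with $n$ minimal. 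Finiteness is then elementary (multicurves of multiplicity $<n$ along the bonds of a triangulation span the quotient), and non-vanishing is quoted from theta--symbols for $S^2$ and the Verlinde formula for higher genus. Your replacement of the minimal--degree/partial--trace analysis by the locality principle plus the classification of two--sided ideals is legitimate, and for the uniqueness clause it is arguably cleaner, at the price of leaning on Section \ref{tldproofsection} and on heavier TQFT machinery; the generic-case ``block extraction'' you flag is precisely the analogue of the paper's cup/cap step and does require the small argument you mention. One small repair: for an arbitrary root of unity $A$ you should first pass to the minimal $r$ with $\Delta_{r-1}(d)=0$, so that $d$ is realized by an $A'$ as in Lemma \ref{jwcases}, since the uniqueness-of-ideal theorem is stated only for those $A$.

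The genuine gap is in your non-vanishing step. Lemma \ref{properideal} is a statement about the planar category: it says $1_0\notin\rd$, i.e.\ no closed network \emph{in a disk} containing $p_{r-1}$ evaluates to a nonzero multiple of $\emptyset$. On a closed surface of positive genus the imposed relations are of a different kind: one inserts $p_{r-1}$ into a disk and completes it by an arbitrary, possibly essential, multicurve in $Y\setminus B^2$; such relations are not planar evaluations, and a priori they could force $\emptyset=0$ in $\TL_d(Y)/\rd$ even though $\rd$ is proper. (The paper's own example in Section \ref{joneswenzlclosed} --- imposing $p_2$ with $d\neq\pm1$ kills the picture space of $S^2$ --- shows that picture quotients do collapse for such global reasons, which is exactly why the paper appeals to nonvanishing theta--symbols for $S^2$ and to the Verlinde formula for higher genus.) So the inference ``$\rd$ proper, hence $\emptyset\neq0$ in $V_d(Y)$'' is a non sequitur for $g>0$. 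Your substitute citation, Theorem \ref{jkskeinspace}, is stated in part (a) only for $A$ a primitive $4r$th root of unity, and part (b) explicitly warns that pieces of it fail in the $2r$th/$r$th ($r$ odd) cases, so as written your finiteness/positivity argument does not cover all $d$ of the stated form. To close the gap, either quote the diagram--TQFT results of Sections \ref{closeddiagram} and \ref{joneswenzlfull} together with the Verlinde formula (\ref{verlindeformula}), which are established for all $A$ as in Lemma \ref{jwcases}, or reproduce the paper's bond--multiplicity count for finiteness and the theta--symbol/Verlinde computation for positivity.
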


\begin{proof} Consider a local relation $R_0 (d)$ of smallest
degree, say $2n$, which holds in $\TL_d$ (i.e. is a consequence of
$R(d)$). Arbitrarily draw $R_0 (d)$ in a rectangle with $n$
endpoints assigned to the top and $n$ endpoints assigned to the
bottom, to place $R_0 (d)$ in the algebra TL$_n(d)$. Adding any
cup or cap to $R_0$ gives a consequent relation of degree $=2n-2$;
this relation, by minimality, must be zero.  This implies that
$e_i R_0 (d) = R_0 (d) e_i =0$ for $1 \leq i \leq n-1$.  So by
Proposition \ref{joneswenzlprojectors}, $R_0 (d) = c p_{n,d}$, $c$
a nonzero scalar.

The trace, tr$(p_{n,d})\in \C$ is a degree $=0$ consequence of
$p_{n,d}$ so unless $d$ is a root of $\tr_n$, tr$(p_{n,d}) \neq 0$
and so generates all relations: $p_{n,d} (Y) =0$.

Now suppose $d$ is the root of two Chebyshev polynomials $\tr_m$
and $\tr_\ell$, $m < \ell$.  This happens exactly when $(m+1)$
divides $(\ell +1)$.  In fact to understand the roots of $\tr_n
(d)$ introduces a change of variables $d = B+B^{-1}$, then: $\tr_n
(d) = \break (B^{n+1} - B^{-n-1})/ (B-B^{-1})$.  The r.h.s.
vanishes (simply) when (and only when) $B$ is a $2n+2-$ root of
unity $\neq \pm 1$. In particular if $d$ is a root of $\tr_m$ and
$\tr_\ell$ then $p_{m,d}$ is a consequence of $p_{m, \ell}$ by
$\lq\lq$partial trace" as shown in Figure \ref{partialtrace}.

\begin{figure}\label{partialtraces}
\centering
\includegraphics[scale=1]{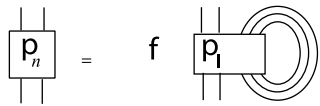}
\caption{Partial trace}\label{partialtrace}
\end{figure}

%\vskip.2in \epsfxsize=2in \centerline{\epsfbox{fig12.eps}}{\centerline{Figure 1.12}} \vskip.2in

Trace both sides to verify the coefficient $f=\f{\tr_m}{\tr_\ell}$
and note that since both numerator and denominator have simple
roots at $d$ the coefficient at $d$ is well defined and nonzero.

Thus for a diagram local relation (or set thereof) to yield a
nontrivial set of quotient space $\neq 0$, $d$ must be a root of
lowest degree $\tr_n$ and the relation(s) are equivalent to the
single relation $p_{n, d}$.

Geometrically, $0 =p_n = 1 +U$ implies $1=-U$ means that the
multicurves whose \underline{multiplicity} is less than $n$ along
the $1-$ cells, $SK^1(Y)$, ($\lq\lq$bonds" in physical language)
of any fixed triangulation of $Y$ determine ${p_n}(Y)$.  Here
multiplicity $\leq n$ for a multicurve $\g$ means that $\g$ runs
near $SK^1 (Y)$ and with fewer than $n$ parallel copies of a $1-$
cell (bond) of $SK^1(Y)$. Finite dimensionality of ${p_n}(Y)$ is
an immediate consequence. ($SK^1$ stands for $\lq\lq 1-$skeleton.")

The quotient space $ \neq 0$, for
$Y=S^2$ this follows from the nonvanishing of certain
$\theta$-symbols; for $Y$ of higher genus the Verlinde
formulas. \end{proof}

\subsection{Unitary TQFTs}

A unitary modular functor is a modular functor such that each
$V(Y)$ is endowed with a non-degenerate Hermitian pairing:
$$<,>: \overline{V(Y)}\times V(Y)\la \C,$$
and each morphism is unitary.  The Hermitian structures are
required to satisfy compatibility conditions as in the naturality
axiom of a modular functor. In particular,
$$<\oplus_{i}v_i, \oplus_j w_j>=\sum_i s_{i0}<v_i,w_j>.$$
Note that this implies that all quantum dimensions of particles
are positive reals.  It might be true that any theory with all
quantum dimensions positive is actually unitary.  Moreover, the
following diagram commutes for all $Y$:

$$
\begin{CD}
V(Y) @>\cong>> V(-Y)^* \\
@V{\cong}VV             @VV{\cong}V\\
\overline{V(Y)^*} @>\cong>> \overline{V(-Y)}
\end{CD}
$$
A unitary TQFT is a TQFT whose modular functor is unitary and
whose partition function satisfies $Z(-M)=\overline{Z(M)}$.

\subsection{Classification and unitarity}

There are two kinds of TQFTs that we studied in this paper:
undoubled and doubled, which are indexed by the Kauffman variable
$A$.

When $A$ is a primitive $4r$th root of unity for $r\geq 3$, we have the
Jones-Kauffman TQFTs.  The even sub-categories of TLJs yield TQFTs for $r$ odd, but have singular
$S$-matrix if $r$ even.  If $r$ is even, and $A=\pm i e^{\pm
\frac{2\pi i}{4r}}$, then the Jones-Kauffman TQFTs are unitary.

We also have the WRT $SU(2)$-TQFTs for $q=e^{\pm \frac{2\pi i}{r}}$,
which are unitary.  WRT TQFTs were believed to be the same as the
Jones-Kauffman TQFTs with $q=A^{\pm 4}$, but they are actually
different.  Jones-Kauffman TQFTs and WRT TQFTs are related by a
version of Schur-Weyl duality as alluded in Section
\ref{jonesrepandjoneswenzl} for the braid group representations.

All above theories can be doubled to get picture TQFTs: the doubled
Jones-Kauffman TQFTs are the diagram TQFTs, while the doubled WRT
TQFTs are the Turaev-Viro TQFTs \cite{turaevviro}.
The doubles of even sub-categories for $r$ odd form
part of the Black-White TQFTs in Theorem \ref{BWTQFT}, while for $r$ even this is still a conjecture.

When $A$ is a primitive $2r$th or $r$th root of unity and $r$ odd,
the TLJ categories do not yield TQFTs.  But the restrictions to the
even labels lead to TQFTs.  When $A=\pm i e^{\pm \frac{2\pi
i}{4r}}$, the resulting TQFTs are unitary. Those unitary TQFTs are
the same as those obtained from the restrictions of WRT TQFTs to
integral spins.  All can be doubled to picture TQFTs.
Note that for these cases when $A$ is
a primitive $r$th root of unity, then $-A$ is a primitive $2r$th root of unity.  For the
even sub-categories, they lead to the same TQFTs, which form part of
the Black-White TQFTs in Theorem \ref{BWTQFT}.

\hfill $\square$

\newpage

\appendix

\section{Topological phases of matter}

Fractional quantum Hall liquids are new phases of matter
exhibiting topological orders, and Chern-Simons theories are
proposed as effective theories to describe the universal
properties of such quantum liquids. Quantum Chern-Simons theories
are (2+1)-dimensional topological quantum field theories (TQFTs),
so we define a topological phase of matter as a quantum system
with a TQFT effective theory.

\subsection{Ground states manifolds as modular functors}

While in real experiments, we will prefer to work with quantum
systems in the plane, it is useful in theory to consider quantum
systems on 2-dimensional surfaces such as the torus. Given a
quantum system on a 2-dimensional oriented closed surface
$\Sigma$, there associates a Hilbert space $\mathbb{H}$ consisting
of all states of the system.  The lowest energy states form the
ground states manifold $V(\Sigma)$, which is a subspace of
$\mathbb{H}$. For a given theory, the local physics of the quantum
systems on different surfaces are the same, so there are relations
among the ground states manifolds $V(\Sigma)$ for different
$\Sigma$'s dictated by the local physics. In a topological quantum
system, the ground states manifolds form a modular functor---the
2-dimensional part of a TQFT. In particular, $V(\Sigma)$ depends
only on the topological type of $\Sigma$.

\subsection{Elementary excitations as particles}

A topological quantum system has many salient features including
an energy gap in the thermodynamic limit, ground states degeneracy
and the lack of continuous evolutions for the ground states
manifolds.  The energy gap implies that elementary excitations are
particle-like and particle statistics is well-defined.  These
quasi-particles are anyons, whose statistics are described by
representations of the braid groups rather than representations of
the permutation groups.

The mathematical model for an anyonic system is a ribbon category.
In this model an anyon is pictured as a framed point in the plane:
a small interval.  Given a collection of $n$ anyons in the plane,
we will arbitrarily order them and place them onto the real axis,
so we can represent them by intervals  $[i-\epsilon,
i+\epsilon],i=1,2,\cdots, n$ on the real axis for some small
$\epsilon$. The worldlines of any $n$ anyons from time $t=0$ to
the same set of $n$ anyons at time $t=1$ form a {\it framed} braid
in $\mathbb{R}^2 \times [0,1]$.  We will represent worldlines of
anyons by diagrams of ribbons in the plane which are projections
from $\mathbb{R}^2 \times [0,1]$ to the real axis $\times [0,1]$
with crossings. (Technically we need to perturb the worldlines in
order to avoid more singular projections.)  A further convention
is the so-called blackboard framing: we will draw only single
lines to represent ribbons with the understanding the ribbon is
the parallel thickening of the lines in the plane.

Suppose $n$ elementary excitations of a topological quantum system
on a surface $\Sigma$ are localized at points $p_1,p_2,\cdots,
p_n$, by excising the particles from $\Sigma$, we have a
topological quantum system on a punctured surface $\Sigma'$
obtained from $\Sigma$ by deleting a small disk around each point
$p_i$. Then the ground states manifold of the quantum system on
$\Sigma'$ form a Hilbert space $V(\Sigma')$. The resulting Hilbert
space should depend only on the topological properties of the
particles---particle types that will be referred to also as
labels. In this way we assign Hilbert spaces $V(\Sigma, a_1, a_2,
\cdots, a_n)$ to surfaces with boundary components $\{1, 2,\cdots,
n\}$ labelled by $\{a_1,a_2,\cdots, a_n\}$.

\subsection{Braid statistics}

The energy gap protects the ground states manifold, and when two
particles are exchanged adiabatically within the ground states
manifolds, the wavefunctions are changed by a unitary
transformation.  Hence particle statistics can be defined as the
resulting unitary representations of the braid groups.

\section{Representation of linear category}\label{repofcategory}

%\section{Category and its representation}

Category theory is one of the most abstract branch of mathematics.
It is extremely convenient to use category language to describe
topological phases of matters.  It remains to be seen whether or
not this attempt will lead to useful physics.  But tensor category
theory might prove to be the right generalization of group theory
for physics. On a superficial level, the two layers of structures
in a category fit well with physics: objects in a category
represent states, and morphisms between objects possible
$\lq\lq$physical processes" from one state to another.  For quantum
physics, the category will be linear so each morphism set is a
vector space. Functors might be useful for the description of
topological phase transitions and condensations of particles or
string-nets.  For more detailed introduction, consult the book
\cite{maclane}.

A {\it category} $\Cc$ consists of a collection of objects,
denoted by $a,b,c,\cdots$,  and a morphism set ${}_a\Cc_{b}$ (also
denoted by $\textrm{Mor}(a,b)$) for each ordered pair $(a,b)$ of
objects which satisfy the following axioms:

Given $f\in {}_a\Cc_{b}$ and $g\in {}_b\Cc_{c}$, then there is a
morphism $f\cdot g\in {}_a\Cc_{c}$ such that

\noindent 1)(Associativity):

If $f\in {}_a\Cc_{b}, g\in {}_b\Cc_{c},h\in {}_c\Cc_{d}$, then
$(f\cdot g)\cdot h=f\cdot (g\cdot h)$.

\noindent 2)(Identity):

For each object $a$, there is a morphism $\textrm{id}_a\in
{}_a\Cc_{a}$ such that for any $f\in {}_a\Cc_{b}$ and $g\in
{}_c\Cc_{a}$, $\textrm{id}_a \cdot f=f$ and $ g\cdot
\textrm{id}_a=g$. %\qed

We denote the objects of $\Cc$ by $\Cc^0$ and write $a\in \Cc^0$
for an object of $\Cc$.  We use $\Cc^1$ to denote the disjoint
union of all the sets ${}_a\Cc_{b}$ . The morphism $f\cdot g\in
{}_a\Cc_{c}$ is usually called the {\it composition} of $f\in
{}_a\Cc_{b}$ and $g\in {}_b\Cc_{c}$, but our notation $f\cdot g$
is different from the usual convention $g\cdot f$ as we imagine
the composition as the join of two consecutive arrows rather than
the composition of two functions. This convention is convenient
when the composition in ${}_a\Cc_{a}$ of a linear category is
regarded as a multiplication to turn ${}_a\Cc_{a}$ into an
algebra.

A category $\Cc$ is a {\it linear category} if each morphism set
${}_a\Cc_b$ is a finitely dimensional vector space, and the
composition of morphisms is a bilinear map of vector spaces.  It
follows that for each object $a$, ${}_a\Cc_a$ is a finitely
dimensional unital algebra. It follows that a finitely dimensional
unital algebra can be regarded as a linear category with a single
object. Another important linear category is the category of
finitely dimensional vectors spaces $\V$.  An object of $\V$ is a
finitely dimensional vector space $V$.  The morphism set $\M(V,W)$
between two objects $V,W$ is $\textrm{Hom}(V,W)$.  More generally,
given any finite set $I$, consider the linear category $\V[I]$ of
$I$-graded vector spaces, which is a categorification of the group
algebra $\C [G]$ if $I$ is a finite group $G$. An object of
$\V[I]$ is a collect of finitely dimensional vector spaces
$\{V_i\}_{i\in I}$ labelled by elements of $I$, and the morphism
set $\M(\{V_i\}_{i\in I},\{W_j\}_{j\in I})$ is the (graded) vector
space of linear maps $\oplus_{i\in I}\textrm{Hom}(V_i,W_i)$.  In
the following all categories will be linear categories, and we
will see that any semisimple linear category with finitely many
irreducible representations is isomorphic to a category of a
finite set graded vector spaces.

%\appendix{Linear representations of category}\label{repofcategory}

%\section{Linear representations of category}%\label{repofcategory}

\subsection{General representation theory}

\begin{definition} A (right) representation of a linear category $\Cc$ is a functor $\rho:
\Cc \rightarrow \V$, where $\V$ is the category of finitely
dimensional vector spaces.  The action is written on the right:
$\rho(a)=V_a$ and given an $f\in {}_a\Cc_b,
v.\rho(f)=v.f=v\cdot\rho(f): V_a\rightarrow V_b$ for any $v\in
V_a$.
\end{definition}

The 0-representation of a category is the representation which
sends every object to the 0-vector space.  Fix an object $a\in
\Cc^0$, we have a representation of the category $\Cc$, denoted by
$a\Cc$: the representation sends $a$ to the vector space
${}_a\Cc_{a}$, and any other $b\in \Cc^0$ to ${}_a\Cc_b$.  An
important construct which gives rise to all the representations of
a semi-simple linear category is as follows: fix an object $a\in
\Cc^0$ and a right ideal $J_a$ of the algebra ${}_a\Cc_{a}$, then
the map which sends each object $b\in \Cc^0$ to $J_a\cdot
{}_a\Cc_{b}\subseteq {}_a\Cc_{b}$ affords $\Cc$ a representation,
where $J_a\cdot {}_a\Cc_{b}$ is the subspace of ${}_a\Cc_{b}$
generated by all elements $f\cdot g, f\in J_a\subseteq
{}_a\Cc_{a},g\in {}_a\Cc_{b}$.  If the right ideal $J_a$ is
generated by an element $p_a\in {}_a\Cc_{a}$, then the resulting
representation of $\Cc$ will be denoted by $p_a \Cc $.  In
particular if $J_a={}_a\Cc_{a}$, we will have the regular
representation $a\Cc$.

The technical part of the paper will be the analysis of the
representations of certain picture categories. In order to do
this, we first recall the representation theory for an algebra---a
linear category with a single object.

\begin{definition}  Let $A$ be an algebra, an element $e\in A$ is an
idempotent if $e^2=e\neq 0$.  Two idempotents $e_1,e_2$ are
orthogonal if $e_1e_2=e_2e_1=0$.  An idempotent is minimal if it
is not the sum of two orthogonal idempotents.
\end{definition}

Given an idempotent $e$ of a finitely dimensional semi-simple
algebra $A$, the right ideal $eA$ is an irreducible representation
of $A$ if and only if the idempotent $e$ is minimal. Since every
irreducible right representation of $A$ is isomorphic to a right
ideal $eA$ for some idempotent of $A$,
 the representations of
$A$ are completely known once we find a collection of pairwise
orthogonal minimal idempotents $e_i$ of $A$ such that
$1=\oplus_{i}e_i$.  It follows that $A=\oplus_{i=1}^n e_iA$.

Let $p(x)$ be a polynomial of degree=$n$ with $n$ distinct roots
$a_1,a_2,\cdots, a_n$ and $A$ be the quotient algebra $\C
[x]/(p(x))$ of the polynomial algebra $\C [x]$. Let
$u_j=\prod_{i=1,i\neq j}^n (x-a_j), \lambda_j=\prod_{i=1,i\neq
j}^n(\lambda_j-\lambda_i)$ and $e_j=\frac{u_j}{\lambda_j}$.  Then
we have the following lemma.
\begin{lemma}\label{decomposition}
The idempotents $\{e_j\}_{j=1}^n$ of $A$ are pair-wise orthogonal
and $\oplus_{j=1}^n e_j=1$.  It follows that $A$ is semi-simple
and a direct sums of $\C$'s.  Note that $e_j$ is an eigenvector of
the element $x\in A$ associated with the eigenvalue $a_j$.
\end{lemma}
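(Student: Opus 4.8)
The plan is to recognize this as the Lagrange interpolation decomposition of $A=\C[x]/(p(x))$ --- equivalently, the Chinese Remainder isomorphism $\C[x]/(p(x))\cong\prod_{j=1}^n\C[x]/(x-a_j)$ --- and to verify the three ingredients directly: $e_j\neq 0$, $e_ie_j=0$ for $i\neq j$, and $\sum_j e_j=1$. Throughout I would assume $p$ is monic, since rescaling $p$ does not change the quotient algebra, so that $p(x)=\prod_{k=1}^n(x-a_k)$ and $(x-a_j)u_j(x)=p(x)$ exactly.

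First I would note $e_j\neq 0$ in $A$: the polynomial $u_j$ has degree $n-1<n=\deg p$, and $\lambda_j=\prod_{i\neq j}(a_j-a_i)\neq 0$ precisely because the roots are distinct, so $e_j=u_j/\lambda_j$ is a nonzero polynomial of degree $<\deg p$ and hence a nonzero class. Next, for $i\neq j$ the product $u_iu_j$ is divisible by every factor $(x-a_k)$, $1\leq k\leq n$ (the factor $(x-a_i)$ occurs in $u_j$, the factor $(x-a_j)$ in $u_i$, all others in both), hence is divisible by $p(x)$; therefore $e_ie_j=0$ in $A$. For the partition of unity, consider $q(x)=\sum_j u_j(x)/\lambda_j-1$, a polynomial of degree $\leq n-1$; evaluating at $x=a_k$ and using $u_j(a_k)=0$ for $j\neq k$ together with $u_k(a_k)=\lambda_k$ gives $q(a_k)=0$ for all $k$, so $q$ has $n$ distinct roots while $\deg q<n$, forcing $q\equiv 0$; thus $\sum_j e_j=1$ in $A$.

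With these in hand the rest is formal: $e_j=e_j\cdot 1=e_j\sum_i e_i=e_j^2$ by orthogonality, so each $e_j$ is a genuine idempotent, and orthogonality with $\sum_j e_j=1$ yields $A=\bigoplus_{j=1}^n e_jA$. From $(x-a_j)u_j(x)=p(x)$ we get $x\,e_j=a_j e_j$ in $A$, so $e_j$ is an eigenvector of multiplication by $x$ with eigenvalue $a_j$; since $A$ is generated by $x$, each $e_jA=\C e_j$ is one-dimensional, whence $A\cong\bigoplus_{j=1}^n\C$ is semisimple and every $e_j$ is minimal.

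There is no genuine obstacle here; the only points that deserve a word of care are the normalization of $p$ (so that $(x-a_j)u_j=p$ on the nose rather than up to a scalar) and the observation that distinctness of the $a_i$ is exactly what makes the $\lambda_j$ invertible --- everything else is the standard partial-fractions computation.
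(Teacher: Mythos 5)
Your proposal is correct and follows essentially the same route as the paper: the Lagrange-interpolation idempotents, orthogonality via divisibility of $u_iu_j$ by $p$, and the degree argument showing $\sum_j e_j - 1$ has $n$ distinct roots and hence vanishes. The only cosmetic difference is that you deduce $e_j^2=e_j$ formally from orthogonality plus $\sum_j e_j=1$, while the paper gets it directly from the eigenvector relation $u_j\cdot x = a_j u_j$; both are fine.
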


\begin{proof} Since $u_j(x-a_j)=p(x)=0$, so $u_j\cdot x=u_j\cdot
a_j=a_j u_j$.  It follows that $u_j^2=u_j\prod_{i=1,i\neq
j}^n(x-a_i)=\lambda_j u_j$, therefore $e_j^2=e_j\neq 0$. Now
consider $u_i\cdot u_j$, in $u_j$ there is the factor $(x-a_i)$ if
$i\neq j$, but $u_i(x-a_i)=p(x)=0$, hence $u_iu_j=0$.

The polynomial $g(x)=(\sum_{j=1}^n e_j)-1$ is a polynomial of
degree $n-1$, but it has $n$ distinct roots $a_1,a_2,\cdots, a_n$,
so $g(x)$ is identically 0.\end{proof}

A representation $\rho$ of $\Cc$ is {\it reducible} if $\rho$ is
the direct sum of two non-zero representations of $\Cc$. Otherwise
$\rho$ is {\it irreducible}. A linear category $\Cc$ is {\it
semi-simple} if every representation $\{\rho,\V \}$ of $\Cc$ is a
direct sum of irreducible representations.

\begin{definition}
$\Lambda$ has a positive definite Hermitian inner product (pdhi) iff
each morphism set ${}_a\Lambda_b$ has a finite dimensional pdhi and
composition ${}_a\Lambda_b\bigotimes {}_b\Lambda_c \stackrel{P}{\rightarrow}
{}_a\Lambda_c$ satisfies the compatibility $<P({}_am_b\bigotimes
{}_bm_c), {}_cm_d>=<{}_am_b, P({}_bm_c\bigotimes {}_cm_d)>,
{}_im_j\in {}_i\Lambda_j$, and for all $i,j\in \Lambda^0, {}_i\Lambda_j $ is
identified with $\overline{{}_j\Lambda_i}$.
\end{definition}

\begin{lemma}\label{starsemi}
Suppose $\Lambda$ has positive definite Hermitian inner
product, then $A$ is semi-simple.
\end{lemma}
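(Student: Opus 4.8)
The plan is to show that every finite-dimensional representation $\rho$ of $\Lambda$ is a direct sum of irreducibles; this is the definition of semi-simplicity (and it specializes to semi-simplicity of each algebra $A={}_a\Lambda_a$, which is how the lemma gets applied). By induction on $\sum_{a\in\Lambda^0}\dim\rho(a)$ it is enough to prove a Maschke-type statement: if $N\subseteq M$ is a subrepresentation of a representation $M=\{M_a\}$, then there is a complementary subrepresentation $N'$ with $M_a=N_a\oplus N'_a$ for every object $a$. Granting this, a minimal nonzero subrepresentation of $M$ is irreducible, one splits it off using the complement, and recurses.

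First I would put on $M$ a positive definite Hermitian form that is compatible with the action in the sense $\langle u\cdot m,\,v\rangle=\langle u,\,v\cdot\bar m\rangle$, where $\bar{}$ denotes the antilinear involution ${}_i\Lambda_j\to{}_j\Lambda_i$ coming from the identification ${}_i\Lambda_j=\overline{{}_j\Lambda_i}$ built into the definition of a pdhi. On the regular representations $a\Lambda$ such a form is exactly the datum assumed in the hypothesis (it is the compatibility of the pdhi on $\Lambda$, rewritten in module notation, as for the Markov-trace pairing on $\TL_n(d)$). For a general $M$ I would invoke Theorem \ref{structuretheorem}: $M$ embeds as a subrepresentation of a finite direct sum $\bigoplus_i a_i\Lambda$ of regular representations, and restricting the orthogonal direct sum of the given forms to $M$ yields the required compatible positive definite form on $M$.

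Now set $N'=N^{\perp}$, the objectwise orthogonal complement inside $M$ for this form. Positive definiteness gives $M_a=N_a\oplus N^{\perp}_a$ as vector spaces for each $a$, so it remains only to see that $N^{\perp}$ is a subrepresentation. Take $x\in N^{\perp}_a$ and $m\in{}_a\Lambda_b$; for any $n\in N_b$ we have $\bar m\in{}_b\Lambda_a$, hence $n\cdot\bar m\in N_a$ because $N$ is a subrepresentation, and therefore $\langle x\cdot m,\,n\rangle=\langle x,\,n\cdot\bar m\rangle=0$. Thus $x\cdot m\in N^{\perp}_b$, so $N^{\perp}$ is action-closed and $M=N\oplus N^{\perp}$ as representations. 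The induction then finishes the proof.

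I expect the only real difficulty to be the middle step: producing a \emph{compatible} positive definite form on an arbitrary representation rather than just on $\Lambda$ itself. This is where the embedding theorem does the work --- for a general linear category there is no averaging device to replace it, so the proof genuinely rests on knowing that every representation sits inside a sum of regular ones. The remaining ingredients (that positivity, not just nondegeneracy, makes $N^{\perp}$ a vector-space complement, and that the antilinear involution is precisely what converts ``$N$ is a submodule'' into ``$N^{\perp}$ is a submodule'') are routine once the form is in hand.
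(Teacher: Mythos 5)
Your overall Maschke strategy is fine: the compatibility $\langle u\cdot m,\,v\rangle=\langle u,\,v\cdot\bar m\rangle$ does make the objectwise orthogonal complement of a subrepresentation action-closed, and positivity does give the vector-space splitting, so once a compatible positive-definite form exists on a representation the induction goes through. The genuine gap is exactly where you say the difficulty lies, and your fix for it does not work as stated. You justify ``$M$ embeds as a subrepresentation of a finite direct sum $\bigoplus_i a_i\Lambda$ of regular representations'' by citing Theorem \ref{structuretheorem}, but that theorem is a structure theorem whose hypothesis is that the linear category is \emph{semi-simple}; invoking it inside a proof of Lemma \ref{starsemi} is circular. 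Nor is the embedding a formal fact about linear categories: for the two-object category with ${}_1\Lambda_1={}_2\Lambda_2=\C$, ${}_1\Lambda_2=\C$, ${}_2\Lambda_1=0$ (the upper-triangular $2\times 2$ algebra viewed as a category), the simple representation supported at the first object admits no nonzero map to any regular representation, hence no embedding into a sum of them. So the embedding must itself be extracted from the pdhi hypothesis, and it is essentially equivalent to the semi-simplicity you are proving. The easy Yoneda-type statement goes the other way: a finitely generated representation is a \emph{quotient} of a finite sum $\bigoplus_i a_i\Lambda$ via $m\mapsto v\cdot m$ on generators $v\in M_{a_i}$. Your argument can be repaired in that order: put the orthogonal sum of the given forms on $\bigoplus_i a_i\Lambda$, apply your complementation step to the kernel $N$ of the surjection onto $M$, and identify $M$ with $N^{\perp}$, which then inherits a compatible positive-definite form; but as written the key step is unsupported.

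It is also worth noting that the paper resolves this same crux by a different device, the one you assert does not exist: it takes arbitrary positive-definite forms on the spaces $V_i$ and \emph{averages} them under the invertible morphisms to obtain an invariant family satisfying $(\rho(m))^{\dagger}=\rho(\bar m)$, i.e.\ a unitarian-trick argument rather than an embedding into regular representations (the paper's averaging is only sketched, but it is the intended route). So your proposal diverges from the paper precisely at the step that carries the weight of the proof, and in its present form that step is circular.
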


\begin{proof} If $\Lambda$ has a pdhi, then any (finite dimensional)
representation $\{\rho, V\}$ of $\Lambda$ may also be given a pdhi
structure.  This means that the $V_i$ are individually pdhi-spaces
and that for all morphisms $m, (\rho(m))^{\dag}=\rho({\bar{m}})$.
One may check that any collections of pdhi-structures on $\{V\}$
which are averaged under the invertible morphisms (and therefore
invariant) satisfies this condition. \end{proof}

\iffalse
\begin{definition}
A category $\Cc$ is a $*$-category if for each ${}_a\Cc_b$, there
is an anti-linear $*$ map from ${}_a\Cc_b$ to ${}_b\Cc_a$ such
that the paring $f\cdot g^{*}: {}_a\Cc_b\times {}_a\Cc_b
\rightarrow {}_a\Cc_a$ for $f,g\in {}_a\Cc_b$ is non-degenerate,
i.e. $f\cdot g^{*}=0$ for all $g$, then $f=0$.
\end{definition}

\begin{lemma}\label{starsemi}  If $\Cc$
is a $*$ category, then $\Cc$ is semi-simple.
\end{lemma}
\fi

Most of the usual machinery of linear algebra, including Schur's
 lemma, holds for $\C$-linear categories.

\begin{lemma}
\label{schur} {\bf (Schur's Lemma for $\C$-linear categories)}
Suppose $\{\rho_m,
 V_i\}$ and $\{ \chi_m, W_i\}, i\in \textrm{obj}(\Lambda), m \in
 \textrm{Morph}(i,j),$ are irreducible representations of a
 $\C$-linear category $\Lambda$ (called an algebroid by some authors
 e.g. [BHMV]).  Irreducibility means no $\rho_m$ invariant class
 of proper subspaces $V_i'\subset V_i$ exists.  Suppose that
 $\phi: \{V\} \rightarrow \{W\}$ is a $\Lambda$-map commuting with the
 action $\Lambda$.  That is for $m\in \textrm{Morph}(i,j)$ and $v_i\in
 V_i$ we have $\chi_m(\phi_i(v_i))=\phi_j \cdot \rho_m(v_i)$.
 Then either $\phi$ is identically zero for all $i,
 \phi_i:V_i\rightarrow W_i$, or $\phi$ is an isomorphism.  If
 $\{V\}=\{W\}$ then $\phi=\lambda \cdot \textrm{id}$ for some
 $\lambda\in \C$.

 \end{lemma}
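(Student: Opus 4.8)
The plan is to run the classical proof of Schur's lemma, with the one adjustment the categorical setting requires. First I would check that any $\Lambda$-map $\phi\colon\{V\}\to\{W\}$ has a ``kernel'' and an ``image'' that are again invariant classes of subspaces: putting $K_i=\ker(\phi_i)$ and $J_i=\im(\phi_i)$, the naturality relation $\chi_m(\phi_i(v))=\phi_j(\rho_m(v))$ for $m\in\textrm{Morph}(i,j)$ gives at once $\rho_m(K_i)\subseteq K_j$ (if $\phi_i(v)=0$ then $\phi_j(\rho_m(v))=\chi_m(0)=0$) and $\chi_m(J_i)\subseteq J_j$. Thus $\{K_i\}$ is a subrepresentation of $\{V\}$ and $\{J_i\}$ a subrepresentation of $\{W\}$.

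Next I would invoke irreducibility twice. Because $\{V\}$ is irreducible, $\{K_i\}$ is either zero in every grade (each $\phi_i$ injective) or equal to $\{V_i\}$ in every grade (i.e.\ $\phi\equiv 0$); because $\{W\}$ is irreducible, $\{J_i\}$ is either zero in every grade or all of $\{W\}$. Assuming $\phi\not\equiv 0$, there is an object $i$ with $\phi_i\neq 0$, so $J_i\neq 0$ and hence $J_i=W_i$ for \emph{all} $i$, while also $K_i=0$ for all $i$; so every $\phi_i$ is a linear isomorphism. A one-line check that $\{\phi_i^{-1}\}$ satisfies the naturality square --- compose $\chi_m\circ\phi_i=\phi_j\circ\rho_m$ with $\phi_i^{-1}$ on the right and $\phi_j^{-1}$ on the left to get $\phi_j^{-1}\circ\chi_m=\rho_m\circ\phi_i^{-1}$ --- shows $\phi$ is an isomorphism of representations.

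For the endomorphism statement $\{V\}=\{W\}$: if $\phi\equiv 0$ take $\lambda=0$; otherwise fix an object $i_0$ with $V_{i_0}\neq 0$ (one exists since $\phi\not\equiv 0$), observe that the endomorphism $\phi_{i_0}$ of the nonzero finite-dimensional complex vector space $V_{i_0}$ has an eigenvalue $\lambda\in\C$, and apply the first two steps to $\psi=\phi-\lambda\cdot\textrm{id}$, which is again a $\Lambda$-map. Its kernel is nonzero in grade $i_0$ (it contains a $\lambda$-eigenvector), so by irreducibility of $\{V\}$ that kernel is all of $\{V\}$, i.e.\ $\psi=0$ and $\phi=\lambda\cdot\textrm{id}$.

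I expect no real obstacle; the only subtlety, and the single place where the argument departs from the algebra case, is that an irreducible representation of $\Lambda$ may assign the zero space to some objects (as $\rho_0$ does to the odd objects in the $\TLJ$ examples), so one cannot run the eigenvalue argument at an arbitrary grade --- one must locate a grade with $V_{i_0}\neq 0$ and then let irreducibility propagate $\psi=0$ out to every grade.
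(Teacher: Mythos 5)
Your proposal is correct and follows essentially the same route as the paper's proof: kernel and image are invariant families of subspaces, irreducibility forces each $\phi_i$ to be zero or an isomorphism, and the endomorphism case is handled by subtracting an eigenvalue $\lambda$ and applying the first part to $\phi-\lambda\cdot\textrm{id}$. Your extra care in choosing a grade with $V_{i_0}\neq 0$ before extracting an eigenvalue is a worthwhile refinement of a point the paper's proof passes over silently, but it does not change the argument.
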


\begin{proof} As in the algebra case $\textrm{ker}(\phi)$ (and
 $\textrm{image}(\phi)$) are both invariant families of subspaces
 (indexed by $i\in \T{obj}(\Lambda)$.) So if either is a nontrivial
 proper subspace for any $i$ irreducibility of $\{V\}$ (or
 $\{W\}$) fails.   For the second assertion, since $\C$ is
 algebraically closed for any $i\in \T{obj}(\Lambda)$, the
 characteristic equation $\T{det}(\phi_i-x\T{I})=0$, has roots,
 call one $\lambda_i$.  The $\Lambda$-map $\phi-\lambda \T(I)$ has
 non-zero kernel (at least at object $i$) so by part one,
 $\phi-\lambda I=0$ identically or $\phi=\lambda I$. \end{proof}

 \begin{corollary}\label{uniqueness}
   Suppose the representation $\{\rho,V\}$ of $\Lambda$ has
 decomposition $\{V\}=V_{a_1}\bigotimes \{V_1\}\bigoplus \cdots \bigoplus
 V_{a_k}\bigotimes \{V_k\}$, where the $V_l$ are distinct (up to
 isomorphism) irreducible representations, $l$ is finite index,
 $1\leq l\leq k$, and the $V_{a_l}$ are ordinary $\C$-vector
 spaces with no $\Lambda$-action, ( Dimension $(V_{a_l})=: d(a_l)$ is
 the multiplicity of $V_l$.) The decomposition is unique up to
 permutation and of course isomorphism of $V_{a_l}$ and scalars
 acting on $\{V_l\}$.
 \end{corollary}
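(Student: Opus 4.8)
The plan is to recover each multiplicity space $V_{a_l}$ intrinsically from the representation $\{V\}$, so that it cannot depend on the chosen decomposition. First I would record two elementary facts about $\Lambda$-maps. For representations $\{M\},\{N\}$ of $\Lambda$, write $\T{Hom}_{\Lambda}(\{M\},\{N\})$ for the $\C$-vector space of natural transformations $\{M\}\to\{N\}$; since a $\Lambda$-map is a family of linear maps $M_i\to N_i$ intertwining the action, it is determined object-by-object, and hence $\T{Hom}_{\Lambda}$ is additive in each slot: a $\Lambda$-map into (or out of) a finite direct sum is the same datum as a tuple of $\Lambda$-maps into (out of) the summands. Likewise, if $W$ is an ordinary vector space and $W\otimes\{V_m\}$ denotes the representation $i\mapsto W\otimes (V_m)_i$ with action $\T{id}_W\otimes\rho_m$, then $\T{Hom}_{\Lambda}(\{V_l\}, W\otimes\{V_m\})\cong W\otimes \T{Hom}_{\Lambda}(\{V_l\},\{V_m\})$, because the inert factor $W$ plays no role in the intertwining condition. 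Both facts are checked componentwise and use only finite-dimensionality of the $V_i$ and finiteness of the index set.

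Next, by Schur's Lemma (Lemma \ref{schur}), since the $\{V_l\}$ are pairwise non-isomorphic irreducibles, $\T{Hom}_{\Lambda}(\{V_l\},\{V_m\})$ vanishes for $l\ne m$ and is one-dimensional, spanned by $\T{id}$, for $l=m$. Feeding this into the two structural facts applied to $\{V\}=\bigoplus_{m=1}^{k} V_{a_m}\otimes\{V_m\}$ yields a canonical isomorphism
\[
\T{Hom}_{\Lambda}(\{V_l\},\{V\})\;\cong\;\bigoplus_{m=1}^{k} V_{a_m}\otimes \T{Hom}_{\Lambda}(\{V_l\},\{V_m\})\;\cong\; V_{a_l}.
\]
The left-hand side refers only to $\{V\}$ and $\{V_l\}$. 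Hence if $\{V\}$ is also written as $\bigoplus_{l'} V'_{b_{l'}}\otimes\{V'_{l'}\}$ with the $\{V'_{l'}\}$ distinct irreducibles, then running the same computation with each $\{V_l\}$ shows that $\{V_l\}$ occurs in one decomposition (i.e. $\T{Hom}_{\Lambda}(\{V_l\},\{V\})\ne 0$) exactly when $\{V_l\}\cong\{V'_{l'}\}$ for some $l'$, and in that case $\dim V_{a_l}=\dim\T{Hom}_{\Lambda}(\{V_l\},\{V\})=\dim V'_{b_{l'}}$. This forces the two lists of irreducible constituents to agree up to isomorphism and the multiplicities $d(a_l)$ to match, so the two decompositions differ only by a permutation together with linear isomorphisms $V_{a_l}\cong V'_{b_{l'}}$ of the (action-free) multiplicity spaces.

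Finally, to see that the isotypic summands themselves are pinned down as subrepresentations of $\{V\}$, I would identify $V_{a_l}\otimes\{V_l\}\subseteq\{V\}$ with the image of the intrinsic evaluation $\Lambda$-map
\[
\T{ev}_l:\;\T{Hom}_{\Lambda}(\{V_l\},\{V\})\otimes\{V_l\}\;\longrightarrow\;\{V\},\qquad \phi\otimes v\longmapsto\phi(v),
\]
so the summand is determined and the only remaining freedom — rescaling on the $\{V_l\}$-factor compensated by the inverse scalar on $V_{a_l}$ — is exactly the "scalars acting on $\{V_l\}$" in the statement. The sole nontrivial ingredient is Schur's Lemma, already available, so I do not expect a genuine obstacle; the one point needing a little care is bookkeeping for our "continuously many objects" categories, but every $\T{Hom}_{\Lambda}$ here is computed object-by-object and the sums involved are finite, so no set-theoretic or convergence issue intrudes.
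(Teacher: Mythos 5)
Your proof is correct, and it rests on the same key ingredient as the paper's proof --- Schur's lemma (Lemma \ref{schur}) --- but it is organized along a genuinely different route. The paper takes a second decomposition $\{V\}\cong\bigoplus_m W_{a_m}\bigotimes\{W_m\}$ and applies Schur's lemma directly to the compositions $v_{a_l}\bigotimes\{V_l\}\rightarrow\{V\}\rightarrow w_{a_m}\bigotimes\{W_m\}$ (inclusion of a summand of one decomposition followed by projection onto a summand of the other), concluding that each $\{V_l\}$ matches some $\{W_m\}$ with equal multiplicity; it is a direct two-decomposition comparison. You instead compute the invariant $\T{Hom}_{\Lambda}(\{V_l\},\{V\})\cong V_{a_l}$, using Hom-additivity over finite direct sums, the inertness of the multiplicity factor, and Schur's vanishing/one-dimensionality statement; since the left-hand side depends only on $\{V\}$ and $\{V_l\}$, the irreducible constituents and the numbers $d(a_l)=\dim\T{Hom}_{\Lambda}(\{V_l\},\{V\})$ of \emph{any} decomposition are forced, with no need to juggle two decompositions map-by-map. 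Your evaluation map $\T{ev}_l$ moreover pins down the isotypic summand inside $\{V\}$ and makes the residual freedom (rescaling $\{V_l\}$ compensated on $V_{a_l}$) explicit, a point the paper's two-line proof leaves implicit. The trade-off is brevity versus structure: the paper's comparison is shorter, while your intrinsic characterization is more robust and yields the multiplicity formula as a byproduct. Your cautions about the continuum of objects and finiteness of the sums are exactly right and raise no difficulty, since every Hom space involved is checked object-by-object against finitely many finite-dimensional summands.
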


\begin{proof} Suppose $\{V\}=\bigoplus_m W_{a_m}\bigotimes
\{W_j\}.$  Apply
 Schur's lemma to compositions:

\begin{equation*}
v_{a_l}\bigotimes \{V_l\}\rightarrow \{V\} \rightarrow
w_{a_m}\bigotimes\{W_m\}.
\end{equation*}
for all $v_{a_l}\in V_{a_l}$ and $w_{a_l}\in W_{a_l}$ to conclude
that given $l$, $V_l\cong W_m$ for some $m$ and $d(a_l)=d(a_m)$.
This established uniqueness. \end{proof}

Now we state a structure theorem from \cite{walker06} for the
representation theory of semisimple linear categories.  Both the
statement and the proof are analogous to those for the semi-simple
algebras.  A {\it right ideal} of $\Cc$ is a subset $J$ of $\Cc^1$
such that for each object $a\in \Cc^0$, $J\cap {}_a\Cc_{a}$ is a
right ideal of ${}_a\Cc_{a}$.  Note that each right ideal of $\Cc$
affords $\Cc$ a representation.

\begin{theorem}\label{structuretheorem}

1): Let $\Cc$ be a semi-simple linear category, and $\{X_i\}_{i\in
I}$ be a complete set of representatives for the simple right
ideals of $\Cc$. Then $\Cc$ is naturally isomorphic to the
category of the finite set $I$-graded vector spaces with each
object $a\in \Cc^0$ corresponding to the graded vector space
$X_{ia}$, where $X_{ia}$ is $X_i\cap {}_a\Cc_{a}$.

2):  Each irreducible representation $\rho$ of $\Cc$ is given by a
right ideal of the form $e_a\Cc$ for some object $a\in \Cc^0$, where
$e_a$ is a minimal idempotent of ${}_a\Cc_{a}$.  If for some $b\in
\Cc^0$, which may be $a$, and $e_b$ is a minimal idempotent of
${}_b\Cc_b$, then the irrep $e_b\Cc$ of $\Cc$ is isomorphic to
$e_a\Cc$ if and only if there exist $f\in {}_a\Cc_b$ and $g\in
{}_b\Cc_a$ such that $f\cdot g=e_a$, $g\cdot f=e_b$.

\end{theorem}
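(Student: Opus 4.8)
The plan is to establish part~(2) first --- which pins down the irreducibles as $e_a\Cc$ for minimal idempotents $e_a$ and gives the isomorphism test --- and then to deduce part~(1) by running the Wedderburn structure theory of semisimple algebras one object at a time. For the first assertion of~(2): given a nonzero irreducible representation $\rho$, pick an object $a$ with $\rho(a)\neq 0$ and a nonzero vector $v\in\rho(a)$, and let $\phi\colon a\Cc\to\rho$ be the natural transformation whose component at $b$ sends $m\in{}_a\Cc_b$ to $v.\rho(m)$. Then $\phi$ is a morphism of representations and $\phi_a(\textrm{id}_a)=v\neq 0$, so its image is a nonzero subrepresentation of $\rho$ and hence $\phi$ is onto. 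Next one checks that ${}_a\Cc_a$ is a semisimple algebra: writing the semisimple representation $a\Cc$ as $\bigoplus_k W_k$ with $W_k$ irreducible and evaluating at $a$ gives ${}_a\Cc_a=\bigoplus_k W_k(a)$, where each $W_k(a)$ is zero or irreducible as an ${}_a\Cc_a$-module, since a proper nonzero submodule would generate a proper nonzero subrepresentation of $W_k$ (whose value at $a$ is exactly that submodule). So $\textrm{id}_a=\sum_j e^j_a$ for pairwise orthogonal minimal idempotents $e^j_a\in{}_a\Cc_a$, and $a\Cc=\bigoplus_j e^j_a\Cc$ with each $e^j_a\Cc$ irreducible just as for algebras; since $\phi$ is nonzero on some summand, Schur's Lemma (Lemma~\ref{schur}) forces $e^j_a\Cc\cong\rho$, and we take $e_a=e^j_a$.

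For the isomorphism criterion in~(2): after replacing $f,g$ by $e_a\cdot f\cdot e_b$ and $e_b\cdot g\cdot e_a$ (which satisfy the same relations), the rule $x\mapsto g\cdot x$ defines a morphism $e_a\Cc\to e_b\Cc$ with inverse $x\mapsto f\cdot x$, because $f\cdot g=e_a$ acts as the identity on $e_a\Cc$ and $g\cdot f=e_b$ on $e_b\Cc$; hence $e_a\Cc\cong e_b\Cc$. Conversely, any isomorphism $e_a\Cc\to e_b\Cc$ and its inverse are, by the Yoneda identification $\Hom_\Cc(a\Cc,\sigma)\cong\sigma(a)$ together with Schur's Lemma, of the form $x\mapsto g\cdot x$, $x\mapsto f\cdot x$ for some $g\in e_b\cdot{}_b\Cc_a\cdot e_a$ and $f\in e_a\cdot{}_a\Cc_b\cdot e_b$; then $f\cdot g$ is an invertible element of $e_a\cdot{}_a\Cc_a\cdot e_a$, a division algebra over the algebraically closed field $\C$, hence $f\cdot g=\lambda e_a$ with $\lambda\neq 0$, and rescaling $f$ gives $f\cdot g=e_a$, whereupon $g\cdot f$ is a nonzero idempotent in $e_b\cdot{}_b\Cc_b\cdot e_b=\C e_b$, so $g\cdot f=e_b$.

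For part~(1): ``being connected by a pair $f,g$ as in~(2)'' is an equivalence relation on the set of all minimal idempotents $e^j_a$ occurring over all objects $a$, and by~(2) two of them are connected iff the associated right ideals are isomorphic representations; so the classes are indexed by the finite set $I$ of isomorphism classes of irreducibles. Choosing a representative $e_i\in{}_{a_i}\Cc_{a_i}$ for each $i$ and setting $X_i=e_i\Cc$, one has for each object $a$ that $X_{ia}=X_i\cap{}_a\Cc_a$ is the span of the $e^j_a$ in class $i$. The Peirce decomposition $m=\sum_{j,k}e^j_a\cdot m\cdot e^k_b$ of a morphism in ${}_a\Cc_b$, together with the fact --- proved exactly as for matrix algebras, using minimality --- that the block $e^j_a\cdot{}_a\Cc_b\cdot e^k_b$ is one-dimensional when $e^j_a,e^k_b$ are connected and zero otherwise, then yields a linear isomorphism ${}_a\Cc_b\cong\bigoplus_{i\in I}\Hom(X_{ia},X_{ib})$; checking that it intertwines composition in $\Cc$ with block-diagonal composition and is natural in $a$ and $b$ identifies $\Cc$ with the category $\V[I]$.

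The genuine work --- and the step I expect to be hardest to write cleanly --- is this last Peirce/block computation: verifying that $e^j_a\cdot{}_a\Cc_b\cdot e^k_b$ has dimension exactly one when the idempotents are connected and zero otherwise, and that the induced identification respects composition and naturality. Everything else is Schur's Lemma plus $\C$-linearity; the only obstacle is transcribing the Wedderburn theory while carrying the two object-indices $a$ and $b$ simultaneously.
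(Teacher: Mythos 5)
Your proposal is correct and follows essentially the paper's own route: the paper gives no detailed argument here, stating only that the proof is analogous to the semisimple-algebra case (citing \cite{walker06}), and your write-up --- Schur's lemma, minimal idempotents of the endomorphism algebras ${}_a\Cc_a$, and the Peirce-block identification of ${}_a\Cc_b$ with $\bigoplus_{i\in I}\Hom(X_{ia},X_{ib})$ --- is exactly that analogy carried out object by object, so the remaining work you flag is routine. The one detail worth adding is that irreducibility of $e^j_a\Cc$ is not purely ``as for algebras'': you must also invoke semisimplicity of $\Cc$ (decompose $e^j_a\Cc$ into irreducibles and note each summand is generated by its value at $a$) to rule out subrepresentations that vanish at $a$ but not at some other object.
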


\section{Gluing and Maslov
index}\label{topology}

%\section{Gluing and Maslov index}\label{topology}

\subsection{Gluing}
Gluing of $3$-manifolds needs to be addressed carefully due to anomaly.  The basic problem
is when $X$ is a bordism, the canonical Lagrangian subspace $\lambda_X \in H_1(\p X;\R)$ is in general
not a direct sum.  $\lambda_X$ is determined by the intrinsic topology as it is the kernel of the inclusion
homomorphism: $H_1(\p X; \R)\rightarrow  H_1(X; \R)$.  But the anomaly is related to the parameterizations of the
bordisms, which are extrinsic.

Suppose $X_i, i=1,2$ are bordisms from $-Y_i$ to $Y_{i+1}$ extended by
$\lambda_j,j=1,2,3$.  The canonical Lagrangian
subspace $\lambda_{X_i}$ defines a Lagrangian subspace of $H_1(Y_2;\R)$ as follows:
let $\lambda_{-}X_1=\{b\in H_1(Y_2;\R)|\tn{for some}\;\; a \in \lambda_1, (a,b)\in \lambda_{X_1}\}$, and
$\lambda_{+}X_2=\{c\in H_1(Y_2;\R)|\tn{for some}\;\; d \in \lambda_3, (c,d)\in \lambda_{X_2}\}$.  Then we have
three Lagrangian subspaces in $H_1(Y_2;\R)$ together with $\lambda_2$.

More generally, let $(Y_i, \lambda_i)$ be extended sub-surfaces of $(\p X; \lambda_X)$, and
$f: (Y_1;\lambda_1)\rightarrow (-Y_2;\lambda_2)$ be a gluing map.  Then we have three Lagrangian subspaces
in $H_1(Y_1;\R)\oplus H_1(Y_2;\R)$: the direct sum $\lambda_1 \oplus \lambda_2$, the anti-diagonal
$\Delta=\{(x\oplus -f_{*}(x)\}$, and $K$---the complement of $\lambda_i$ in $\lambda_X$ mapped here.

\subsection{Maslov index}
Given three isotropic subspaces $\lambda_i,i=1,2,3$ of a
symplectic vector space $(H,\omega)$, we can define a symmetric
bilinear form $<,>$ on $(\lambda_1+\lambda_2)\cap \lambda_3$ as
follows: for any $v,w\in (\lambda_1+\lambda_2)\cap \lambda_3$,
write $v=v_1+v_2, v_i\in \lambda_i$, then set
$<v,w>=\omega(v_2,w)$.  The Maslov index
$\mu(\lambda_1,\lambda_2,\lambda_3)$ is the signature of the
symmetric bilinear form $<,>$ on $(\lambda_1+\lambda_2)\cap
\lambda_3$.

\end{document}